\title{Small Models, Large Cardinals, and Induced Ideals}
\DeclareMathOperator{\IN}{\epsilon}
\DeclareMathOperator{\Lim}{Lim}
\DeclareMathOperator{\Ord}{Ord}
\DeclareMathOperator{\tcl}{\mathrm{tc}}
\DeclareMathOperator{\id}{id}
\DeclareMathOperator{\supp}{supp}
\DeclareMathOperator{\ZFC}{ZFC}
\newcommand{\crit}[1]{\mathrm{crit}({#1})}
\newcommand{\Set}[2]{\{#1~\vert~#2\}}
\newcommand{\pow}{\mathcal{P}}
\newcommand{\anf}[1]{{\text{``}\hspace{0.3ex}{#1}\hspace{0.3ex}\text{''}}}
\newcommand{\lanf}{{\text{``}\hspace{0.3ex}}}
\newcommand{\ranf}{{\hspace{0.02ex}\text{''}}}
\DeclareMathOperator{\calS}{\mathcal{S}}
\newcommand{\ran}[1]{\mathrm{ran}({#1})}
\newcommand{\map}[3]{{#1}:{#2}\longrightarrow{#3}}
\newcommand{\Map}[5]{{#1}:{#2}\longrightarrow{#3}; ~ {#4}\mapsto{#5}}
\newcommand{\PPP}{{\mathbb{P}}}
\newcommand{\RRR}{{\mathbb{R}}}
\newcommand{\SSS}{{\mathbb{S}}}
\newcommand{\LL}{{\rm{L}}}
\newcommand{\VV}{{\rm{V}}}
\newcommand{\II}{{\rm{I}}}
\newcommand{\NN}{{\rm{N}}}
\newcommand{\HH}[1]{{\rm{H}}(#1)}
\newcommand{\betrag}[1]{\vert{#1}\vert}
\newcommand{\otp}[1]{\rm{otp}({#1})}
\newcommand{\POT}[1]{{\mathcal{P}}({#1})}
\newcommand{\seq}[2]{\langle{#1}~\vert~{#2}\rangle}
\newcommand{\Add}[2]{{\rm{Add}}({#1},{#2})}
\newcommand{\Ult}[2]{{\mathrm{Ult}}({#1},{#2})}
\newcommand{\goedel}[2]{{\prec}{#1},{#2}{\succ}}
\newcommand{\delt}{\Delta}
\newcommand{\infi}{\boldsymbol{\infty}}
\newcommand{\cc}{\mathbf{cc}}
\newcommand{\scc}{\mathbf{sc}}
\newcommand{\triv}{\mathbf{T}}
\newcommand{\wf}{\mathbf{wf}}
\author{Peter Holy}
\address{University of Udine, Via delle Scienze 206, 33100 Udine, Italy}
\email{pholy@math.uni-bonn.de}
\author{Philipp L\"ucke}
\address{Institut de Matem\`{a}tica, Universitat de Barcelona, 
Gran via de les Corts Catalanes 585,
08007 Barcelona, Spain}
\email{pluecke@math.uni-bonn.de}
\subjclass[2010]{03E55, 03E05, 03E35}
\keywords{Large Cardinals, Elementary Embeddings, Ultrafilters, Large Cardinal Ideals}
\thanks{The first author would like to thank Niels Ranosch and Philipp Schlicht for discussing material related to some of the contents of this paper at an early stage. 
 Both authors would like to thank Victoria Gitman for some helpful comments on an early version of this paper. 
  The authors would also like to thank the anonymous referee for a number of helpful comments on the paper. 
  During the revision of this paper, the research of the first author was supported by the Italian PRIN 2017 Grant \emph{Mathematical Logic: models, sets, computability}. 
  This project has received funding from the European Union’s Horizon 2020 research and innovation programme under the Marie Sk{\l}odowska-Curie grant agreement No 842082 of the second author (Project \emph{SAIFIA: Strong Axioms of
Infinity -- Frameworks, Interactions and Applications}).}
\begin{document}

\begin{abstract}
  We show that many large cardinal notions up to measurability can be characterized through the existence of certain 
 filters for small models of set theory. 
 This correspondence will allow us to obtain a canonical way in which to assign ideals to many large cardinal notions. 
 This assignment coincides with classical large cardinal ideals whenever such ideals had been defined before. 
 Moreover, in many important cases, relations between these ideals reflect the ordering of the corresponding large cardinal properties both under direct implication and consistency strength. 
\end{abstract}

\maketitle

\theoremstyle{definition}
\newtheorem{definition}{Definition}[section]
\newtheorem{scheme}{Scheme}
\newtheorem{remark}[definition]{Remark}
\theoremstyle{plain}
\newtheorem{fact}[definition]{Fact}
\newtheorem{lemma}[definition]{Lemma}
\newtheorem{theorem}[definition]{Theorem}
\newtheorem{corollary}[definition]{Corollary}
\newtheorem{claim}[definition]{Claim}
\newtheorem*{claim*}{Claim}
\newtheorem{subclaim}[definition]{Subclaim}
\newtheorem{conjecture}[definition]{Conjecture}
\newtheorem{question}[definition]{Question}
\newtheorem{observation}[definition]{Observation}
\newtheorem{proposition}[definition]{Proposition}

\renewcommand*{\thescheme}{\Alph{scheme}}


\section{Introduction}

The work presented in this paper is motivated by the aim to develop general frameworks for large cardinal properties and their ordering under both direct implication and consistency strength. 
We develop such a framework for large cardinal notions up to measurability, that is based on the existence of set-sized models and ultrafilters for these models satisfying certain degrees of amenability and normality. 
 This will cover several classical large cardinal concepts like inaccessibility, weak compactness, ineffability, Ramseyness and measurability, and also many of the \emph{Ramsey-like} cardinals that are an increasingly popular subject of recent set-theoretic research (see, for example, \cite{mitchellforramsey}, \cite{brent}, \cite{MR3348040}, \cite{MR2830415}, \cite{MR2830435}, \cite{MR3800756}, \cite{nielsen-welch},  and \cite{MR2817562}), but in addition, it canonically yields a number of new large cardinal notions.
 %
We then use these large cardinal characterizations to canonically assign ideals to large cardinal notions, in a way that generalizes all such assignments previously considered in the set-theoretic literature, like the classical definition of the \emph{weakly compact ideal}, the \emph{ineffable ideal}, the \emph{completely ineffable ideal} and the \emph{Ramsey ideal}. 
 In a great number of cases, we can show that the ordering of these ideals under inclusion directly corresponds to the ordering of the underlying large cardinal notions under direct implication. 
Similarly, the ordering of these large cardinal notions under consistency strength can usually be read off these ideals 
in a simple and canonical way.
 In combination, these results show that the framework developed in this paper provides a natural setting to study the lower reaches of the large cardinal hierarchy.


\subsection{Characterization schemes}

Starting from measurability upwards, many important large cardinal notions are defined through the existence of certain ultrafilters that can be used in ultrapower constructions in order to produce elementary embeddings $\map{j}{\VV}{M}$ of the set-theoretic universe $\VV$ into some transitive class $M$ with the large cardinal in question as their critical point.  
A great variety of results shows that many prominent large cardinal properties below measurability can be characterized through the existence of filters that only measure sets contained in set-sized models $M$ of set theory.\footnote{In the following, we identify (not necessarily transitive) classes $M$ with the $\epsilon$-structure $\langle M,\in\rangle$. In particular, given some theory $T$ in the language of set theory, we say that a class $M$ is a model of $T$ (and write $M\models T$) if and only if $\langle M,\in\rangle\models T$.}
 For example, the equivalence of weak compactness to the \emph{filter property} (see \cite[Theorem 1.1.3]{MR0460120}) implies that an uncountable cardinal $\kappa$ is weakly compact if and only if for every model $M$ of $\ZFC^-$ of cardinality at most $\kappa$ that contains $\kappa$, there exists an uniform\footnote{All relevant definitions can be found in Section \ref{generalizations}.} $M$-ultrafilter $U$ on $\kappa$ that is ${<}\kappa$-complete in $\VV$.\footnote{I.e., arbitrary intersections of less than $\kappa$-many elements of $U$ in $\VV$ are nonempty.}  
  Isolating what was implicit in folklore results (see, for example, \cite{MR534574}), Gitman, Sharpe and Welch showed that Ramseyness can be characterized through the existence of countably complete ultrafilters for transitive $\ZFC^-$-models of cardinality $\kappa$ (see \cite[Theorem 1.3]{MR2830415} or \cite[Theorem 1.5]{MR2817562}). 
 More examples of such characterizations are provided by results of Kunen \cite{MR2617841}, Kleinberg \cite{MR513844} and Abramson--Harrington--Kleinberg--Zwicker \cite{MR0460120}. 
 Their characterizations can be formulated through the following scheme, which is hinted at in the paragraph before \cite[Lemma 3.5.1]{MR0460120}:  
 \emph{An uncountable cardinal $\kappa$ has the large cardinal property $\Phi(\kappa)$ if and only if for some (equivalently, for all) sufficiently large regular cardinal(s) $\theta$ and for some (equivalently, for all) countable $M\prec\HH{\theta}$ with $\kappa\in M$, there exists a uniform $M$-ultrafilter $U$ on $\kappa$ with the property that the statement $\Psi(M,U)$ holds}.
Their results show that this scheme holds true in the following cases: 
 \begin{itemize} 
    \item {\cite{MR0460120}} $\Phi(\kappa)\equiv\anf{\textit{$\kappa$ is inaccessible}}$ and $\Psi(M,U)\equiv\anf{\textit{$U$ is ${<}\kappa$-amenable and ${<}\kappa$-complete for $M$}}$.\footnote{Note that, here and in the following, in order to avoid mention of $\kappa$ within $\Psi(M,U)$, the cardinal $\kappa$ could be defined as being the least $M$-ordinal $\eta$ satisfying $\bigcup U\subseteq\eta$.}

    \item {\cite{MR2617841}} $\Phi(\kappa)\equiv\anf{\textit{$\kappa$ is weakly compact}}$ and $\Psi(M,U)\equiv\anf{\textit{$U$ is $\kappa$-amenable and ${<}\kappa$-complete for $M$}}$. 

    \item {\cite{MR513844}} $\Phi(\kappa)\equiv\anf{\textit{$\kappa$ is completely ineffable}}$ and $\Psi(M,U)\equiv\anf{\textit{$U$ is $\kappa$-amenable for $M$ and $M$-normal }}$. 
  \end{itemize}


Generalizing the above scheme, our large cardinal characterizations will be based on  three schemes that are introduced below. 
 In order to phrase these schemes in a compact way, we introduce some terminology.  
As usual, we say that some statement $\varphi(\beta)$ holds for \emph{sufficiently large ordinals $\beta$} if there is an $\alpha\in\Ord$ such that $\varphi(\beta)$ holds for all $\alpha\leq\beta\in\Ord$.
Given infinite cardinals $\lambda\leq\kappa$, a $\ZFC^-$-model $M$ is a \emph{$(\lambda,\kappa)$-model} if it has cardinality $\lambda$ and $(\lambda+1)\cup\{\kappa\}\subseteq M$. A $(\kappa,\kappa)$-model is called a \emph{weak $\kappa$-model}.
A \emph{$\kappa$-model} is a weak $\kappa$-model that is closed under ${<}\kappa$-sequences.\footnote{Note that, unlike some authors, we do not require (weak) $\kappa$-models to be transitive.} 
Moreover, given an infinite cardinal $\theta$ and a class $\calS$ of elementary submodels of $\HH{\theta}$, we say that some statement $\psi(M)$ holds for \emph{many models in $\calS$} if for every $x\in\HH{\theta}$, there exists an $M$ in $\calS$ with $x\in M$ for which $\psi(M)$ holds. 
 Finally, we say that a statement $\psi(M)$ holds for \emph{many transitive weak $\kappa$-models} $M$ if for every $x\subseteq\kappa$, there exists a transitive  weak $\kappa$-model $M$ with $x\in M$ for which $\psi(M)$ holds.

\begin{scheme}\label{schemeSmall}
 An uncountable cardinal $\kappa$ has the large cardinal property $\Phi(\kappa)$ if and only if for all sufficiently large regular cardinals $\theta$ and all infinite cardinals $\lambda<\kappa$,  there are many $(\lambda,\kappa)$-models $M\prec\HH{\theta}$ for which there exists a uniform $M$-ultrafilter $U$ on $\kappa$ with $\Psi(M,U)$. 
\end{scheme}

\begin{scheme}\label{schemeNoel}
  An uncountable cardinal $\kappa$ has the large cardinal property $\Phi(\kappa)$ if and only if for many transitive weak $\kappa$-models $M$ there exists a uniform $M$-ultrafilter $U$ on $\kappa$ with $\Psi(M,U)$. 
\end{scheme}

\begin{scheme}\label{schemeKappa}
 An uncountable cardinal $\kappa$ has the large cardinal property $\Phi(\kappa)$ if and only if for all sufficiently large regular cardinals $\theta$, there are many weak $\kappa$-models $M\prec\HH{\theta}$ for which there exists a uniform $M$-ultrafilter $U$ on $\kappa$ with $\Psi(M,U)$. 
\end{scheme}

Trivial examples of valid instances of the Schemes \ref{schemeSmall} and \ref{schemeKappa} can be obtained by considering the properties $\Phi(\kappa)\equiv\Phi_{ms}(\kappa)\equiv\anf{\textit{$\kappa$ is measurable}}$ and $\Psi(M,U)\equiv\Psi_{ms}(M,U)\equiv$ \lanf \emph{$U$ is $M$-normal and $U=F\cap M$ for some $F\in M$}\ranf. 
 In contrast, Scheme \ref{schemeNoel} cannot provably hold true for $\Phi(\kappa)\equiv\Phi_{ms}(\kappa)$ and a property $\Psi(M,U)$ of models $M$ and $M$-ultrafilters $U$ whose restriction to $\kappa$-models and filters on $\kappa$ is definable by a $\Pi^2_1$-formula over $\VV_\kappa$, because the statement that for many transitive weak $\kappa$-models $M$ there exists a uniform $M$-ultrafilter $U$ on $\kappa$ with $\Psi(M,U)$ could then again be formulated by a $\Pi^2_1$-sentence over $\VV_\kappa$, and measurable cardinals are $\Pi^2_1$-indescribable (see \cite[Proposition 6.5]{MR1994835}). 
Since the measurability of $\kappa$ can be expressed by a $\Sigma^2_1$-formula over 
$\VV_\kappa$, this shows that there is no reasonable\footnote{Let us remark that any reasonable characterization of a large cardinal notion through one of the above schemes should make use of a formula $\Psi$ that is of lower complexity than the formula $\Phi$, for otherwise one can obtain trivial characterizations through any of the three schemes by setting $\Psi(M,U)\equiv\Phi(\bigcup\bigcup U)$.} characterization of measurability through Scheme \ref{schemeNoel}. 
 %
 %
 In order to have a trivial example for a valid instance of Scheme \ref{schemeNoel} available, we make the following definition: 
  %

\begin{definition}
  An uncountable cardinal $\kappa$ is \emph{locally measurable} if and only if for many transitive weak $\kappa$-models $M$ there exists a uniform $M$-normal $M$-ultrafilter $U$ on $\kappa$ with $U\in M$.
\end{definition}

 By the transitivity of the models $M$ involved, Scheme \ref{schemeNoel} then holds true for the properties $\Phi(\kappa)\equiv\Phi_{lms}(\kappa)\equiv\anf{\textit{$\kappa$ is locally measurable}}$ and $\Psi(M,U)\equiv\Psi_{ms}(M,U)$. 
 Standard arguments show that measurable cardinals are locally measurable limits of locally measurable cardinals. 
 In addition, we will show that consistency-wise,  locally measurable cardinals are strictly above all other large cardinal notions discussed in this paper. 
 We will also show that locally measurable cardinals are Ramsey. 
 In contrast, they are not necessarily ineffable, for ineffable cardinals are known to be $\Pi^1_2$-indescribable, while local measurability is a $\Pi^1_2$-property of $\kappa$.


\subsection{Large cardinal characterizations}

 In combination with existing results, the work presented in this paper will yield a complete list of large cardinal properties that can be characterized through the above schemes by considering filters possessing various degrees of amenability and normality. 
In order to present these results in a compact way, we introduce abbreviations for the relevant properties of cardinals, models and filters. 
 All relevant definitions will be provided in the later sections of our paper.

\begin{itemize} 
  \item $\Phi_{ia}(\kappa)\equiv\anf{\textit{$\kappa$ is inaccessible}}$, $\Psi_{ia}(M,U)\equiv\anf{\textit{$U$ is ${<}\kappa$-amenable and ${<}\kappa$-complete for $M$}}$.  

  \item $\Phi_{wc}(\kappa)\equiv\anf{\textit{$\kappa$ is weakly compact}}$, $\Psi_{wc}(M,U)\equiv\anf{\textit{$U$ is $\kappa$-amenable and ${<}\kappa$-complete for $M$}}$.  

  \item $\Psi_{\delta}(M,U)\equiv\anf{\textit{$U$ is $M$-normal}}$. 
  
  \item $\Psi_{WC}(M,U)\equiv\anf{\textit{$U$ is ${<}\kappa$-amenable for $M$ and $M$-normal}}.$  

  \item $\Phi_{wie}(\kappa)\equiv\anf{\textit{$\kappa$ is weakly ineffable}}$, $\Psi_{wie}(M,U)\equiv\anf{\textit{$U$ is genuine}}$. 

  \item $\Phi_{ie}(\kappa)\equiv\anf{\textit{$\kappa$ is ineffable}}$, $\Psi_{ie}(M,U)\equiv\anf{\textit{$U$ is normal}}$.  

  \item $\Phi_{cie}(\kappa)\equiv\anf{\textit{$\kappa$ is completely ineffable}}$, $\Psi_{cie}(M,U)\equiv\anf{\textit{$U$ is $\kappa$-amenable for $M$ and $M$-normal}}$. 

  \item $\Phi_{\textup{w}R}(\kappa)\equiv\anf{\textit{$\kappa$ is weakly Ramsey}}$, \\ $\Psi_{\textup{w}R}(M,U)\equiv$\lanf{\it $U$ is $\kappa$-amenable for $M$, $M$-normal, and $\Ult{M}{U}$ is well-founded}\ranf. 
 \item $\Phi_{\omega R}(\kappa)\equiv\anf{\textit{$\kappa$ is $\omega$-Ramsey}}$. 

  \item $\Phi_R(\kappa)\equiv\anf{\textit{$\kappa$ is Ramsey}}$, \\ $\Psi_R(M,U)\equiv\anf{\textit{$U$ is $\kappa$-amenable for $M$, $M$-normal and countably complete}}$. 

  \item $\Phi_{iR}(\kappa)\equiv\anf{\textit{$\kappa$ is ineffably Ramsey}}$, \newline 
$\Psi_{iR}(M,U)\equiv$\lanf\emph{$U$ is $\kappa$-amenable for $M$, $M$-normal and stationary-complete}\ranf.  

  \item $\Phi_{nR}(\kappa)\equiv\anf{\textit{$\kappa$ is $\Delta_\omega^\forall$-Ramsey}}$, $\Psi_{nR}(M,U)\equiv\anf{\textit{$U$ is $\kappa$-amenable for $M$ and normal}}$. 

  \item $\Phi_{stR}(\kappa)\equiv\anf{\textit{$\kappa$ is strongly Ramsey}}$, \newline $\Psi_{stR}(M,U)\equiv\anf{\textit{$M$ is a $\kappa$-model, $U$ is $\kappa$-amenable for $M$ and $M$-normal}}$.  

   \item $\Phi_{suR}(\kappa)\equiv\anf{\textit{$\kappa$ is super Ramsey}}$, \newline $\Psi_{suR}(M,U)\equiv\anf{\textit{$M\prec\HH{\kappa^+}$ is a $\kappa$-model, $U$ is $\kappa$-amenable for $M$ and $M$-normal}}$. 
\end{itemize}

First, note that some of the large cardinal properties appearing in the above list are already defined through one of the above schemes, yielding the following trivial correspondences:

\begin{itemize} 
  \item Scheme \ref{schemeNoel} holds true in the following cases: 
   \begin{itemize} 
    \item $\Phi(\kappa)\equiv\Phi_{stR}(\kappa)$ and $\Psi(M,U)\equiv\Psi_{stR}(M,U)$.  

    \item $\Phi(\kappa)\equiv\Phi_{suR}(\kappa)$ and $\Psi(M,U)\equiv\Psi_{suR}(M,U)$.  
   \end{itemize}
   
 \item Scheme \ref{schemeKappa} holds true in the following cases: 
   \begin{itemize} 
    \item $\Phi(\kappa)\equiv\Phi_{\omega R}(\kappa)$ and $\Psi(M,U)\equiv\Psi_{\textup{w}R}(M,U)$. 
    
    \item $\Phi(\kappa)\equiv\Phi_{nR}(\kappa)$ and $\Psi(M,U)\equiv\Psi_{nR}(M,U)$. 
  \end{itemize}
\end{itemize}

The following theorem summarizes our results, together with a number of known results. Items (\ref{item:wca}), (\ref{item:schemeSmall:CI}) and (\ref{item:ina}) extend the classical results of Kunen, Kleinberg, and Abramson--Harrington--Kleinberg--Zwicker from \cite{MR0460120} mentioned above.\footnote{These could of course have been stated in terms of the existence of \emph{many} countable models $M\prec H(\theta)$ in \cite{MR0460120}. Note that the arguments of \cite{MR0460120} are strongly based on the existence of generic filters over countable models of set theory, hence we will need to follow a very different line of argument.} Item (\ref{item:ramsey}) is the result from Gitman, Sharpe and Welch mentioned above. Items 
(\ref{item:wie}) and (\ref{item:ie}) are due to Abramson, Harrington, Kleinberg and Zwicker in \cite[Theorem 1.1.3, Theorem 1.2.1 and Corollary 1.3.1]{MR0460120}. 

\begin{theorem}\label{theorem:SchemesSummary}
 \begin{enumerate} 

  \item\label{item:wca} Schemes \ref{schemeSmall}, \ref{schemeNoel} and \ref{schemeKappa} hold true in case $\Phi(\kappa)\equiv\Phi_{wc}(\kappa)$ and $\Psi(M,U)\equiv\Psi_{wc}(M,U)$.  

  \item Schemes \ref{schemeSmall} and \ref{schemeKappa} both hold true in the following cases: 
    \begin{enumerate}
        \item\label{item:schemeSmall:CI} $\Phi(\kappa)\equiv\Phi_{cie}(\kappa)$ and $\Psi(M,U)\equiv\Psi_{cie}(M,U)$.  

        \item\label{item:schemeSmall:wR} $\Phi(\kappa)\equiv\Phi_{\omega R}(\kappa)$ and $\Psi(M,U)\equiv\Psi_{\textup{w}R}(M,U)$.  

       \item\label{item:schemeSmall:nR} $\Phi(\kappa)\equiv\Phi_{nR}(\kappa)$ and either
       \begin{enumerate}
        \item $\Psi(M,U)\equiv\Psi_{iR}(M,U)$,  or
        
        \item\label{item:schemeSmall:nRobvious} $\Psi(M,U)\equiv\Psi_{nR}(M,U)$.  
       \end{enumerate}

   \end{enumerate}

  \item Scheme \ref{schemeNoel} holds true in the following cases:
  \begin{enumerate}
    \item\label{item:ramsey} $\Phi(\kappa)\equiv\Phi_R(\kappa)$ and $\Psi(M,U)\equiv\Psi_R(M,U)$. 
    
    \item\label{item:CharIneffRamsey} $\Phi(\kappa)\equiv\Phi_{iR}(\kappa)$ and $\Psi(M,U)\equiv\Psi_{iR}(M,U)$.
  \end{enumerate}

  \item\label{item:a} Scheme \ref{schemeSmall} holds true in the following cases:  

      \begin{enumerate}
    \item $\Phi(\kappa)\equiv\anf{\textit{$\kappa$ is regular}}$ and either 

      \begin{enumerate}
        \item\label{item:rega} $\Psi(M,U)\equiv\anf{\textit{$U$ is ${<}\kappa$-complete for $M$}}$,   or 
        \item\label{item:regb} $\Psi(M,U)\equiv\Psi_{ie}(M,U)$. 
      \end{enumerate} 

    \item\label{item:schemeSmall:Inaccessible} $\Phi(\kappa)\equiv\Phi_{ia}(\kappa)$ and either 

      \begin{enumerate}
        \item\label{item:ina} $\Psi(M,U)\equiv\Psi_{ia}(M,U)$, or
        \item\label{item:inb} $\Psi(M,U)\equiv\anf{\textit{$U$ is ${<}\kappa$-amenable for $M$ and normal}}$.  
      \end{enumerate}
   \end{enumerate}

  \item\label{item:b} Schemes \ref{schemeNoel} and \ref{schemeKappa} hold true in the following cases: 

      \begin{enumerate}
        \item\label{item:wcb} $\Phi(\kappa)\equiv\Phi_{wc}(\kappa)$ and either 

        \begin{enumerate}
         \item\label{item:WCia} $\Psi(M,U)\equiv\Psi_{ia}(M,U)$, 
         
          \item\label{item:WCii} $\Psi(M,U)\equiv\anf{\textit{$U$ is stationary-complete, $M$-normal and ${<}\kappa$-amenable for $M$}}$.  
        \end{enumerate}

        \item\label{item:wie} $\Phi(\kappa)\equiv\Phi_{wie}(\kappa)$ and $\Psi(M,U)\equiv\Psi_{wie}(M,U)$. 

        \item\label{item:ie} $\Phi(\kappa)\equiv\Phi_{ie}(\kappa)$ and $\Psi(M,U)\equiv\Psi_{ie}(M,U)$.  
   \end{enumerate}
 \end{enumerate}
\end{theorem}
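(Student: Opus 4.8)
The plan is to treat the large cardinal property $\Phi(\kappa)$ in each item as a \emph{hub}: rather than transferring filters directly between the three schemes, I would prove every listed instance as a pair of implications, establishing separately that $\Phi(\kappa)$ yields the relevant filter-existence statement and that the filter-existence statement yields $\Phi(\kappa)$. Three general tools organize the work. First, transitive collapse links Scheme~\ref{schemeNoel} and Scheme~\ref{schemeKappa}: since a weak $\kappa$-model $M\prec\HH{\theta}$ satisfies $\kappa+1\subseteq M$, its transitive collapse $\pi$ fixes $\kappa$ and every element of $\POT{\kappa}^M$, whence $\POT{\kappa}^M=\POT{\kappa}^{\bar M}$ and $\pi$ carries a uniform $M$-ultrafilter to a uniform $\bar M$-ultrafilter preserving $M$-normality, $\kappa$-amenability, ${<}\kappa$-amenability and all completeness properties; so Scheme~\ref{schemeKappa} implies Scheme~\ref{schemeNoel} directly whenever $\Psi$ refers only to such collapse-robust features, while the converse passes through the hub. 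Second, elementarity of $M\prec\HH{\theta}$ supplies the reflection needed for the $(\Leftarrow)$ directions of Schemes~\ref{schemeSmall} and~\ref{schemeKappa}. Third, the $(\Rightarrow)$ directions proceed by restricting a global witness (an embedding, a system of stationary sets, or an iterable ultrafilter) to the given model. Because I cannot appeal to generic filters over countable models as in \cite{MR0460120}, every construction must be carried out uniformly and directly.

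For the $(\Leftarrow)$ directions I would use the given $M$-ultrafilter $U$ to manufacture a combinatorial or embedding-theoretic witness for $\Phi(\kappa)$ and then reflect. When $U$ is sufficiently amenable one forms $\Ult{M}{U}$ and the induced embedding; $M$-normality makes $\kappa$ the critical point with $\kappa=[\id]_U$, and normality together with the various completeness hypotheses (${<}\kappa$-, countable, stationary) controls how much of $\POT{\kappa}$ the filter concentrates on and, decisively, the well-foundedness of the ultrapower in the Ramsey-type items~(\ref{item:ramsey}) and~(\ref{item:CharIneffRamsey}). The key quantitative point is the \emph{degree of amenability}: ${<}\kappa$-amenability captures only initial segments inside $M$ and so reflects down to regularity/inaccessibility in item~(\ref{item:a}), whereas $\kappa$-amenability captures full powerset-sized slices and thereby yields the tree- and partition-content of weak compactness and complete ineffability. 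In each case, once the embedding or the homogeneous/ineffable/coherent object is produced around $M$, I would feed the relevant instance (a tree on $\kappa$, a colouring, a sequence $\langle A_\alpha\mid\alpha<\kappa\rangle$ with $A_\alpha\subseteq\alpha$) through the elementarity of $M$ to obtain the corresponding property of $\kappa$ in $\VV$.

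For the $(\Rightarrow)$ directions the starting point is the defining characterization of each $\Phi(\kappa)$. For regular and inaccessible $\kappa$ (item~(\ref{item:a})) the witnesses are built by hand: given a $(\lambda,\kappa)$-model with $\lambda<\kappa$, I would enumerate the $\lambda$-many sets in $\POT{\kappa}^M$ and run a decreasing tower of large sets, using regularity of $\kappa$ to keep intersections of length ${<}\kappa$ nonempty, thereby extracting a uniform, ${<}\kappa$-complete, ${<}\kappa$-amenable (and, for~(\ref{item:regb}) and~(\ref{item:inb}), normal) $M$-ultrafilter. For the stronger notions I would first produce the filter on a $\kappa$-sized model directly from the characterization: the elementary-embedding characterization of weak compactness, Kleinberg's stationary classes for complete ineffability, and the iterable $M$-ultrafilters for Ramseyness and its ineffable strengthening each hand over a $\kappa$-amenable, $M$-normal $U$ on a weak $\kappa$-model — transitive for Scheme~\ref{schemeNoel}, elementary in $\HH{\theta}$ for Scheme~\ref{schemeKappa} — with countable, respectively stationary, completeness inherited from the iterability of the witness. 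To descend to the small models of Scheme~\ref{schemeSmall} I would realize a weak $\kappa$-model as a continuous increasing union of $(\lambda,\kappa)$-models and reflect the filter-existence statement down the chain, or equivalently reflect the $\HH{\theta}$-level assertion to cofinally many small $M$.

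The hardest part, I expect, is precisely this descent combined with the forward direction of Scheme~\ref{schemeSmall} for weak compactness~(\ref{item:wca}) and complete ineffability~(\ref{item:schemeSmall:CI}): one must produce, for \emph{all} $\lambda<\kappa$ and for many small $M\prec\HH{\theta}$, a filter carrying \emph{full} $\kappa$-amenability together with the correct normality, whereas the naive tower construction delivers only ${<}\kappa$-amenability. Circumventing the generic-filter method of \cite{MR0460120} here seems to require extracting the small-model filters as restrictions of a single coherent $\kappa$-model filter and verifying that $\kappa$-amenability survives the restriction. A secondary difficulty is bookkeeping the equivalences among distinct $\Psi$'s attached to the same $\Phi$ — for instance $\Psi_{ia}$ versus the normal variant for inaccessibility, $\Psi_{iR}$ versus $\Psi_{nR}$ for $\Delta_\omega^\forall$-Ramseyness in~(\ref{item:schemeSmall:nR}), and the two clauses for weak compactness in~(\ref{item:wcb}) — each needing a short argument that the extra normality or stationary-completeness can be arranged without destroying the amenability already secured.
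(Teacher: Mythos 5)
Your overall architecture is close to the paper's: the hub/two-implication organization, the collapse bridge from Scheme~\ref{schemeKappa} to Scheme~\ref{schemeNoel} (legitimate here, since every $\Psi$ occurring in the theorem depends only on $\POT{\kappa}^M$, which the collapse fixes pointwise), and -- crucially -- your idea for the hardest descent in Scheme~\ref{schemeSmall} is exactly the paper's device: one does not restrict $U$ to an arbitrary small submodel, but takes $(\lambda,\kappa)$-models $\langle\bar M,\bar U\rangle\prec\langle M,U\rangle$ elementary \emph{in the pair structure}, so that $\kappa$-amenability and $M$-normality transfer by elementarity while external properties survive because $\Psi$ remains true under restrictions (proofs of Lemma \ref{lemma:WCmodelsBoundedIdeal} and Theorem \ref{theorem:filtervsramsey}). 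The genuine gap is your forward construction for item (\ref{item:a}). A ``decreasing tower of large sets'' cannot be run at a merely regular $\kappa$: it breaks at limit stages (partition $\omega_1$ into $\omega$ disjoint uncountable pieces $P_n$; the sets $\bigcup_{m\geq n}P_m$ are decreasing of size $\omega_1$ with empty intersection), and maintaining size-$\kappa$ intersections along all ${<}\kappa$-sequences is precisely the ${<}\kappa$-filter extension property, which is \emph{equivalent} to inaccessibility (Lemma \ref{smallfilterextension}); so a tower argument provably cannot separate (\ref{item:rega}) from (\ref{item:ina}), and mere finite-intersection bookkeeping does not even guarantee ${<}\kappa$-completeness \emph{for} $M$. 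Worse, in (\ref{item:regb}) and (\ref{item:inb}) the required $\Psi$ is external normality -- in particular $\bigcap U$, respectively diagonal intersections, must be \emph{stationary} -- which no unboundedness bookkeeping can deliver. The paper's mechanism is different in kind: collapse a continuous elementary chain to obtain, for any stationary $S\subseteq\kappa$, an embedding $\map{j}{X}{\HH{\theta}}$ with $\crit{j}\in S$ and $j(\crit{j})=\kappa$ (Lemma \ref{smallembedding}), and derive $U$ from the critical point, so that all elements of $U$ are stationary and normality follows since $|M|<\kappa$ (Lemma \ref{lemma:regular}); likewise the ${<}\kappa$-amenability in (\ref{item:ina}) is not built by hand but is automatic at inaccessibles, since every embedding with critical point $\kappa$ on $M\prec\HH{\theta}$ is ${<}\kappa$-powerset preserving (Corollary \ref{corollary:inaccessible} with Proposition \ref{proposition:correspondence1}.(4)).

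Two further steps are substantially thinner in your outline than in the paper. First, for item (\ref{item:wcb}) on weak $\kappa$-models $M\prec\HH{\theta}$, iterated filter extension yields amenability and ${<}\kappa$-completeness but not $M$-normality, and the collapse bridge runs the wrong way; the paper manufactures $M$-normal ultrafilters on such non-transitive $\kappa$-sized models by coding the elementary diagram of the collapse of $M$ into a $\Pi^1_1$-statement and reflecting its failure to an inaccessible $\alpha<\kappa$, where the small-model Lemma \ref{lemma:regular} applies (Theorem \ref{theorem:weaklycompactideal}.(1)); your appeal to ``the elementary-embedding characterization of weak compactness'' has no substitute for this. Second, the equivalence underlying item (\ref{item:schemeSmall:nR}) -- that $\Psi_{iR}$ and $\Psi_{nR}$ characterize the same cardinals -- is not a per-filter normalization that can be ``arranged without destroying amenability'': a single stationary-complete, $M$-normal ultrafilter need not have stationary diagonal intersection. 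The paper obtains normality from a length-$\omega$ run of the filter game, where each $\Delta F_i$ lies in the final filter $F$, so stationary-completeness of $F$ makes $\Delta F$ stationary (Proposition \ref{RamseysEquivalence}); your game-free sketch flags this issue but supplies no mechanism for it.
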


The above results are summarized in abbreviated form in Tables \ref{table:schemeSmall}, \ref{table:schemeIntermediate} and \ref{table:schemeKappa} below.\footnote{Let us remark that if we were to consider models of size less than $\kappa$ in Table \ref{table:schemeKappa} without elementarity, we would clearly only get regularity at all levels of normality
.}
  The meaning of the tables should be clear to the reader when compared with the results presented in Theorem \ref{theorem:SchemesSummary}. 
  Note that no normality properties weaker than genuineness appear in Table \ref{table:schemeSmall}, because it turns out that these properties alone cannot be used to characterize large cardinal properties (see Theorem \ref{theorem:mahlolike} and the discussion following Theorem \ref{theorem:weaklycompactideal}). 
All entries in Table \ref{table:schemeKappa} that are not mentioned within the statement of Theorem \ref{theorem:SchemesSummary} will be immediate consequences of the definitions of the large cardinal notions that will be introduced later in our paper. 
 %
Furthermore, let us remark that our results (some of which are mentioned already within Theorem \ref{theorem:SchemesSummary}) will show that the size of the models considered is irrelevant once we consider elementary submodels of (sufficiently large) $\HH{\theta}$'s in Table \ref{table:schemeKappa}.


\begin{table}[h]

\medskip

\begin{tabular}{| l | c | c |}
 \hline 
 & $|M|<\kappa$ & $|M|=\kappa$\\
\hline
genuine & regular & weakly ineffable\\
\hline
normal & regular & ineffable\\
\hline
\end{tabular}

  \medskip

 \caption{Characterizations without amenability}\label{table:schemeSmall}
\end{table}


\begin{table}[h]

\medskip

\begin{tabular}{| l | c | c | c |}
 \hline 
 ${<}\kappa$-amenable and \ldots & $|M|<\kappa$ & $|M|=\kappa$\\
\hline
${<}\kappa$-complete for $M$ & inaccessible & weakly compact\\
\hline
$M$-normal & inaccessible & weakly compact\\
\hline
$M$-normal and well-founded & inaccessible & weakly compact\\
\hline
$M$-normal and countably complete & inaccessible & weakly compact\\
\hline
$M$-normal and stationary-complete & inaccessible & weakly compact\\
\hline
genuine & inaccessible & weakly ineffable\\
\hline
normal & inaccessible & ineffable\\
\hline
\end{tabular}

  \medskip

 \caption{Characterizations with ${<}\kappa$-amenability}\label{table:schemeIntermediate}
\end{table}


\begin{table}[h]

\medskip

\begin{tabular}{| l | c | c |}
 \hline
$\kappa$-amenable and \ldots & $|M|=\kappa$ & $M\prec\HH{\theta}$\\
\hline
${<}\kappa$-complete for $M$ & weakly compact & weakly compact\\
\hline
$M$-normal & $\mathbf T_\omega^\kappa$-Ramsey & completely ineffable\\
\hline
$M$-normal and well-founded & weakly Ramsey & $\omega$-Ramsey\\
\hline
$M$-normal and countably complete & Ramsey & $\cc_\omega^\forall$-Ramsey\\
\hline
$M$-normal and stationary-complete & ineffably Ramsey & $\Delta_\omega^\forall$-Ramsey\\
\hline
genuine & $\infty_\omega^\kappa$-Ramsey & $\Delta_\omega^\forall$-Ramsey\\
\hline
normal & $\Delta_\omega^\kappa$-Ramsey & $\Delta_\omega^\forall$-Ramsey\\
\hline
\end{tabular}

  \medskip

 \caption{Characterizations with $\kappa$-amenability}\label{table:schemeKappa}
\end{table}


\subsection{Large cardinal ideals}

 %
%

We next want to study the large cardinal ideals that are canonically induced by our characterizations.

\begin{definition}\label{definition:Ideals}
 Let $\Psi(M,U)$ be a property of models $M$ and filters $U$, and let $\kappa$ be an uncountable cardinal.  
  \begin{enumerate} 
   \item We define $\II^{{<}\kappa}_{\Psi}$ to be the collection of all $A\subseteq\kappa$ with the property that for all sufficiently large regular cardinals $\theta$, there exists a set $x\in\HH{\theta}$ such that for all infinite cardinals $\lambda<\kappa$, if $M\prec\HH{\theta}$ is a $(\lambda,\kappa)$-model with $x\in M$ and $U$ is a uniform $M$-ultrafilter on $\kappa$ with $\Psi(M,U)$, then $A\notin U$. 
  
   \item\label{definition:Ideals-KappaSizedNonelem} We define $\II^\kappa_\Psi$ to be the collection of all $A\subseteq\kappa$ with the property that there exists $x\subseteq\kappa$  such that if $M$ is a transitive weak $\kappa$-model with $x\in M$ and $U$ is a uniform $M$-ultrafilter on $\kappa$ with $\Psi(M,U)$, then $A\notin U$.  
   
   \item\label{definition:Ideals-KappaSizedElem} We define $\II^\kappa_{\prec\Psi}$ to be the collection of all $A\subseteq\kappa$ with the property that for all sufficiently large regular cardinals $\theta$, there exists a set $x\in\HH{\theta}$  such that if $M\prec\HH{\theta}$ is a weak $\kappa$-model with $x\in M$ and $U$ is a uniform $M$-ultrafilter on $\kappa$ with $\Psi(M,U)$, then $A\notin U$.   
  \end{enumerate} 
\end{definition}

It is easy to see that the collections $\II_\Psi^{{<}\kappa}$, $\II_\Psi^\kappa$ and $\II_{\prec\Psi}^\kappa$ always form ideals on $\kappa$. 
 Moreover, if Scheme \ref{schemeSmall}, \ref{schemeNoel} or \ref{schemeKappa} holds for some large cardinal property $\Phi(\kappa)$ and some property $\Psi(M,U)$ of models $M$ and filters $U$, then the statement that $\Phi(\kappa)$ holds for some uncountable cardinal $\kappa$ implies the properness of the ideal $\II_\Psi^{{<}\kappa}$, $\II_\Psi^\kappa$, or $\II_{\prec\Psi}^\kappa$ respectively.
 In addition, in all cases covered by Theorem \ref{theorem:SchemesSummary} (and also in most other natural situations), the converse of this implication also holds true. 
This is trivial for instances of Scheme \ref{schemeNoel}. For instances of Schemes \ref{schemeSmall} and \ref{schemeKappa}, this is an easy consequence of the observation that all properties $\Psi$ listed in the theorem are \emph{restrictable}, 
 i.e.\ given uncountable cardinals $\bar{\theta}<\theta$, if $M\prec\HH{\theta}$ with $\bar{\theta}\in M$, $\kappa\in M\cap\bar{\theta}$ is a cardinal and $U$ is a uniform $M$-ultrafilter on $\kappa$ with $\Psi(M,U)$, then $\Psi(\bar{M},U)$ holds, where $\bar M=M\cap\HH{\bar{\theta}}\prec\HH{\bar{\theta}}$. Moreover, in Lemma \ref{lemma:AllIdealsNormal} below, we will see that in most cases these ideals are in fact normal ideals.

The above definitions provide uniform ways to assign ideals to large cardinal properties. 
The next theorem shows that, in the cases where such canonical ideals were already defined, these assignments turn out to coincide with the known notions.  
In the following, given an abbreviation $\Psi_{\ldots}$ for a property of models and filters from the above list, we will write $\II^{{<}\kappa}_{\ldots}$ instead  of $\II^{{<}\kappa}_{\Psi_{\ldots}}$, $\II^\kappa_{\prec\ldots}$ instead of $\II^\kappa_{\prec\Psi_{\ldots}}$, and $\II^\kappa_{\ldots}$ instead of $\II^\kappa_{\Psi_{\ldots}}$. 
 Item (\ref{item:Ideals:WC}) below is essentially due to Baumgartner (see {\cite[Section 2]{MR0540770}}).
The weakly ineffable ideal and the ineffable ideal were introduced by Baumgartner in \cite{MR0384553}.
The completely ineffable ideal was introduced by Johnson in \cite{MR918427}.
Item (\ref{item:Ideals:CI}) below is a generalization of a result for countable models by Kleinberg mentioned in \cite{MR513844} after the proof of its Theorem 2. The Ramsey ideal and the ineffably Ramsey ideal were introduced by Baumgartner in \cite{MR0540770}.
Items (\ref{item:Ideals:r}) and (\ref{item:Ideals:ir}) are essentially due to Mitchell (see \cite[Theorem 2.10]{brent}).

\begin{theorem}\label{theorem:Ideals}
 \begin{enumerate} 
  \item\label{item:Ideals:ia} If $\kappa$ is inaccessible, then $\II^{{<}\kappa}_{ia}$ is the bounded ideal on $\kappa$.  

  \item\label{item:Ideals:reg} If $\kappa$ is a regular and uncountable cardinal, then $\II_{\delta}^{<\kappa}$ is the non-stationary ideal on $\kappa$.  

  \item\label{item:Ideals:WC}  
  If $\kappa$ is a weakly compact cardinal, then $\II_{WC}^\kappa$ is the weakly compact ideal on $\kappa$.  

  \item\label{item:Ideals:wie} If $\kappa$ is a weakly ineffable cardinal, then $\II_{wie}^\kappa$ is the weakly ineffable ideal on $\kappa$.  
  
  \item\label{item:Ideals:ie} If $\kappa$ is an ineffable cardinal, then $\II_{ie}^\kappa$ is the ineffable ideal on $\kappa$. 

  \item\label{item:Ideals:CI} If $\kappa$ is a completely ineffable cardinal, then $\II^\kappa_{{\prec}{cie}}$ is the completely ineffable ideal on $\kappa$. 

  \item\label{item:Ideals:r} If $\kappa$ is a Ramsey cardinal, then $\II_R^\kappa$ is the Ramsey ideal on $\kappa$.  

  \item\label{item:Ideals:ir} If $\kappa$ is an ineffably Ramsey cardinal, then $\II_{iR}^\kappa$ is the ineffably Ramsey ideal on $\kappa$.  

  \item\label{item:Ideals:measurable} If $\kappa$ is a measurable cardinal, then the complement of $\II^\kappa_{\prec{ms}}$ is the union of all normal ultrafilters on $\kappa$. 
 \end{enumerate}
\end{theorem}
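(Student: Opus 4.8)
The plan is to prove the two inclusions $\bigcup\{W\mid W\text{ a normal ultrafilter on }\kappa\}\subseteq\POT{\kappa}\setminus\II^\kappa_{\prec ms}$ and $\POT{\kappa}\setminus\II^\kappa_{\prec ms}\subseteq\bigcup\{W\mid W\text{ a normal ultrafilter on }\kappa\}$ separately. Throughout, I would restrict attention to regular cardinals $\theta>2^\kappa$, so that $\POT{\kappa}\in\HH{\theta}$ and $\HH{\theta}$ correctly computes the statement \anf{$F$ is a normal ultrafilter on $\kappa$}; note also that whenever $M\prec\HH{\theta}$ is a weak $\kappa$-model, elementarity and the absoluteness of \anf{$X\subseteq\kappa$} yield $\POT{\kappa}^M=\POT{\kappa}\cap M$.

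For the first inclusion, fix a normal ultrafilter $W$ on $\kappa$ and a set $A\in W$. Given any regular $\theta>2^\kappa$ with $W\in\HH{\theta}$ and any $x\in\HH{\theta}$, I would build a weak $\kappa$-model $M\prec\HH{\theta}$ with $\{x,A,W\}\subseteq M$ and set $U=W\cap M$, using $F=W\in M$ as the witness. Since $W$ is a uniform, $\kappa$-complete, normal ultrafilter and $W\in M$, the usual trace arguments show that $U=W\cap M$ is a uniform $M$-ultrafilter that is $M$-normal, so $\Psi_{ms}(M,U)$ holds; moreover $A\in W\cap M=U$. As such $M$ and $U$ can be produced for all sufficiently large $\theta$ and all $x\in\HH{\theta}$, no choice of $x$ can witness membership of $A$ in $\II^\kappa_{\prec ms}$, and hence $A\notin\II^\kappa_{\prec ms}$.

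For the converse inclusion, suppose $A\notin\II^\kappa_{\prec ms}$. Unwinding Definition \ref{definition:Ideals}(\ref{definition:Ideals-KappaSizedElem}), there are cofinally many regular $\theta$ for which no $x$ works; picking one such $\theta>2^\kappa$ and testing it against $x=A$ (note $A\in\HH{\theta}$ as $A\subseteq\kappa$), I obtain a weak $\kappa$-model $M\prec\HH{\theta}$ with $A\in M$ and a uniform $M$-ultrafilter $U$ on $\kappa$ with $\Psi_{ms}(M,U)$ and $A\in U$. Let $F\in M$ be the witness provided by $\Psi_{ms}$, so that $U=F\cap M$ and $U$ is $M$-normal. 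The key claim is that $M\models$\anf{$F$ is a uniform normal ultrafilter on $\kappa$}. Granting this, elementarity of $M\prec\HH{\theta}$ together with the absoluteness of this statement to $\HH{\theta}$ (valid since $\theta>2^\kappa$) shows that $F$ is a genuine normal ultrafilter on $\kappa$ in $\VV$, and $A\in U=F\cap M\subseteq F$ then places $A$ into a normal ultrafilter.

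The main obstacle is establishing this key claim, i.e.\ transferring the $M$-ultrafilter and $M$-normality properties of $U=F\cap M$ into first-order assertions about $F$ inside $M$. The crucial point is that the internal quantifier \anf{$\forall X\subseteq\kappa$} of $M$ ranges exactly over $\POT{\kappa}\cap M$, since $\POT{\kappa}^M=\POT{\kappa}\cap M$. For every $X\in\POT{\kappa}\cap M$ one has $X\in F\iff X\in U$ and $\kappa\setminus X\in F\iff\kappa\setminus X\in U$, so the fact that $U$ is a uniform filter on the algebra $\POT{\kappa}\cap M$ deciding every such $X$ translates verbatim into $M\models$\anf{$F$ is a uniform ultrafilter on $\kappa$}; likewise, $M$-normality of $U$ says precisely that every regressive $f\colon\kappa\to\kappa$ in $M$ is constant on a set in $U$, which is exactly $M\models$\anf{$F$ is normal}. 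One must take care that the filter operations (finite intersections, supersets, and the sets $\{\alpha\mid f(\alpha)=\beta\}$) are computed identically in $M$ and in $\VV$, which again follows from $M\prec\HH{\theta}$ together with $\kappa+1\subseteq M$. I expect no difficulty in the reflection arguments themselves; the only genuine care needed is to match these internal and external quantifiers correctly.
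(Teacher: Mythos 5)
Your argument addresses only item (\ref{item:Ideals:measurable}) of the theorem, and for that item it is correct and follows essentially the same route as the paper's own proof (Lemma \ref{lemma:measurableIdelaUnion}): one direction takes the trace $W\cap M$ of a normal ultrafilter $W\in M\prec\HH{\theta}$, the other reflects the witness $F\in M$ provided by $\Psi_{ms}$ up to $\HH{\theta}$ by elementarity and then uses absoluteness for $\theta>2^\kappa$. The paper compresses your \emph{key claim} into the single phrase \anf{elementarity implies that $F$ is a normal ultrafilter on $\kappa$}; your more careful matching of internal and external quantifiers is the right way to fill this in. One micro-point worth making explicit: $\Psi_{ms}$ only asserts $U=F\cap M$ for \emph{some} $F\in M$, so a priori $F$ may contain elements that are not subsets of $\kappa$; this is harmless because $F\cap M=U\subseteq\POT{\kappa}\cap M$ already yields $M\models\anf{F\subseteq\POT{\kappa}}$, after which your translation of the $M$-ultrafilter and $M$-normality conditions into $M\models\anf{\textit{$F$ is a uniform normal ultrafilter on $\kappa$}}$ goes through exactly as you describe. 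Your unwinding of the \anf{sufficiently large $\theta$} quantifier in Definition \ref{definition:Ideals} is also handled correctly in both directions.

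The genuine gap is that the statement comprises nine items, and items (\ref{item:Ideals:ia})--(\ref{item:Ideals:ir}) are not addressed at all; none of them follows from the argument you give, and in the paper each rests on substantively different machinery. Item (\ref{item:Ideals:ia}) uses the ${<}\kappa$-filter extension property (Lemma \ref{smallfilterextension} via Proposition \ref{proposition:inaccessiblelimit}); item (\ref{item:Ideals:reg}) uses the construction of normal $M$-ultrafilters from stationary sets (Lemma \ref{lemma:regular}); item (\ref{item:Ideals:WC}) requires a $\Pi^1_1$-reflection argument coding the collapse of $M$ into a structure on $\kappa$ (Theorem \ref{theorem:weaklycompactideal}); items (\ref{item:Ideals:wie}) and (\ref{item:Ideals:ie}) are proved by translating between $M$-ultrafilters and $A$-lists (Theorem \ref{theorem:ineffableideal}); item (\ref{item:Ideals:CI}) goes through the filter-game analysis and stationary classes (Theorem \ref{nielsen} together with Lemma \ref{lemma:filtervsramsey}); and items (\ref{item:Ideals:r}) and (\ref{item:Ideals:ir}) invoke Mitchell's argument for the Ramsey and ineffably Ramsey ideals. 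Note in particular that the self-strengthening trick available for measurability --- where the witnessing object $F$ is literally an element of $M$ and can be exported by elementarity --- is exactly what fails at these weaker levels (indeed, the paper's discussion of $\Pi^2_1$-indescribability shows measurability cannot be captured by Scheme \ref{schemeNoel}), so your method cannot be adapted wholesale; a complete proof of the theorem needs the eight missing arguments.
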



\subsection{Structural properties of large cardinal ideals}

We show that many aspects of the relationship between different large cardinal notions are reflected in the relationship of their corresponding ideals.

 First, our results will show that, for many important examples, the ordering of large cardinal properties under direct implication turns out to be identical to the ordering of their corresponding ideals under inclusion. 
 %

Next, our approach to show that the ordering of large cardinal properties by their consistency strength can also be read off from the corresponding ideals 
 is motivated by the fact that the well-foundedness of the \emph{Mitchell order} (see {\cite[Lemma 19.32]{MR1940513}}) implies that for every measurable cardinal $\kappa$, there is a normal ultrafilter $U$ on $\kappa$ with the property that $\kappa$ is not measurable in $\Ult{\VV}{U}$. 
 Translated into the context of our paper (using Theorem \ref{theorem:Ideals}.(\ref{item:Ideals:measurable})) this shows that the set of all non-measurables below $\kappa$ is not contained in the ideal $\II^\kappa_{{\prec}ms}$.\footnote{We will also provide an easy argument for this result that does not make any use of the Mitchell order in Lemma \ref{lemma:PropertiesOfMeasurableIdeal}.}
 To generalize this to other large cardinal properties $\Phi$, if $\kappa$ is a cardinal, we let $$\NN^\kappa_\Phi ~ = ~ \Set{\alpha<\kappa}{\neg\Phi(\alpha)}.$$ 
If $\Phi_{\ldots}$ is an abbreviation for a large cardinal property, then we write $\NN^\kappa_{\ldots}$ instead of $\NN^\kappa_{\Phi_{\ldots}}$. 
We show that the above result for measurable cardinals can be generalized to many other important large cardinal notions.\footnote{For Ramsey, strongly Ramsey, and super Ramsey cardinals, this also follows from the results of \cite{mitchellforramsey}, where the notion of Mitchell rank is generalized to apply to these large cardinal notions.}
  More precisely, for various instances of our characterization schemes, we will show that the above set of ordinals without the given large cardinal property is not contained in the corresponding ideal. 
 These results can be seen as indicators that the derived characterization and the associated ideal canonically describe the given large cardinal property, as one would expect these cardinals to lose some of their large cardinal properties in their ultrapowers. 
 Moreover, our results also show that, in many important cases, the fact that some large cardinal property $\Phi^*$ has a strictly higher consistency strength than some other large cardinal property $\Phi$ is equivalent to the fact that $\Phi^*(\kappa)$ implies that the set $\NN^\kappa_\Phi$ is an element of the ideal on $\kappa$ corresponding to $\Phi^*$.
 This allows us to reconstruct the consistency strength ordering of these properties from structural properties of their corresponding ideals.
  Together with the correspondence described in the last paragraph, it also shows that, in many cases, the fact that some large cardinal property $\Phi^*$ provably implies a large cardinal property $\Phi$ of strictly lower consistency strength yields that $\Phi(\kappa)$ implies the ideal on $\kappa$ corresponding to $\Phi$ to be a proper subset of the ideal on $\kappa$ corresponding to $\Phi^*$. 
 %
 %
 %
 %
  %
   %
   %

 Finally, we consider the question whether there are fundamental differences between the ideal $\II^\kappa_{{\prec}ms}$ induced by measurability and the ideals induced by weaker large cardinal notions. 
  %
  By classical results of Kunen (see {\cite[Theorem 20.10]{MR1994835}}), it is possible that there is a unique normal measure on a measurable cardinal $\kappa$. In this case, the ideal $\II^\kappa_{{\prec}ms}$ is equal to the complement of this measure and hence the induced partial order $\POT{\kappa}/\II^\kappa_{{\prec}ms}$ is trivial, hence in particular atomic. We study the question whether the partial orders induced by other large cardinal ideals can also be atomic, conjecturing that the possible atomicity of the quotient partial order is a property that separates measurability from all weaker large cardinal properties (this is motivated by Lemma \ref{lemma:atoms} below). This conjecture turns out to be closely related to the previous topics, and we will verify it for many prominent large cardinal properties.

 The following theorem provides selected instances of our results, namely some of their consequences for large cardinal notions that had already been introduced in the set theoretic literature. 
 Item (\ref{item:cont:ia}) and the statement that $\II_{ia}^{<\kappa}\subseteq\II_{WC}^\kappa$ in Item (\ref{item:cont:WC}) below are of course trivial consequences of Theorem \ref{theorem:Ideals}.  
 The statement that $\NN^\kappa_{ia}$ belongs to the weakly compact ideal in Item (\ref{item:cont:WC}) has been shown by Baumgartner in \cite[Theorem 2.8]{MR0540770}. 
 The statement that $\II^\kappa_{WC}\subseteq\II^\kappa_{wie}$ in Item (\ref{item:cont:wie}) has been shown by Baumgartner in \cite[Theorem 7.2]{MR0384553}. 
  That $\NN_{ie}^\kappa\in\II_{{\prec}cie}^\kappa$ in Item (\ref{item:cont:ci}) was shown by Johnson in \cite[Corollary 4]{MR853844}, however we will also provide an easy self-contained argument of this result later on for the benefit of our readers.
 Gitman has shown that weakly Ramsey cardinals (which are also known under the name of \emph{$1$-iterable cardinals}) are weakly ineffable limits of completely ineffable cardinals (see {\cite[Theorem 3.3 and Theorem 3.7]{MR2830415}}). Her arguments in the proof of \cite[Theorem 3.7]{MR2830415} actually show that if $\kappa$ is a weakly Ramsey cardinal, then $\NN_{cie}^\kappa\in\II_{\textup{w}R}^\kappa$, as in Item (\ref{item:cont:wr}). 
 That $\II_{\textup{w}R}^\kappa\subseteq\II_R^\kappa$ in Item (\ref{item:cont:r}) is already immediate from our above definitions. The proof of {\cite[Theorem 4.1]{MR2830435}} shows that $\NN_{\textup{w}R}^\kappa\in\II_R^\kappa$, as in Item (\ref{item:cont:r}). 
That $\NN_R^\kappa\notin I_R^\kappa$ in Item (\ref{item:cont:r}), and $\II^\kappa_R\cup\{\NN^\kappa_R\}\subseteq\II^\kappa_{iR}$ and $\NN_{iR}^\kappa\notin I_{iR}^\kappa$ in Item (\ref{item:cont:ir}) are due to Feng (see \cite[Corollary 4.4 and Theorem 4.5]{MR1077260}). 
 Moreover, Theorem \ref{theorem:Ideals} directly shows that $\II^\kappa_{ie}\subseteq\II^\kappa_{iR}$ holds for ineffably Ramsey cardinals $\kappa$. 
That $\NN_{stR}^\kappa\notin I_{stR}^\kappa$ in Item (\ref{item:IdealContain:stR}) and that $\NN_{suR}^\kappa\notin I_{suR}^\kappa$ in Item (\ref{item:IdealContain:suR}) follows easily from the results of \cite{mitchellforramsey}, and these statements will also be immediate consequences of fairly general results from our paper.
That $\II^\kappa_{{\prec}cie}\nsubseteq\II^\kappa_{suR}$ in Item (\ref{item:IdealContain:suR}) was brought to our attention by Gitman, after we had posed this as an open question in an early version of this paper.
The final statement of Item (\ref{item:IdealContain:MS}) is an immediate consequence of the above-mentioned result of Kunen.

\begin{theorem}\label{theorem:IdealContain}
  \begin{enumerate} 
    \item\label{item:cont:ia} If $\kappa$ is an inaccessible cardinal, then $\NN^\kappa_{ia}\notin\II^{{<}\kappa}_{ia}$, and $\POT{\kappa}/\II^{{<}\kappa}_{ia}$ is not atomic.  

  \item\label{item:cont:WC}  
  If $\kappa$ is a weakly compact cardinal, then $\II^{{<}\kappa}_{ia}\cup\{\NN^\kappa_{ia}\}\subseteq\II^\kappa_{WC}$, $\NN^\kappa_{wc}\notin\II^\kappa_{WC}$, and $\POT{\kappa}/\II^\kappa_{WC}$ is not atomic.  

  \item\label{item:cont:wie}  
   If $\kappa$ is a weakly ineffable cardinal, then $\II^\kappa_{WC}\cup\{\NN^\kappa_{wc}\}\subseteq\II^\kappa_{wie}$, $\NN^\kappa_{wie}\notin\II^\kappa_{wie}$, and $\POT{\kappa}/\II^\kappa_{wie}$ is not atomic.  

  \item\label{item:cont:ie}  
   If $\kappa$ is an ineffable cardinal, then $\II^\kappa_{wie}\cup\{\NN^\kappa_{wie}\}\subseteq\II^\kappa_{ie}$, $\NN^\kappa_{ie}\notin\II^\kappa_{ie}$, and $\POT{\kappa}/\II^\kappa_{ie}$ is not atomic.  

  \item\label{item:cont:ci} If $\kappa$ is a completely ineffable cardinal, then $\II^\kappa_{ie}\cup\{\NN^\kappa_{ie}\}\subseteq\II^\kappa_{{\prec}cie}$, $\NN^\kappa_{cie}\notin\II^\kappa_{{\prec}{cie}}$, and $\POT{\kappa}/\II^\kappa_{{\prec}cie}$ is not atomic.  

  \item\label{item:cont:wr}  
   If $\kappa$ is a weakly Ramsey cardinal, then $\II_{wie}^\kappa\cup\{\NN^\kappa_{cie}\}\subseteq\II^\kappa_{\textup{w}R}$, $\NN^\kappa_{\textup{w}R}\notin\II^\kappa_{\textup{w}R}$, $\II^\kappa_{ie}\nsubseteq\II^\kappa_{\textup{w}R}$, and $\POT{\kappa}/\II^\kappa_{\textup{w}R}$ is not atomic. 

  \item\label{item:cont:r}  
    If $\kappa$ is a Ramsey cardinal, then $\II^\kappa_{\textup{w}R}\cup\{\NN^\kappa_{\textup{w}R}\}\subseteq\II^\kappa_R$, $\NN^\kappa_R\notin\II^\kappa_R$, $\II^\kappa_{ie}\nsubseteq\II^\kappa_R$, and $\POT{\kappa}/\II^\kappa_R$ is not atomic.  

  \item\label{item:cont:ir}  
   If $\kappa$ is an ineffably Ramsey cardinal, then $\II^\kappa_{ie}\cup\II^\kappa_R\cup\{\NN^\kappa_R\}\subseteq\II^\kappa_{iR}$, $\NN^\kappa_{iR}\notin\II^\kappa_{iR}$, $\II^\kappa_{{\prec}cie}\nsubseteq\II^\kappa_{iR}$, and $\POT{\kappa}/\II^\kappa_{iR}$ is not atomic.    

  \item\label{item:IdealContain:stR} If $\kappa$ is strongly Ramsey, then $\II^\kappa_R\cup\{\NN^\kappa_{iR}\}\subseteq\II^\kappa_{stR}$, $\NN^\kappa_{stR}\notin\II^\kappa_{stR}$, $\II^\kappa_{ie}\nsubseteq\II^\kappa_{stR}$, and $\POT{\kappa}/\II^\kappa_{stR}$ is not atomic.  
  
  \item\label{item:IdealContain:suR} If $\kappa$ is super Ramsey, then $\II^\kappa_{iR}\cup\II^\kappa_{stR}\cup\{\NN^\kappa_{stR}\}\subseteq\II^\kappa_{suR}$, $\NN^\kappa_{suR}\notin\II^\kappa_{suR}$, $\II^\kappa_{{\prec}cie}\nsubseteq\II^\kappa_{suR}$, and $\POT{\kappa}/\II^\kappa_{suR}$ is not atomic. 

  \item\label{item:IdealContain:LMS} If $\kappa$ is locally measurable, then $\II^\kappa_{stR}\cup\{\NN^\kappa_{suR}\}\subseteq\II^\kappa_{ms}$, $\NN^\kappa_{lms}\notin\II^\kappa_{ms}$, $\II^\kappa_{ie}\nsubseteq\II^\kappa_{ms}$, and $\POT{\kappa}/\II^\kappa_{ms}$ is not atomic.  

  \item\label{item:IdealContain:MS} If $\kappa$ is measurable, then $\II^\kappa_{{\prec}{cie}}\cup\II^\kappa_{suR}\cup\II^\kappa_{ms}\cup\{\NN^\kappa_{lms}\}\subseteq\II^\kappa_{\prec ms}$, $\NN^\kappa_{ms}\notin\II^\kappa_{\prec ms}$, and $\POT{\kappa}/\II^\kappa_{\prec ms}$ may be atomic.  
  \end{enumerate}
\end{theorem}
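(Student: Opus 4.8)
I first observe that items (1)--(11) each bundle together four kinds of assertions, and (12) three of them, with a highly uniform shape across the list, so the plan is to treat the \emph{types} of assertion uniformly and specialize only where a notion demands it. The four types are: (A) ideal inclusions of the form $\II_{\Phi'}\cup\{\NN^\kappa_{\Phi''}\}\subseteq\II_{\Phi}$; (B) the non-membership statements $\NN^\kappa_{\Phi}\notin\II_{\Phi}$; (C) the cross-hierarchy non-inclusions ($\II^\kappa_{ie}\nsubseteq\II^\kappa_{\textup{w}R},\II^\kappa_R,\II^\kappa_{stR},\II^\kappa_{ms}$ and $\II^\kappa_{{\prec}cie}\nsubseteq\II^\kappa_{iR},\II^\kappa_{suR}$); and (D) the (non-)atomicity of $\POT{\kappa}/\II$. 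Many instances of (A), (B) and (C) are already known and are attributed in the paragraph preceding the theorem to Baumgartner, Johnson, Feng, Gitman and Mitchell; for those I would simply verify that the ideals defined in Definition \ref{definition:Ideals} agree, via Theorem \ref{theorem:Ideals}, with the classical ideals to which these cited results apply, and then quote them. The genuinely new work concentrates in the later Ramsey-like items and in type (B).

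For type (A) I would separate the ideal-to-ideal part from the $\NN$-membership part. The inclusion $\II_{\Phi'}\subseteq\II_{\Phi}$ is a monotonicity statement: recalling from Definition \ref{definition:Ideals} that $A$ lies in an induced ideal exactly when some witness $x$ forces $A\notin U$ for \emph{every} admissible pair $(M,U)$, a more restrictive requirement on $(M,U)$ yields a larger ideal. Within a single scheme these inclusions thus follow at once from the implications among the properties $\Psi$ recorded in Tables \ref{table:schemeSmall}--\ref{table:schemeKappa} (e.g.\ normal $\Rightarrow$ genuine gives $\II^\kappa_{wie}\subseteq\II^\kappa_{ie}$), while the cross-scheme inclusions (such as $\II^{{<}\kappa}_{ia}\subseteq\II^\kappa_{WC}$ and $\II^\kappa_{ie}\subseteq\II^\kappa_{{\prec}cie}$) I would obtain through the identifications of Theorem \ref{theorem:Ideals} together with the classical inclusions. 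The membership statements $\NN^\kappa_{\Phi''}\in\II_{\Phi}$ are reflection facts: $\Phi(\kappa)$ entails that $\Phi''$ holds on a $\Phi$-positive set below $\kappa$, since $\Phi''$ has strictly smaller consistency strength. Here I would use the normality of the induced ideals (Lemma \ref{lemma:AllIdealsNormal}) to reduce to a diagonal argument showing that every witnessing filter for $\Phi$ concentrates on ordinals satisfying the weaker property $\Phi''$.

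Types (B) and (C) are the hard part. For $\NN^\kappa_{\Phi}\notin\II_{\Phi}$ I must, for every candidate witness $x$, produce an admissible pair $(M,U)$ with $x\in M$ and $\NN^\kappa_{\Phi}\in U$, i.e.\ a witnessing filter under which $\kappa$ believes that $\Phi$-many ordinals below it fail $\Phi$; this is the generalized Mitchell-order phenomenon, and I would invoke the general machinery alluded to after the theorem statement, which shows that a $\Phi$-cardinal always admits a witnessing filter whose associated embedding moves $\kappa$ to a target in which $\kappa$ is no longer $\Phi$, and then pull the resulting non-$\Phi$ set back into $U$. For the non-inclusions of type (C), I need a separating set: for $\II^\kappa_{ie}\nsubseteq\II^\kappa_R$ and its variants one exhibits an ineffable-ideal set that remains Ramsey-positive, exploiting that ineffability requires genuine/normal (regressive-function) measuring whereas the Ramsey-like filters are only $M$-normal together with a completeness clause; the classical instances I would quote from Feng. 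For the two new cases $\II^\kappa_{{\prec}cie}\nsubseteq\II^\kappa_{iR}$ and $\II^\kappa_{{\prec}cie}\nsubseteq\II^\kappa_{suR}$ I would follow Gitman's suggestion and construct a (super) Ramsey witnessing filter concentrating on a set that is small in the completely ineffable ideal.

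Finally, for type (D), the non-atomicity of $\POT{\kappa}/\II$ in (1)--(11) I would derive from the general splitting principle of Lemma \ref{lemma:atoms}: because the witnessing models and filters for each of these notions are \emph{not} unique, any $\II$-positive set can be partitioned into two $\II$-positive pieces, so the quotient has no atoms. The contrast in (12) is exactly Kunen's theorem: a measurable cardinal may carry a \emph{unique} normal measure, in which case $\II^\kappa_{\prec ms}$ is the complement of that measure, hence a prime ideal, so $\POT{\kappa}/\II^\kappa_{\prec ms}$ may be atomic. I expect the main obstacles to be the type (B) constructions, which must be carried out uniformly and with the correct amenability/normality bookkeeping across all the Ramsey-like notions, and the new type (C) separation $\II^\kappa_{{\prec}cie}\nsubseteq\II^\kappa_{suR}$, where the delicate point is realizing a completely-ineffable-small set as super-Ramsey-large.
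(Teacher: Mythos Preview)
Your high-level taxonomy into types (A)--(D) is sound, and your handling of (A) via monotonicity and Theorem~\ref{theorem:Ideals} matches the paper. However, there are two genuine gaps.

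\textbf{Type (D).} You mischaracterize Lemma~\ref{lemma:atoms}. It is not a splitting principle based on non-uniqueness of filters; rather, it shows that if $\POT{\kappa}/\II$ has an atom then $\kappa$ is measurable, and if it is atomic then moreover $\II^\kappa_{\prec ms}\subseteq\II$. The paper's argument for non-atomicity in items (1)--(11) is therefore indirect: each of the ideals in question is shown (via the other parts of the theorem) to be \emph{strictly} contained in $\II^\kappa_{\prec ms}$ whenever $\kappa$ is measurable, so atomicity would force $\II^\kappa_{\prec ms}\subseteq\II\subsetneq\II^\kappa_{\prec ms}$, a contradiction. Your proposed direct splitting (``any $\II$-positive set can be partitioned into two $\II$-positive pieces because filters are not unique'') does not follow from anything established and is not how the paper proceeds.

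\textbf{Type (C).} Your mechanism for $\II^\kappa_{ie}\nsubseteq\II^\kappa_{\textup{w}R}$ (and likewise for $\II^\kappa_R,\II^\kappa_{stR},\II^\kappa_{ms}$) is off. The paper does not exploit the distinction between genuine/normal and $M$-normal filters. Instead, it uses that each of these Ramsey-like properties is $\Pi^1_2$-expressible over $\VV_\kappa$: the Jensen--Kunen argument for $\Pi^1_2$-indescribability of ineffables then shows that the set of non-reflection points of any such property lies in the ineffable ideal, so $\NN^\kappa_\Phi\in\II^\kappa_{ie}$, while (B) gives $\NN^\kappa_\Phi\notin\II^\kappa_\Phi$; this is the separating set. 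Similarly, for $\II^\kappa_{{\prec}cie}\nsubseteq\II^\kappa_{iR}$ and $\II^\kappa_{{\prec}cie}\nsubseteq\II^\kappa_{suR}$, the separating set is again $\NN^\kappa_{iR}$ (resp.\ $\NN^\kappa_{suR}$): one shows it lies in $\II^\kappa_{{\prec}cie}$ because super Ramseyness (and ineffable Ramseyness) is determined by $\HH{\kappa^+}$ and hence transfers back along any $\kappa$-powerset-preserving embedding witnessing complete ineffability. Your description (``construct a super Ramsey witnessing filter concentrating on a completely-ineffable-small set'') is the right target but does not identify this mechanism.
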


 Note that the above statements show that the linear ordering of the mentioned large cardinal properties by their consistency strength can be read off from the containedness of sets of the form $\NN^\kappa_\Phi$ in the induced ideals. 
 Moreover, all provable implications and consistent non-implications can be read of from the ordering of the corresponding ideals under inclusion.
 For example, the fact that ineffability and Ramseyness do not provably imply each other corresponds to the fact that $\II^\kappa_{ie}\nsubseteq\II^\kappa_R\nsubseteq\II^\kappa_{ie}$ holds whenever $\kappa$ is both ineffable and Ramsey, where the second non-inclusion is a consequence of $\NN^\kappa_{ie}\subseteq\NN^\kappa_{cie}\in\II^\kappa_{\textup{w}R}\subseteq\II^\kappa_R$ and $\NN^\kappa_{ie}\notin\II^\kappa_{ie}$.

Figure \ref{figure:OrderingIdeals} below summarizes the structural statements listed in Theorem \ref{theorem:IdealContain}. In this diagram, a provable inclusion $I_1\subseteq I_0$ of large cardinal ideals is represented by a solid arrow $\xymatrix{I_0\ar[r]&I_1}$. Moreover, if $I_1$ is an ideal induced by a large cardinal property $\Phi$, then a dashed arrow $\xymatrix{I_0\ar@{-->}[r]&I_1}$ represents the statement that $\NN^\kappa_\Phi\in I_0$ provably holds.

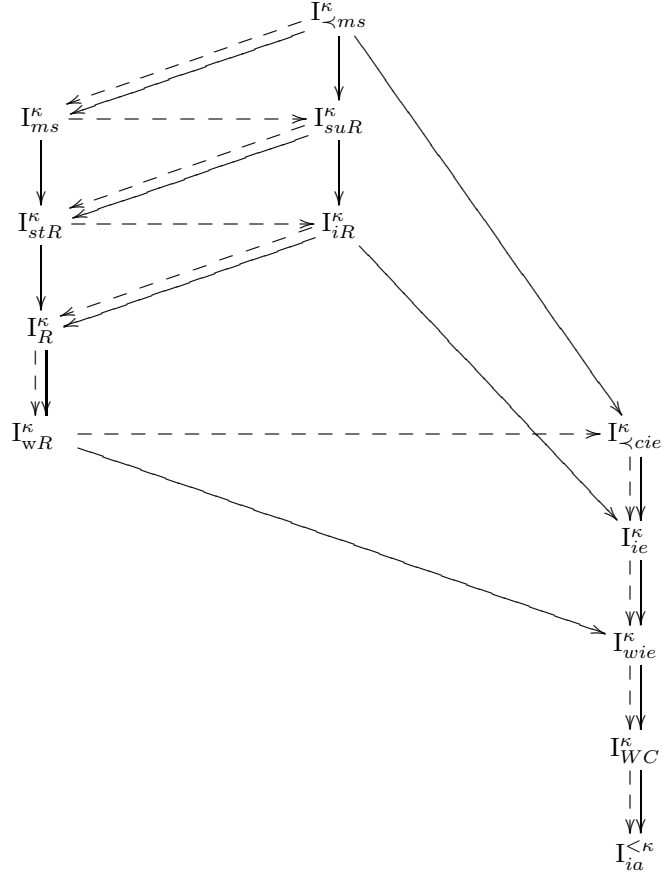
\begin{figure}\label{fig_Implications}
 \begin{displaymath}
  \xymatrix{
     & & & \II^\kappa_{{\prec}ms} \ar[d] \ar@<2pt>[dlll] \ar@{-->}@<-2pt>[dlll]  \ar[ddddrrr] & & & \\
    \II^\kappa_{ms} \ar[d] \ar@{-->}[rrr]  & & & \II^\kappa_{suR} \ar@<2pt>[dlll] \ar@{-->}@<-2pt>[dlll] \ar[d] & & &  \\ 
    \II^\kappa_{stR} \ar[d] \ar@{-->}[rrr] & & & \II^\kappa_{iR} \ar@<2pt>[dlll] \ar@{-->}@<-2pt>[dlll]  \ar[dddrrr] & & & \\ 
   \II^\kappa_R \ar@<2pt>[d] \ar@{-->}@<-2pt>[d] & &  & & & & \\
 \II^\kappa_{\textup{w}R} \ar[ddrrrrrr] \ar@{-->}[rrrrrr]\hole & & & &   & & \II^\kappa_{{\prec}cie} \ar@<2pt>[d] \ar@{-->}@<-2pt>[d] \\
   & & & &  &  & \II^\kappa_{ie} \ar@<2pt>[d] \ar@{-->}@<-2pt>[d] \\
     & &  & &  &  & \II^\kappa_{wie} \ar@<2pt>[d] \ar@{-->}@<-2pt>[d]\\
         & &   & &  &  & \II^\kappa_{WC} \ar@<2pt>[d] \ar@{-->}@<-2pt>[d] \\
                 &  &         & &  &  & \II^{{<}\kappa}_{ia} \\
 }
\end{displaymath}
\caption{Ordering of large cardinal ideals.}
\label{figure:OrderingIdeals}
\end{figure}


\section{Some basic notions}\label{generalizations}

A key ingredient for our results will be the generalization of a number of standard notions to the context of non-transitive models, and, in the case of elementary embeddings, also to possibly non-wellfounded target models. While most of these definitions are very much standard, we will take some care in order to present them in a way that makes them applicable also in these generalized settings. They clearly correspond to their usual counterparts in the case of transitive models $M$. 
In the following, we let $\ZFC^-$ denote the collection of axioms of $\ZFC$ without the powerset axiom (but, as usual, with the axiom scheme of Collection rather than the axiom scheme of Replacement). 
 In order to avoid unnecessary technicalities, we restrict our attention to \emph{$\Sigma_0$-correct models}, i.e.\ models that are $\Sigma_0$-elementary in $\VV$. 
 Since every $\Sigma_0$-elementary submodel of a transitive class is  $\Sigma_0$-correct, all models considered in the above schemes are $\Sigma_0$-correct. 
 Note that if $M$ is $\Sigma_0$-correct, $\alpha\in M$ is an ordinal in $M$ and $f\in M$ is a function with domain $\alpha$ in $M$, then $\alpha$ is an ordinal and $f$ is a function with domain $\alpha$.

\begin{definition}[Properties of $M$-ultrafilters]\label{ultrafilterproperties}
 Let $M$ be a class that is a $\Sigma_0$-correct model of $\ZFC^-$ and let $\kappa$ be a cardinal of $M$. 
  \begin{itemize} 
  \item A collection $U\subseteq M\cap\POT{\kappa}$ is an \emph{$M$-ultrafilter on $\kappa$} if $\langle M,U\rangle\models\anf{\textit{$U$ is an ultrafilter on $\kappa$}}$.  
\end{itemize}
 In the following, let $U$ denote an $M$-ultrafilter on $\kappa$. 
\begin{itemize} 
  \item $U$ is \emph{non-principal} if $\{\alpha\}\notin U$ for all $\alpha<\kappa$. 

  \item $U$ \emph{contains all final segments of $\kappa$ in $M$} if $[\alpha,\kappa)\in U$ whenever $\alpha\in M\cap\kappa$. 

  \item $U$ is \emph{uniform} if $\betrag{x}^M=\kappa$ for all $x\in U$. 

  \item $U$ is \emph{${<}\kappa$-amenable} (respectively, \emph{$\kappa$-amenable) for $M$} if whenever $\alpha<\kappa$ (respectively, $\alpha=\kappa$)  and $\seq{x_\beta}{\beta<\alpha}$ is a sequence of subsets of $\kappa$ that is an element of $M$, then $$\langle M,U\rangle\models\anf{\exists x ~  \forall\beta<\alpha ~ [x_\beta\in U ~ \longleftrightarrow ~ \beta\in x]}.\footnote{Note that, from the point of view of $V$, given such a sequence $\seq{x_\beta}{\beta<\alpha}$, a witness $x$ for amenability in $M$ need only satisfy that \anf{$x_\beta\in U\longleftrightarrow\beta\in x$} whenever $\beta\in M\cap\alpha$.}$$ 

  \item Given $\alpha\leq\kappa$ in $M$, $U$ is \emph{${<}\alpha$-complete for $M$} if $\langle M,U\rangle\models\anf{\textit{$U$ is ${<}\alpha$-complete}}$. 

 \item $U$ is \emph{$M$-normal}  if  $\langle M,U\rangle\models\anf{\textit{$U$ is normal}}$. 

   \item $U$ is \emph{$M$-normal with respect to $\subseteq$-decreasing sequences} if whenever $\seq{x_\alpha}{\alpha<\kappa}$ is a sequence of subsets of $\kappa$ that is an element of $M$ and satisfies $$\langle M,U\rangle\models\anf{\forall\alpha\leq\beta<\kappa ~ [x_\alpha\in U ~ \wedge ~ x_\beta\subseteq x_\alpha]},$$ then $\langle M,U\rangle\models\anf{\Delta_{\alpha<\kappa}x_\alpha\in U}$.\footnote{An easy argument shows that an $M$-ultrafilter on $\kappa$ that contains all final segments of $\kappa$ in $M\models\ZFC^-$ is $M$-normal if and only if it is ${<}\kappa$-complete and $M$-normal with respect to $\subseteq$-decreasing sequences.} 

  \item $U$ is \emph{countably complete} if whenever $\seq{x_n}{n<\omega}$ is a sequence of elements of $U$, then $\bigcap_{n<\omega}x_n\neq\emptyset$. 

  \item $U$ is \emph{stationary-complete} if whenever $\seq{x_n}{n<\omega}$ is a sequence of elements of $U$, then $\bigcap_{n<\omega}x_n$ is a stationary subset of $\kappa$. 

  \item $U$ is \emph{genuine} if either $\betrag{U}=\kappa$ and $\Delta_{\alpha<\kappa}U_\alpha$ is unbounded in $\kappa$ for every sequence $\seq{U_\alpha}{\alpha<\kappa}$ of elements of $U$, or $\betrag{U}<\kappa$ and $\bigcap U$ is unbounded in $\kappa$. 

  \item $U$ is \emph{normal} if either $\Delta_{\alpha<\kappa}U_\alpha$ is a stationary subset of $\kappa$ for every (or equivalently, for some) enumeration $\seq{U_\alpha}{\alpha<\kappa}$ of $U$, or $\betrag{U}<\kappa$ and $\bigcap U$ is a stationary subset of $\kappa$. 
  \end{itemize}
\end{definition}

The following observation shows that for weak $\kappa$-models, the first columns of Table \ref{table:schemeIntermediate} and of Table  \ref{table:schemeKappa} list the above properties according to their strength.

\begin{lemma}
  In the situation of Definition \ref{ultrafilterproperties}, if additionally $\kappa$ is regular and $\kappa\subseteq M$, and $U$ is genuine and contains all final segments of $\kappa$ in $M$, then $U$ is $M$-normal and stationary-complete.
\end{lemma}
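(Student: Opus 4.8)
The plan is to pin down the cardinality situation first, and then treat $M$-normality and stationary-completeness separately; in both cases the engine is genuineness, read as a statement about the diagonal intersection of \emph{any} sequence (possibly external to $M$) all of whose entries lie in $U$. To set this up, I would first observe that the hypotheses force $\betrag{U}=\kappa$: since $\kappa\subseteq M$ and $U$ contains every final segment $[\alpha,\kappa)$ with $\alpha<\kappa$, we have $\betrag{U}\ge\kappa$, whereas the definition of genuineness only allows $\betrag{U}\le\kappa$. Hence we are in the first clause of genuineness, i.e.\ $\Delta_{\alpha<\kappa}U_\alpha$ is unbounded in $\kappa$ for every sequence $\seq{U_\alpha}{\alpha<\kappa}$ of elements of $U$ (recall that $\kappa$ is regular and uncountable, so that \emph{stationary} is meaningful and $\omega<\kappa$).

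For $M$-normality I would take a sequence $\seq{x_\alpha}{\alpha<\kappa}\in M$ with each $x_\alpha\in U$. Since $\kappa\subseteq M$ and $M$ is $\Sigma_0$-correct, $D=\Delta_{\alpha<\kappa}x_\alpha$ lies in $M$ and is computed there correctly, so $U$ decides $D$. If $\kappa\setminus D\in U$, then $\seq{x_\alpha\cap(\kappa\setminus D)}{\alpha<\kappa}$ is a sequence of elements of $U$ whose diagonal intersection is contained in $\{0\}$ (any $\xi>0$ in it would have to satisfy $\xi\in D$ and $\xi\notin D$ simultaneously), contradicting genuineness. Thus $D\in U$, so $\langle M,U\rangle$ regards $U$ as closed under diagonal intersections; as $M$ also regards $U$ as containing all cobounded subsets of $\kappa$, this is exactly $M$-normality (equivalently, one may split this into $<\kappa$-completeness and normality with respect to $\subseteq$-decreasing sequences, invoking the characterization in the footnote to Definition \ref{ultrafilterproperties}).

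For stationary-completeness, let $\seq{x_n}{n<\omega}$ consist of elements of $U$ and let $C\subseteq\kappa$ be an arbitrary club; the goal is to produce $\xi\in C\cap\bigcap_{n<\omega}x_n$. The idea is to feed genuineness a $\kappa$-sequence built only out of members of $U$ that nevertheless encodes both $C$ and the $x_n$: set $A_\delta=x_\delta\cap[\min(C\setminus(\delta+1)),\kappa)$ for $\delta<\omega$ and $A_\delta=[\min(C\setminus(\delta+1)),\kappa)$ for $\omega\le\delta<\kappa$. Each $A_\delta$ is a finite intersection of a member of $U$ with a final segment, hence lies in $U$ (and in $M$), while the sequence itself need not lie in $M$. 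The decisive point is a combinatorial trapping: $\xi\in\Delta_{\alpha<\kappa}A_\alpha$ implies $\forall\delta<\xi\,(\xi\ge\min(C\setminus(\delta+1)))$, and this condition holds (for $\xi>0$) exactly when $\xi\in C$ — here the shift to $\delta+1$ is what excludes the immediate successors of points of $C$, which would otherwise enter the diagonal intersection and spoil the argument. Moreover, for $\xi>\omega$, the instances $\delta=n<\omega$ give $\xi\in x_n$ for every $n$. Since genuineness makes $\Delta_{\alpha<\kappa}A_\alpha$ unbounded, it contains some $\xi>\omega$, and then $\xi\in C\cap\bigcap_{n<\omega}x_n$; in particular, the case of a single set shows that every member of $U$ is stationary.

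The step I expect to be the main obstacle is precisely the $M$-versus-$V$ mismatch in the stationary-completeness argument: stationarity and the clubs $C$ are $V$-notions and the relevant auxiliary sets (such as $\{\xi:\sup(C\cap\xi)\ge\delta\}$) typically do not belong to $M$, so one cannot simply apply internal normality or completeness of $U$ inside $M$. The resolution is to arrange the diagonal sequences so that their entries are bona fide members of $U$ while their diagonal intersections are forced, by the explicit combinatorics above, to land inside the target $V$-set; calibrating this trap correctly — in particular the $\delta+1$ shift eliminating successors of club points — is the delicate part of the proof.
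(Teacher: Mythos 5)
Your proof is correct and follows essentially the same route as the paper's: the $M$-normality step is the identical contradiction via $\Delta_{\alpha<\kappa}(y\cap x_\alpha)\subseteq\{0\}$, and your stationary-completeness argument uses the same diagonal sequence of final segments $[\min(C\setminus(\delta+1)),\kappa)$ (the paper simply takes $C$ to consist of limit ordinals and leaves the first $\omega$ entries as the $x_n$, concluding $\Delta_{\alpha<\kappa}U_\alpha\subseteq\omega\cup C$, while your $\delta+1$-shift trap handles arbitrary clubs directly). Your preliminary observation that $\betrag{U}=\kappa$ pins down which clause of genuineness applies is a small but harmless addition that the paper leaves implicit.
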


\begin{proof}
  Assume that $U$ is a genuine $M$-ultrafilter. In order to show that $U$ is $M$-normal, let $\seq{x_\alpha}{\alpha<\kappa}$ be a sequence of elements of $U$ in $M$. If this sequence had diagonal intersection $x\notin U$, then the complement $y$ of $x$ would be an element of $U$. But then the set $\Delta_{\alpha<\kappa}(y\cap x_\alpha)=\{0\}$ is a diagonal intersection of elements of $U$, contradicting that $U$ is genuine.

In order to show that $U$ is stationary-complete, let $\seq{x_n}{n<\omega}$ be a sequence of elements of $U$ and let $C$ be a club subset of $\kappa$ consisting of limit ordinals. 
 Let $\seq{U_\alpha}{\alpha<\kappa}$ be the sequence of elements of $U$ with $U_n=x_n$ for every $n<\omega$ and $U_\alpha=\kappa\setminus\min(C\setminus(\alpha+1))$ for every $\omega\leq\alpha<\kappa$. 
 It is easy to check that $$\Delta_{\alpha<\kappa} U_\alpha ~ \subseteq ~ \omega ~ \cup ~ C$$ holds.  
 Using that $U$ is genuine, it follows that $\Delta_{\alpha<\kappa}U_\alpha$ is an unbounded subset of $\kappa$. In particular, we know that $C\cap\bigcap_{i<\omega}x_i\neq\emptyset$, showing that $U$ is stationary-complete. 
\end{proof}

\begin{lemma}\label{lemma:AllIdealsNormal}
  If $\Psi(M,U)$ implies that $U$ is $M$-normal, then the ideals $I_\Psi^{{<}\kappa}$, $I_\Psi^\kappa$ and $I_{{\prec}\Psi}^\kappa$ are all normal. 
\end{lemma}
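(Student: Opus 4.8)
The plan is to prove the single syntactic fact that each of $\II^{{<}\kappa}_\Psi$, $\II^\kappa_\Psi$ and $\II^\kappa_{{\prec}\Psi}$ is closed under diagonal unions; since uniform $M$-ultrafilters contain no bounded subsets of $\kappa$, each of these ideals contains the bounded ideal, and for such ideals closure under diagonal unions is exactly normality (it also yields $\kappa$-completeness). So fix a sequence $\seq{A_\alpha}{\alpha<\kappa}$ of members of the ideal under consideration and write $A=\Set{\xi<\kappa}{\exists\alpha<\xi~\xi\in A_\alpha}$ for its diagonal union. The task is to manufacture one witness for $A$ out of witnesses for the individual $A_\alpha$.

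I would first assemble this witness. For $\II^\kappa_\Psi$, choose for each $\alpha<\kappa$ a set $x_\alpha\subseteq\kappa$ witnessing $A_\alpha\in\II^\kappa_\Psi$, and let $x\subseteq\kappa$ code the two $\kappa$-sequences $\seq{A_\alpha}{\alpha<\kappa}$ and $\seq{x_\alpha}{\alpha<\kappa}$ via a fixed pairing function on $\kappa$. For $\II^{{<}\kappa}_\Psi$ and $\II^\kappa_{{\prec}\Psi}$, note that for each $\alpha$ a witness lives in $\HH{\theta}$ for all sufficiently large regular $\theta$; passing to any regular $\theta$ above the supremum of these $\kappa$-many thresholds makes witnesses $x_\alpha\in\HH{\theta}$ available simultaneously, and then, by regularity of $\theta$, the pair of sequences $\seq{A_\alpha}{\alpha<\kappa}$, $\seq{x_\alpha}{\alpha<\kappa}$ is itself an element $x\in\HH{\theta}$. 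In each case I claim $x$ witnesses $A$ is in the ideal.

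To verify this, let $M$ and $U$ be as demanded by the relevant clause of Definition \ref{definition:Ideals}, with $x\in M$ and $\Psi(M,U)$. Since $x\in M\models\ZFC^-$, the model recovers both coded sequences, so $\seq{A_\alpha}{\alpha<\kappa}\in M$ and, for every $\alpha\in M\cap\kappa$, also $A_\alpha,x_\alpha\in M$. For such $\alpha$ the model $M$ has exactly the form required by the witnessing property of $x_\alpha$ and contains $x_\alpha$, whence $A_\alpha\notin U$; as $U$ is an $M$-ultrafilter and $A_\alpha\in M\cap\POT{\kappa}$, this gives $\kappa\setminus A_\alpha\in U$. Because the bounded quantifier $\forall\alpha<\kappa$ interpreted in $M$ ranges exactly over $M\cap\kappa$, the model $M$ regards $\seq{\kappa\setminus A_\alpha}{\alpha<\kappa}$ as a $\kappa$-sequence of elements of $U$; and since $\Psi(M,U)$ makes $U$ $M$-normal, normality inside $M$ yields $\langle M,U\rangle\models\Delta_{\alpha<\kappa}(\kappa\setminus A_\alpha)\in U$.

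The final and most delicate step is to identify the set that $M$ computes as $\Delta_{\alpha<\kappa}(\kappa\setminus A_\alpha)$ with the genuine complement $\kappa\setminus A$, and the object $M$ computes as the diagonal union of $\seq{A_\alpha}{\alpha<\kappa}$ with the real $A$. For the transitive ideal $\II^\kappa_\Psi$ this is routine, since $\kappa\subseteq M$ makes the defining $\Sigma_0$-condition of the diagonal operations absolute. For the two elementary ideals, where $M\prec\HH{\theta}$, the point is that $\HH{\theta}$ is transitive and computes diagonal intersections and unions correctly, so elementarity transfers both identities from $\HH{\theta}$ down into $M$. I expect the main obstacle to lie precisely here in the case of $\II^{{<}\kappa}_\Psi$: a $(\lambda,\kappa)$-model omits cofinally many ordinals below $\kappa$, so a \emph{pointwise} evaluation of the diagonal operations inside $M$ need not agree with the true one. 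The resolution is that we never evaluate pointwise; full elementarity in $\HH{\theta}$ pins down $M$'s diagonal union and diagonal intersection as objects equal to the real ones. Granting this, $\kappa\setminus A\in U$ while $A\in M$, so $A\notin U$ because $U$ is an $M$-ultrafilter, completing the verification that $x$ witnesses $A$ is in the ideal and hence that the ideal is closed under diagonal unions.
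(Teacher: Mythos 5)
Your proof is correct and follows essentially the same route as the paper's: fix one sufficiently large regular $\theta$, bundle the individual witnesses $x_\alpha$ into a single witness, observe that $\langle M,U\rangle$ believes $A_\alpha\notin U$ for every $\alpha$ in its internal range of quantification (namely $M\cap\kappa$), and apply $M$-normality inside $\langle M,U\rangle$ to conclude that the diagonal union is not in $U$. The paper presents only the $\II^{{<}\kappa}_{\Psi}$ case, which it notes is the hardest, and leaves implicit the absoluteness/elementarity identification of the internally computed diagonal operations that you spell out explicitly.
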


\begin{proof}
  We will only present the proof for the ideal $I_\Psi^{{<}\kappa}$, which requires the most difficult argument of the three. 
   Assume thus that $\vec A=\seq{A_\alpha}{\alpha<\kappa}$ is a sequence of elements of $I_\Psi^{{<}\kappa}$. Fix $\theta$ sufficiently large, and fix a sequence $\vec x=\seq{x_\alpha}{\alpha<\kappa}$ such that for every $\alpha<\kappa$, the set $x_\alpha$ witnesses that $A_\alpha\in I_\Psi^{<\kappa}$ with respect to $\theta$. 
   Pick $M\prec\HH{\theta}$ such that $\vec x,\vec A\in M$, $\betrag{M}<\kappa$ and $\Psi(M,U)$ holds. By elementarity, $\vec{A}\in M$ implies that $\nabla\vec A\in M$.  Since the model $\langle M,U\rangle$ thinks that $A_\alpha\not\in U$ for every $\alpha<\kappa$ and $\Psi(M,U)$ implies that $U$ is $M$-normal, it follows that $\nabla\vec A\not\in U$. This argument shows that $\nabla\vec A\in I_\Psi^{<\kappa}$.
\end{proof}

We now turn our attention to extended definitions regarding elementary embeddings. 
As mentioned above, we identify classes $M$ with the corresponding $\epsilon$-structures $\langle M,\in\rangle$. 
Given transitive classes $M$ and $N$, the critical point of an elementary embedding $\map{j}{M}{N}$ is simply defined as the least ordinal $\alpha\in M$ with $j(\alpha)>\alpha$. 
We need a generalization of this concept for elementary embeddings $\map{j}{M}{\langle N,\epsilon_N\rangle}$ when the class $M$ is not necessarily transitive and the $\epsilon$-structure $\langle N,\epsilon_N\rangle$ is not necessarily well-founded. 
In the following, we let $\map{j}{M}{\langle N,\epsilon_N\rangle}$ always denote an elementary embedding between $\epsilon$-structures, whose domain is a $\Sigma_0$-correct $\ZFC^-$-model.

\begin{definition}[Jump]
  Given $\map{j}{M}{\langle N,\epsilon_N\rangle}$ and an ordinal $\alpha\in M$, we say that \emph{$j$ jumps at $\alpha$} if there exists an $N$-ordinal $\gamma$ with $\gamma \IN_N j(\alpha)$ and  $j(\beta) \IN_N \gamma$ for all $\beta\in M\cap\alpha$. 
\end{definition}

Note that, in the above situation, for every $N$-ordinal $\gamma$, there is at most one ordinal $\alpha$ in $M$ such that $\gamma$ witnesses that $j$ jumps at $\alpha$. 
 Moreover, elementarity directly implies that elementary embeddings only jump at limit ordinals.

\begin{definition}[Critical Point]
  Given $\map{j}{M}{\langle N,\epsilon_N\rangle}$, if there exists an ordinal $\alpha\in M$ such that $j$ jumps at $\alpha$, then we denote the minimal such ordinal by $\crit{j}$, the \emph{critical point of $j$}. 
\end{definition}

It is easy to see that if $\crit{j}$ exists and $\alpha\in\Ord$, then $\crit{j}>\alpha$ holds if and only if $$j[M\cap\alpha] ~ = ~ \Set{\beta\in N}{\beta \IN_N j(\alpha)}.$$ 
This shows that the map $j\restriction(M\cap\crit{j})$ is an $\IN$-isomorphism between  $M\cap\crit{j}$ and the proper initial segment $\Set{\beta\in N}{\exists\alpha<\crit{j} ~ \beta \IN_N j(\alpha)}$ of the $N$-ordinals. 
 In particular, this initial segment is contained in the well-founded part of $\langle N,\IN_N\rangle$ and therefore the ordinal $\otp{M\cap\crit{j}}$ is a subset of the transitive collapse of this set. 
 We will tacitly make use of these facts throughout this paper.

Next, we need to generalize the notions of ${<}\kappa$- and $\kappa$-powerset preservation to a non-transitive context. 
The idea behind an embedding $\map{j}{M}{\langle N,\epsilon_N\rangle}$  being ${<}\kappa$-powerset preserving (respectively, $\kappa$-powerset preserving) is that $M$ and $N$ contain the same subsets of ordinals below $\kappa$ (respectively, the same subsets of $\kappa$). Since the relevant subsets of $M$ are, in a sense made precise below, always contained in $N$, only one of those inclusions is part of the following definitions.

\begin{definition}[${<}\kappa$-powerset preservation]\label{lkpp}
  Given $\map{j}{M}{\langle N,\epsilon_N\rangle}$ with $\crit{j}=\kappa$, the embedding $j$ is \emph{${<}\kappa$-powerset preserving} if $$\forall y\in N ~ \exists x\in M ~ \left[\exists\alpha<M\cap\kappa ~ \langle N,\epsilon_N\rangle\models\anf{y\subseteq j(\alpha)}  ~ \longrightarrow ~ j(x)=y\right].$$  
\end{definition}

As we will also see later on, this notion is an important concept in the study of embeddings between smaller models of set theory and it  turns out to be closely related to the behavior of the continuum function below $\kappa$ in $M$.  

 \begin{proposition}\label{proposition:CharaLessThanKappaPowerPreserving}
  Let $\map{j}{M}{\langle N,\epsilon_N\rangle}$ with $\crit{j}=\kappa$. 
  \begin{enumerate} 
   \item If $M\models\anf{\textit{If $\alpha<\kappa$, then $\POT{\alpha}$ exists and $2^\alpha<\kappa$}}$ holds, then $j$ is ${<}\kappa$-powerset preserving. 
   
   \item If $j$ is ${<}\kappa$-powerset preserving, then $M\models\anf{\textit{If $\alpha<\kappa$, then there is no injection from $\kappa$ into $\POT{\alpha}$}}$. 
  \end{enumerate}
 \end{proposition}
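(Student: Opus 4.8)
The plan is to read ${<}\kappa$-powerset preservation, for a fixed $M$-ordinal $\alpha<\kappa$, as the assertion that the restriction of $j$ to the $M$-subsets of $\alpha$ is a surjection onto the $N$-subsets of $j(\alpha)$; since $j$ is always injective and preserves the relation $\anf{y\subseteq j(\alpha)}$, this really says that $j$ induces a bijection between $\{x\in M : M\models x\subseteq\alpha\}$ and $\{y\in N : \langle N,\IN_N\rangle\models y\subseteq j(\alpha)\}$. Both arguments will exploit the jump witness at $\kappa$: since $\crit{j}=\kappa$, there is an $N$-ordinal $\gamma$ with $\gamma\IN_N j(\kappa)$ and $j(\beta)\IN_N\gamma$ for all $\beta\in M\cap\kappa$, and this $\gamma$ cannot lie in $\range(j)$, for $\gamma=j(\delta)$ would force $\delta<\kappa$ by elementarity and hence the absurdity $j(\delta)\IN_N j(\delta)$. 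I will also use repeatedly the fact recorded before the proposition that, because $\lambda<\kappa=\crit{j}$, the $N$-predecessors of $j(\lambda)$ are exactly $j[M\cap\lambda]$.

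For the first part, fix $y\in N$ and an $M$-ordinal $\alpha<\kappa$ with $\langle N,\IN_N\rangle\models y\subseteq j(\alpha)$ (the remaining values of $y$ satisfy the defining implication vacuously). Using the hypothesis, I would work inside $M$, where $\POT{\alpha}$ exists and $M\models\betrag{\POT{\alpha}}=2^\alpha=\lambda$ for some cardinal $\lambda<\kappa$; fix in $M$ a bijection $e\colon\lambda\to\POT{\alpha}$. By elementarity $j(e)$ is, in $N$, a bijection from $j(\lambda)$ onto $\POT{j(\alpha)}^N$, and $y$ is one of its values, say $y=j(e)(\eta)$ for some $\eta\IN_N j(\lambda)$. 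Here the hypothesis $\lambda<\kappa$ pays off: since $\crit{j}=\kappa>\lambda$, every $N$-predecessor of $j(\lambda)$ is of the form $j(\xi)$ with $\xi\in M\cap\lambda$, so $\eta=j(\xi)$ and then $y=j(e)(j(\xi))=j(e(\xi))$ by elementarity. Thus $x=e(\xi)\in M$ witnesses $j(x)=y$, proving ${<}\kappa$-powerset preservation.

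For the second part I would argue by contraposition: assuming $j$ is ${<}\kappa$-powerset preserving, suppose toward a contradiction that for some $M$-ordinal $\alpha<\kappa$ there is $f\in M$ with $M\models\anf{f\colon\kappa\to\POT{\alpha}\text{ is injective}}$. Passing to the image, let $W=\range(f)\in M$, so that $M$ sees $f$ as a bijection of $\kappa$ onto a set $W$ of subsets of $\alpha$. Now feed the jump witness into $j(f)$: the element $y=j(f)(\gamma)\in N$ satisfies $\langle N,\IN_N\rangle\models y\subseteq j(\alpha)$, so ${<}\kappa$-powerset preservation yields $x\in M$ with $j(x)=y$. Since $N\models j(x)=y\in j(W)$, elementarity gives $M\models x\in W=\range(f)$, hence $x=f(\delta)$ for some $\delta\in M\cap\kappa$. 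Applying $j$ we get $j(f)(\gamma)=y=j(x)=j(f)(j(\delta))$, and injectivity of $j(f)$ in $N$ forces $\gamma=j(\delta)\in\range(j)$, contradicting the choice of $\gamma$.

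The one delicate point, and the step I expect to require the most care, is the second part: the naive attempt to use the given injection $f$ directly fails, because a ${<}\kappa$-powerset-preservation witness $x$ for $y=j(f)(\gamma)$ is only guaranteed to be \emph{some} $M$-subset of $\alpha$, which need not lie in the range of the merely injective $f$, so no contradiction ensues. Replacing $f$ by the bijection onto $W=\range(f)$ is exactly what forces the witness $x$ back into $\range(f)$ and thereby collapses $\gamma$ into $\range(j)$. This reformulation also sidesteps any worry about whether $\POT{\alpha}$ is a set in $M$, since $W$ is a set in $M$ as the image of a function in $M$, so the argument is uniform and never appeals to the existence of the full powerset.
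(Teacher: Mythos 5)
Your proof is correct and follows essentially the same route as the paper's: part (1) via a bijection (there, an enumeration) of $\POT{\alpha}^M$ in order type below $\crit{j}$, so that the $N$-index of $y$ is pulled back through $j$, and part (2) by feeding the jump witness $\gamma$ into $j(\iota)$ and deriving the contradiction $\gamma\in\range(j)$. One correction to your self-assessment: the ``naive'' direct argument you claim fails is in fact exactly the paper's proof --- since $j(x)=y=j(f)(\gamma)$, the model $N$ sees $j(x)\in\range(j(f))=j(\range(f))$, so elementarity already forces $x\in\range(f)$ in $M$ without ever naming $W$; your detour through $W=\range(f)$ is harmless but not needed, and neither version appeals to $\POT{\alpha}$ being a set in $M$ for part (2).
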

 
 \begin{proof}
  (1) Pick $\gamma\in M\cap \kappa$ and $y\in N$ with $\langle N,\IN_N\rangle\models\anf{y\subseteq j(\gamma)}$. By our assumptions, there is an enumeration $\vec{x}=\seq{x_\xi}{\xi<\alpha}$ of $M\cap\POT{\gamma}$ in $M$ with $\alpha<\kappa$. By elementarity, there is an $N$-ordinal $\beta$ with $\beta\IN_N j(\alpha)$ and $\langle N,\IN_N\rangle\models\anf{y=j(\vec{x})(\beta)}$. In this situation, the fact that $\alpha<\kappa$ yields $\xi\in M\cap\alpha$ satisfying $j(\xi)=\beta$ and $j(x_\xi)=y$. 
  
  (2) Assume, towards a contradiction, that $j$ is ${<}\kappa$-powerset preserving and $\map{\iota}{\kappa}{\POT{\alpha}}$ is an injection in $M$ with $\alpha<\kappa$. Let $\gamma$ be an $N$-ordinal witnessing that $j$ jumps at $\kappa$ and pick $y\in N$ satisfying $\langle N,\IN_N\rangle\models\anf{j(\iota)(\gamma)=y\subseteq j(\alpha)}$. 
  By our assumptions, there is $x\in M\cap\POT{\alpha}$ with $j(x)=y$ and elementarity yields $\xi\in M\cap\kappa$ with $\iota(\xi)=x$. Since $\iota$ is an injection, this implies that $j(\xi)=\gamma$, contradicting the fact that $\gamma$ witnesses that $j$ jumps at $\kappa$.   
 \end{proof}

The following definition shows that there is still a useful notion of a $\kappa$-powerset preserving elementary embedding, even if we do not have a representative for $\kappa$ in the target model of our embedding.

\begin{definition}[$\kappa$-powerset preservation]\label{kpp}
  Given $\map{j}{M}{\langle N,\epsilon_N\rangle}$ with $\crit{j}=\kappa$, the embedding $j$ is \emph{$\kappa$-powerset preserving} if $$\forall y\in N ~ \exists x\in M ~ \left[\langle N,\epsilon_N\rangle\models\anf{y\subseteq j(\kappa)}~ \longrightarrow ~ M\cap x=\Set{\alpha\in M\cap\kappa}{j(\alpha) \IN_N y}\right].$$
\end{definition}

 Note that if $M$ and $N$ are weak $\kappa$-models, then the usual notions of critical point and of $\kappa$-powerset  preservation for an embedding $\map{j}{M}{N}$ clearly coincide with our respective notions.

We close this section by isolating a property that implies the existence of a canonical representative for $\kappa$ in the target model of our elementary embedding. 

\begin{definition}[$\kappa$-embedding]
  Given $\map{j}{M}{\langle N,\epsilon_N\rangle}$ that jumps at $\kappa$, the embedding $j$ is a \emph{$\kappa$-embedding} if there exists an $\epsilon_N$-minimal $N$-ordinal $\gamma$ witnessing that $j$ jumps at $\kappa$. We denote this ordinal by $\kappa^N$.\footnote{If $\crit j=\kappa$, then Proposition \ref{proposition:KappaEmbeddingOrdinalHeight} shows that $\kappa^N$ is the unique $N$-ordinal on which the $\epsilon_N$-relation has order-type $M\cap\kappa$. Otherwise, $\kappa^N$ might also depend on the embedding $j$, which we nevertheless suppress in our notation.}
\end{definition}

\begin{proposition}\label{proposition:KappaEmbeddingOrdinalHeight}
 Given $\map{j}{M}{\langle N,\IN_N\rangle}$ with $\crit{j}=\kappa$, the following statements are equivalent: 
 \begin{enumerate}
  \item $j$ is a $\kappa$-embedding. 
  
  \item The ordinal $\otp{M\cap\kappa}$ is an element of the transitive collapse of the well-founded part of $\langle N,\IN_N\rangle$. 
 \end{enumerate}
\end{proposition}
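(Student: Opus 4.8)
The plan is to combine the description of $j\restriction(M\cap\kappa)$ recorded just before the proposition with a direct analysis of the $N$-ordinals witnessing that $j$ jumps at $\kappa$. Since $\crit{j}=\kappa$, that discussion shows that $j\restriction(M\cap\kappa)$ is an $\IN$-isomorphism of $M\cap\kappa$ onto the initial segment $I=\Set{\beta\in N}{\exists\alpha<\kappa~\beta\IN_N j(\alpha)}=j[M\cap\kappa]$ of the $N$-ordinals, that $I$ lies in the well-founded part of $\langle N,\IN_N\rangle$, and that $\otp{(I,\IN_N)}=\otp{M\cap\kappa}$. Writing $\pi$ for the transitive collapse of the well-founded part, $\pi[I]$ is then an initial segment of genuine ordinals of order type $\otp{M\cap\kappa}$, so $\pi[I]=\otp{M\cap\kappa}$ and statement (2) is equivalent to $\otp{M\cap\kappa}$ being an ordinal strictly below the ordinal height of the collapse. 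The first thing I would record is that the $N$-ordinals $\gamma$ witnessing that $j$ jumps at $\kappa$ are precisely those lying $\IN_N$-above every element of $I$ and $\IN_N$-below $j(\kappa)$; the linearity of $\IN_N$ on $N$-ordinals and the downward $\IN_N$-closure of $I$ will be used throughout.

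For the implication $(1)\Rightarrow(2)$ I would take the $\IN_N$-minimal witness $\gamma=\kappa^N$ and show that $\Set{\delta}{\delta\IN_N\gamma}=I$. The inclusion $I\subseteq\Set{\delta}{\delta\IN_N\gamma}$ is immediate from the witness property together with transitivity. For the reverse inclusion, any $N$-ordinal $\delta\IN_N\gamma$ failing to lie in $I$ would, by downward closure and linearity, sit $\IN_N$-above all of $I$, and hence (being $\IN_N$-below $\gamma\IN_N j(\kappa)$) be a witness strictly $\IN_N$-below $\gamma$, contradicting minimality. Once we know $\Set{\delta}{\delta\IN_N\gamma}=I$ and recall that $I$ lies in the well-founded part, it follows that $\gamma$ itself lies in the well-founded part and that $\pi(\gamma)=\otp{(I,\IN_N)}=\otp{M\cap\kappa}$, which yields (2).

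For $(2)\Rightarrow(1)$, assuming $\otp{M\cap\kappa}$ lies in the collapse, I would pick the unique well-founded $N$-ordinal $\gamma$ with $\pi(\gamma)=\otp{M\cap\kappa}$; injectivity of $\pi$ together with $\pi[I]=\otp{M\cap\kappa}$ then gives $\Set{\delta}{\delta\IN_N\gamma}=I$, so $\gamma$ lies $\IN_N$-above every element of $I$. It remains to verify that $\gamma\IN_N j(\kappa)$ and that $\gamma$ is $\IN_N$-minimal among witnesses. Minimality is argued exactly as before: a witness $\IN_N$-below $\gamma$ would belong to $I$, hence lie $\IN_N$-below some $j(\alpha)$ it is simultaneously required to dominate, which is impossible. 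The step I expect to be the main obstacle is establishing $\gamma\IN_N j(\kappa)$, since knowing the collapse height exceeds $\otp{M\cap\kappa}$ only supplies a well-founded ordinal of the correct order type sitting above $I$, with no a priori control relative to $j(\kappa)$. Here I would compare $\gamma$ with an arbitrary witness $\gamma_0$, which exists because $\crit{j}=\kappa$ forces $j$ to jump at $\kappa$: since $\Set{\delta}{\delta\IN_N\gamma}=I$ while $\gamma_0$ dominates $I$ yet cannot itself belong to $I$, linearity forces $\gamma\IN_N\gamma_0$ or $\gamma=\gamma_0$, and in either case transitivity with $\gamma_0\IN_N j(\kappa)$ gives $\gamma\IN_N j(\kappa)$. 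Thus $\gamma$ is the $\IN_N$-minimal witness, and $j$ is a $\kappa$-embedding with $\kappa^N=\gamma$.
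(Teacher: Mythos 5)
Your proposal is correct and takes essentially the same route as the paper's proof: both directions rest on identifying the $\IN_N$-predecessors of the candidate ordinal with $j[M\cap\kappa]$ via the pre-proposition observations, and your resolution of the "main obstacle" $\gamma\IN_N j(\kappa)$ — comparing $\gamma$ with an arbitrary witness and invoking linearity and transitivity — is exactly the comparison the paper makes in its second paragraph.
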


\begin{proof}
 First, assume that $j$ is a $\kappa$-embedding. Fix $\gamma\in N$ with $\gamma\IN_N\kappa^N$. Since $\kappa$ is a limit ordinal and $\kappa^N\IN_N j(\kappa)$, the fact that $\gamma$ does not witness that $j$ jumps at $\kappa$ yields an $\alpha\in M\cap\kappa$ with $\gamma\IN_N j(\alpha)$. By our earlier observations, this shows that $j[M\cap\kappa]=\Set{\beta\in N}{\beta\IN_N\kappa^N}$. In particular, the $N$-ordinal $\kappa^N$ is contained in the well-founded part of $\langle N,\IN_N\rangle$ and the transitive collapse of this set maps $\kappa^N$ to $\otp{M\cap\kappa}$. 
 
 In the other direction, assume that there is an $N$-ordinal $\beta$ in the well-founded part  of $\langle N,\IN_N\rangle$ that is mapped to $\otp{M\cap\kappa}$ by the transitive collapse of this set. By our earlier observations, this shows that $j[M\cap\kappa]=\Set{\delta\in N}{\delta\IN_N\beta}$. Let $\gamma\in N$ witness that $j$ jumps at $\kappa$. By our computations, we then have either $\beta=\gamma$ or $\beta\IN_N\gamma$. In particular, we have $\beta\IN_N j(\kappa)$ and $\beta$ witnesses that $j$ jumps at $\kappa$. But these computations also show that $\beta$ is the $\IN_N$-minimal $N$-ordinal with this property.  
\end{proof}


\section{Correspondences between ultrapowers and elementary embeddings}\label{section:ultrapowersandembeddings}

The results of this section will allow us to interchangeably talk about ultrafilters or about embeddings for models of $\ZFC^-$. 
If $M$ is a class that is a $\Sigma_0$-correct model of $\ZFC^-$, $\kappa$ is a cardinal of $M$, and $U$ is an $M$-ultrafilter on $\kappa$, then we can use the $\Sigma_0$-correctness of $M$\footnote{Note that, given a $\Sigma_0$-correct $\ZFC^-$-model $M$ and functions $\map{f,g}{\kappa}{M}$ in $M$, then the set $\Set{\alpha<\kappa}{f(\alpha)=g(\alpha)}$ and $\Set{\alpha<\kappa}{f(\alpha)\in g(\alpha)}$ are both contained in $M$ and satisfy the same defining properties in it.} to define the induced ultrapower embedding $\map{j_U}{M}{\langle\Ult{M}{U},\epsilon_U\rangle}$ as usual: 
define an equivalence relation $\equiv_U$ on the class of all functions $\map{f}{\kappa}{M}$ contained in $M$ by setting $f\equiv_U g$ if and only if $\Set{\alpha<\kappa}{f(\alpha)=g(\alpha)}\in U$, 
let $\Ult{M}{U}$ consists of all sets $[f]_U$ of rank-minimal elements of $\equiv_U$-equivalence classes, 
define $[f]_U \IN_U [g]_U$ to hold if and only if $\Set{\alpha<\kappa}{f(\alpha)\in g(\alpha)}\in U$ 
and set $j_U(x)=[c_x]_U$, where $c_x\in M$ denotes the constant function with domain $\kappa$ and value $x$. 
It is easy to check that the assumption that $M\models\ZFC^{-}$ implies that {\L}os' Theorem still holds true in our setting, i.e. we have $$\Ult{M}{U}\models\varphi([f_0]_U,\ldots,[f_{n-1}]_U) ~ \Longleftrightarrow ~ \langle M,U\rangle\models\anf{\exists x\in U ~ \forall\alpha\in x ~ \varphi(f_0(\alpha),\ldots,f_{n-1}(\alpha))}$$ for every first order $\IN$-formula $\varphi(v_0,\ldots,v_{n-1})$ and all functions $\map{f_0,\ldots,f_{n-1}}{\kappa}{M}$ in $M$.

Given an elementary embedding $\map{j}{M}{\langle N,\epsilon_N\rangle}$ that jumps at $\kappa$, let $\gamma$ be a witness for this, and let $$U_j^\gamma ~ = ~ \Set{A\in M\cap\POT{\kappa}}{\gamma \IN_N j(A)}$$ denote the $M$-ultrafilter induced by $\gamma$ and by $j$. Since $\gamma$ is not in the range of $j$, the filter $U_j^\gamma$ is non-principal. 
If $j$ is a $\kappa$-embedding and $\gamma=\kappa^N$, then we call $U_j=U_j^\gamma$ the \emph{canonical} $M$-ultrafilter induced by $j$, or simply \emph{the} $M$-ultrafilter induced by $j$.

In the following, we say that a property $\Psi(M,U)$ of $\Sigma_0$-correct $\ZFC^-$-models $M$ and $M$-ultrafilters $U$ \emph{corresponds} to a property $\Theta(M,j)$ of such models $M$ and elementary embeddings $\map{j}{M}{\langle N,\epsilon_N\rangle}$ if the following statements hold: 
\begin{itemize}   

  \item If $\Psi(M,U)$ holds for an $M$-ultrafilter $U$,  then $\Theta(M,j_U)$ holds. 
 
  \item If $\Theta(M,j)$ holds for an elementary embedding $\map{j}{M}{\langle N,\epsilon_N\rangle}$ and $\gamma$ witnesses that $j$ jumps at some $\kappa\in M$, then $\Psi(M,U_j^\gamma)$ holds. 
\end{itemize}

Most of the correspondences below are well-known, in a perhaps slightly less general setup.

\begin{proposition}\label{proposition:correspondence1}
 Let $\kappa$ be an ordinal. 

  \begin{enumerate} 
    \item \anf{$U$ is an $M$-ultrafilter on $\kappa\in M$ that contains all final segments of $\kappa$ in $M$} corresponds to \anf{$j$ jumps at $\kappa\in M$}. 

    \item Given $\alpha\leq\kappa$, \anf{$\alpha\in M$ and $U$ is an $M$-ultrafilter on $\kappa\in M$ that is ${<}\alpha$-complete for $M$ and contains all final segments of $\kappa$ in $M$} corresponds to \anf{$\crit{j}\geq\alpha\in M$ and $j$ jumps at $\kappa\in M$}. 

    \item \anf{$U$ is a non-principal $M$-ultrafilter on $\kappa\in M$ that is ${<}\kappa$-complete for $M$} corresponds to \anf{$\crit{j}=\kappa\in M$}. 

    \item \anf{$U$ is a non-principal $M$-ultrafilter on $\kappa\in M$ that is ${<}\kappa$-complete for $M$ and ${<}\kappa$-amenable for $M$} corresponds to \anf{$\crit{j}=\kappa\in M$ and $j$ is ${<}\kappa$-powerset preserving}. 

    \item \anf{$U$ is a non-principal $M$-ultrafilter on $\kappa\in M$ that is ${<}\kappa$-complete and $\kappa$-amenable for $M$} corresponds to \anf{$\crit{j}=\kappa\in M$ and $j$ is $\kappa$-powerset preserving}.
  \end{enumerate}
\end{proposition}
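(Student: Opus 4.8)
The plan is to verify both bullet points in the definition of correspondence, using the already-established item~(3) to dispose of every property except amenability versus powerset preservation. Indeed, item~(3) tells us that a non-principal ${<}\kappa$-complete $M$-ultrafilter $U$ on $\kappa$ induces an embedding $j_U$ with $\crit{j_U}=\kappa$, and conversely that whenever $\crit{j}=\kappa$ and $\gamma$ witnesses that $j$ jumps at $\kappa$, the filter $U_j^\gamma$ is a non-principal ${<}\kappa$-complete $M$-ultrafilter on $\kappa$. Hence it suffices to show, under these hypotheses, that \emph{$U$ is $\kappa$-amenable for $M$} is equivalent to \emph{$j$ is $\kappa$-powerset preserving} for the embeddings in question. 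The two implications are essentially dual translations between a subset of $j(\kappa)$ in the target model and the defining sequence occurring in amenability, and I would carry them out as follows.

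For the forward direction I would assume $\Psi(M,U)$ and check that $j_U$ is $\kappa$-powerset preserving. Given $y=[f]_U\in\Ult{M}{U}$ with $\Ult{M}{U}\models\anf{y\subseteq j_U(\kappa)}$, I would first invoke {\L}os' Theorem to arrange, after altering $f$ on a set not in $U$ (which does not change $[f]_U$), that $M\models\anf{f\colon\kappa\to\POT{\kappa}}$. The key step is then to feed the sequence $\seq{x_\alpha}{\alpha<\kappa}\in M$ given by $x_\alpha=\Set{\xi<\kappa}{\alpha\in f(\xi)}$ into the amenability hypothesis, obtaining $x\in M$, which I may assume satisfies $M\models x\subseteq\kappa$, such that $x_\alpha\in U\leftrightarrow\alpha\in x$ for all $\alpha\in M\cap\kappa$. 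A second application of {\L}os' Theorem shows that, for $\alpha\in M\cap\kappa$, one has $j_U(\alpha)\IN_U y$ if and only if $x_\alpha\in U$, so that $M\cap x=\Set{\alpha\in M\cap\kappa}{j_U(\alpha)\IN_U y}$ and $x$ witnesses powerset preservation at $y$.

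For the reverse direction I would assume $\Theta(M,j)$ together with a witness $\gamma$ that $j$ jumps at $\kappa=\crit{j}$, and verify that $U_j^\gamma$ is $\kappa$-amenable. Given a sequence $\vec x=\seq{x_\beta}{\beta<\kappa}\in M$ of subsets of $\kappa$, I would pass to $j(\vec x)\in N$ and form, inside $N$, the slice $y=\Set{\beta\IN_N j(\kappa)}{\gamma\IN_N j(\vec x)(\beta)}$; this is legitimate since $\gamma\in N$, and plainly $N\models\anf{y\subseteq j(\kappa)}$. Using elementarity to compute $j(\vec x)(j(\beta))=j(x_\beta)$ for $\beta\in M\cap\kappa$, one checks that $j(\beta)\IN_N y$ holds exactly when $\gamma\IN_N j(x_\beta)$, that is, exactly when $x_\beta\in U_j^\gamma$. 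Applying $\kappa$-powerset preservation to $y$ then produces $x\in M$ with $M\cap x=\Set{\alpha\in M\cap\kappa}{j(\alpha)\IN_N y}$, and after intersecting with $\kappa$ this $x$ is precisely an amenability witness for $\vec x$.

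The main obstacle I anticipate is not a single deep step but the careful bookkeeping between the genuine ordinal $\kappa$, the set $M\cap\kappa$ of $M$-ordinals below $\kappa$, and the target structure $\langle N,\IN_N\rangle$, together with the precise reading of the amenability schema. In particular, I would lean on the footnoted clarification that a witness $x$ to amenability need only behave correctly on parameters $\beta\in M\cap\kappa$, and on the remarks accompanying Definition~\ref{kpp} that the relevant subsets coded by $M$ are always available in $N$. Matching these conventions against the definitions of $\kappa$-amenability and of $\kappa$-powerset preservation, and justifying the two normalizing reductions (that $f$ may be taken to have values in $\POT{\kappa}$, and that the witness $x$ may be taken to be a subset of $\kappa$), is where the argument will need the most attention.
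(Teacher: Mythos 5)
Your treatment of clause (5) is essentially the paper's own proof of that clause and is correct: the forward direction feeds the sequence $x_\alpha=\Set{\xi<\kappa}{\alpha\in f(\xi)}$ into $\kappa$-amenability and translates back via {\L}os' Theorem, and the reverse direction slices $j(\vec x)$ at the witness $\gamma$ and applies $\kappa$-powerset preservation, exactly as in the paper. One local caveat: since $\langle N,\IN_N\rangle$ may be ill-founded, you cannot ``form, inside $N$,'' the set $y$ by external comprehension, and ``since $\gamma\in N$'' is not the justification; $y$ must be obtained as an \emph{element} of $N$ by applying elementarity to the $\ZFC^-$-fact that in $M$, for every ordinal $\delta$, there is $y\subseteq\kappa$ with $\beta\in y\leftrightarrow\delta\in x_\beta$ for all $\beta<\kappa$, and then instantiating $\delta=\gamma$. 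This is precisely how the paper produces $y$.

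The genuine gap is coverage. The statement comprises five clauses, and your proposal proves only clause (5): clause (3), which you invoke to ``dispose of every property except amenability,'' is itself part of what is to be proved, and in the paper it is derived from clause (2), whose proof requires a real argument (from a putative jump at some $\beta<\alpha$ one extracts $\map{f}{\kappa}{\beta}$ in $M$, forms the sequence $x_\delta=\Set{\xi<\kappa}{\delta<f(\xi)<\beta}$, and contradicts ${<}\alpha$-completeness; the converse uses that $j[\beta]$ is exactly the $N$-initial segment below $j(\beta)$). Clause (1) likewise needs the observation that $[\id_\kappa]_U$ witnesses the jump at $\kappa$, using that $U$ contains all final segments. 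Most importantly, clause (4) is \emph{not} a cosmetic variant of clause (5): by Definition \ref{lkpp}, ${<}\kappa$-powerset preservation demands $j(x)=y$ on the nose, not merely $M\cap x=\Set{\alpha\in M\cap\kappa}{j(\alpha)\IN_N y}$ as in Definition \ref{kpp}. In the forward direction of (4), the amenability witness only yields the latter, and upgrading it to $j_U(x)=[f]_U$ requires the additional facts that $j_U[\alpha]=\Set{\gamma\in\Ult{M}{U}}{\gamma\IN_U j_U(\alpha)}$ for $\alpha<\kappa=\crit{j_U}$ together with Extensionality in the ultrapower --- a step that appears nowhere in your outline. As written, the proposal is a correct proof of one clause of the proposition, with clauses (1), (2), (3) unproved and the extensionality argument needed for (4) missing.
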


\begin{proof}
 Throughout this proof, we let $M$ denote a $\Sigma_0$-correct $\ZFC^-$-model with $\kappa\in M$. 

 (1) Let $U$ be an $M$-ultrafilter on $\kappa$ that contains all final segments of $\kappa$ in $M$. 
 Then $\id_\kappa\in M$ and, given $\alpha<\kappa$ in $M$, we have $(\alpha,\kappa)\in U$  and hence $j_U(\alpha)=[c_\alpha]_U \IN_U [\id_\kappa]_U \IN_U j_U(\kappa)$. Hence $[\id_\kappa]_U$ witnesses that $j_U$ jumps at $\kappa$. 
 In the other direction, if $\gamma$ witnesses that $\map{j}{M}{\langle N,\epsilon_N\rangle}$ jumps at $\kappa$ and $\alpha\in M\cap\kappa$, then we have $[\alpha,\kappa)\in M$,  $\gamma \IN_N [j(\alpha),j(\kappa))^N=j([\alpha,\kappa))$ and hence $[\alpha,\kappa)\in U_j^\gamma$, as desired.

 (2) Pick $\alpha\leq\kappa$ in $M$. Let $U$ be an $M$-ultrafilter on $\kappa$ that is ${<}\alpha$-complete for $M$ and contains all final segments of $\kappa$ in $M$. 
 By (1), $\crit{j_U}$ exists. Assume, towards a contradiction, that $j_U$ jumps at $\beta<\alpha$. Then there is $\map{f}{\kappa}{\beta}$ in $M$ such that $[c_\delta]_U \IN_U [f]_U \IN_U [c_\beta]_U$ holds for every $\delta\in M\cap\beta$. By our assumptions on $M$, there is a sequence $\seq{x_\delta}{\delta<\beta}$ of subsets of $\kappa$ in $M$ such that $x_\delta=\Set{\xi<\kappa}{\delta<f(\xi)<\beta}$ for all $\delta<\beta$ and $x_\delta\in U$ for all $\delta\in M\cap\beta$. In this situation, the ${<}\alpha$-completeness of $U$ implies that $\bigcap_{\delta<\beta}x_\delta\in U$. Pick $\xi\in M\cap\bigcap_{\delta<\beta}x_\delta$. Then $f(\xi)\in M\cap\beta$ and $\xi\in x_{f(\xi)}$, a contradiction. 
 
 In the other direction, assume that $\gamma$ witnesses $\map{j}{M}{\langle N,\epsilon_N\rangle}$ to jump at $\kappa$, and that $\crit{j}\geq\alpha$. Pick a sequence $\vec{x}=\seq{x_\delta}{\delta<\beta}\in M$ with $\beta<\alpha$, and with $\gamma \IN_N j(x_\delta)$ for all $\delta\in M\cap\beta$. 
By our assumption, we have $j[\beta]=\Set{\delta\in N}{\delta \IN_N j(\beta)}$, and hence  $\gamma \IN_N j(\vec{x})(\xi)$ holds for all $\xi\in N$ with $\xi \IN_N j(\beta)$. This allows us to conclude that $\gamma \IN_N j(\bigcap_{\delta<\beta}x_\delta)$ and hence $\bigcap_{\delta<\beta}x_\delta\in U_j^\gamma$. 

 (3) This statement is a direct consequence of (2), because every non-principal $M$-ultrafilter on $\kappa$ that is ${<}\kappa$-complete for $M$ contains all final segments of $\kappa$ in $M$. 

 (4) Let $U$ be a non-principal $M$-ultrafilter on $\kappa$ that is ${<}\kappa$-complete for $M$ and ${<}\kappa$-amenable for $M$. Then (3) implies that $\crit{j_U}=\kappa$. 
 Fix a function $\map{f}{\kappa}{M}$ in $M$ and $\alpha\in M\cap\kappa$ such that $[f]_U$ is a subset of $j_U(\alpha)$ in $\langle\Ult{M}{U},\epsilon_U\rangle$. 
 Then the sequence $\seq{x_\beta}{\beta<\alpha}$ with $x_\beta=\Set{\xi<\kappa}{\beta\in f(\xi)}$ for all $\beta<\alpha$ is an element of $M$. Given $\beta\in M\cap\alpha$, \L{}os' Theorem implies that $j_U(\beta) \IN_U [f]_U$ if and only if $x_\beta\in U$. The ${<}\kappa$-amenability of $U$ now yields an $x\in M$ with $M\cap x=\Set{\beta\in M\cap\alpha}{j_U(\beta) \IN_U [f]_U}$. 
 Since $j_U[\alpha]=\Set{\gamma\in\Ult{M}{U}}{\gamma \IN_U j_U(\alpha)}$, extensionality allows us to conclude that $j_U(x)=[f]_U$. 

 In the other direction, let $\map{j}{M}{\langle N,\epsilon_N\rangle}$ be a ${<}\kappa$-powerset preserving elementary embedding with $\crit{j}=\kappa$ and let $\gamma$ be any witness that $\crit{j}=\kappa$. By (3), we know that $U^\gamma_j$ is ${<}\kappa$-complete for $M$ and non-principal. Fix a sequence $\vec{x}=\seq{x_\beta}{\beta<\alpha}$ of subsets of $\kappa$ in $M$ with $\alpha<\kappa$. 
 Then there is $y\in N$ with $$\langle N,\epsilon_N\rangle\models\anf{y\subseteq j(\alpha) ~ \wedge ~ \forall\beta<j(\alpha) ~ [\beta\in y ~ \longleftrightarrow ~ \gamma\in j(\vec{x})(\beta)]}.$$ By our assumption, there is $x\in M$ with $j(x)=y$ and $$x_\beta\in U^\gamma_j ~ \Longleftrightarrow ~ \gamma \IN_N j(x_\beta) ~ = ~ j(\vec{x})(j(\beta)) ~ \Longleftrightarrow ~ j(\beta) \IN_N y ~ = ~ j(x) ~ \Longleftrightarrow ~ \beta\in x$$ for all $\beta\in M\cap\alpha$. This shows that $\langle M,U^\gamma_j\rangle\models\anf{\forall\beta<\alpha ~ [x_\beta\in U \longleftrightarrow \beta\in x]}$.

(5) Let $U$ be a ${<}\kappa$-complete, non-principal and $\kappa$-amenable $M$-ultrafilter on $\kappa$. By (3), we have $\crit{j}=\kappa$. Fix a function $\map{f}{\kappa}{M}$ in $M$ with the property that $[f]_U$ is a subset of $j_U(\kappa)$ in $\langle\Ult{M}{U},\epsilon_U\rangle$. 
 Then $M$ contains the sequence $\seq{x_\beta}{\beta<\kappa}$ with $x_\beta=\Set{\xi<\kappa}{\beta\in f(\xi)}$ and $\kappa$-amenability yields an $x\in M$ with $M\cap x=\Set{\beta\in M\cap\kappa}{x_\beta\in U}$. Given $\beta\in M\cap\kappa$, it is now easy to see that $\beta\in x$ if and only if $j_U(\beta)\epsilon_U[f]_U$. 

 In the other direction, let $\map{j}{M}{\langle N,\epsilon_N\rangle}$ be $\kappa$-powerset preserving with $\crit{j}=\kappa$ and let $\gamma$ be a witness that $j$ jumps at $\kappa$. Then (3) shows that $U$ is ${<}\kappa$-complete and non-principal. 
 Fix a sequence $\seq{x_\beta}{\beta<\kappa}$ of subsets of $\kappa$ in $M$ and $y\in N$ with $$\langle N,\epsilon_N\rangle\models\anf{y\subseteq j(\kappa) ~ \wedge ~ \forall\beta<j(\kappa) ~ [\beta\in y ~ \longleftrightarrow ~ \gamma\in j(\vec{x})(\beta)]}.$$ Then there is $x\in M$ with $M\cap x=\Set{\beta\in M\cap\kappa}{j(\beta) \IN_N y}$. 
 Given $\beta\in M\cap \kappa$, we then have $\beta\in x$ if and only if $x_\beta\in U^\gamma_j$. 
\end{proof}

We next consider situations in which an elementary embedding $\map{j}{M}{\langle N,\epsilon_N\rangle}$ induces a canonical $M$-ultrafilter $U$ on $\kappa$, i.e.  situations in which $j$ is a $\kappa$-embedding. 
Given an ordinal $\kappa$,  a property $\Psi(M,U)$ of $\Sigma_0$-correct $\ZFC^-$-models $M$ containing $\kappa$ and $M$-ultrafilters $U$ on $\kappa$ \emph{$\kappa$-corresponds} to a property $\Theta(M,j)$ of such models $M$ and elementary embeddings $\map{j}{M}{\langle N,\epsilon_N\rangle}$ if the following statements hold: 
\begin{itemize} 
 \item If $\Psi(M,U)$ holds for an $M$-ultrafilter $U$ on $\kappa$,  then $\Theta(M,j_U)$ holds. 
 
  \item If $\Theta(M,j)$ holds for an elementary embedding $\map{j}{M}{\langle N,\epsilon_N\rangle}$, then $j$ is a $\kappa$-embedding and $\Psi(M,U_j)$ holds. 
  \end{itemize}

\begin{proposition}\label{proposition:correspondence2}
 \anf{$U$ is an $M$-ultrafilter on $\kappa$ that is $M$-normal with respect to $\subseteq$-decreasing sequences and contains all final segments of $\kappa$ in $M$} $\kappa$-corresponds to \mbox{\anf{$j$ is a $\kappa$-embedding}.}
\end{proposition}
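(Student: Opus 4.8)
The plan is to verify the two implications in the definition of $\kappa$-correspondence, keeping $M$ a fixed $\Sigma_0$-correct $\ZFC^-$-model with $\kappa\in M$. The common tool is a description of the $N$-ordinals below $\kappa^N$ for a $\kappa$-embedding $j$: arguing as in the first half of the proof of Proposition \ref{proposition:KappaEmbeddingOrdinalHeight}, the $\IN_N$-minimality of $\kappa^N$ among the witnesses that $j$ jumps at $\kappa$, together with the fact that $\kappa$ is a limit ordinal, yields $\Set{\gamma\in N}{\gamma\IN_N\kappa^N}=\Set{\gamma\in N}{\exists\alpha\in M\cap\kappa ~ \gamma\IN_N j(\alpha)}$. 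I would record this identity first; note that, unlike Proposition \ref{proposition:KappaEmbeddingOrdinalHeight}, it does not presuppose $\crit{j}=\kappa$, which matches the fact that $M$-normality with respect to $\subseteq$-decreasing sequences is normality with the completeness requirement removed.

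For the implication from ultrafilters to embeddings, suppose $U$ is an $M$-ultrafilter on $\kappa$ that contains all final segments of $\kappa$ in $M$ and is $M$-normal with respect to $\subseteq$-decreasing sequences. By Proposition \ref{proposition:correspondence1}.(1), $j_U$ jumps at $\kappa$ and $[\id_\kappa]_U$ is a witness, so it remains to show that $[\id_\kappa]_U$ is the $\IN_U$-minimal witness, whence $j_U$ is a $\kappa$-embedding with $\kappa^{\Ult{M}{U}}=[\id_\kappa]_U$. Assume otherwise, so that some $[f]_U\IN_U[\id_\kappa]_U$ is again a witness; after shrinking to a $U$-set on which $f$ is regressive and ordinal-valued (legitimate since $[f]_U$ is an $N$-ordinal below $[\id_\kappa]_U$), we have $X=\Set{\alpha<\kappa}{f(\alpha)<\alpha}\in U$, while, by the definition of $\IN_U$, the witness property of $[f]_U$ translates into $x_\beta=\Set{\alpha<\kappa}{f(\alpha)>\beta}\in U$ for every $\beta\in M\cap\kappa$. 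The sequence $\seq{x_\beta}{\beta<\kappa}$ belongs to $M$ and is $\subseteq$-decreasing with all terms in $U$, so $M$-normality with respect to $\subseteq$-decreasing sequences gives $\Delta_{\beta<\kappa}x_\beta\in U$. Since $\Delta_{\beta<\kappa}x_\beta=\Set{\alpha<\kappa}{f(\alpha)\geq\alpha}$ is disjoint from $X$, and both lie in the $M$-ultrafilter $U$, this contradicts $\emptyset\notin U$.

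For the converse, let $j$ be a $\kappa$-embedding and put $U_j=U_j^{\kappa^N}$; I must check $\Psi(M,U_j)$. That $U_j$ is an $M$-ultrafilter on $\kappa$ containing all final segments of $\kappa$ in $M$ is the backward half of Proposition \ref{proposition:correspondence1}.(1) for the witness $\kappa^N$. For $M$-normality with respect to $\subseteq$-decreasing sequences, fix a $\subseteq$-decreasing $\vec x=\seq{x_\alpha}{\alpha<\kappa}\in M$ with $x_\alpha\in U_j$, i.e.\ $\kappa^N\IN_N j(x_\alpha)$, for all $\alpha\in M\cap\kappa$. By elementarity, $j(\Delta_{\alpha<\kappa}x_\alpha)$ is the diagonal intersection in $N$ of $j(\vec x)$, so it suffices to prove $\kappa^N\IN_N j(\vec x)(\gamma)$ for every $N$-ordinal $\gamma\IN_N\kappa^N$. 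Given such a $\gamma$, the identity recorded above supplies $\alpha\in M\cap\kappa$ with $\gamma\IN_N j(\alpha)$; since $j(\vec x)$ is $\subseteq$-decreasing in $N$ and $\gamma\IN_N j(\alpha)$, we get $j(x_\alpha)\subseteq j(\vec x)(\gamma)$ in $N$, and then $\kappa^N\IN_N j(x_\alpha)$ yields $\kappa^N\IN_N j(\vec x)(\gamma)$, as required. Hence $\Delta_{\alpha<\kappa}x_\alpha\in U_j$.

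I expect the minimality step in the first implication to be the crux: the point is that a hypothetical smaller witness $[f]_U$ makes $f$ regressive on a $U$-large set yet, for each fixed $\beta<\kappa$, larger than $\beta$ on a $U$-large set, and the $\subseteq$-decreasing sequence $\seq{\Set{\alpha}{f(\alpha)>\beta}}{\beta<\kappa}$ is precisely the object whose diagonal intersection, forced into $U$ by normality, collides with $X$. The converse is then a routine monotonicity calculation once the description of $\Set{\gamma}{\gamma\IN_N\kappa^N}$ is available.
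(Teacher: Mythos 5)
Your proof is correct and takes essentially the same approach as the paper's: it establishes $[\id_\kappa]_U$ as the $\IN_U$-minimal witness via a diagonal-intersection contradiction from $M$-normality with respect to $\subseteq$-decreasing sequences, and proves the converse by the same monotonicity computation, using the fact that every $\gamma\IN_N\kappa^N$ lies $\IN_N$-below some $j(\alpha)$ with $\alpha\in M\cap\kappa$. The only cosmetic difference is that the paper folds your two ingredients into the single decreasing sequence $\seq{\Set{\xi<\kappa}{\beta<f(\xi)<\xi}}{\beta<\kappa}$ and derives a pointwise contradiction at some $\xi$ in its diagonal intersection, whereas you separate the regressive set $X$ from the tail sets $x_\beta$ and conclude via disjointness of two $U$-sets.
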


\begin{proof}
 Let $M$ denote a $\Sigma_0$-correct $\ZFC^-$-model with $\kappa\in M$. 
  
   First, assume that $U$ is $M$-normal with respect to $\subseteq$-decreasing sequences and contains all final segments of $\kappa$ in $M$. Then, the proof of Proposition \ref{proposition:correspondence1}.(1) shows that $[\id_\kappa]_U$ witnesses that $j_U$ jumps at $\kappa$. 
 Assume, towards a contradiction, that there is an $\map{f}{\kappa}{M}$ in $M$ with $[f]_U \IN_U [\id_\kappa]_U$ and $j_U(\beta) \IN_U [f]_U$ for all $\beta<\kappa$. 
 Then the sequence $\seq{x_\beta}{\beta<\kappa}$ with $x_\beta=\Set{\xi<\kappa}{\beta<f(\xi)<\xi}$ for all $\beta<\kappa$ is an element of $M$, and we have $x_\beta\in U$ for all $\beta\in M\cap\kappa$. 
 Since this sequence is $\subseteq$-decreasing, we know that $\Delta_{\beta<\kappa}x_\beta\in U$. But then, there is $\xi\in M\cap\Delta_{\beta<\kappa}x_\beta$ with $\xi\in x_{f(\xi)}$, a contradiction. This shows that $[\id_\kappa]_U$ witnesses that $j_U$ is a $\kappa$-embedding. 

 Now, assume that $\map{j}{M}{\langle N,\epsilon_N\rangle}$ is a $\kappa$-embedding. Then, Proposition \ref{proposition:correspondence1}.(1) shows that $U$ contains all final segments of $\kappa$ in $M$. Let $\vec{x}=\seq{x_\beta}{\beta<\kappa}$ be a $\subseteq$-decreasing sequence of subsets of $\kappa$ in $M$ with $x_\beta\in U_j$ for all $\beta\in M\cap\kappa$. 
 Pick $\gamma\in N$ with $\gamma \IN_N \kappa^N$.  Then the minimality of $\kappa^N$ yields $\beta\in M\cap\kappa$ with $\gamma \IN_N j(\beta) \IN_N \kappa^N$. Since $\langle N,\epsilon_N\rangle$ believes that $j(\vec{x})$ is $\subseteq$-decreasing and $x_\beta\in U_j$ implies that $\kappa^N \IN_N j(x_\beta)$, this shows that $\kappa^N \IN_N j(\vec{x})(\gamma)$. But this shows that $\kappa^N ~ \epsilon_N ~ j(\Delta_{\beta<\kappa} x_\beta)$ and hence $\Delta_{\beta<\kappa}x_\beta\in U_j$.  
\end{proof}

If $\map{j}{M}{\langle N,\epsilon_N\rangle}$ is a $\kappa$-embedding that is induced by an $M$-ultrafilter $U$, we may also write $\kappa^U$ rather than $\kappa^N$.
We can now add the assumptions from Proposition  \ref{proposition:correspondence2} to each item in Proposition  \ref{proposition:correspondence1}. 
For example, Clauses (4) and  (5) in Proposition \ref{proposition:correspondence1} yields the following:

\begin{corollary}\label{corollary:correspondence1}
  \anf{$U$ is an $M$-ultrafilter on $\kappa$ that contains all final segments of $\kappa$ in $M$, 
  and is $M$-normal and ${<}\kappa$-amenable (respectively, $\kappa$-amenable) for $M$}  $\kappa$-corresponds to \anf{$\crit{j}=\kappa\in M$ and $j$ is a ${<}\kappa$-powerset preserving (respectively,  $\kappa$-powerset preserving) $\kappa$-embedding}. \qed 
\end{corollary}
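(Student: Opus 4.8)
The plan is to assemble the corollary from the two preceding results together with the normality equivalence recorded in the footnote to Definition~\ref{ultrafilterproperties}: an $M$-ultrafilter on $\kappa$ that contains all final segments of $\kappa$ in $M$ is $M$-normal if and only if it is both ${<}\kappa$-complete for $M$ and $M$-normal with respect to $\subseteq$-decreasing sequences. I would first invoke this equivalence, since it is the bridge that lets me trade the single hypothesis \anf{$M$-normal} for the pair of hypotheses appearing separately in Proposition~\ref{proposition:correspondence1}.(4)/(5) and in Proposition~\ref{proposition:correspondence2}. I would treat the ${<}\kappa$-amenable case explicitly; the $\kappa$-amenable case is identical, with Proposition~\ref{proposition:correspondence1}.(5) in place of (4).

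For the forward direction, I would assume $\Psi(M,U)$, so that $U$ contains all final segments of $\kappa$ in $M$, is $M$-normal, and is ${<}\kappa$-amenable for $M$. Containing all final segments already forces $U$ to be non-principal. Applying the normality equivalence, $U$ is ${<}\kappa$-complete for $M$ and $M$-normal with respect to $\subseteq$-decreasing sequences. Proposition~\ref{proposition:correspondence2} then gives that $j_U$ is a $\kappa$-embedding, while Proposition~\ref{proposition:correspondence1}.(4), fed the non-principality, ${<}\kappa$-completeness, and ${<}\kappa$-amenability, gives that $\crit{j_U}=\kappa$ and that $j_U$ is ${<}\kappa$-powerset preserving. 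Together these yield $\Theta(M,j_U)$.

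For the reverse direction, I would assume $\Theta(M,j)$, i.e. $\crit{j}=\kappa$ and $j$ is a ${<}\kappa$-powerset preserving $\kappa$-embedding. That $j$ is a $\kappa$-embedding is part of the hypothesis, so the canonical witness $\kappa^N$ is available, and I would apply Proposition~\ref{proposition:correspondence1}.(4) \emph{precisely with the witness} $\gamma=\kappa^N$, so that the resulting ultrafilter $U_j^{\kappa^N}$ is exactly the canonical ultrafilter $U_j$; this yields that $U_j$ is non-principal, ${<}\kappa$-complete, and ${<}\kappa$-amenable for $M$. Proposition~\ref{proposition:correspondence2}, applied to the $\kappa$-embedding $j$, yields that $U_j$ contains all final segments of $\kappa$ in $M$ and is $M$-normal with respect to $\subseteq$-decreasing sequences. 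Recombining the ${<}\kappa$-completeness with $M$-normality with respect to $\subseteq$-decreasing sequences via the normality equivalence yields that $U_j$ is $M$-normal, hence $\Psi(M,U_j)$.

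The only genuinely new ingredient is the normality equivalence from the footnote, so that is where I expect the (modest) work to concentrate: the forward implication decomposes $M$-normality by padding a short sequence with copies of $\kappa$ and intersecting the resulting diagonal intersection with a tail, while the backward implication reassembles an arbitrary $M$-normal diagonal intersection from the $\subseteq$-decreasing sequence of its running intersections. The subtler bookkeeping point, easy to overlook, is that Proposition~\ref{proposition:correspondence1} produces $U_j^\gamma$ for an \emph{arbitrary} witness $\gamma$, whereas the $\kappa$-correspondence demands the canonical ultrafilter $U_j=U_j^{\kappa^N}$; one must therefore take $\gamma=\kappa^N$ to guarantee that the two induced ultrafilters coincide.
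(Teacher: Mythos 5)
Your proposal is correct and matches the paper's intended argument: the corollary is stated there as an immediate combination of Proposition \ref{proposition:correspondence1}.(4)/(5) with Proposition \ref{proposition:correspondence2}, mediated by exactly the footnote equivalence (for ultrafilters containing all final segments, $M$-normality is equivalent to ${<}\kappa$-completeness plus $M$-normality for $\subseteq$-decreasing sequences) that you make explicit. Your observation that one must instantiate the witness as $\gamma=\kappa^N$ so that $U_j^\gamma$ is the canonical ultrafilter $U_j$ required by the definition of $\kappa$-correspondence is precisely the bookkeeping the paper leaves tacit.
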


\begin{remark}
 Using the above results, one could easily rephrase  the results from \cite{MR0460120}, \cite{MR513844} and \cite{MR2617841} cited in the introduction in order to obtain characterizations of inaccessible, of weakly compact, and of completely ineffable cardinals in terms of the existence of certain elementary embeddings on countable elementary submodels of structures of the form $\HH{\theta}$. We leave this -- given the above results, straightforward -- task to the interested reader (for inaccessible cardinals, this was done in \cite{ranosch}).  
\end{remark}

The following lemma will be useful later on.

\begin{lemma}\label{lemma:kappapowersetpreservings}
  Let $\kappa$ be an inaccessible cardinal, let $\map{b}{\kappa}{\VV_\kappa}$ be a bijection, let $M$ be a $\Sigma_0$-correct model of $\ZFC^-$ with $(\kappa+1)\cup\{b\}\subseteq M$ and let  $\map{j}{M}{\langle N,\epsilon_N\rangle}$ be a $\kappa$-powerset preserving $\kappa$-embedding with $\crit{j}=\kappa$. 
 \begin{enumerate} 
  \item The map $$\Map{j_*}{M\cap\VV_{\kappa+1}}{\langle\Set{y\in N}{y\IN_N\VV^N_{\kappa^N+1}},\epsilon_N\rangle}{x}{(j(x)\cap V_{\kappa^N})^N}$$ is an $\IN$-isomorphism extending $j\restriction(M\cap\VV_\kappa)$. 

  \item\label{lemma:kappapowersetpreserving:2} There is an $\IN$-isomorphism $$\map{j^*}{\HH{\kappa^+}^M}{\langle\Set{y\in N}{y\IN_N\HH{(\kappa^N)^+}^N},\epsilon_N\rangle}$$ extending $j_*$. 
 \end{enumerate}
\end{lemma}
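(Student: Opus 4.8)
The plan is to prove both statements by exploiting the $\kappa$-powerset preservation to transfer, level by level, the structure of $M$ below (and just above) $\kappa$ into the well-founded part of $N$. The guiding principle is that $j$ already restricts to an $\IN$-isomorphism between $M\cap\VV_\kappa$ and an initial segment of $N$, since $\crit j=\kappa$ means $j[M\cap\kappa]=\{\beta\in N\mid\beta\IN_N\kappa^N\}$ and, by $\Sigma_0$-correctness together with elementarity, $j\restriction(M\cap\VV_\kappa)$ is an $\IN$-isomorphism onto $\{y\in N\mid y\IN_N\VV^N_{\kappa^N}\}$. For part (1), I would first check that $j_*$ is well-defined: given $x\in M\cap\VV_{\kappa+1}$, elementarity gives $j(x)\in N$ with $\langle N,\epsilon_N\rangle\models\anf{j(x)\subseteq\VV_{j(\kappa)}}$, so $(j(x)\cap\VV_{\kappa^N})^N$ is a legitimate element of $N$ below $\VV^N_{\kappa^N+1}$. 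That $j_*$ extends $j\restriction(M\cap\VV_\kappa)$ is immediate, since for $x\in M\cap\VV_\kappa$ we have $j(x)\subseteq\VV^N_{\kappa^N}$ in $N$, so intersecting with $\VV_{\kappa^N}$ changes nothing.

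The heart of part (1) is showing $j_*$ is an $\IN$-isomorphism, and this is exactly where $\kappa$-powerset preservation enters. For injectivity and for surjectivity onto $\{y\in N\mid y\IN_N\VV^N_{\kappa^N+1}\}$, the key case is $x\in M\cap(\VV_{\kappa+1}\setminus\VV_\kappa)$, i.e. $x\subseteq\VV_\kappa$ coded (via the bijection $b$) as a subset of $\kappa$ in $M$. Here I would argue that for any $y\in N$ with $y\IN_N\VV^N_{\kappa^N}$ (equivalently, $N$ thinks $y\subseteq\VV_{\kappa^N}$), Definition \ref{kpp} applied to the coded subset of $j(\kappa)$ yields $x\in M$ with $M\cap x=\{\alpha\in M\cap\kappa\mid j(\alpha)\IN_N y\}$; decoding via $b$ and $j(b)$ produces the required preimage element of $M\cap\VV_{\kappa+1}$. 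Preservation of $\IN$ in both directions reduces, after this coding, to the basic fact that $\alpha\in x\Leftrightarrow j(\alpha)\IN_N j_*(x)$, which follows from $j$ being an $\IN$-isomorphism on $M\cap\VV_\kappa$. I expect the bookkeeping around the bijection $b$ — translating between membership in sets of rank $\kappa$ and membership of ordinals in subsets of $\kappa$ — to be the main technical obstacle, since one must repeatedly check that $j(b)$ continues to code elements of $\VV^N_{\kappa^N}$ correctly in $N$.

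For part (2), the plan is to iterate the idea of part (1) up through $\HH{\kappa^+}^M$. Every element of $\HH{\kappa^+}^M$ is, in $M$, coded by a subset of $\kappa$ (a well-founded extensional relation on $\kappa$ whose transitive collapse is the given set), and this coding is available via $b$ and absolute by $\Sigma_0$-correctness. I would define $j^*$ on such a coded set by applying $j_*$ to the code and then taking the transitive collapse inside $N$ of the resulting relation on $\VV^N_{\kappa^N}$; one must verify this collapse exists in the well-founded part of $N$, which holds because $\kappa^N$ lies in the well-founded part (Proposition \ref{proposition:KappaEmbeddingOrdinalHeight}) and the coded relation, being $\IN$-isomorphic under $j_*$ to a genuinely well-founded relation, is itself well-founded in $N$. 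That $j^*$ is a well-defined $\IN$-isomorphism onto $\{y\in N\mid y\IN_N\HH{(\kappa^N)^+}^N\}$ then follows from part (1) applied to the codes, together with Łoś-style absoluteness of the coding and decoding operations; surjectivity again uses $\kappa$-powerset preservation to pull back arbitrary codes living below $(\kappa^N)^+$ in $N$. The main subtlety here is independence of the choice of code and the verification that membership is preserved, which I would handle by noting that any two codes of the same set are related by an isomorphism in $M$ that $j$ carries to a corresponding isomorphism in $N$.
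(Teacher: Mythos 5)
Your proposal is correct and takes essentially the same route as the paper: part (1) codes subsets of $\VV_\kappa$ by subsets of $\kappa$ via $b$ (the paper additionally fixes a club of $\alpha<\kappa$ with $b[\alpha]=\VV_\alpha$ to keep $j(b)$ coding correctly below $\kappa^N$, exactly the bookkeeping you anticipate) and uses $\kappa$-powerset preservation for surjectivity, while part (2) is precisely the paper's uniformly definable coding of elements of $\HH{\kappa^+}$ by subsets of $\kappa$ (well-founded extensional relations, collapsed via Mostowski), with your code-independence observation playing the role of the paper's relation $\equiv_\kappa$. One small caution on phrasing: that $N$ believes a transferred code is well-founded does not follow from external isomorphism alone but from the surjectivity of $j_*$ established in part (1) --- well-foundedness is first-order over the $\VV_{\kappa+1}$-level structures (``every nonempty subset of the field has a minimal element''), so it transfers along $j_*$, and the collapse is then computed internally by $N\models\ZFC^-$, with no need to place it in the well-founded part of $N$ in advance (that it lands there is a consequence of the lemma, not an ingredient of its proof).
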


\begin{proof}
 (1) First, note that, using $\Sigma_0$-correctness,  one can show that $\ran{b}^M=\ran{b}=\VV_\kappa=\VV_\kappa^M\in M$ and $\VV_{\kappa+1}^M=\VV_{\kappa+1}\cap M$. 
 Now, if $x\in\VV_\kappa$, then there is $\alpha<\kappa$ with $x\subseteq\VV_\alpha$ and, since $j(\alpha)\IN_N\kappa^N$ holds, we have $j_*(x)=j(x)\IN_N\VV_{\kappa^N}^N$. 
 This shows that $j_*(x)\IN_N\VV_{\kappa^N+1}^N$ for all $x\in\VV_\kappa$ and $j_*\restriction\VV_\kappa$ is an $\IN$-homomorphism, i.e. given $x,y\in\VV_\kappa$, we have $x\in y$ if and only if $j_*(x)\IN_N j_*(y)$. 
  The proof of Proposition \ref{proposition:KappaEmbeddingOrdinalHeight} shows that for every $N$-ordinal $\gamma$ with $\gamma\IN_N\kappa^N$, there is $\beta\in\kappa$ with $j(\beta)=\gamma$. 
 %
   In particular, if $z\in\VV_{\kappa+1}^M\setminus\VV_\kappa$, then $j_*(z)\IN_N(\VV_{\kappa^N+1})^N$ and  $j_*(z)\not{\hspace{-2.3pt}\IN}_N\VV_{\kappa^N}^N$. 
 This shows that $j_*$ is an $\IN$-homomorphism and, by Extensionality, this also shows that $j_*$ is injective.

 Now, pick a club subset $C$ of $\kappa$ in $M$ with $b[\alpha]=\VV_\alpha$ for all $\alpha\in C$. Note that the bijectivity of $b$ implies that $b[x]\cap\VV_\alpha=b[x\cap\alpha]$ holds for all $\alpha\in C$ and $x\subseteq\kappa$. 
 Since $j[\kappa]$ is the set of all elements of $\kappa^N$ in $\langle N,\IN_N\rangle$, elementarity implies that $\kappa^N\IN_N j(C)$ and  
 %
 %
 $$j[\VV_\kappa] ~ = ~ (j\circ b)[\kappa]~ = ~ \Set{y\in N}{y\IN_N(j(b)[\kappa^N])^N} ~ = ~ \Set{y\in N}{y\IN_N\VV^N_{\kappa^N}}.$$

 Finally, pick $z\in N$ with $z\IN_N\VV^N_{\kappa^N+1}$. By elementarity and the above computations, there is $y\in N$ with $\langle N,\IN_N\rangle\models\anf{y\subseteq\kappa^N\wedge j(b)[y]=z}$. 
  Using that $j$ is $\kappa$-powerset preserving, pick $x\in\POT{\kappa}^M$ satisfying  $$x ~ = ~ M\cap x ~ = ~ \Set{\alpha<\kappa}{j(\alpha)\IN_N y}.$$  
Then $b[x]\in\VV_{\kappa+1}^M$ and the fact that $\kappa^N\IN j(C)$ implies that $$j_*(b[x]) ~ = ~ ((j(b)[j(x)])\cap\VV_{\kappa^N})^N ~ = ~ (j(b)[j(x)\cap\kappa^N])^N.$$ 

 \begin{claim*}
  $j_*(b[x])=z$. 
 \end{claim*}

 \begin{proof}[Proof of the Claim]
  First, fix $u\in N$ with $u\IN_N j_*(b[x])$. Then there is an $N$-ordinal $\gamma$ with $\gamma\IN_N\kappa^N$, $\gamma\IN_N j(x)$ and $(j(b)(\gamma))^N=u$. 
 In this situation, we can find $\beta<\kappa$ with $j(\beta)=\gamma$ and $j(b(\beta))=u$. But then elementarity implies that $\beta\in x$, $\gamma\IN_N y$ and $u\IN_N z$. 
  In the other direction, fix $u\in N$ with $u\IN_N z$. Then there is an $N$-ordinal $\gamma$ with $\gamma\IN_N y$ and $(j(b)(\gamma))^N=u$. As above, we can find $\beta<\kappa$ with $j(\beta)=\gamma$. Then $\beta\in x$ and $u=j(b(\beta))\IN_N j_*(b[x])$. 
 By Extensionality, these computations yield the desired equality. 
 \end{proof}

 Since the above claim shows that $j_*$ is surjective, we now know that this map is an $\IN$-isomorphism. 

\medskip

 (2)  We are going to make use of the standard coding of elements of $\HH{\kappa^+}$ by subsets of $\kappa$: There are first order $\IN$-formulas $\varphi_0(v_0)$, $\varphi_1(v_0,v_1)$, $\varphi_2(v_0,v_1)$, $\varphi_3(v_0,v_1,)$ and $\varphi_4(v_0,v_1,v_2)$ 
 with the property that the axioms of $\ZFC^-$ prove that whenever $\map{b}{\kappa}{\VV_\kappa}$ is a bijection for some inaccessible cardinal $\kappa$, then 
 \begin{itemize} 
  \item the formula $\varphi_0$ defines a \emph{set of codes} $D_\kappa\subseteq\pow(\kappa)$,\footnote{The set $D_\kappa$ consists of subsets of $\kappa$ that code sets of hereditary cardinality at most $\kappa$ in some canonical way. For example, we can define $D_\kappa$ to consist of all $x\subseteq\kappa$ with the property that there exists an element $y\in\HH{\kappa^+}$ and a surjection $\map{s}{\kappa}(\tcl{\{y\})}$ with $$x ~ = ~ \Set{\goedel{0}{\alpha}}{\alpha<\kappa, ~ s(\alpha)\in y} ~ \cup ~ \Set{\goedel{1}{\goedel{\alpha}{\beta}}}{\alpha,\beta<\kappa, ~ s(\alpha)\in s(\beta)},$$ where $\map{\goedel{\cdot}{\cdot}}{\Ord\times\Ord}{\Ord}$ denotes the G\"odel pairing function.} 

  \item the formula $\varphi_1$ defines an equivalence relation $\equiv_\kappa$ on $D_\kappa$, 

  \item the formula $\varphi_2$ defines an $\equiv_\kappa$-invariant binary relation $E_\kappa$ on $D_\kappa$,   

 \item the formula $\varphi_3$ defines an  epimorphism $\map{\pi_\kappa}{\langle D_\kappa,\equiv_\kappa,E_\kappa\rangle}{\langle \HH{\kappa^+},=,\in\rangle}$, and 

  \item the formula $\varphi_4$ and the parameter $b$ define a function $\map{b_*}{\VV_{\kappa+1}}{D_\kappa}$ with $\pi_\kappa\circ b_*=\id_{\VV_{\kappa+1}}$. 
\end{itemize}

 By (1), we now obtain a map $\map{j^*}{\HH{\kappa^+}^M}{\Set{y\in N}{y\IN_N\HH{(\kappa^N)^+}^N}}$ extending $j_*$ with $$\langle N,\IN_N\rangle\models\anf{\pi_{\kappa^N}(j_*(x))=j^*(\pi_\kappa(x))}$$ for all $x\in D_\kappa$. 
 By the above assumptions on the uniform definability of $D_\kappa$, $\equiv_\kappa$ and $E_\kappa$, we can conclude that $j^*$ is an $\IN$-isomorphism extending $j_*$. 
\end{proof}

We introduce one further type of correspondence between ultrafilters and elementary embeddings by saying that, given an ordinal $\kappa$, a property $\Psi(M,U)$ of $\Sigma_0$-correct $\ZFC^-$-models $M$ containing $\kappa$ and $M$-ultrafilters $U$ on $\kappa$ \emph{weakly $\kappa$-corresponds} to a property $\Theta(M,j)$ of such models $M$ and elementary embeddings $\map{j}{M}{\langle N,\epsilon_N\rangle}$ if the following properties hold:
  \begin{itemize} 
  \item Whenever $\Psi(M,U)$ holds for an $M$-ultrafilter $U$ on $\kappa$, then $\Theta(j_U,M)$ holds. 

  \item $\Theta(M,j)$ implies that $j$ jumps at $\kappa$. 
  
  \item Whenever $\Theta(j,M)$ holds for an elementary embedding $\map{j}{M}{\langle N,\IN_N\rangle}$, then $\Psi(M,U_j^\gamma)$ holds for some $N$-ordinal $\gamma$ witnessing that $j$ jumps at $\kappa$.
  \end{itemize}

Note that if $\Psi(M,U)$ corresponds to $\Theta(M,j)$, then these properties weakly $\kappa$-correspond for some ordinal $\kappa$.
Moreover, if $\Psi(M,U)$ weakly $\kappa$-corresponds  to $\Theta(M,j)$ and $\Theta(M,j)$ implies that $j$ is a $\kappa$-embedding, then these properties also $\kappa$-correspond. 
 Finally, if $\Psi(M,U)$ and $\Theta(M,j)$ $\kappa$-correspond, then they also weakly $\kappa$-correspond. 
 Our next result is an easy consequence of \L{}os' theorem, and is a most frequently used standard result in a less general setup.

\begin{lemma}\label{correspondence3}
  Given $A\subseteq\kappa$,  \anf{$A\in U$ and $U$ contains all final segments of $\kappa$ in $M$} weakly $\kappa$-corresponds to \anf{$A\in M$ and there is $\gamma\in N$ with $\gamma\IN_N j(A)$ witnessing that $j$ jumps at $\kappa$}.
\end{lemma}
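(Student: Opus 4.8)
The plan is to verify directly the three clauses in the definition of \anf{weakly $\kappa$-corresponds}, writing $\Psi(M,U)$ for \anf{$A\in U$ and $U$ contains all final segments of $\kappa$ in $M$} and $\Theta(M,j)$ for \anf{$A\in M$ and there is $\gamma\in N$ with $\gamma\IN_N j(A)$ witnessing that $j$ jumps at $\kappa$}. The middle clause is free: since $\Theta(M,j)$ explicitly asserts the existence of a witness $\gamma$ to $j$ jumping at $\kappa$, it immediately implies that $j$ jumps at $\kappa$.

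For the first clause, I would assume $\Psi(M,U)$ and produce $\Theta(M,j_U)$. Because $U\subseteq M\cap\POT{\kappa}$ and $A\in U$, we have $A\in M$. Since $U$ contains all final segments of $\kappa$ in $M$, Proposition \ref{proposition:correspondence1}.(1) (more precisely its proof) shows that $\gamma=[\id_\kappa]_U$ witnesses that $j_U$ jumps at $\kappa$. It then remains to check $\gamma\IN_U j_U(A)$, and here {\L}os' Theorem does the work: $[\id_\kappa]_U\IN_U j_U(A)=[c_A]_U$ holds if and only if $\Set{\alpha<\kappa}{\id_\kappa(\alpha)\in c_A(\alpha)}=A\in U$, which is exactly our hypothesis. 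Thus the single ordinal $\gamma=[\id_\kappa]_U$ simultaneously witnesses the jump and the membership, giving $\Theta(M,j_U)$.

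For the third clause, I would assume $\Theta(M,j)$, fix an $N$-ordinal $\gamma$ witnessing both that $j$ jumps at $\kappa$ and $\gamma\IN_N j(A)$, and verify $\Psi(M,U_j^\gamma)$ for this same $\gamma$. Since $A\in M\cap\POT{\kappa}$ and $\gamma\IN_N j(A)$, the definition $U_j^\gamma=\Set{B\in M\cap\POT{\kappa}}{\gamma\IN_N j(B)}$ immediately yields $A\in U_j^\gamma$. That $U_j^\gamma$ contains all final segments of $\kappa$ in $M$ is precisely the backward direction of Proposition \ref{proposition:correspondence1}.(1), applied to the witness $\gamma$.

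The argument is essentially bookkeeping, so there is no serious obstacle; the only point requiring care is that in each direction a single ordinal $\gamma$ must do double duty, simultaneously witnessing that the embedding jumps at $\kappa$ and recording the membership of $A$. Ensuring that the canonical witness $[\id_\kappa]_U$ chosen in the first clause also verifies $\gamma\IN_U j_U(A)$ via {\L}os' Theorem, and that the hypothesised witness in the third clause is reused verbatim, is the whole content of the proof.
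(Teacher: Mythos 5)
Your proposal is correct and follows essentially the same route as the paper's proof: set $\gamma=[\id_\kappa]_U$ and apply {\L}os' Theorem for the forward direction, and in the backward direction reuse the hypothesised witness $\gamma$ together with Proposition \ref{proposition:correspondence1}.(1) to get that $U_j^\gamma$ contains all final segments of $\kappa$ in $M$. The only difference is presentational — you spell out the three clauses of the definition of weak $\kappa$-correspondence explicitly, including the (trivial) middle clause that the paper leaves implicit.
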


\begin{proof}
  First, assume that $M$ is a $\Sigma_0$-correct $\ZFC^-$-model with $\kappa\in M$ and $U$ is an $M$-ultrafilter on $\kappa$ such that $A\in U$ and $U$ contains all final segments of $\kappa$ in $M$. 
 Set $\gamma=[\id_\kappa]_U$. Then $\gamma\IN_N j_U(A)$ holds by \L{}os' theorem and $\gamma$ witnesses that $j_U$ jumps at $\kappa$. 
   In the other direction, if $\map{j}{M}{\langle N,\IN_N\rangle}$ is an elementary embedding, $\gamma\in N$ witnesses that $j$ jumps at $\kappa$ and $\gamma\IN_N j(A)$, then $A\in U_j^\gamma$ and Proposition \ref{proposition:correspondence1}.(1) shows that $U$ contains all final segments of $\kappa$ in $M$. 
 \end{proof}

Combining earlier observations with arguments from the proofs of Proposition \ref{proposition:correspondence2} and Lemma \ref{correspondence3} immediately yields the following correspondence, which will be of use later on:

\begin{corollary}\label{corollary:correspondence2}
  Given $A\subseteq\kappa$,  \anf{$U$ is $M$-normal with respect to $\subseteq$-decreasing sequences, $U$ contains all final segments of $\kappa$ in $M$, and $A\in U$} $\kappa$-corresponds to \anf{$A\in M$ and $j$ is a $\kappa$-embedding with $\kappa^N\IN_N j(A)$}. \qed
\end{corollary}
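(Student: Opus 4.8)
The plan is to verify the two implications demanded by the definition of $\kappa$-correspondence, in each case layering the extra clause about $A$ on top of the correspondence already established in Proposition \ref{proposition:correspondence2}.

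For the forward direction, I would assume that $U$ is $M$-normal with respect to $\subseteq$-decreasing sequences, contains all final segments of $\kappa$ in $M$, and satisfies $A\in U$. Proposition \ref{proposition:correspondence2} immediately gives that $j_U$ is a $\kappa$-embedding, and, crucially, its proof shows that $[\id_\kappa]_U$ is the $\IN_U$-minimal witness that $j_U$ jumps at $\kappa$, i.e.\ $\kappa^U=[\id_\kappa]_U$. It then remains to observe that $A\in M\cap\POT{\kappa}$ (trivially, since $A\in U$) and that, by {\L}os' Theorem (exactly as in the proof of Lemma \ref{correspondence3}), the condition $A\in U$ is equivalent to $[\id_\kappa]_U\IN_U j_U(A)$, which is precisely $\kappa^U\IN_U j_U(A)$. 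This yields $\Theta(M,j_U)$.

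For the reverse direction, I would assume $A\in M$ and that $j$ is a $\kappa$-embedding with $\kappa^N\IN_N j(A)$. Proposition \ref{proposition:correspondence2} already yields that the canonical induced ultrafilter $U_j=U_j^{\kappa^N}$ is $M$-normal with respect to $\subseteq$-decreasing sequences and contains all final segments of $\kappa$ in $M$. Since $A\subseteq\kappa$ and $A\in M$, we have $A\in M\cap\POT{\kappa}$, and the hypothesis $\kappa^N\IN_N j(A)$ is literally the defining condition for membership in $U_j^{\kappa^N}=\Set{B\in M\cap\POT{\kappa}}{\kappa^N\IN_N j(B)}$, so $A\in U_j$. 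Hence $\Psi(M,U_j)$ holds.

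The argument involves no genuine obstacle, which is why the statement is presented as an immediate corollary; the only point requiring care is the bookkeeping of witnesses — namely ensuring in the forward direction that the ordinal playing the role of $\kappa^N$ is exactly $[\id_\kappa]_U$, so that the clause $A\in U$ translates under {\L}os' Theorem into the clause $\kappa^N\IN_N j(A)$ and back. Everything else is a direct citation of Proposition \ref{proposition:correspondence2} together with the definition of the canonical $M$-ultrafilter $U_j$ induced by a $\kappa$-embedding.
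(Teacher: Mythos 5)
Your proof is correct and matches the paper's intended argument exactly: the paper derives this corollary by combining Proposition \ref{proposition:correspondence2} with the {\L}os-theorem computation from Lemma \ref{correspondence3}, which is precisely your decomposition, including the key bookkeeping point that $\kappa^U=[\id_\kappa]_U$ so that $A\in U$ translates to $\kappa^U\IN_U j_U(A)$ and, conversely, that $\kappa^N\IN_N j(A)$ is by definition membership of $A$ in $U_j=U_j^{\kappa^N}$.
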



We want to close this section with two $\kappa$-correspondences, which may seem somewhat trivial, but which will be useful to have available later on.

\begin{lemma}\label{wellfounded}
  \mbox{\anf{$U$ contains all final segments of $\kappa$ in $M$ and $\langle\Ult{M}{U},\IN_U\rangle$ is well-found\-ed}} $\kappa$-corresponds to {\anf{$j$ jumps at $\kappa$ and $\langle N,\epsilon_N\rangle$ is well-found\-ed}}.
\end{lemma}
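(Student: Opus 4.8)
The plan is to verify the two clauses in the definition of $\kappa$-correspondence separately; both come down to understanding the factor structure relating $\Ult{M}{U_j}$ and $\langle N,\epsilon_N\rangle$. For the forward direction, suppose $U$ contains all final segments of $\kappa$ in $M$ and $\langle\Ult{M}{U},\IN_U\rangle$ is well-founded. Proposition \ref{proposition:correspondence1}.(1) immediately gives that $j_U$ jumps at $\kappa$, and since the target structure of $j_U$ is by definition $\langle\Ult{M}{U},\IN_U\rangle$, its well-foundedness is exactly the hypothesis. Hence $\Theta(M,j_U)$ holds, so this direction is essentially immediate.

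For the backward direction, suppose $\map{j}{M}{\langle N,\epsilon_N\rangle}$ jumps at $\kappa$ and $\langle N,\epsilon_N\rangle$ is well-founded. First I would check that $j$ is a $\kappa$-embedding: the collection of $N$-ordinals witnessing that $j$ jumps at $\kappa$ is nonempty by assumption, and the well-foundedness of $\langle N,\epsilon_N\rangle$ guarantees that it has an $\epsilon_N$-minimal element, which is precisely what it means for $j$ to be a $\kappa$-embedding. Writing $\kappa^N$ for this minimal witness and $U_j=U_j^{\kappa^N}$ for the induced canonical $M$-ultrafilter, Proposition \ref{proposition:correspondence1}.(1) again shows that $U_j$ contains all final segments of $\kappa$ in $M$.

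It then remains to show that $\langle\Ult{M}{U_j},\IN_{U_j}\rangle$ is well-founded, which is the heart of the argument. The plan is to build the usual factor embedding
$$\Map{k}{\langle\Ult{M}{U_j},\IN_{U_j}\rangle}{\langle N,\epsilon_N\rangle}{[f]_{U_j}}{(j(f)(\kappa^N))^N},$$
using that $\kappa^N\IN_N j(\kappa)$ permits evaluating $j(f)$ at $\kappa^N$ inside $N$. I would verify, via \L{}os' theorem and the elementarity of $j$, that $k$ is well-defined and an $\IN$-embedding: for functions $\map{f,g}{\kappa}{M}$ in $M$, the set $\Set{\alpha<\kappa}{f(\alpha)=g(\alpha)}$ lies in $U_j$ exactly when $\kappa^N\IN_N j(\Set{\alpha<\kappa}{f(\alpha)=g(\alpha)})$, which by elementarity holds exactly when $N\models j(f)(\kappa^N)=j(g)(\kappa^N)$; the analogous equivalence with $\in$ in place of $=$ yields preservation and reflection of $\IN_{U_j}$. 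Since any infinite $\IN_{U_j}$-descending chain in $\Ult{M}{U_j}$ would be carried by $k$ to an infinite $\epsilon_N$-descending chain in $N$, the well-foundedness of $\langle N,\epsilon_N\rangle$ forces $\langle\Ult{M}{U_j},\IN_{U_j}\rangle$ to be well-founded, completing the verification of $\Psi(M,U_j)$.

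The only genuinely delicate point is the construction and verification of the factor map $k$ in this generalized setting, where $M$ need not be transitive and values must be read off as $(j(f)(\kappa^N))^N$ inside $N$ rather than as honest set values; once $k$ is recognized as an $\IN$-embedding, the transfer of well-foundedness (using dependent choice in $\VV$ to extract a descending chain from ill-foundedness) is routine. I do not expect any serious obstacle beyond careful bookkeeping with \L{}os' theorem.
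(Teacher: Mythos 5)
Your proof is correct and follows essentially the same route as the paper: both directions rest on Proposition \ref{proposition:correspondence1}.(1), the well-foundedness of $\langle N,\IN_N\rangle$ yields the $\IN_N$-minimal witness making $j$ a $\kappa$-embedding, and the factor map $[f]_{U_j}\mapsto(j(f)(\kappa^N))^N$ transfers well-foundedness back to $\langle\Ult{M}{U_j},\IN_{U_j}\rangle$. The only difference is that you spell out, via \L{}os' theorem, the well-definedness and $\IN$-preservation of $k$ that the paper compresses into the phrase \anf{as in the standard setting}.
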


\begin{proof}
  The forward direction is immediate from Proposition \ref{proposition:correspondence1}.(1). 
 On the other hand, assume that $\map{j}{M}{\langle N,\IN_N\rangle}$ is such that 
 $j$ jumps at $\kappa$ and $\langle N,\IN_N\rangle$ is well-founded. 
 Then the well-foundedness of $\IN_N$ directly implies that $j$ is a $\kappa$-embedding and Proposition \ref{proposition:correspondence1}.(1) shows that $U_j$ contains all final segments of $\kappa$ in $M$. 
 As in the standard setting, we can now define a map $k$ from $\Ult{M}{U_j}$ to $N$ that sends $[f]_{U_j}$ to $(j(f)(\kappa^N))^N$. Since this map satisfies $k\circ j_{U_j}=j$, the well-foundedness of $\langle N,\IN_N\rangle$ implies the well-foundedness of $\langle\Ult{M}{U_j},\IN_{U_j}\rangle$. 
\end{proof}

\begin{lemma}\label{lemma:general}
  Given a first order $\IN$-formula $\varphi(v_0,v_1)$, \anf{$U$ is $M$-normal with respect to $\subseteq$-decreasing sequences, $U$ contains all final segments of $\kappa$ in $M$, and $\varphi(M,U)$ holds} $\kappa$-corresponds to \anf{$j$ is a $\kappa$-embedding with $\varphi(M,U_j)$}. 
\end{lemma}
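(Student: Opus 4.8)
The plan is to reduce the whole statement to Proposition \ref{proposition:correspondence2} together with the elementary observation that the canonical $M$-ultrafilter read off from the ultrapower embedding $j_U$ is $U$ itself, so that the side condition $\varphi$ is simply carried along unchanged in both directions. First I would record the relevant identity. Assume $U$ is an $M$-ultrafilter on $\kappa$ that is $M$-normal with respect to $\subseteq$-decreasing sequences and contains all final segments of $\kappa$ in $M$. By the proof of Proposition \ref{proposition:correspondence2}, $j_U$ is then a $\kappa$-embedding whose $\epsilon_U$-minimal witness for jumping at $\kappa$ is $\kappa^U=[\id_\kappa]_U$. Consequently, for every $A\in M\cap\POT{\kappa}$, using this identification and \L{}os' theorem,
$$A\in U_{j_U} ~ \Longleftrightarrow ~ \kappa^U\IN_U j_U(A) ~ \Longleftrightarrow ~ \Set{\alpha<\kappa}{\alpha\in A}\in U ~ \Longleftrightarrow ~ A\in U,$$
so that $U_{j_U}=U$. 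In particular, $\varphi(M,U_{j_U})$ and $\varphi(M,U)$ are literally the same statement.

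The forward direction of the $\kappa$-correspondence is then immediate. If $\Psi(M,U)$ holds, i.e.\ $U$ has the two structural properties above and $\varphi(M,U)$ holds, then Proposition \ref{proposition:correspondence2} yields that $j_U$ is a $\kappa$-embedding, and the identity $U_{j_U}=U$ converts the assumed $\varphi(M,U)$ into $\varphi(M,U_{j_U})$; hence $\Theta(M,j_U)$ holds. For the backward direction, suppose $\Theta(M,j)$ holds for an elementary embedding $\map{j}{M}{\langle N,\epsilon_N\rangle}$, i.e.\ $j$ is a $\kappa$-embedding and $\varphi(M,U_j)$ holds. That $j$ is a $\kappa$-embedding is part of the hypothesis, so the first clause in the definition of $\kappa$-correspondence is met directly. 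For the second clause, the backward direction of Proposition \ref{proposition:correspondence2}, applied to the $\kappa$-embedding $j$, shows that $U_j$ contains all final segments of $\kappa$ in $M$ and is $M$-normal with respect to $\subseteq$-decreasing sequences; combined with the assumed $\varphi(M,U_j)$, this gives exactly $\Psi(M,U_j)$.

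Since $\varphi$ appears only as an unchanged side condition on both sides, there is essentially no genuine obstacle here beyond invoking Proposition \ref{proposition:correspondence2} correctly. The single point requiring a little care is the identification $\kappa^U=[\id_\kappa]_U$, which is precisely what the proof of Proposition \ref{proposition:correspondence2} establishes and which is what guarantees that the canonical ultrafilter $U_{j_U}$, defined via the minimal witness $\kappa^U$, coincides with the original $U$; without this the equality $\varphi(M,U_{j_U})\equiv\varphi(M,U)$ would not be available.
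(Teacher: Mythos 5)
Your proposal is correct and follows essentially the same route as the paper's own proof: both directions are reduced to Proposition \ref{proposition:correspondence2}, with the key observation that $\kappa^U=[\id_\kappa]_U$ yields $U_{j_U}=U$, so that $\varphi$ is carried along unchanged. Your write-up simply spells out the \L{}os-theorem computation behind this identity, which the paper leaves implicit.
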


\begin{proof}
  For the forward direction, we know by Proposition \ref{proposition:correspondence2} and its proof that $[\id]_U$ witnesses that $j_U$ is a $\kappa$-embedding. 
This directly implies that $U=U_{j_U}$ and hence $\varphi(M,U_{j_U})$ holds. 
 The backward direction is a direct consequence of Proposition \ref{proposition:correspondence2}. 
\end{proof}


\section{Inaccessible cardinals and the bounded ideal}\label{section:Inaccessibles}

In this section, we characterize inaccessible limits of certain types of ordinals through the existence of ${<}\kappa$-amenable filters for small models $M$. 
 We then use these characterizations to determine the  corresponding ideals, which turn out to be the bounded ideals on the corresponding cardinals. 
 The following direct consequence of Proposition \ref{proposition:CharaLessThanKappaPowerPreserving} will be crucial for these characterizations.

\begin{corollary}\label{corollary:inaccessible}
  Let $\kappa$ be an inaccessible cardinal and let $\map{j}{M}{\langle N,\IN_N\rangle}$ be an elementary embedding with $\crit{j}=\kappa$.  If $M$ is a $\Sigma_0$-correct model of $\ZFC^-$ with $M\models\anf{\textit{$\HH{\kappa}$ exists}}$, then $j$ is ${<}\kappa$-powerset preserving. In particular, if $M\prec\HH{\theta}$ for some regular $\theta>\kappa$, then $j$ is ${<}\kappa$-powerset preserving. 
\end{corollary}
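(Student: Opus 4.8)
The plan is to derive the statement directly from Proposition \ref{proposition:CharaLessThanKappaPowerPreserving}.(1), whose hypothesis is that $M\models\anf{\textit{if $\alpha<\kappa$, then $\POT{\alpha}$ exists and $2^\alpha<\kappa$}}$. So the whole task reduces to verifying this internal statement in $M$ from the assumptions that $\kappa$ is inaccessible in $\VV$, that $M$ is a $\Sigma_0$-correct $\ZFC^-$-model, and that $M\models\anf{\textit{$\HH{\kappa}$ exists}}$. Throughout I would use the basic fact recorded just before Definition \ref{ultrafilterproperties}, that $\Sigma_0$-correctness makes every $M$-ordinal a genuine ordinal and turns every $M$-function into a genuine function with the same domain; in particular every $M$-ordinal $\alpha$ with $M\models\alpha<\kappa$ really satisfies $\alpha<\kappa$, since $\alpha\in\kappa$ is $\Sigma_0$.

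Fix such an $\alpha<\kappa$ in $M$. For the existence of $\POT{\alpha}$ in $M$, I would argue entirely inside $M$: if $M\models x\subseteq\alpha$, then in $M$ the transitive closure of $\{x\}$ is contained in $\{x\}\cup\alpha$ and has cardinality at most $\betrag{\alpha}<\kappa$, so $M\models x\in\HH{\kappa}$; hence $M\models\POT{\alpha}\subseteq\HH{\kappa}$, and since $\HH{\kappa}$ is a set of $M$, Separation inside $M$ produces $\POT{\alpha}$ as the set of its subsets of $\alpha$. The harder half is $M\models 2^\alpha<\kappa$, and this is where inaccessibility in $\VV$ enters. Since $M\models\ZFC^-$, I can well-order $\POT{\alpha}$ inside $M$ and obtain an $M$-cardinal $\lambda$ with $M\models\betrag{\POT{\alpha}}=\lambda$, witnessed by a bijection $g\in M$ from $\lambda$ onto $\POT{\alpha}^M$. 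By $\Sigma_0$-correctness, $g$ is a genuine bijection, so $\betrag{\lambda}^\VV=\betrag{\POT{\alpha}^M}^\VV$; and since $\Sigma_0$-correctness also gives $\POT{\alpha}^M\subseteq\POT{\alpha}^\VV$, the strong limit property of the inaccessible $\kappa$ yields $\betrag{\lambda}^\VV\le 2^{\betrag{\alpha}^\VV}<\kappa$. As $\kappa$ is a genuine cardinal, $\betrag{\lambda}^\VV<\kappa$ forces $\lambda<\kappa$, and then $\Sigma_0$-correctness transfers this back to $M\models\lambda<\kappa$, i.e. $M\models 2^\alpha<\kappa$. With both clauses verified, Proposition \ref{proposition:CharaLessThanKappaPowerPreserving}.(1) gives that $j$ is ${<}\kappa$-powerset preserving.

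I expect the step transferring the cardinality computation between $M$ and $\VV$ to be the only genuinely delicate point: $M$ need not be transitive and its cardinalities may differ from those of $\VV$, so one cannot simply read off $M\models 2^\alpha<\kappa$ from $\betrag{\POT{\alpha}^M}^\VV<\kappa$. The resolution is to let $M$ choose the well-ordering first and then use $\Sigma_0$-correctness to move the resulting bijection and the ordinal $\lambda$ back and forth. Finally, for the ``in particular'' clause, if $M\prec\HH{\theta}$ with $\theta>\kappa$ regular, then $M$ is automatically $\Sigma_0$-correct and $M\models\ZFC^-$ (as $\HH{\theta}\models\ZFC^-$); moreover $\kappa\in M$ (being $\crit{j}$) and $\HH{\theta}\models\anf{\textit{$\HH{\kappa}$ exists}}$ because $\betrag{\HH{\kappa}}=\kappa<\theta$, so elementarity gives $M\models\anf{\textit{$\HH{\kappa}$ exists}}$ and the main assertion applies.
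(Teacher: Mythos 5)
Your proposal is correct and follows essentially the same route as the paper: both reduce the claim to Proposition \ref{proposition:CharaLessThanKappaPowerPreserving}.(1) by checking, for $\alpha\in M\cap\kappa$, that $\POT{\alpha}^M\subseteq\HH{\kappa}^M$ (so $\POT{\alpha}$ exists in $M$ by Separation) and that $(2^\alpha)^M<\kappa$ via the strong limit property, and both handle the ``in particular'' clause by noting that $\HH{\kappa}$ is correctly computed in $\HH{\theta}$ and lies in $M$ by elementarity. The only difference is presentational: the paper compresses your careful bijection-transfer argument into the one-line assertion $(2^\alpha)^M<2^\alpha<\kappa$, whereas you spell out why the $M$-cardinality computation moves correctly between the possibly non-transitive $M$ and $\VV$.
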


\begin{proof}
 Given $\alpha\in M\cap\kappa$, we have $\POT{\alpha}^M\subseteq\HH{\kappa}^M\in M$, $\POT{\alpha}^M\in M$ and $(2^\alpha)^M<2^\alpha<\kappa$. Hence Proposition \ref{proposition:CharaLessThanKappaPowerPreserving} implies that $j$ is ${<}\kappa$-powerset preserving. 
 Finally, if $M\prec\HH{\theta}$ for some regular $\theta>\kappa$, then the inaccessibility of $\kappa$ implies that $\HH{\kappa}^M=\HH{\kappa}\in M\cap\HH{\theta}$. 
 %
   %
  %
  %
 %
\end{proof}

 We will also make use of a classical characterization of inaccessible cardinals. 
 Following {\cite[Definition 2.2]{MR3800756}}, our formulation of this result uses slightly generalized notions of filters on arbitrary collections of subsets of $\kappa$. 
 It is easy to see that these notions correspond well with our already defined notions of $M$-ultrafilters.\footnote{That is, if $M$ is a model of $\ZFC^-$ with $\kappa\in M$, an $M$-ultrafilter $U$ on $\kappa$ is uniform (respectively, uniform and ${<}\kappa$-complete for $M$) just in case $U$ is uniform (respectively, uniform and ${<}\kappa$-complete) in the sense of Definition \ref{definition:filter}.
}

\begin{definition}\label{definition:filter}
\begin{enumerate} 
  \item A \emph{uniform filter} on $\kappa$ is a subset $F$ of $\POT{\kappa}$ such that $\betrag{\bigcap_{i<n}A_i}=\kappa$ whenever $n<\omega$ and $\seq{A_i}{i<n}$ is a sequence of elements of $F$. 

  \item A uniform filter $F$ on $\kappa$ \emph{measures} a subset $A$ of $\kappa$ if $A\in F$ or $\kappa\setminus A\in F$ and it \emph{measures} a subset $X$ of $\POT{\kappa}$ if it measures every element of $X$. 

\item A uniform filter $F$ on $\kappa$ is \emph{${<}\kappa$-complete} if $\betrag{\bigcap_{i<\gamma}A_i}=\kappa$  
holds for every $\gamma<\kappa$ and every sequence $\seq{A_i}{i<\gamma}$ of elements of $F$. 
\end{enumerate} 
\end{definition}

\begin{lemma}[{\cite[Corollary 1.1.2]{MR0460120}}]\label{smallfilterextension}
 An uncountable cardinal $\kappa$ is inaccessible if and only if it has the \emph{${<}\kappa$-filter extension property}, i.e. whenever $F$ is a uniform, ${<}\kappa$-complete filter on $\kappa$, of size less than $\kappa$, and $X$ is a collection of subsets of $\kappa$ with $X$ of size less than $\kappa$, then there exists a uniform, ${<}\kappa$-complete filter $F'\supseteq F$ that measures $X$.
\end{lemma}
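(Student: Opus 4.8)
The plan is to prove both directions of the biconditional, establishing the $<\kappa$-filter extension property from inaccessibility and conversely deriving inaccessibility from the filter extension property.

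For the forward direction, I would assume $\kappa$ is inaccessible and let $F$ be a uniform, $<\kappa$-complete filter on $\kappa$ of size $\lambda<\kappa$, together with a collection $X$ of subsets of $\kappa$ with $|X|<\kappa$. The strategy is to enumerate $X$ as $\Seq{X_\xi}{\xi<\mu}$ for some $\mu<\kappa$ and to build an increasing sequence of filters by transfinite recursion, at each step deciding one more set $X_\xi$ by adding either $X_\xi$ or its complement. The key point is that at each stage we can choose one of the two options while preserving uniformity and $<\kappa$-completeness: if $F_\xi$ is the filter built so far (of size $<\kappa$), then for any $A=X_\xi$, at least one of $F_\xi\cup\{A\}$ or $F_\xi\cup\{\kappa\setminus A\}$ generates a uniform, $<\kappa$-complete filter, since otherwise we would obtain fewer than $\kappa$ elements of $F_\xi$ whose intersection with $A$ and with $\kappa\setminus A$ are both of size $<\kappa$, contradicting $<\kappa$-completeness of $F_\xi$ together with the regularity of $\kappa$. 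Here inaccessibility enters twice: regularity ensures the length-${<}\kappa$ intersections stay large, and the filters generated at each stage remain of size $<\kappa$ (using that $\kappa$ is a strong limit to bound the number of finite/${<}\kappa$ Boolean combinations). Taking the union $F'=\bigcup_{\xi<\mu}F_\xi$ yields the desired extension measuring all of $X$.

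For the reverse direction, I would assume $\kappa$ has the $<\kappa$-filter extension property and is uncountable, and derive inaccessibility. To prove regularity, suppose toward a contradiction that $\kappa=\bigcup_{i<\gamma}A_i$ is a union of $\gamma<\kappa$ many sets each of size $<\kappa$; then the filter $F$ generated by the complements $\kappa\setminus A_i$ is uniform and $<\kappa$-complete and of size $<\kappa$, but any ultrafilter-like extension measuring a suitable witnessing family leads to a contradiction with the partition covering $\kappa$. To prove that $\kappa$ is a strong limit, suppose $2^\lambda\geq\kappa$ for some $\lambda<\kappa$; I would use the extension property applied to the trivial filter together with a family $X$ of size $<\kappa$ coding an injection of $\kappa$ into $\POT{\lambda}$, and extract a contradiction from the fact that a uniform filter cannot simultaneously separate $\kappa$-many points while only measuring $<\kappa$-many sets built from $\lambda$-indexed coordinates.

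The main obstacle will be the reverse direction, and in particular the strong-limit argument: the forward direction is a fairly standard transfinite bootstrapping construction whose only subtlety is the bookkeeping that filters stay small and complete, whereas extracting a failure of $2^\lambda<\kappa$ from the filter extension property requires carefully choosing the family $X$ to be measured so that a measuring extension forces an impossible decision about $\kappa$-many branches through ${}^{\lambda}2$. Since this lemma is cited as \cite[Corollary 1.1.2]{MR0460120}, I would also be prepared to simply invoke the reverse direction from the literature and present only the streamlined argument, but the plan above indicates how I would reconstruct it directly if needed.
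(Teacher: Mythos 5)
The paper itself gives no proof of this lemma -- it is quoted directly from Abramson--Harrington--Kleinberg--Zwicker -- so your proposal has to be measured against the standard argument, and against correctness. Your forward direction has a genuine gap at limit stages. The successor-step claim is fine: if $F_\xi$ has size ${<}\kappa$ and neither $F_\xi\cup\{A\}$ nor $F_\xi\cup\{\kappa\setminus A\}$ were uniform and ${<}\kappa$-complete, the union of the two witnessing subfamilies would violate the ${<}\kappa$-completeness of $F_\xi$. But individually safe choices need not cohere: partition $\kappa=\bigcup_{n<\omega}P_n$ into $\omega$ pieces of size $\kappa$ and let $X_n=\kappa\setminus P_n$; then every finite family $\{X_m\mid m\leq n\}$ has intersection $\bigcup_{m>n}P_m$ of size $\kappa$, so each successor step of your recursion is legal, yet $\bigcap_{n<\omega}X_n=\emptyset$, and the union at stage $\omega$ is not ${<}\kappa$-complete. (The correct extension here must instead keep one piece $P_{n_0}$, i.e.\ choose the complement of $X_{n_0}$.) So the sets in $X$ must be decided \emph{coherently, all at once}, not greedily one at a time. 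The standard fix is short: since $|F|<\kappa$, the ${<}\kappa$-completeness of any family of size ${<}\kappa$ reduces to the single condition that its \emph{total} intersection $C=\bigcap F$ has size $\kappa$ (the worst subfamily is the whole family) -- this observation also dissolves all the bookkeeping you worry about. Enumerate $X=\Set{X_\xi}{\xi<\mu}$ with $\mu<\kappa$, assign to each $\alpha<\kappa$ its type $s_\alpha\in{}^{\mu}2$ with $s_\alpha(\xi)=1$ iff $\alpha\in X_\xi$; since $2^\mu<\kappa$ (strong limit) and $\kappa$ is regular, some $s^*$ has $\betrag{\Set{\alpha\in C}{s_\alpha=s^*}}=\kappa$, and letting $F'$ be $F$ together with $X_\xi$ or $\kappa\setminus X_\xi$ according to $s^*(\xi)$ finishes the forward direction in one step, with no recursion and no limit problem.

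In the reverse direction, your regularity argument is broken as stated: if $\gamma<\kappa$ and the sets $A_i$ cover $\kappa$, then the family $\Set{\kappa\setminus A_i}{i<\gamma}$ has fewer than $\kappa$ members and \emph{empty} total intersection, so it is not ${<}\kappa$-complete and cannot serve as the base filter $F$ at all. The correct move is to put the witnesses into $X$ rather than $F$: with $\seq{\alpha_i}{i<\gamma}$ cofinal in $\kappa$, apply the extension property to $F=\{\kappa\}$ and $X=\Set{[0,\alpha_i)}{i<\gamma}$; uniformity forces $[\alpha_i,\kappa)\in F'$ for every $i$, and then ${<}\kappa$-completeness fails because these $\gamma<\kappa$ many sets have empty intersection. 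Your strong-limit sketch, by contrast, is essentially the right and standard argument: given an injection $\map{g}{\kappa}{{}^{\lambda}2}$, take $X_\xi=\Set{\alpha<\kappa}{g(\alpha)(\xi)=1}$ for $\xi<\lambda$; a measuring extension $F'$ of the trivial filter determines a single branch $s\in{}^{\lambda}2$, and ${<}\kappa$-completeness would force the set $\Set{\alpha}{g(\alpha)=s}$, which is at most a singleton, to have size $\kappa$. Contrary to your closing assessment, this is the routine half; the real subtlety you needed to confront is the limit-stage coherence in the forward direction.
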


We are  now ready to state the key proposition of this section.  These results will directly yield the statements of  Theorems \ref{theorem:Ideals}.(\ref{item:Ideals:ia})  and \ref{theorem:IdealContain}.(\ref{item:cont:ia}), because the accessible cardinals are unbounded in every inaccessible cardinal and every unbounded subset of such a cardinal can be split into two disjoint unbounded subsets.

\begin{proposition}\label{proposition:inaccessiblelimit}
  Let $\kappa$ be an inaccessible cardinal. 
  \begin{enumerate} 
    \item If $A$ is an unbounded subset of $\kappa$, $\theta>\kappa$ is a regular cardinal and  $M\prec\HH{\theta}$ with $\betrag{M}<\kappa$ and $A\in M$, then there is a uniform $M$-ultrafilter $U$ on $\kappa$ with $A\in U$ and $\Psi_{ia}(M,U)$. 

  \item The set $\II_{ia}^{{<}\kappa}$ is the ideal of bounded subsets of $\kappa$. 
 \end{enumerate}
\end{proposition}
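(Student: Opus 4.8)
The plan is to prove the two parts of Proposition~\ref{proposition:inaccessiblelimit} in sequence, using part~(1) as the main technical engine that then quickly yields part~(2). For part~(1), the goal is to construct, for a given unbounded $A \in M$ with $\betrag{M}<\kappa$, a uniform $M$-ultrafilter $U$ on $\kappa$ with $A \in U$ that is ${<}\kappa$-amenable and ${<}\kappa$-complete for $M$ (i.e.\ satisfies $\Psi_{ia}(M,U)$). The natural approach is to build $U$ by a transfinite recursion of length $\betrag{M}<\kappa$, at each stage deciding one more subset of $\kappa$ that lies in $M$, while maintaining a uniform, ${<}\kappa$-complete filter of size less than $\kappa$. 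The key tool enabling each extension step is the ${<}\kappa$-filter extension property from Lemma~\ref{smallfilterextension}: since $\kappa$ is inaccessible, any uniform ${<}\kappa$-complete filter of size ${<}\kappa$ can be extended to measure any further collection of ${<}\kappa$-many sets while staying uniform and ${<}\kappa$-complete.

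Concretely, I would fix an enumeration $\seq{A_\xi}{\xi<\mu}$ (with $\mu=\betrag{M}<\kappa$) of $M\cap\POT{\kappa}$, start with the filter $F_0$ generated by $A$ together with all final segments of $\kappa$ (which is uniform and ${<}\kappa$-complete because $A$ is unbounded and $\kappa$ is regular), and recursively build an increasing chain of uniform, ${<}\kappa$-complete filters $\seq{F_\xi}{\xi\leq\mu}$ each of size ${<}\kappa$, using Lemma~\ref{smallfilterextension} to pass from $F_\xi$ to $F_{\xi+1}$ so that $F_{\xi+1}$ measures $A_\xi$, and taking unions at limits. The crucial point at limit stages is that the union of a chain of fewer than $\kappa$ uniform ${<}\kappa$-complete filters, each of size ${<}\kappa$, is again uniform, ${<}\kappa$-complete, and of size ${<}\kappa$ --- this uses the regularity and inaccessibility of $\kappa$. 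Setting $U=F_\mu\cap M$, by construction $U$ measures every element of $M\cap\POT{\kappa}$, so $U$ is an $M$-ultrafilter; it is uniform and contains $A$; and its ${<}\kappa$-completeness \emph{in $\VV$} (inherited from $F_\mu$) gives both that $U$ is ${<}\kappa$-complete for $M$ and --- via the characterization in Corollary~\ref{corollary:inaccessible} together with Proposition~\ref{proposition:correspondence1}.(4) --- that the induced embedding is ${<}\kappa$-powerset preserving, hence $U$ is ${<}\kappa$-amenable for $M$. Alternatively one argues ${<}\kappa$-amenability directly: for any $\seq{x_\beta}{\beta<\alpha}\in M$ with $\alpha<\kappa$, the set $x=\Set{\beta<\alpha}{x_\beta\in U}$ has size ${<}\kappa$ and, since $M\prec\HH{\theta}$ contains all bounded subsets of $\kappa$ coded in $\VV_\kappa\in M$, lies in $M$.

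For part~(2), the plan is to show both inclusions. The easy direction is that every bounded subset of $\kappa$ lies in $\II_{ia}^{{<}\kappa}$: if $A\subseteq\kappa$ is bounded, then $\kappa\setminus A$ contains a final segment, and since any uniform $M$-ultrafilter contains all final segments of $\kappa$ in $M$ (being uniform, with $\kappa$ regular), we get $A\notin U$ for every relevant $M$ and $U$, witnessed trivially. For the reverse inclusion, suppose $A$ is unbounded; I want to show $A\notin\II_{ia}^{{<}\kappa}$, i.e.\ for every sufficiently large $\theta$ and every candidate witness $x\in\HH{\theta}$, there is some $(\lambda,\kappa)$-model $M\prec\HH{\theta}$ with $x\in M$ and a uniform $U$ with $\Psi_{ia}(M,U)$ and $A\in U$. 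This is exactly what part~(1) supplies: choose any infinite $\lambda<\kappa$, take $M\prec\HH{\theta}$ with $\betrag{M}=\lambda$ and $\{A,x\}\subseteq M$ (using $A\in M$, which we can arrange since $A$ is definable-enough to be absorbed), and apply part~(1) to obtain $U$ with $A\in U$. Since this works for every $x$, no $x$ can witness $A\in\II_{ia}^{{<}\kappa}$, so $A\notin\II_{ia}^{{<}\kappa}$.

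The main obstacle I anticipate is verifying ${<}\kappa$-amenability of the constructed $U$ cleanly, and more subtly, ensuring that the recursion can always place $A$ itself (and the designated $x$) into the model while keeping control of the filter. The elementarity $M\prec\HH{\theta}$ is what makes this painless: it guarantees $\POT{\alpha}^M=\POT{\alpha}$ for $\alpha<\kappa$ (by inaccessibility, as in Corollary~\ref{corollary:inaccessible}), so that bounded traces of filter sets computed in $\VV$ are reflected into $M$, which is precisely what ${<}\kappa$-amenability demands. I would therefore lean on Corollary~\ref{corollary:inaccessible} and Proposition~\ref{proposition:correspondence1}.(4) to package the amenability verification as a consequence of ${<}\kappa$-powerset preservation of $j_U$, rather than checking the combinatorial definition by hand, since $\crit{j_U}=\kappa$ follows from ${<}\kappa$-completeness and $M\models\anf{\textit{$\HH{\kappa}$ exists}}$ holds by elementarity.
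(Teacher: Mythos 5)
Your overall architecture is the paper's — Lemma~\ref{smallfilterextension} to get the measuring filter, then the correspondence machinery plus Corollary~\ref{corollary:inaccessible} for amenability — but your execution of the first step has a genuine gap: the limit stage of your transfinite recursion is false. The union of an increasing chain of length ${<}\kappa$ of uniform, ${<}\kappa$-complete filters (in the weak sense of Definition~\ref{definition:filter}), each of size ${<}\kappa$, need \emph{not} be ${<}\kappa$-complete. Partition $\kappa$ into pieces $\seq{P_n}{n<\omega}$ of size $\kappa$, let $A_n=\bigcup_{m\geq n}P_m$ and $F_n=\{A_0,\ldots,A_n\}$: each $F_n$ is uniform and ${<}\kappa$-complete, since any ${<}\kappa$-sequence from $F_n$ has intersection containing $A_n$, yet $\bigcup_{n<\omega}F_n$ contains the $\omega$-sequence $\seq{A_n}{n<\omega}$, whose intersection is empty. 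Regularity and inaccessibility of $\kappa$ are irrelevant here, as the failure already occurs along a chain of length $\omega$. Worse, your greedy recursion can actually steer into this situation: if $M\cap\POT{\kappa}$ contains such a sequence $\seq{A_n}{n<\omega}$ (it does, definably), then adding each $A_n$ positively is locally consistent at every finite stage, so nothing in your construction prevents $F_\omega$ from losing ${<}\kappa$-completeness, at which point Lemma~\ref{smallfilterextension} can no longer be applied. The repair is exactly the paper's (simpler) proof: since $\betrag{M\cap\POT{\kappa}}<\kappa$, apply Lemma~\ref{smallfilterextension} \emph{once}, with $F=\{A\}$ and $X=M\cap\POT{\kappa}$, to obtain in a single step a uniform, ${<}\kappa$-complete filter measuring all of $M\cap\POT{\kappa}$; the global coherence of the decisions is delegated to the lemma rather than made greedily one set at a time.

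Separately, your ``alternative'' direct verification of ${<}\kappa$-amenability is wrong: since $\betrag{M}<\kappa$, the model $M$ does not contain all bounded subsets of $\kappa$, and $\POT{\alpha}^M=\POT{\alpha}$ fails whenever $2^{\betrag{\alpha}}>\betrag{M}$ — if it held for all $\alpha<\kappa$, then $\betrag{M}\geq\kappa$. The set $x=\Set{\beta<\alpha}{x_\beta\in U}$ computed in $\VV$ need not lie in $M$; what amenability requires is some $x\in M$ that agrees with this set on $M\cap\alpha$ (see the footnote to Definition~\ref{ultrafilterproperties}), and producing such a witness is precisely what the ${<}\kappa$-powerset-preservation route delivers. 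Your primary route — Proposition~\ref{proposition:correspondence1}.(3) for $\crit{j_U}=\kappa$, Corollary~\ref{corollary:inaccessible} for ${<}\kappa$-powerset preservation, then Proposition~\ref{proposition:correspondence1}.(4) with Lemma~\ref{correspondence3} — is the paper's argument and is correct; note that the derived ultrafilter $U^\gamma_{j_U}$ with $\gamma=[\id_\kappa]_U$ is just $U$ itself. Your part~(2) matches the paper: uniformity forces every element of $U$ to have size $\kappa$, so bounded sets are excluded, and the converse inclusion is part~(1); the aside that $A$ is ``definable-enough to be absorbed'' is unnecessary, since one simply chooses $M$ containing $A$ and $x$ as elements by L\"owenheim--Skolem.
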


\begin{proof}
 (1) Assume that $A\subseteq\kappa$ has size $\kappa$. 
 Then, $\{A\}$ is a uniform, ${<}\kappa$-complete filter on $\kappa$, of size less than $\kappa$. 
 If $\theta>\kappa$ is a regular cardinal and $M\prec \HH{\theta}$ of size less than $\kappa$ with $\kappa\in M$ and $A \in M$, then we can apply Lemma \ref{smallfilterextension} to extend $\{A\}$ to a uniform, ${<}\kappa$-complete filter $U$ on $\kappa$ that measures $M\cap\POT{\kappa}$. 
 As mentioned above, this exactly means for $U$ to be a uniform, ${<}\kappa$-complete $M$-ultrafilter on $\kappa$. 
 Let $\map{j_U}{M}{\langle\Ult{M}{U},\IN_U\rangle}$  be the induced ultrapower embedding. 
 By Proposition \ref{proposition:correspondence1}.(3), we know that $\crit{j_U}=\kappa$. 
 Moreover, Lemma \ref{correspondence3} shows that there is $\gamma\in \Ult{M}{U}$ with $\gamma\IN_U j_U(A)$ that witnesses that $j_U$ jumps at $\kappa$. 
 Since $\kappa$ is inaccessible, we can apply Lemma \ref{corollary:inaccessible} to see that $j_U$ is ${<}\kappa$-powerset preserving. 
 In this situation, we can use Proposition \ref{proposition:correspondence1}.(4) and Lemma \ref{correspondence3} to conclude that $U^\gamma_{j_U}$ is a uniform $M$-ultrafilter on $\kappa$  that is ${<}\kappa$-complete and ${<}\kappa$-amenable for $M$ and contains the subset $A$.  
  
 (2) If $\theta>\kappa$ is a regular cardinal, $M\prec \HH{\theta}$ with $\betrag{M}<\kappa$ and $U$ is a uniform $M$-ultrafilter on $\kappa$, then the uniformity of $U$ implies that every element of $U$ has size $\kappa$. By (1), this yields the desired equality. 
\end{proof}

 The following lemma provides us with the reverse direction for our desired characterization.
 Moreover, it shows that for the listed types of domain models $M$, $\kappa$-embeddings $\map{j}{M}{\langle N,\IN_N\rangle}$ with $\crit{j}=\kappa$ induce uniform $M$-ultrafilters $U_j$ on $\kappa$.

\begin{lemma}\label{lemma:inaccessible2}
  Assume that either
  \begin{itemize}
    \item for some regular cardinal $\theta>\kappa$ and some $M\prec\HH{\theta}$, or
    \item for many transitive weak $\kappa$-models $M$,\footnote{We would like to thank Victoria Gitman for pointing out a mistake in our proof with respect to the second item of this lemma in an earlier version of our paper.}
  \end{itemize}
  there exists an elementary embedding $\map{j}{M}{\langle N,\IN_N\rangle}$ with $\crit{j}=\kappa$. Then, $\kappa$ is regular. Moreover, if we can also assume that the given embeddings are ${<}\kappa$-powerset preserving, then $\kappa$ is inaccessible. 
\end{lemma}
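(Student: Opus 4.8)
The plan is to treat the two hypotheses in parallel, first establishing regularity and then, under the additional powerset-preservation assumption, the strong limit property. Throughout I would fix an embedding $\map{j}{M}{\langle N,\IN_N\rangle}$ with $\crit{j}=\kappa$ supplied by whichever hypothesis is in force, and I would repeatedly use the observation recorded after the definition of the critical point: for every $\alpha<\kappa$ the restriction $j\restriction(M\cap\alpha)$ is an $\IN$-isomorphism onto $\Set{\beta\in N}{\beta\IN_N j(\alpha)}$, so that every $N$-ordinal below $j(\alpha)$ is of the form $j(\delta)$ with $\delta\in M\cap\alpha$. I would also fix, once and for all, an $N$-ordinal $\gamma$ witnessing that $j$ jumps at $\kappa$, so that $j(\beta)\IN_N\gamma\IN_N j(\kappa)$ for every $\beta\in M\cap\kappa$; note that such a $\gamma$ cannot lie in the range of $j$.

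For regularity I would argue by contradiction, assuming $\cof(\kappa)=\lambda<\kappa$ and fixing a cofinal function $\map{f}{\lambda}{\kappa}$. In the first case I obtain $f\in M$ directly from elementarity, since $\lambda$ and such an $f$ are definable from $\kappa\in M\prec\HH{\theta}$; in the second case I code $f$ by a subset $x\subseteq\kappa$ via the G\"odel pairing and invoke the hypothesis that many transitive weak $\kappa$-models exist to obtain a transitive $M\ni x$ carrying the embedding, whence $f\in M$ by decoding. In either case $M$ correctly sees that $f$ is cofinal in $\kappa$, so by elementarity $\langle N,\IN_N\rangle\models\anf{j(f)\text{ is cofinal in }j(\kappa)}$. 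Applying this to $\gamma\IN_N j(\kappa)$ yields an $N$-ordinal $\xi\IN_N j(\lambda)$ with $\gamma\IN_N j(f)(\xi)$. Since $\lambda<\kappa=\crit{j}$, the observation above produces $\delta\in M\cap\lambda$ with $\xi=j(\delta)$, and then elementarity gives $j(f)(\xi)=j(f(\delta))$ with $f(\delta)\in M\cap\kappa$. Thus $j(f(\delta))\IN_N\gamma$ by the choice of $\gamma$, while $\gamma\IN_N j(f(\delta))$; this $2$-cycle among $N$-ordinals contradicts the Axiom of Foundation in $\langle N,\IN_N\rangle$, and so $\kappa$ is regular.

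For the strong limit property I would now add the assumption that the embeddings are ${<}\kappa$-powerset preserving, so that Proposition \ref{proposition:CharaLessThanKappaPowerPreserving}.(2) applies to the relevant $M$ and gives $M\models\anf{\text{for all }\alpha<\kappa\text{ there is no injection from }\kappa\text{ into }\POT{\alpha}}$. Assuming toward a contradiction that $\kappa$ is not a strong limit, I fix $\alpha<\kappa$ and an injection $\map{\iota}{\kappa}{\POT{\alpha}}$ in $\VV$. The point I would stress is that, although $2^\alpha$ may be enormous, the range of $\iota$ consists of only $\kappa$-many subsets of $\alpha$, so $\betrag{\tcl(\iota)}=\kappa$. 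In the first case this forces $\iota\in\HH{\theta}$, so that $\HH{\theta}$, and hence its elementary submodel $M$, sees an injection from $\kappa$ into $\POT{\alpha}$, contradicting the displayed statement. In the second case I code $\iota$ by $x=\Set{\goedel{\xi}{\beta}}{\xi<\kappa,\ \beta\in\iota(\xi)}\subseteq\kappa$ and use the many-models hypothesis to place $x$ into a transitive weak $\kappa$-model $M$ carrying the embedding; transitivity lets $M$ decode $\iota$ and recognize it as an injection into $\POT{\alpha}$, again a contradiction. Hence $\kappa$ is a strong limit, and being regular and uncountable (as $\kappa$ is assumed to be throughout), it is inaccessible.

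The main obstacle I anticipate is the transfer of the internal statement $\anf{\text{no injection }\kappa\to\POT{\alpha}}$ from $M$ back to $\VV$: a naive reading suggests one would need $\HH{\theta}$ (or the transitive model) to contain all of $\POT{\alpha}$, which fails precisely when $2^\alpha$ is large, i.e.\ in the case of interest. The resolution is the transitive-closure computation above, which shows that a \emph{single} injection $\kappa\to\POT{\alpha}$ is a small object of hereditary size $\kappa$, and is therefore captured by $\HH{\theta}$ or codeable by a subset of $\kappa$; getting this bookkeeping right in the non-transitive elementary-submodel case, and the coding/decoding right in the transitive case, is where the care is needed.
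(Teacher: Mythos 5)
Your proof is correct and follows essentially the same route as the paper's: regularity via a cofinal function in $M$ and the jump witness $\gamma$ producing a two-cycle $j(f(\delta))\IN_N\gamma\IN_N j(f(\delta))$, and inaccessibility by placing an injection $\map{\iota}{\kappa}{\POT{\alpha}}$ inside $M$ and contradicting Proposition \ref{proposition:CharaLessThanKappaPowerPreserving}.(2). The only difference is that you spell out what the paper compresses into \anf{by our assumptions} --- the hereditary-size computation $\betrag{\tcl(\iota)}=\kappa$ for the $\HH{\theta}$ case and the G\"odel-pairing coding for the transitive case --- which is a worthwhile elaboration rather than a deviation.
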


\begin{proof}
  Assume that $\kappa$ is singular.  
  By our assumptions, we can find a cofinal function $\map{c}{\alpha}{\kappa}$ with $\alpha<\kappa$ and an elementary embedding $\map{j}{M}{\langle N,\IN_N\rangle}$ with $\crit{j}=\kappa$ and the property that $M$ is a $\Sigma_0$-correct $\ZFC^-$-model  with $c\in M$. 
  Let $\gamma\in N$ witness that $j$ jumps at $\kappa$. By elementarity, there is $\delta\in N$ with $\delta\IN_N j(\alpha)$ and $\gamma\IN_N(j(c)(\delta))^N$. Since $\alpha\in M\cap\kappa$, there is $\beta<\alpha$ with $j(\beta)=\delta$ and hence $\gamma\IN_N(j(c)(\delta))^N=j(c(\beta))\IN_N \gamma$, a contradiction. 
  
  Now, assume that we can also find  ${<}\kappa$-powerset preserving embeddings for the desired models and assume, towards a contradiction, that $2^\alpha\geq\kappa$ holds for some $\alpha<\kappa$. 
  Then our assumption yields an injection $\map{\iota}{\kappa}{\POT{\alpha}}$ and a ${<}\kappa$-powerset preserving embedding $\map{j}{M}{\langle N,\IN_N\rangle}$ with $\crit{j}=\kappa$ and the property that $M$ is a $\Sigma_0$-correct $\ZFC^-$-model  with $\iota\in M$. The existence of such a model directly contradicts Proposition \ref{proposition:CharaLessThanKappaPowerPreserving}.(2). 
 %
\end{proof}

%

The next result is now an immediate consequence of what has been shown above, and in particular implies Theorem \ref{theorem:SchemesSummary}.(\ref{item:ina}).

\begin{theorem}\label{theorem:inaccessiblelimit}
  Let $\kappa$ be an uncountable cardinal, let $\delta<\kappa$,  let $\varphi(v_0,v_1)$ be a first order $\epsilon$-formula and let $\theta>\kappa$ be a regular cardinal  such that the statement $\varphi(\alpha,\delta)$ is absolute between $\HH{\theta}$ and $\VV$ for all $\alpha<\kappa$. 
 Then, the following statements are equivalent: 
  \begin{enumerate} 
    \item The cardinal $\kappa$ is an inaccessible limit of ordinals $\alpha$ such that the property $\varphi(\alpha,\delta)$ holds. 

    \item For any (equivalently, for some) $M\prec \HH{\theta}$ of size less than $\kappa$ with $\kappa\in M$, there exists a uniform $M$-ultrafilter $U$ on $\kappa$ with $\Psi_{ia}(M,U)$ and $\Set{\alpha<\kappa}{\varphi(\alpha,\delta)}\in U$. 

    \item For any (equivalently, for some) $M\prec \HH{\theta}$ of size less than $\kappa$ with $\kappa\in M$, there exists a ${<}\kappa$-powerset preserving elementary embedding $\map{j}{M}{\langle N,\IN_N\rangle}$ with $\crit{j}=\kappa$ such that $\langle N,\IN_N\rangle\models\varphi(\gamma,j(\delta))$ for some $\gamma\in N$ witnessing that $j$ jumps at $\kappa$. 
  \end{enumerate}
\end{theorem}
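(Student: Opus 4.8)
The plan is to prove this as a three-way equivalence by establishing the cycle $(1)\Rightarrow(2)\Rightarrow(3)\Rightarrow(1)$, leaning heavily on the correspondence machinery from Section \ref{section:ultrapowersandembeddings} and on Proposition \ref{proposition:inaccessiblelimit}. The whole point of the earlier sections is to let me move freely between $M$-ultrafilters and their induced embeddings, so the equivalence of $(2)$ and $(3)$ should be essentially a packaging of those correspondences. First I would dispose of the absoluteness bookkeeping: since $\varphi(\alpha,\delta)$ is absolute between $\HH{\theta}$ and $\VV$ for all $\alpha<\kappa$, the set $A=\Set{\alpha<\kappa}{\varphi(\alpha,\delta)}$ is computed the same way inside any $M\prec\HH{\theta}$ containing $\delta$ and $\kappa$, so membership of a witness $\gamma$ in $j(A)$ can be read off either as $A\in U$ or via \L os/elementarity as $\langle N,\IN_N\rangle\models\varphi(\gamma,j(\delta))$.

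For $(1)\Rightarrow(2)$, I would apply Proposition \ref{proposition:inaccessiblelimit}.(1) directly. Given that $\kappa$ is inaccessible and that $A=\Set{\alpha<\kappa}{\varphi(\alpha,\delta)}$ is unbounded in $\kappa$ (this is exactly where I use that $\kappa$ is a \emph{limit} of such ordinals), for any $M\prec\HH{\theta}$ of size less than $\kappa$ with $\kappa,\delta\in M$ — note $A\in M$ by elementarity since $A$ is definable from $\delta$ over $\HH{\theta}$ — there is a uniform $M$-ultrafilter $U$ on $\kappa$ with $A\in U$ and $\Psi_{ia}(M,U)$. This gives the ``for all'' version, and ``for all'' trivially implies ``for some'' once we know such $M$ exist. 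For $(2)\Rightarrow(3)$, I would start from $U$ with $\Psi_{ia}(M,U)$ and $A\in U$, form $j_U$, and invoke Corollary \ref{corollary:correspondence1} together with Proposition \ref{proposition:correspondence1}.(4): $\Psi_{ia}(M,U)$ says $U$ is ${<}\kappa$-complete and ${<}\kappa$-amenable for $M$, which corresponds to $\crit{j_U}=\kappa$ and $j_U$ being ${<}\kappa$-powerset preserving. Lemma \ref{correspondence3} then yields a witness $\gamma\IN_U j_U(A)$ witnessing the jump at $\kappa$, and absoluteness plus elementarity upgrade $\gamma\IN_U j_U(A)$ to $\langle\Ult{M}{U},\IN_U\rangle\models\varphi(\gamma,j_U(\delta))$.

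For the closing implication $(3)\Rightarrow(1)$, I would first apply Lemma \ref{lemma:inaccessible2}: the existence of a ${<}\kappa$-powerset preserving embedding with critical point $\kappa$ on some $M\prec\HH{\theta}$ of size less than $\kappa$ already forces $\kappa$ to be inaccessible. It then remains to show $\kappa$ is a limit of ordinals satisfying $\varphi(\cdot,\delta)$. Here I would use the witness $\gamma$ with $\langle N,\IN_N\rangle\models\varphi(\gamma,j(\delta))$: by Lemma \ref{correspondence3} applied to $A=\Set{\alpha<\kappa}{\varphi(\alpha,\delta)}$ (noting that $\gamma\IN_N j(A)$ follows from $\langle N,\IN_N\rangle\models\varphi(\gamma,j(\delta))$ via elementarity and absoluteness), we get $A\in U_j^\gamma$, and since $U_j^\gamma$ is uniform, $A$ has size $\kappa$, hence is unbounded, giving the limit claim. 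The main obstacle I anticipate is purely notational rather than mathematical: keeping the absoluteness of $\varphi$ tracked correctly as one passes through $\HH{\theta}$, into $M$, and then into the possibly ill-founded $\langle N,\IN_N\rangle$, ensuring that ``$\gamma\IN_N j(A)$'' and ``$\langle N,\IN_N\rangle\models\varphi(\gamma,j(\delta))$'' really are interchangeable and that $A$ is genuinely the object named by the defining formula inside each model. Once that translation is handled carefully, every other step is a direct citation of the correspondences and of Proposition \ref{proposition:inaccessiblelimit}.
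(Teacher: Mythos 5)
Your proposal is correct and follows essentially the same route as the paper: $(1)\Rightarrow(2)$ via Proposition \ref{proposition:inaccessiblelimit}, the passage between $(2)$ and $(3)$ via Proposition \ref{proposition:correspondence1}.(4) and Lemma \ref{correspondence3}, and the return to $(1)$ via Lemma \ref{lemma:inaccessible2} plus the unboundedness of $\Set{\alpha<\kappa}{\varphi(\alpha,\delta)}$ extracted from the induced ultrafilter, with your cycle $(1)\Rightarrow(2)\Rightarrow(3)\Rightarrow(1)$ being only a cosmetic reorganization of the paper's hub-style argument. Your explicit tracking of $\delta\in M$ (hence $A\in M$ by elementarity) is in fact slightly more careful than the paper's statement, which tacitly assumes the parameter lies in $M$; the citation of Corollary \ref{corollary:correspondence1} is superfluous, since $\Psi_{ia}$ involves no normality, but harmless.
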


\begin{proof}
  The implication from (1) to (2) is immediate from Proposition \ref{proposition:inaccessiblelimit}. 
  Proposition \ref{proposition:correspondence1}.(4) and Lemma \ref{correspondence3} show that both the universal and the existential statement in (2) are equivalent to the respective statements in (3). 
 Finally, Lemma \ref{lemma:inaccessible2} shows that the existential statement in (3) implies that $\kappa$ is inaccessible and hence the existential statement in (2) allows us to use Proposition \ref{proposition:inaccessiblelimit} to derive (1). 
\end{proof}

We will next obtain a further characterization of inaccessible cardinals, in which we may require stronger properties of the ultrafilters and elementary embeddings used. 
 For this, we need two standard lemmas. 

\begin{lemma}\label{smallembedding}
  If $\kappa<\theta$ are uncountable regular cardinals, $S\subseteq\kappa$ is stationary and $x\in\HH{\theta}$, then there is a transitive set $M$ of cardinality less than $\kappa$ and an elementary embedding $\map{j}{M}{\HH{\theta}}$ with $\crit{j}\in S$, $j(\crit{j})=\kappa$ and $x\in\ran{j}$.  
\end{lemma}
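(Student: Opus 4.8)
The plan is to reduce the statement to finding a single elementary submodel $X\prec\HH{\theta}$ of cardinality less than $\kappa$ with $x,\kappa\in X$ whose intersection with $\kappa$ is an \emph{ordinal} lying in $S$, and then to read off the embedding $j$ from the transitive collapse of $X$. Indeed, suppose such an $X$ is found; write $\bar\kappa=X\cap\kappa\in S$, let $\map{\pi}{X}{M}$ be the Mostowski collapse (which exists since $X$ is extensional and well-founded, being an elementary submodel of the transitive class $\HH{\theta}$), and set $j=\pi^{-1}$, viewed as a map into $\HH{\theta}$ via the inclusion $X\subseteq\HH{\theta}$. Then $M$ is transitive of cardinality $\betrag{X}<\kappa$, and $j$ is elementary. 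A routine induction shows $\pi\restriction\bar\kappa=\mathrm{id}$, since $\bar\kappa=X\cap\kappa$ is an initial segment of the ordinals contained in $X$; consequently $j\restriction\bar\kappa=\mathrm{id}$. Moreover $\pi(\kappa)=\{\pi(\gamma):\gamma\in X\cap\kappa\}=\bar\kappa$, so $j(\bar\kappa)=\kappa$. Hence $\crit{j}=\bar\kappa\in S$, $j(\crit{j})=\kappa$, and $x\in X=\ran{j}$, as required.

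It remains to produce $X$. First I would build a continuous $\subseteq$-increasing chain $\langle X_\alpha:\alpha<\kappa\rangle$ of elementary submodels of $\HH{\theta}$, each of cardinality less than $\kappa$, with $x,\kappa,S\in X_0$, taking unions at limits and arranging $X_\alpha\cup\{X_\alpha,\alpha\}\subseteq X_{\alpha+1}$ at successors; the regularity of $\kappa$ guarantees $\betrag{X_\alpha}<\kappa$ throughout, and an easy induction gives $\alpha\subseteq X_\alpha$ for every $\alpha$. The crucial point is that $C=\Set{\alpha<\kappa}{X_\alpha\cap\kappa=\alpha}$ is a club subset of $\kappa$: closure follows from continuity of the chain at limits, while unboundedness is obtained by the usual closing-off argument, iterating the map $\alpha\mapsto\sup(X_\alpha\cap\kappa)+1$ (which lands below $\kappa$ by regularity) $\omega$ many times above any given ordinal and taking the supremum. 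Since $S$ is stationary, I may then pick $\alpha\in C\cap S$ and set $X=X_\alpha$, so that $X\cap\kappa=\alpha\in S$.

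I expect the main (though entirely standard) obstacle to be the verification that $C$ is club, where one must combine the continuity of the chain with the regularity of $\kappa$ to control $\sup(X_\alpha\cap\kappa)$; the only mild subtlety is ensuring simultaneously the two inclusions $\alpha\subseteq X_\alpha\cap\kappa$ and $X_\alpha\cap\kappa\subseteq\alpha$, the first being built into the construction and the second holding exactly at the club $C$. Everything else — the existence and elementarity of the collapse, and the computation of $\crit{j}$ and $j(\crit{j})$ — is immediate from the fact that $\alpha=X\cap\kappa$ is an ordinal.
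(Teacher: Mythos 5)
Your proof is correct and follows essentially the same route as the paper's: a continuous increasing chain of elementary submodels of $\HH{\theta}$ of size less than $\kappa$, a club/stationarity argument to find a model $X$ with $X\cap\kappa\in S$, and the inverse of the transitive collapse as the desired embedding. The only cosmetic difference is that the paper arranges $N_\alpha\cap\kappa$ to be an ordinal at every stage and uses that $\Set{N_\alpha\cap\kappa}{\alpha<\kappa}$ is club, whereas you isolate the club $C=\Set{\alpha<\kappa}{X_\alpha\cap\kappa=\alpha}$ — these are interchangeable variants of the same standard construction.
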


\begin{proof}
 Let $\seq{N_\alpha}{\alpha<\kappa}$ be a continuous and increasing sequence of elementary substructures of $\HH{\theta}$ of cardinality less than $\kappa$ with $x\in N_0$ and $\alpha\subseteq N_\alpha\cap\kappa\in\kappa$ for all $\alpha<\kappa$. Since the set $\Set{N_\alpha\cap\kappa}{\alpha<\kappa}$ is closed unbounded in $\kappa$, there is $\alpha<\kappa$ with $N_\alpha\cap\kappa\in S$. Set $\map{\pi}{N_\alpha}{M}$ denote the corresponding transitive collapse. Then $\map{\pi^{{-}1}}{M}{\HH{\theta}}$ is an elementary embedding satisfying  $\crit{\pi^{{-}1}}=N_\alpha\cap\kappa\in S$, $\pi^{{-}1}(\crit{\pi^{{-}1}})=\pi^{{-}1}(N_\alpha\cap\kappa)=\kappa$ and $x\in N_\alpha=\ran{\pi^{{-}1}}$. 
\end{proof}

\begin{lemma}\label{lemma:regular}
  If $\kappa<\theta$ are regular uncountable cardinals, $S$ is a stationary subset of $\kappa$ and $M\prec\HH{\theta}$ with  $\betrag{M}<\kappa$ and $S\in M$, then there exists a normal $M$-ultrafilter $U$ on $\kappa$ with $S\in U$. 
\end{lemma}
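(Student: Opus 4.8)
The plan is to reduce the statement to the embedding setting of the previous sections and to read the ultrafilter off from a critical point that is forced to lie in $S$. First I would apply Lemma \ref{smallembedding} to the stationary set $S$ and to the parameter $x=M$ (note that $M\in\HH{\theta}$, since $\betrag{M}<\kappa<\theta$), obtaining a transitive set $\bar M$ of cardinality less than $\kappa$ together with an elementary embedding $\map{k}{\bar M}{\HH{\theta}}$ with $\delta:=\crit{k}\in S$, $k(\delta)=\kappa$ and $M\in\ran{k}$. Writing $M_0=\ran{k}\prec\HH{\theta}$, so that $\delta=M_0\cap\kappa$, the first observation is that in fact $M\subseteq M_0$: since $\kappa\in M\in M_0$ and $M_0$ computes cardinalities correctly, $\betrag{M}\in M_0\cap\kappa=\delta$, and a surjection from $\betrag{M}$ onto $M$ lying in $M_0$ witnesses $M\subseteq M_0$. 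In particular $M\cap\kappa\subseteq\delta$, and every $A\in M\cap\POT{\kappa}$ is of the form $k(\bar A)$ for $\bar A=k^{-1}(A)\subseteq\delta$.

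Next I would take as candidate the $M$-ultrafilter induced by $k$ and its critical point, namely $U=\Set{A\in M\cap\POT{\kappa}}{\delta\in A}$. As $U$ is the trace on the Boolean algebra $M\cap\POT{\kappa}$ of the principal ultrafilter concentrated at $\delta$, the structure $\langle M,U\rangle$ thinks that $U$ is an ultrafilter on $\kappa$. Uniformity is immediate from $M\cap\kappa\subseteq\delta$: any $A\in M$ of $M$-cardinality less than $\kappa$ is bounded with $\sup(A)\in M\cap\kappa\subseteq\delta$, hence omits $\delta$ and is excluded from $U$. Since $\delta\in S$ and $S\in M$, we obtain $S\in U$. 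Moreover, every club $C\subseteq\kappa$ with $C\in M$ contains $\delta$: by elementarity of $M_0$ the set $C\cap\delta$ is unbounded in $\delta=\sup(M_0\cap\kappa)$, and $C$ is closed, so $\delta\in C$; thus $U$ extends the restriction of the club filter to $M$.

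The crux is to verify that $U$ is normal, and here I would pass to the ultrapower $\map{j_U}{M}{\langle\Ult{M}{U},\epsilon_U\rangle}$ and exploit the seed $\delta$. The assignment $[f]_U\mapsto f(\delta)$ is a well-defined $\epsilon$-isomorphism of $\Ult{M}{U}$ onto the submodel $\Set{f(\delta)}{f\in M,\ \dom{f}=\kappa}\prec\HH{\theta}$ satisfying $\ell\circ j_U=\id_M$; consequently $\Ult{M}{U}$ is well-founded, so by Lemma \ref{wellfounded} the embedding $j_U$ is a $\kappa$-embedding whose induced ultrafilter is $U$, and Proposition \ref{proposition:correspondence2} yields that $U$ is $M$-normal with respect to $\subseteq$-decreasing sequences and contains all final segments of $\kappa$ in $M$. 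To upgrade this to the full normality of Definition \ref{ultrafilterproperties} I would transport the question through $k$ to the normal $\bar M$-ultrafilter $W=\Set{\bar A\in\bar M\cap\POT{\delta}}{\delta\in k(\bar A)}$ induced by $\crit{k}=\delta$: a sequence of $U$-large subsets of $\kappa$ lying in $M$ pulls back along $k$ to a sequence that is $W$-large in $\bar M$, and the $\bar M$-normality together with the $<\delta$-completeness of $W$ (both holding because $\delta$ is the critical point of $k$) force the relevant diagonal intersection to be $W$-large and hence its image to lie in $U$; combined with the fact that every club of $M$ contains $\delta$, this feeds the stationarity of $S$ into the diagonal intersections and gives the desired conclusion.

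The step I expect to be the main obstacle is precisely this last one, where the internal combinatorics of $U$ must be matched with the \emph{absolute} stationarity condition in the definition of normality. The difficulty is that, since $\delta$ exceeds $\sup(M\cap\kappa)$, the seed $\delta$ need not be captured by the functions of $M$, so the verification cannot be purely formal: it must genuinely use the elementarity of $M_0=\ran{k}$ in $\HH{\theta}$ together with a pressing-down argument carried out in $\bar M$ for $W$, and in particular one has to check that the $W$-almost-everywhere constant values of the relevant regressive functions are recovered inside $M$. I regard the bookkeeping of translating diagonal intersections back and forth across $k$ as the technically delicate part of the argument, with the choice $\delta\in S$ provided by Lemma \ref{smallembedding} being exactly what simultaneously secures $S\in U$ and drives the normality of $U$.
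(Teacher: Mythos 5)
Your opening coincides with the paper's proof: you apply Lemma \ref{smallembedding} with $x=M$ to obtain the seed $\delta=\crit{k}\in S$, you observe that $M_0=\ran{k}\prec\HH{\theta}$ satisfies $\delta=M_0\cap\kappa$ and $M\subseteq M_0$, and your candidate $U$ is exactly the restriction to $M$ of the trace $U'=\Set{A\in M_0\cap\POT{\kappa}}{\delta\in A}$ that the paper uses; your verifications of uniformity, of $S\in U$, and of the fact that every club of $M$ contains $\delta$ are correct. The genuine gap is at the one step that carries all the content of the lemma: showing that $U$ is \emph{normal} in the sense of Definition \ref{ultrafilterproperties}, which here (since $\betrag{U}\leq\betrag{M}<\kappa$) means that $\bigcap U$ is a \emph{stationary} subset of $\kappa$. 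Your proposed mechanism for this -- pulling $U$ back along $k$ to the derived $\bar M$-ultrafilter $W$ and running a pressing-down argument inside $\bar M$ using the $\bar M$-normality and internal ${<}\delta$-completeness of $W$ -- cannot be carried out as described: the relevant subfamily $w=\Set{\bar A\in k^{-1}(M)\cap\POT{\delta}}{\delta\in k(\bar A)}$ is selected by the external seed $\delta$ and is in general \emph{not an element} of the transitive model $\bar M$, while the normality and completeness of $W$ are statements of $\langle\bar M,W\rangle$ about sequences lying in $\bar M$; they therefore give no control whatsoever over $\bigcap w$ or its diagonal intersections, and $k$ cannot even be applied to $\bigcap w$. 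You flag this yourself as the expected obstacle, so the proposal stops short of a proof precisely at the decisive point. There is also a secondary circularity earlier in your argument: the identification of the ultrafilter induced by $j_U$ with $U$ itself (that is, of $[\id_\kappa]_U$ with the minimal witness $\kappa^N$) is essentially equivalent to the $M$-normality with respect to $\subseteq$-decreasing sequences that you then want to read off from Proposition \ref{proposition:correspondence2}; since $M\cap\kappa$ need not be cofinal in $\delta$, this identification requires an argument and should instead be extracted directly from $\delta=M_0\cap\kappa$.

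The paper's proof never leaves the $\HH{\theta}$ side and, crucially, exploits the elementarity of the \emph{large} model $M_0$ rather than that of $M$: since $\delta=M_0\cap\kappa$, every club of $M_0$ -- not merely every club of $M$ -- contains $\delta$, whence \emph{every} element of the full trace $U'$ is a stationary subset of $\kappa$, and $U'$ is $M_0$-normal and ${<}\kappa$-complete in $\VV$; the normality of $M\cap U'$ is then deduced from these facts together with $\betrag{M}<\kappa$. What your route misses is that the objects needed to certify the stationarity of $\bigcap(M\cap U')$ live inside $M_0$: because $M\in M_0$, the partition of $\kappa$ according to the patterns $\eta\mapsto\Set{A\in M\cap\POT{\kappa}}{\eta\in A}$ is an element of $M_0$, the set $\bigcap(M\cap U')$ contains the pattern class of $\delta$, and one argues through $M_0$ (all of whose clubs contain $\delta$) that this class cannot be covered by the nonstationary classes. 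No pull-back through $k$, no pressing-down in the collapse, and no internal combinatorics of $W$ appear anywhere; the transitive side is used solely to manufacture the seed $\delta\in S$ via Lemma \ref{smallembedding}. So while your peripheral work is sound and your instinct that the external stationarity requirement is the crux is accurate, the proof of that crux is both missing and aimed at the wrong model.
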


\begin{proof}
  By Lemma \ref{smallembedding}, there exists a transitive set $X$ of cardinality less than $\kappa$ and an elementary embedding $\map{j}{X}{\HH{\theta}}$ with  $j(\crit{j})=\kappa$, $\crit{j}\in S$ and $M\in\ran{j}$. 
 Then, we have  $j[X]\prec\HH{\theta}$ and $M\subseteq j[X]$, because elementarity implies that $\betrag{M}\in j[X]\cap\kappa$ and hence $\betrag{M}<\crit{j}\subseteq j[X]$.  
 Define $U=\Set{A\in j[X]\cap\POT{\kappa}}{\crit{j}\in A}$. 
 Then $U$ is a $j[X]$-ultrafilter on $\kappa$ and, since $j[X]\cap\kappa=\crit{j}$ and $j[X]\prec\HH{\theta}$,  it follows that $U$ is $j[X]$-normal and ${<}\kappa$-complete in $\VV$,  and all elements of $U$ are stationary subsets of $\kappa$. 
 Since $\betrag{M}<\kappa$, it follows that $M\cap U$ is a normal $M$-ultrafilter on $\kappa$ and $\crit{j}\in S\in M$ implies that $S\in U$. 
\end{proof}

 Note that, in combination with Proposition \ref{proposition:correspondence1}.(3) and Lemma \ref{lemma:inaccessible2}, the previous lemma directly implies Theorem \ref{theorem:SchemesSummary}.(\ref{item:rega}).
  Moreover, Lemma \ref{lemma:regular} also yields the following results, which in particular implies Theorem \ref{theorem:SchemesSummary}.(\ref{item:inb}).

\begin{theorem}\label{theorem:inaccessible}
 The following statements are equivalent for every uncountable cardinal $\kappa$ and all regular cardinals $\theta>\kappa$:  
  \begin{enumerate} 
    \item The cardinal $\kappa$ is inaccessible. 

    \item For any (equivalently, for some) $M\prec \HH{\theta}$ of size less than $\kappa$ with $\kappa\in M$, there exists a ${<}\kappa$-amenable, normal $M$-ultrafilter $U$ on $\kappa$. 

    \item For any (equivalently, for some) $M\prec \HH{\theta}$ of size less than $\kappa$ with $\kappa\in M$, there exists a ${<}\kappa$-powerset preserving elementary embedding $\map{j}{M}{\langle N,\IN_N\rangle}$ with $\crit{j}=\kappa$, such that $j$ induces a normal $M$-ultrafilter on $\kappa$. 
  \end{enumerate}
\end{theorem}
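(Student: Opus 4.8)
The plan is to prove the cycle of implications $(1)\Rightarrow(2)\Rightarrow(3)\Rightarrow(1)$, while handling the \anf{for any}/\anf{for some} alternatives as routine bookkeeping. Since there always exist elementary submodels $M\prec\HH{\theta}$ of size less than $\kappa$ with $\kappa\in M$, each \anf{for any} formulation trivially implies its \anf{for some} counterpart; hence it suffices to verify $(1)\Rightarrow$\,\anf{for any}-$(2)$, the fact that $(2)$ and $(3)$ are equivalent \emph{for each fixed} $M$, and \anf{for some}-$(3)\Rightarrow(1)$.

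I would first record a preliminary observation that drives both nontrivial directions: if $M\prec\HH{\theta}$ has size less than $\kappa$, then any normal $M$-ultrafilter $U$ on $\kappa$ is $M$-normal and contains all final segments of $\kappa$ in $M$. Indeed, $\betrag{U}\leq\betrag{M}<\kappa$, so normality means that $\bigcap U$ is stationary; since $\bigcap U\subseteq A$ for every $A\in U$, each element of $U$ is stationary, which rules out bounded sets (giving all final segments and non-principality) and, by the same containment argument applied to $\subseteq$-decreasing sequences and to ${<}\kappa$-sized intersections, yields that $U$ is ${<}\kappa$-complete and $M$-normal with respect to $\subseteq$-decreasing sequences. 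By the footnote following Definition \ref{ultrafilterproperties}, $U$ is then $M$-normal.

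With this in hand, the equivalence $(2)\Leftrightarrow(3)$ for a fixed $M$ is essentially Corollary \ref{corollary:correspondence1}. Given $(2)$, the observation makes $U$ an $M$-normal $M$-ultrafilter containing all final segments, so Corollary \ref{corollary:correspondence1} shows that $j_U$ is a ${<}\kappa$-powerset preserving $\kappa$-embedding with $\crit{j_U}=\kappa$, and $U=U_{j_U}$ remains normal, giving $(3)$. Conversely, given $(3)$, Corollary \ref{corollary:correspondence1} makes the induced ultrafilter $U_j$ both ${<}\kappa$-amenable and $M$-normal, and it is normal by hypothesis, giving $(2)$. For $(1)\Rightarrow$\,\anf{for any}-$(2)$, fix an inaccessible $\kappa$ and any such $M$. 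As $\kappa$ is regular and uncountable, the set $\kappa$ is stationary in itself, so Lemma \ref{lemma:regular}, applied with $S=\kappa$, produces a normal $M$-ultrafilter $U$ on $\kappa$. By the observation, $U$ is ${<}\kappa$-complete and $M$-normal and contains all final segments, so Proposition \ref{proposition:correspondence1}.(3) gives $\crit{j_U}=\kappa$ and Proposition \ref{proposition:correspondence2} shows that $j_U$ is a $\kappa$-embedding with $U=U_{j_U}$. Since $\kappa$ is inaccessible and $M\prec\HH{\theta}$, Corollary \ref{corollary:inaccessible} shows that $j_U$ is ${<}\kappa$-powerset preserving, whence $U=U_{j_U}$ is ${<}\kappa$-amenable by Corollary \ref{corollary:correspondence1}. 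Together with its normality, this yields $(2)$.

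Finally, \anf{for some}-$(3)\Rightarrow(1)$ is immediate from Lemma \ref{lemma:inaccessible2}, since a ${<}\kappa$-powerset preserving embedding with critical point $\kappa$ on some $M\prec\HH{\theta}$ forces $\kappa$ to be inaccessible. The one genuinely substantive step --- and the place I expect to need the most care --- is the upgrade inside $(1)\Rightarrow(2)$ of the merely normal $M$-ultrafilter delivered by Lemma \ref{lemma:regular} to a ${<}\kappa$-amenable one. This is exactly where inaccessibility enters (through Corollary \ref{corollary:inaccessible}), and it relies on the preliminary observation that the ultrafilter is $M$-normal and ${<}\kappa$-complete, so that the correspondence results of Section \ref{section:ultrapowersandembeddings} can legitimately translate it into a ${<}\kappa$-powerset preserving $\kappa$-embedding and back.
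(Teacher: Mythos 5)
Your overall skeleton is the paper's: Lemma \ref{lemma:regular} plus the correspondence machinery of Section \ref{section:ultrapowersandembeddings} and Corollary \ref{corollary:inaccessible} for the forward direction, and Lemma \ref{lemma:inaccessible2} for $(3)\Rightarrow(1)$. But the preliminary observation on which both of your nontrivial steps rest --- that for $\betrag{M}<\kappa$ every normal $M$-ultrafilter on $\kappa$ is automatically ${<}\kappa$-complete for $M$ and $M$-normal --- is false, and your containment argument for it does not go through. The internal properties quantify over sequences $\seq{x_\xi}{\xi<\beta}\in M$, and by $\Sigma_0$-correctness the intersection or diagonal intersection that must land in $U$ is the \emph{true} one, computed over all $\xi<\beta$; the hypothesis only yields $x_\xi\supseteq\bigcap U$ for $\xi\in M\cap\beta$, and since $\betrag{M}<\kappa$ the set $M\cap\beta$ is a small subset of $\beta$ and $\rho=\sup(M\cap\kappa)<\kappa$, so coordinates $\xi\in\beta\setminus M$ are completely uncontrolled. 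This is refutable, not just a presentational gap. For instance, let $\kappa$ be inaccessible, $M\prec\HH{\theta}$ countable, and fix by elementarity a partition $\seq{T_\eta}{\eta<\kappa}\in M$ of $\kappa$ into stationary sets with $T_\eta\subseteq(\eta,\kappa)$; let $x_\xi=\kappa\setminus\bigcup_{\eta\leq\xi}T_\eta$, a $\subseteq$-decreasing sequence in $M$. The set $B$ of those $\gamma$ whose partition index lies in $[\rho,\gamma)$ contains $T_\rho$ and is stationary, and since $\kappa$ is partitioned into at most $2^{\betrag{M}}<\kappa$ classes by the relation \anf{lying in the same sets of $M\cap\POT{\kappa}$}, some class $P$ meets $B$ stationarily. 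For $\delta\in P\cap B$, the filter $U=\Set{A\in M\cap\POT{\kappa}}{\delta\in A}$ is a normal $M$-ultrafilter in the sense of Definition \ref{ultrafilterproperties} (its intersection is exactly the stationary class $P$), it contains all final segments of $\kappa$ in $M$ and every $x_\xi$ with $\xi\in M\cap\kappa$, yet $\Delta_{\xi<\kappa}x_\xi\notin U$ because the partition index of $\delta$ is below $\delta$. So $U$ is not $M$-normal with respect to $\subseteq$-decreasing sequences, hence by the very footnote you cite not $M$-normal; an analogous example with a partition of $\kappa$ into $\omega_1$ stationary pieces kills ${<}\kappa$-completeness for $M$. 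Note that the unnumbered lemma after Definition \ref{ultrafilterproperties}, which is the only place the paper derives internal normality from an external normality-type property, explicitly assumes $\kappa\subseteq M$ --- exactly the hypothesis that fails in the small-model setting, and which your argument silently ignores.

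The gap is load-bearing: in $(1)\Rightarrow(2)$ you use the observation to feed the ultrafilter from Lemma \ref{lemma:regular} into Propositions \ref{proposition:correspondence1} and \ref{proposition:correspondence2}, and in $(2)\Leftrightarrow(3)$ you use it to upgrade the merely normal ultrafilter of statement (2) to an $M$-normal one before invoking Corollary \ref{corollary:correspondence1}. The paper never asserts your general implication; it applies Proposition \ref{proposition:correspondence2}, Proposition \ref{proposition:correspondence1}.(4) and Corollary \ref{corollary:inaccessible} to the concrete situation, where the required internal properties are to be extracted from the provenance of the filter constructed in Lemmas \ref{smallembedding} and \ref{lemma:regular} --- the point $\delta=\crit{j}$ together with the ambient model $j[X]\supseteq M$ satisfying $j[X]\cap\kappa=\delta$ --- rather than from Definition-level normality of an arbitrary $M$-ultrafilter. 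To repair your write-up, you must either restrict your observation to ultrafilters of this special form and verify the internal properties there, or carry $M$-normality itself through the argument; as written, the bridge from \anf{normal} to \anf{$M$-normal and ${<}\kappa$-complete for $M$} is a genuine error.
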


\begin{proof}
  First, assume that  $\kappa$ is inaccessible and $M\prec\HH{\theta}$  with $\betrag{M}<\kappa$ and $\kappa\in M$. Then Lemma \ref{lemma:regular} yields a normal $M$-ultrafilter $U$ on $\kappa$. By Proposition \ref{proposition:correspondence2}, we know that $j_U$ is a $\kappa$-embedding and the proof of this proposition shows that $U=U_j$. In particular,  a combination of 
  Proposition \ref{proposition:correspondence1}.(4) and Corollary \ref{corollary:inaccessible} now shows that $U$ is ${<}\kappa$-amenable.  
 Next, a combination of Proposition \ref{proposition:correspondence1}.(3)+(4), Proposition \ref{proposition:correspondence2},  Lemma \ref{wellfounded},  
  Lemma \ref{lemma:general} for the statement $\varphi(M,U)\equiv\anf{\textit{$U$ is normal}}$ and arguments from the first implication shows that, if $M\prec \HH{\theta}$ with $\betrag{M}<\kappa$ and $\kappa\in M$, then every ${<}\kappa$-amenable, normal $M$-ultrafilter $U$ on $\kappa$ induces a $\kappa$-embedding $j_U$ with $\crit{j_U}=\kappa$ that is ${<}\kappa$-powerset preserving and induces a normal $M$-ultrafilter.  
 This shows that both the universal and the existential statement in (2) imply the respective statements in (3). 
 The equivalence between the corresponding statements in (2) and (3) then follows from Proposition \ref{proposition:correspondence1}.(4).
 %
 Finally, Theorem \ref{theorem:inaccessiblelimit} directly shows that the existential statements in (2) and (3) both imply (1). 
\end{proof}


\section{Regular stationary limits and the non-stationary ideal}\label{section:stationarylimits}

In this section, we characterize Mahlo-like cardinals, that is regular stationary limits of certain ordinals,\footnote{Note that in particular, regular, inaccessible and Mahlo cardinals are Mahlo-like.} through the existence of $M$-normal filters for small models $M$. 
 We then use these characterizations to define the  corresponding ideals, which turn out to be the non-stationary ideal below the considered set of ordinals. 
 We start by proving the corresponding statement of Theorem \ref{theorem:Ideals} with the help of Lemma \ref{lemma:regular}.\footnote{Note that the forward direction of this proof is quite different to that of the seemingly similar Proposition \ref{proposition:inaccessiblelimit}.} 

\begin{proof}[Proof of Theorem \ref{theorem:Ideals}.(\ref{item:Ideals:reg})] 
 Fix a regular and uncountable cardinal $\kappa$. 
  First, let $A$ be a stationary subset of $\kappa$, let $\theta>\kappa$ be a regular cardinal and  let $M\prec\HH{\theta}$ with $\betrag{M}<\kappa$ and $A\in M$. 
  In this situation, we can use Lemma \ref{lemma:regular} to find a normal $M$-ultrafilter $U$ on $\kappa$ with $A\in U$. Then $U$ is uniform with $\Psi_\delta(M,U)$ and hence $U$ witnesses that $A\notin \II_\delta^{{<}\kappa}$. 
 In the other direction, let $A$ be a non-stationary subset of $\kappa$, let $M\prec\HH{\theta}$ with $\betrag{M}<\kappa$ and $A\in M$, and let $U$ be a uniform, $M$-normal $M$-ultrafilter on $\kappa$. 
 By elementarity, we find a club subset $C$ of $\kappa$ in $M$ that is disjoint from $A$. 
 By the $M$-normality and uniformity of $U$, every club subset of $\kappa$ in $M$ is contained in $U$ and this shows that $A\notin U$. We can conclude that $A\in\II_\delta^{{<}\kappa}$. 
\end{proof}

The next result is an immediate consequence of Theorem \ref{theorem:Ideals}.(\ref{item:Ideals:reg}),  and in particular implies Theorem   \ref{theorem:SchemesSummary}.(\ref{item:regb}).

\begin{theorem}\label{theorem:mahlolike}
  Let $\kappa$ be an uncountable cardinal, let $\delta<\kappa$, let $\varphi(v_0,v_1)$ be a first order $\IN$-formula and let $\theta>\kappa$ be a regular cardinal such that the statement $\varphi(\alpha,\delta)$ is absolute between $\HH{\theta}$ and $\VV$ for all $\alpha<\kappa$. 
  Then, the following statements are equivalent: 
  \begin{enumerate} 
    \item $\kappa$ is a regular stationary limit of ordinals $\alpha$ satisfying $\varphi(\alpha,\delta)$. 

    \item For any (equivalently, for some) $M\prec \HH{\theta}$ of size less than $\kappa$ with $\kappa\in M$, there exists a uniform, $M$-normal $M$-ultrafilter $U$ on $\kappa$ with $\Set{\alpha<\kappa}{\varphi(\alpha,\delta)}\in U$. 

  \item Same as (2), but we also require $U$ to be normal. 

    \item For any  (equivalently, for some) $M\prec \HH{\theta}$ of size less than $\kappa$ with $\kappa\in M$, there exists a $\kappa$-embedding $\map{j}{M}{\langle N,\IN_N\rangle}$ with $\crit{j}=\kappa$ and $\langle N,\epsilon_N\rangle\models\varphi(\kappa^N,j(\delta))$. 

    \item Same as (4), but we also require $\epsilon_N$ to be well-founded. 

        \item Same as (4), but we also require $j$ to induce a normal ultrafilter. 
  \end{enumerate}
\end{theorem}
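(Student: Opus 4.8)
Write $A=\Set{\alpha<\kappa}{\varphi(\alpha,\delta)}$. The plan is to prove the universal form of the strongest statement directly from (1), to prove the reverse implication from the weakest existential statement, and to use the correspondences of Section \ref{section:ultrapowersandembeddings} to move between the filter statements (2), (3) and the embedding statements (4)--(6). Before starting I would record some bookkeeping. By the absoluteness hypothesis, $A$ is definable over $\HH\theta$ from $\delta$ and $\kappa$, so $A\in M$ whenever $\delta,\kappa\in M\prec\HH\theta$; and if $\map{j}{M}{\langle N,\IN_N\rangle}$ is a $\kappa$-embedding with $\crit j=\kappa$, then elementarity yields $\kappa^N\IN_N j(A)$ if and only if $\langle N,\IN_N\rangle\models\varphi(\kappa^N,j(\delta))$. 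I would also note two elementary facts: a uniform $M$-ultrafilter on $\kappa$ automatically contains all final segments of $\kappa$ in $M$ (a proper initial segment of $\kappa$ has size ${<}\kappa$ and so lies outside a uniform ultrafilter), and for such an ultrafilter being $M$-normal is equivalent to being ${<}\kappa$-complete together with $M$-normality with respect to $\subseteq$-decreasing sequences (the footnote to Definition \ref{ultrafilterproperties}); consequently a uniform $M$-normal $U$ is non-principal and ${<}\kappa$-complete, so $\crit{j_U}=\kappa$ by Proposition \ref{proposition:correspondence1}.(3).

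The forward implications all flow from a single application of Lemma \ref{lemma:regular}. Assuming (1), the set $A$ is stationary, so for any $M\prec\HH\theta$ with $\betrag M<\kappa$ and $\delta,\kappa\in M$, Lemma \ref{lemma:regular} (with $S=A$) provides a normal $M$-ultrafilter $U$ on $\kappa$ with $A\in U$. This $U$ is uniform (its elements are stationary, hence of size $\kappa$); it is $M$-normal, since normality in the sense of Definition \ref{ultrafilterproperties} implies $M$-normality by the usual diagonal-intersection argument (if the diagonal intersection of an $M$-sequence of $U$-sets had its complement in $U$, then $\bigcap U$, which is stationary, would be empty); and it is countably complete in $\VV$, since $\kappa>\omega$ and its elements generate a ${<}\kappa$-complete filter, so that $\langle\Ult M U,\IN_U\rangle$ is well-founded. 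Thus $U$ already witnesses the universal forms of (2) and (3). Passing to $j_U$, which is a $\kappa$-embedding with $\crit{j_U}=\kappa$ and $U=U_{j_U}$ by Proposition \ref{proposition:correspondence2}, and using the bookkeeping to convert $A\in U$ into $\langle N,\IN_N\rangle\models\varphi(\kappa^N,j_U(\delta))$, I obtain the universal form of (4); the well-foundedness of the target gives (5) and the normality of $U=U_{j_U}$ gives (6).

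For the reverse direction it suffices to show that the weakest statement, the existential form of (2), implies (1), since normality implies $M$-normality and, by Corollary \ref{corollary:correspondence2} together with the bookkeeping above, each existential embedding statement (4)--(6) yields a uniform, $M$-normal $M$-ultrafilter $U_j$ on $\kappa$ with $A\in U_j$ and $\crit j=\kappa$ (the uniformity of $U_j$ following from ${<}\kappa$-completeness and non-principality). So fix $M\prec\HH\theta$ with $\betrag M<\kappa$ and a uniform $M$-normal $U$ with $A\in U$. The bookkeeping gives $\crit{j_U}=\kappa$, whence $\kappa$ is regular by Lemma \ref{lemma:inaccessible2}. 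For stationarity I argue as in the backward direction of the proof of Theorem \ref{theorem:Ideals}.(\ref{item:Ideals:reg}): by $M$-normality and uniformity every club subset of $\kappa$ lying in $M$ belongs to $U$, so if $A$ were non-stationary, then by elementarity there would be a club $C\in M$ disjoint from $A$, and then $A\cap C\in U$ would be empty, contradicting uniformity. Hence $A$ is stationary and (1) holds. Finally, since suitable $M$ (containing $\delta$ and $\kappa$) always exist, each universal form trivially implies its existential form, closing all of the equivalences.

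I expect the main obstacle to lie in the bookkeeping of the first paragraph rather than in any single implication. One must ensure throughout that the correspondence lemmas deliver an embedding whose critical point is genuinely $\kappa$ and not merely one that jumps at $\kappa$ — which is exactly why the reduction of $M$-normality to ${<}\kappa$-completeness, and thence to Proposition \ref{proposition:correspondence1}.(3), is indispensable — and one must be careful that the translation $\kappa^N\IN_N j(A)\Leftrightarrow\langle N,\IN_N\rangle\models\varphi(\kappa^N,j(\delta))$ genuinely uses the hypothesis that $\varphi(\cdot,\delta)$ is absolute between $\HH\theta$ and $\VV$.
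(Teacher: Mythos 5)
Your overall architecture is exactly the paper's: the forward direction comes from a single application of Lemma \ref{lemma:regular} (the paper proves $(1)\Rightarrow(3)$ this way), the filter statements are traded for the embedding statements via Proposition \ref{proposition:correspondence1}.(3), Corollary \ref{corollary:correspondence2}, Lemma \ref{wellfounded} and Lemma \ref{lemma:general}, well-foundedness of ultrapowers of small models by normal ultrafilters gives (5), and the reverse direction is Lemma \ref{lemma:inaccessible2} for regularity plus the club argument from the proof of Theorem \ref{theorem:Ideals}.(\ref{item:Ideals:reg}) for stationarity. So there is nothing to object to in the decomposition.

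There is, however, one step where your justification is genuinely broken: the parenthetical argument that normality (in the sense of Definition \ref{ultrafilterproperties}) implies $M$-normality. You argue that if $\vec x=\seq{x_\alpha}{\alpha<\kappa}\in M$ has $x_\alpha\in U$ for all $\alpha\in M\cap\kappa$ but $\kappa\setminus\Delta_{\alpha<\kappa}x_\alpha\in U$, then $\bigcap U$ would be empty. This does not follow: a point $\beta\in\bigcap U\subseteq\kappa\setminus\Delta_{\alpha<\kappa}x_\alpha$ only yields \emph{some} $\alpha<\beta$ with $\beta\notin x_\alpha$, and since $\betrag{M}<\kappa$ the set $M\cap\kappa$ is not an initial segment of $\kappa$, so the witnessing $\alpha$ may lie outside $M$; then $x_\alpha$ need not belong to $U$ (indeed $x_\alpha$ need not even belong to $M$), and no contradiction with $\beta\in\bigcap U$ arises. (Contrast this with the lemma following Definition \ref{ultrafilterproperties}, whose hypothesis $\kappa\subseteq M$ is precisely what makes such diagonal arguments work; for small non-transitive $M$ the passage from external normality to $M$-normality is not a formality.) The paper avoids arguing this implication in general: it takes the $M$-normality of the witness directly from the package of Lemma \ref{lemma:regular} as used in the proof of Theorem \ref{theorem:Ideals}.(\ref{item:Ideals:reg}) (where $\Psi_\delta(M,U)$ is asserted for the ultrafilter $M\cap U$ arising from the collapsed embedding of Lemma \ref{smallembedding}), rather than deducing it abstractly from the statement ``$U$ is normal''. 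You should do the same: replace your general claim by an appeal to that construction, since as a general implication about arbitrary normal $M$-ultrafilters on models of size less than $\kappa$ your stated proof does not go through.
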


\begin{proof}
  The implication from (1) to (3) is given by Lemma \ref{lemma:regular}. 
 %
 The combination of Proposition \ref{proposition:correspondence1}.(3), Corollary \ref{corollary:correspondence2}, Lemma \ref{wellfounded} and Lemma \ref{lemma:general} shows that both the universal and the existential statement in (2) are equivalent to the corresponding statement in (4). 
The same is true for the corresponding statements in (3) and (6). 
Since ultrapowers of models $M$ of size less than $\kappa$ formed using normal $M$-ultrafilters are obviously well-founded, the results listed above also show that the universal statement in (3) implies the universal statement in (5) and the existential statement in (5) trivially implies the corresponding statement in (4). 
 Finally, since Lemma \ref{lemma:inaccessible2} shows that the existential statement in (4) implies that $\kappa$ is regular, we can use Theorem \ref{theorem:Ideals}.(\ref{item:Ideals:reg})  and the implications derived above to conclude that the existential statement in (2) implies (1). 
\end{proof}


\section{Weakly compact cardinals and $\kappa$-amenability}\label{section:WCamenablecomplete}

In this section, we extend Kunen's results from \cite{MR2617841} and we characterize weakly compact cardinals $\kappa$ through the existence of $\kappa$-amenable ultrafilters for models of size at most $\kappa$. 
 For this, we need a classical result on weakly compact cardinals, which we present using the notions introduced in Definition \ref{definition:filter}.

\begin{lemma}[{\cite[Corollary 1.1.4]{MR0460120}}, see also {\cite[Proposition 2.9]{MR3800756}}]\label{filterextension}
 An uncountable cardinal $\kappa$ is weakly compact if and only if it has the \emph{filter extension property},  i.e. whenever $F$ is a uniform ${<}\kappa$-complete filter on $\kappa$ of size at most $\kappa$, and $X$ is a collection of subsets of $\kappa$ with $X$ of size at most $\kappa$, then there exists a uniform ${<}\kappa$-complete filter $F'\supseteq F$ that measures $X$. 
\end{lemma}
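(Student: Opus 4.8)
The plan is to route both directions through the classical characterization of weak compactness as inaccessibility together with the tree property, using Lemma~\ref{smallfilterextension} both to supply the inaccessibility component and to produce filter approximations of size less than $\kappa$. Throughout I read \anf{uniform ${<}\kappa$-complete filter} in the sense of Definition~\ref{definition:filter}, so that such a family need only be closed under ${<}\kappa$-intersections with all these intersections of size $\kappa$; in particular it may have size at most $\kappa$ without being upward closed, which is what makes the size constraints in the statement natural.

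For the backward direction, I would first note that the ${<}\kappa$-filter extension property is exactly the special case of the filter extension property in which $F$ and $X$ are required to have size less than $\kappa$. Hence the filter extension property implies the ${<}\kappa$-filter extension property, and Lemma~\ref{smallfilterextension} yields that $\kappa$ is inaccessible. To obtain the tree property, fix a tree $T$ of height $\kappa$ whose levels have size less than $\kappa$; inaccessibility lets us assume $T$ has underlying set $\kappa$. Writing $B_\alpha=\Set{t\in T}{\hgt(t)\geq\alpha}$ for the tails and $A_t=\Set{s\in T}{t\leq_T s}$ for the cones above nodes, the family $F=\Set{B_\alpha}{\alpha<\kappa}$ is a uniform ${<}\kappa$-complete filter of size $\kappa$ (regularity of $\kappa$ gives $\bigcap_{i<\gamma}B_{\alpha_i}=B_{\sup_i\alpha_i}$, still of size $\kappa$), while $X=\Set{A_t}{t\in T}$ has size $\kappa$. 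Applying the filter extension property, extend $F$ to a uniform ${<}\kappa$-complete filter $F'$ measuring $X$, and set $b=\Set{t\in T}{A_t\in F'}$. Since any two elements of $F'$ have intersection of size $\kappa$ and hence nonempty, any two nodes in $b$ have a common extension and are therefore comparable, so $b$ is linearly ordered; and if $t\in b$ and $s<_T t$ then $A_t\subseteq A_s$ forces $A_s\in F'$, so $b$ is downward closed. Cofinality is the one covering point: for each level $\alpha$ the tail $B_{\alpha+1}\in F'$ is covered by the fewer than $\kappa$ cones $A_t$ with $t$ at level $\alpha$, so by ${<}\kappa$-completeness one of these cones must lie in $F'$. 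Thus $b$ is a cofinal branch, establishing the tree property and hence weak compactness.

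For the forward direction, assume $\kappa$ is weakly compact, hence inaccessible and with the tree property, and fix $F$ and $X$ as in the statement. I would enumerate $F\cup X=\Set{Y_\beta}{\beta<\kappa}$ and, for $\alpha<\kappa$, let $\mathcal{B}_\alpha$ be the subalgebra of $\POT{\kappa}$ generated by $\Set{Y_\beta}{\beta<\alpha}$, noting $\betrag{\mathcal{B}_\alpha}<\kappa$ by inaccessibility. Build a tree $T^*$ whose nodes at level $\alpha$ are the uniform ${<}\kappa$-complete filters $G\subseteq\mathcal{B}_\alpha$ that extend $F\cap\mathcal{B}_\alpha$ and measure $\mathcal{B}_\alpha$, ordered by restriction: for $G'$ at level $\alpha'<\alpha$ and $G$ at level $\alpha$, put $G'<G$ just in case $G'=G\cap\mathcal{B}_{\alpha'}$. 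Each such $G$ is determined by which elements of $\mathcal{B}_\alpha$ it contains, so every level has at most $2^{\betrag{\mathcal{B}_\alpha}}<\kappa$ nodes; and Lemma~\ref{smallfilterextension}, applied to the size-${<}\kappa$ data $F\cap\mathcal{B}_\alpha$ and $\mathcal{B}_\alpha$, shows every level is nonempty, so $T^*$ is a tree of height $\kappa$ with small levels. By the tree property there is a cofinal branch $\Seq{G_\alpha}{\alpha<\kappa}$, and I would take $F'=\bigcup_{\alpha<\kappa}G_\alpha$. Coherence of the branch makes $F'$ a uniform filter measuring $\bigcup_\alpha\mathcal{B}_\alpha\supseteq F\cup X$ and extending $F$ (each element of $F$ lies in some $\mathcal{B}_{\beta+1}$, hence in $G_{\beta+1}\supseteq F\cap\mathcal{B}_{\beta+1}$); regularity of $\kappa$ ensures that fewer than $\kappa$ elements of $F'$ already lie in a common $\mathcal{B}_\alpha$, so their intersection lies in $G_\alpha$ and has size $\kappa$, giving ${<}\kappa$-completeness.

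The main obstacle I anticipate is the forward direction: one must verify that the tree of filter approximations is genuinely a $\kappa$-tree (coherent, with small and nonempty levels) and, crucially, that the union along a branch is again \emph{${<}\kappa$-complete} rather than merely finitely consistent. This is exactly where inaccessibility (for the level sizes via $2^{\betrag{\mathcal{B}_\alpha}}<\kappa$ and for nonemptiness via Lemma~\ref{smallfilterextension}) and regularity of $\kappa$ (to localise any ${<}\kappa$ many filter elements inside a single $\mathcal{B}_\alpha$) both enter. In the backward direction the only delicate point is the cofinality of the branch $b$, which I expect to reduce cleanly to the covering observation above together with the ${<}\kappa$-completeness of the extended filter $F'$.
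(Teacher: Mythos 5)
The paper itself offers no proof of this lemma: it is imported verbatim from the literature (the citations to \cite{MR0460120} and \cite{MR3800756}), so there is no in-text argument to compare yours against. Judged on its own merits, your proof is correct and is essentially the classical one: Lemma \ref{smallfilterextension} supplies inaccessibility, the tail-and-cone filter $\Set{B_\alpha}{\alpha<\kappa}$ together with $X=\Set{A_t}{t\in T}$ extracts a cofinal branch, and in the forward direction the tree of filter approximations has levels of size at most $2^{\betrag{\mathcal{B}_\alpha}}<\kappa$ by inaccessibility, while regularity localises any ${<}\kappa$ many elements of $F'$ inside a single $\mathcal{B}_\alpha$, which is exactly what makes the union along the branch ${<}\kappa$-complete. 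This is genuinely different in flavour from what one could extract from the paper's own machinery in Section \ref{section:ultrapowersandembeddings}, where the extension would instead be derived from an elementary embedding $j$ of a $\kappa$-model containing $F$ and $X$, taking $F'=\Set{A}{\xi\in j(A)}$ for a suitable seed $\xi\geq\kappa$; your combinatorial route has the advantage of using nothing beyond Lemma \ref{smallfilterextension} and the tree property.

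Two small points need repair, both stemming from the fact (which you yourself flag) that filters in the sense of Definition \ref{definition:filter} are not upward closed. First, your justification of downward closure of $b$ --- that $A_t\subseteq A_s$ \emph{forces} $A_s\in F'$ --- tacitly uses upward closure; the correct one-line argument is that $F'$ measures $A_s$, and $\kappa\setminus A_s\in F'$ would give $A_t\cap(\kappa\setminus A_s)=\emptyset$, contradicting uniformity. The same measuring-plus-completeness pattern should be made explicit in your cofinality step: if no cone at level $\alpha$ lies in $F'$, then all of the fewer than $\kappa$ complements do, and their intersection with $B_{\alpha+1}\in F'$ is empty, contradicting ${<}\kappa$-completeness. (Alternatively, downward closure is dispensable: your cofinality argument already places on every level a node whose cone is in $F'$, and these are pairwise comparable, so their downward closure is a cofinal branch.) Second, in the forward direction Lemma \ref{smallfilterextension} produces a filter that need not be contained in $\mathcal{B}_\alpha$; you should pass to its intersection with $\mathcal{B}_\alpha$, noting that any subset of a uniform ${<}\kappa$-complete filter is again one in the sense of Definition \ref{definition:filter}, and that measuring $\mathcal{B}_\alpha$ survives the restriction because subalgebras are closed under complements. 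Pedantically, take nodes of $T^*$ to be pairs $(\alpha,G)$, since $\mathcal{B}_\alpha=\mathcal{B}_{\alpha+1}$ can occur and the same filter could otherwise sit on two levels. With these routine repairs the proof is complete.
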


\begin{corollary}
 If $\kappa$ is weakly compact, then $\II^\kappa_{ia}=\II^\kappa_{\prec ia}=\II^{{<}\kappa}_{ia}$ is the bounded ideal on $\kappa$. 
\end{corollary}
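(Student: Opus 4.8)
The plan is to reduce everything to the filter extension property (Lemma \ref{filterextension}), running an argument parallel to the proof of Proposition \ref{proposition:inaccessiblelimit} but with weak $\kappa$-models in place of models of size ${<}\kappa$. Since a weakly compact cardinal is inaccessible, Theorem \ref{theorem:Ideals}.(\ref{item:Ideals:ia}) already identifies $\II^{{<}\kappa}_{ia}$ with the bounded ideal, so it suffices to prove that the ideals $\II^\kappa_{ia}$ and $\II^\kappa_{\prec ia}$ also coincide with the bounded ideal. In each case I would verify the two inclusions separately. Bounded sets are the easy half: if $A\subseteq\kappa$ is bounded, then for any weak $\kappa$-model $M$ (with $\kappa$ a cardinal of $M$) and any uniform $M$-ultrafilter $U$ on $\kappa$ we have $A\notin U$, since uniformity forces every element of $U$ to have $M$-cardinality $\kappa$ while $M$ computes $\betrag{A}<\kappa$. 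Hence, taking the defining parameter $x$ to be $A$ itself, $A$ lies in both $\II^\kappa_{ia}$ and $\II^\kappa_{\prec ia}$.

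The substantive direction is that an unbounded $A\subseteq\kappa$ lies in neither ideal. Unwinding Definitions \ref{definition:Ideals-KappaSizedNonelem} and \ref{definition:Ideals-KappaSizedElem}, this amounts to the following: for $\II^\kappa_{ia}$, given any $x\subseteq\kappa$ I must produce a transitive weak $\kappa$-model $M$ with $x,A\in M$ and a uniform $M$-ultrafilter $U$ with $\Psi_{ia}(M,U)$ and $A\in U$; for $\II^\kappa_{\prec ia}$, for every regular $\theta>\kappa$ and every $x\in\HH{\theta}$ I must produce a weak $\kappa$-model $M\prec\HH{\theta}$ with $x,A\in M$ and such a $U$. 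In each case I would build $M$ first and then manufacture $U$ from $A$ via Lemma \ref{filterextension}: setting $F=\{A\}$ (a uniform, ${<}\kappa$-complete filter of size $1$, as $\betrag{A}=\kappa$) and $X=M\cap\POT{\kappa}$ (of size at most $\kappa$), I extend $F$ to a uniform, ${<}\kappa$-complete filter $F'\supseteq F$ measuring $X$, and put $U=F'\cap M$. As in the proof of Proposition \ref{proposition:inaccessiblelimit}, the finite intersection property together with the ${<}\kappa$-completeness of $F'$ guarantee that $U$ is a uniform $M$-ultrafilter that is ${<}\kappa$-complete for $M$ and contains $A$.

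The step I expect to be the main obstacle is upgrading this ${<}\kappa$-completeness to the ${<}\kappa$-amenability demanded by $\Psi_{ia}$. For this I would pass to the ultrapower embedding $\map{j_U}{M}{\langle\Ult{M}{U},\IN_U\rangle}$, which has $\crit{j_U}=\kappa$ by Proposition \ref{proposition:correspondence1}.(3), and apply Corollary \ref{corollary:inaccessible} to conclude that $j_U$ is ${<}\kappa$-powerset preserving; the correspondence of Proposition \ref{proposition:correspondence1}.(4) then returns that $U$ is ${<}\kappa$-amenable for $M$, completing $\Psi_{ia}(M,U)$. Corollary \ref{corollary:inaccessible} applies directly in the elementary case, where $M\prec\HH{\theta}$, so for $\II^\kappa_{\prec ia}$ I simply take $M\prec\HH{\theta}$ of size $\kappa$ with $(\kappa+1)\cup\{x,A\}\subseteq M$, obtained by closing $\kappa\cup\{x,A,\kappa\}$ under Skolem functions.

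For $\II^\kappa_{ia}$ the corollary instead requires $M\models\anf{\textit{$\HH{\kappa}$ exists}}$, so I would construct the transitive model by collapsing an elementary submodel $Y\prec\HH{\kappa^+}$ with $\HH{\kappa}\cup\{x,A\}\subseteq Y$ and $\betrag{Y}=\kappa$: since $\HH{\kappa}=\VV_\kappa$ is transitive (here I use inaccessibility) and contained in $Y$, the transitive collapse fixes $\HH{\kappa}$ and all ordinals $\leq\kappa$ pointwise, yielding a transitive weak $\kappa$-model $M$ with $\HH{\kappa},x,A\in M$, which therefore believes that $\HH{\kappa}$ exists. The only delicate bookkeeping left — that this $M$ genuinely satisfies $\anf{\textit{$\HH{\kappa}$ exists}}$ and that $U=F'\cap M$ is honestly a ${<}\kappa$-complete $M$-ultrafilter — is handled by $\Sigma_0$-correctness and absoluteness exactly as in the proof of Proposition \ref{proposition:inaccessiblelimit}. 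Combining the two directions then gives $\II^\kappa_{ia}=\II^\kappa_{\prec ia}=\II^{{<}\kappa}_{ia}$ equal to the bounded ideal on $\kappa$.
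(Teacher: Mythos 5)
Your proof is correct and follows essentially the same route as the paper: both reduce the substantive direction (unbounded sets avoid the ideals) to the filter extension property of Lemma \ref{filterextension}, obtain ${<}\kappa$-completeness from the extension, and then upgrade to ${<}\kappa$-amenability via the ultrapower embedding using Proposition \ref{proposition:correspondence1} and Corollary \ref{corollary:inaccessible}, with Theorem \ref{theorem:Ideals}.(\ref{item:Ideals:ia}) handling $\II^{{<}\kappa}_{ia}$. Your minor variations -- starting from $F=\{A\}$ rather than the filter of final segments of $A$, and explicitly constructing the transitive weak $\kappa$-model with $\HH{\kappa}\in M$ by collapsing a suitable $Y\prec\HH{\kappa^+}$ -- merely spell out details the paper's terse argument leaves implicit.
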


\begin{proof}
 Let $A$ be unbounded in $\kappa$ and let $M$ be a weak $\kappa$-model with $A\in M$. Then $F=\Set{A\cap[\alpha,\kappa)}{\alpha<\kappa}$ is a uniform ${<}\kappa$-complete filter on $\kappa$ of size $\kappa$. 
 Using Lemma \ref{filterextension}, we find a uniform ${<}\kappa$-complete filter $U\supseteq F$ that measures $\POT{\kappa}\cap M$. 
  Then Proposition \ref{proposition:correspondence1}.(3) and Corollary  \ref{corollary:inaccessible} show that $\Psi_{ia}(M,U)$ holds. By uniformity, these computations show that both $\II^\kappa_{ia}$ and $\II^\kappa_{\prec ia}$ are the bounded ideal on $\kappa$ and, by Theorem \ref{theorem:Ideals}.(\ref{item:Ideals:ia}), this also shows that they are equal to $\II^{<\kappa}_{ia}$. 
\end{proof}

 The following lemma will allow us to characterize weak compactness through the existence of $\kappa$-amenable, ${<}\kappa$-complete ultrafilters.

\begin{lemma}\label{lemma:WCmodelsBoundedIdeal}
 If $\kappa$ is a weakly compact cardinal, $\lambda\leq\kappa$ is a cardinal, $\theta>\kappa$ is a regular cardinal, $A$ is an unbounded subset of $\kappa$ and $x\in\HH{\theta}$, then there is $(\kappa,\lambda)$-model $M\prec\HH{\theta}$ with $x\in M$ and a uniform $M$-ultrafilter $U$ on $\kappa$ with $A\in U$ and $\Psi_{wc}(M,U)$. 
\end{lemma}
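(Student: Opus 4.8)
The plan is to produce the model $M$ and the ultrafilter $U$ simultaneously, building $U$ as the trace on $M$ of a uniform, ${<}\kappa$-complete filter $\mathcal U$ on $\kappa$ that lives in $\VV$ and has size at most $\kappa$, while arranging that $M$ is closed under the operation sending a function with values in $\POT{\kappa}$ to its $\mathcal U$-trace. I would set $U=\mathcal U\cap M$. Uniformity of $U$ and the fact that $U$ is an $M$-ultrafilter measuring $\POT{\kappa}\cap M$ will then be immediate from the corresponding properties of $\mathcal U$, using $M\prec\HH{\theta}$, $\kappa\in M$ and $\Sigma_0$-correctness; and $A\in U$ will hold because $A$ is thrown into $\mathcal U$ at the start. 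The entire difficulty lies in securing the two nontrivial clauses of $\Psi_{wc}(M,U)$, namely $\kappa$-amenability and ${<}\kappa$-completeness for $M$.

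For the construction I would build, by recursion on $n<\omega$, increasing chains $\seq{M_n}{n<\omega}$ of elementary submodels $M_n\prec\HH{\theta}$ of cardinality $\lambda$ with $(\lambda+1)\cup\{\kappa,A,x\}\subseteq M_0$, and $\seq{\mathcal U_n}{n<\omega}$ of uniform ${<}\kappa$-complete filters on $\kappa$ of size at most $\kappa$, applying Lemma \ref{filterextension} at each step. Starting from the uniform ${<}\kappa$-complete filter generated by the final segments of $A$ (so that $A\in\mathcal U_0$), at stage $n$ I would use the filter extension property to pass to a uniform ${<}\kappa$-complete $\mathcal U_{n+1}\supseteq\mathcal U_n$ that measures the collection $X_n$ consisting of $\POT{\kappa}\cap M_n$ together with $\ran{f}$ for every $f\in M_n$ that is a function from an ordinal $\le\kappa$ into $\POT{\kappa}$; here $\betrag{X_n}\le\lambda\cdot\kappa=\kappa$, so the lemma applies, and since weak compactness gives inaccessibility and hence $\kappa^{<\kappa}=\kappa$, one can keep $\betrag{\mathcal U_{n+1}}\le\kappa$. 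Then I would take $M_{n+1}\prec\HH{\theta}$ of cardinality $\lambda$ containing $M_n$ and, for every such $f\in M_n$, the trace $t_f=\Set{\beta\in\dom{f}}{f(\beta)\in\mathcal U_{n+1}}$, which is well defined because $\mathcal U_{n+1}$ measures $\ran{f}$. Finally I would set $M=\bigcup_n M_n$, $\mathcal U=\bigcup_n\mathcal U_n$ and $U=\mathcal U\cap M$; then $M$ is a $(\lambda,\kappa)$-model with $M\prec\HH{\theta}$ and $x\in M$, as required.

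The key closure feature is that, since every relevant range is measured at a finite stage and $\mathcal U$ is an increasing union of filters, for any such $f\in M$ the $\mathcal U$-trace agrees with the $\mathcal U_{n+1}$-trace computed when $f$ first appears, so $\Set{\beta\in\dom{f}}{f(\beta)\in\mathcal U}\in M$. Applied to a $\kappa$-sequence $\vec x=\seq{x_\beta}{\beta<\kappa}\in M$, this trace witnesses $\kappa$-amenability: for $\beta\in M\cap\kappa$ we have $x_\beta\in M$, and hence $x_\beta\in U\Leftrightarrow x_\beta\in\mathcal U\Leftrightarrow\beta\in t_{\vec x}$. For ${<}\kappa$-completeness for $M$, given $f\in M$ with $\langle M,U\rangle\models\anf{\ran{f}\subseteq U}$, the trace $t_f\in M$ satisfies $M\cap\dom{f}\subseteq t_f\subseteq\dom{f}$, so $M\models\anf{t_f=\dom{f}}$ and therefore $t_f=\dom{f}$ genuinely; this pulls the indices lying outside $M$ into the filter, yielding $f(\beta)\in\mathcal U_{n+1}$ for all $\beta<\dom{f}$, whence $\bigcap\ran{f}\in\mathcal U_{n+1}\subseteq\mathcal U$ by the ${<}\kappa$-completeness of the single filter $\mathcal U_{n+1}$, and $\bigcap\ran{f}\in M$ gives $\bigcap\ran{f}\in U$. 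I expect the main obstacle to be exactly this interaction: a single application of the filter extension property already produces a uniform ${<}\kappa$-complete $M$-ultrafilter with $A\in U$, but gives no control over traces, so $\kappa$-amenability fails outright; it is the simultaneous closure of $M$ under $\mathcal U$-traces that delivers $\kappa$-amenability, and this same closure is what upgrades the stagewise ${<}\kappa$-completeness in $\VV$ to ${<}\kappa$-completeness for the non-transitive model $M$.
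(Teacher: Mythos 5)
Your proof is correct, and its backbone---an $\omega$-recursion alternating applications of the filter extension property (Lemma \ref{filterextension}) with L\"owenheim--Skolem steps that insert the $\mathcal{U}_{n+1}$-traces of the relevant functions into $M_{n+1}$---is exactly the paper's iteration. Where you genuinely diverge is in the treatment of $\lambda<\kappa$: the paper runs the recursion only with weak $\kappa$-models (so $\kappa\subseteq M_n$, every index $\beta<\alpha$ lies in $M$, ranges of sequences in $M_n$ are already subsets of $M_n$, and ${<}\kappa$-completeness for $M$ comes essentially for free from measuring $\POT{\kappa}\cap M_n$), and then obtains the small model only at the very end, by taking a $(\lambda,\kappa)$-model $\langle\bar M,\bar U\rangle\prec\langle M,U\rangle$ and using that $\Psi_{wc}(M,U)$ is a first-order property of the structure $\langle M,U\rangle$, hence passes to elementary substructures. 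You instead build the size-$\lambda$ model directly, which forces you to measure the full ranges $\ran{f}$ (no longer subsets of $M_n$ when $\lambda<\kappa$) and to deploy the extensionality trick---$M\cap\alpha\subseteq t_f\subseteq\alpha$ with $t_f\in M$ forces $t_f=\alpha$---to upgrade the internal hypothesis \anf{$\ran{f}\subseteq U$} to a genuine one. This is precisely the point that the paper's final elementarity step handles invisibly and that your route must handle by hand; the paper's route is shorter, yours makes the mechanism explicit, and, to your credit, you actually arrange $A\in U$ from the start, a detail the paper's write-up omits when choosing $U_0$.

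Two small repairs, neither a real gap since your construction already supports them. First, the \anf{filters} of Definition \ref{definition:filter} are merely families with the uniform intersection property, and Lemma \ref{filterextension} as stated does not bound $\betrag{F'}$; to keep $\betrag{\mathcal{U}_{n+1}}\leq\kappa$, simply replace $F'$ by $F$ together with, for each $B\in X$, whichever of $B$, $\kappa\setminus B$ lies in $F'$ (a subfamily of a uniform ${<}\kappa$-complete family is again one)---inaccessibility and $\kappa^{{<}\kappa}=\kappa$ are not what is needed here. Second, in the completeness step, ${<}\kappa$-completeness of $\mathcal{U}_{n+1}$ in the sense of Definition \ref{definition:filter} only yields $\betrag{\bigcap\ran{f}}=\kappa$, not membership; to conclude $\bigcap\ran{f}\in\mathcal{U}_{n+1}$, note that $\bigcap\ran{f}\in\POT{\kappa}\cap M_n\subseteq X_n$ is measured by $\mathcal{U}_{n+1}$, and its complement together with the sets $f(\beta)$, $\beta<\dom{f}$, would give an empty intersection of fewer than $\kappa$ many members of $\mathcal{U}_{n+1}$, a contradiction.
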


\begin{proof}
 We recursively construct $\omega$-sequences $\seq{M_n}{n<\omega}$ of weak $\kappa$-models $M_n\prec\HH{\theta}$, and $\seq{U_n}{n<\omega}$ of $M_n$-ultrafilters on $\kappa$. 
 Pick a weak $\kappa$-model $M_0\prec\HH{\theta}$ with $x\in M_0$, and, using Lemma \ref{filterextension}, let $U_0$ be a uniform ${<}\kappa$-complete $M_0$-ultrafilter on $\kappa$.
 Now, assume that $M_n$ and $U_n$ are already constructed, let $M_{n+1}\prec\HH{\theta}$ be a weak $\kappa$-model  with $M_n,U_n\in M_{n+1}$, and, using Lemma \ref{filterextension}, let $U_{n+1}$ be a uniform ${<}\kappa$-complete $M_{n+1}$-ultrafilter extending $U_n$. 
 Set $M=\bigcup_{n<\omega}M_n$, and let $U=\bigcup_{n<\omega}U_n$. Then, $U$ is a uniform $M$-ultrafilter that is ${<}\kappa$-complete for $M\prec\HH{\theta}$. 
 If $\vec{x}=\seq{x_\alpha}{\alpha<\kappa}$ is a sequence of subsets of $\kappa$ in $M$, then $\vec{x}\in M_n$ for some $n<\omega$. Hence, each $x_\alpha$ is measured by $U_n\subseteq U$, and thus, by our choice of $M_{n+1}$, we know that $\Set{\alpha<\kappa}{ x_\alpha\in U}\in M_{n+1}\subseteq M$, showing that $U$ is $\kappa$-amenable for $M$ and therefore proving the lemma for $\lambda=\kappa$. Given $\lambda<\kappa$, we simply take a $(\lambda,\kappa)$-model $\langle\bar M,\bar U\rangle\prec\langle M,U\rangle$ with $x\in\bar M$. Then, by elementarity, $\langle\bar M,\bar U\rangle$ has the desired properties. 
\end{proof}

\begin{corollary}\label{corollary:WCmodelsBoundedIdeal}
 If $\kappa$ is weakly compact, then $\II^\kappa_{wc}=\II^\kappa_{\prec wc}=\II^{{<}\kappa}_{wc}$ is the bounded ideal on $\kappa$.
\end{corollary}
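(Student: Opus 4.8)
The plan is to prove all three equalities at once by sandwiching: I would show that the bounded ideal is contained in each of $\II^\kappa_{wc}$, $\II^\kappa_{\prec wc}$ and $\II^{{<}\kappa}_{wc}$, and conversely that each of these ideals is contained in the bounded ideal. Since every one of them then coincides with the bounded ideal, the full chain of equalities follows immediately. The substantive input for the non-trivial inclusions is Lemma \ref{lemma:WCmodelsBoundedIdeal}, which produces, for any unbounded $A$, models of the required size containing a prescribed parameter together with a uniform ultrafilter $U$ satisfying $\Psi_{wc}$ and $A\in U$.

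For the inclusion of the bounded ideal into each of the three ideals, the key observation is that \emph{no} bounded subset of $\kappa$ can lie in a uniform $M$-ultrafilter: if $A\subseteq\alpha$ for some $\alpha<\kappa$ and $A\in U$, then $A\in M$, and since $A\subseteq\alpha$ is $\Sigma_0$ and $\kappa$ is a cardinal of $M$, the model computes $\betrag{A}^M\le\alpha<\kappa$, contradicting uniformity $\betrag{A}^M=\kappa$. Hence for a bounded $A$ the defining condition of each ideal is satisfied vacuously by any parameter (say $x=\emptyset$), so $A$ belongs to all three ideals.

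For the reverse inclusions I would fix an unbounded $A\subseteq\kappa$ and show $A$ lies in none of the three ideals. Unwinding Definition \ref{definition:Ideals}, for $\II^{{<}\kappa}_{wc}$ and $\II^\kappa_{\prec wc}$ this reduces to showing that for every regular $\theta>\kappa$ and every $x\in\HH{\theta}$ there is a $(\lambda,\kappa)$-model with $\lambda<\kappa$ (respectively a weak $\kappa$-model) $M\prec\HH{\theta}$ with $x\in M$ and a uniform $M$-ultrafilter $U$ with $\Psi_{wc}(M,U)$ and $A\in U$. These are exactly the assertions of Lemma \ref{lemma:WCmodelsBoundedIdeal}, taking $\lambda$ an arbitrary infinite cardinal below $\kappa$ in the first case and $\lambda=\kappa$ in the second, so both non-containments are immediate.

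The only case needing an extra step is $\II^\kappa_{wc}$, whose definition quantifies over \emph{transitive} weak $\kappa$-models, while Lemma \ref{lemma:WCmodelsBoundedIdeal} delivers only elementary submodels of $\HH{\theta}$; this transitivization is the main obstacle. Here, given $x\subseteq\kappa$, I would apply the lemma with parameter $x$ and $\lambda=\kappa$ to obtain a weak $\kappa$-model $M\prec\HH{\theta}$ with $x\in M$ and a uniform $M$-ultrafilter $U$ with $\Psi_{wc}(M,U)$ and $A\in U$, and then pass to the transitive collapse $\map{\pi}{M}{\bar M}$. Since every weak $\kappa$-model contains $\kappa+1$, the collapse $\pi$ fixes every ordinal ${\le}\kappa$ and hence every subset of $\kappa$ lying in $M$; in particular $\pi$ is the identity on $x$, on $A$, and on all members of $U$, so $\bar M$ is a transitive weak $\kappa$-model with $x\in\bar M$ and $\pi[U]=U$. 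As $\pi$ is an isomorphism from $\langle M,U\rangle$ onto $\langle\bar M,U\rangle$ fixing the parameter $\kappa$, uniformity, ${<}\kappa$-completeness and $\kappa$-amenability all transfer, so $U$ is a uniform $\bar M$-ultrafilter with $\Psi_{wc}(\bar M,U)$ and $A\in U$. Since $x\subseteq\kappa$ was arbitrary, no parameter can witness $A\in\II^\kappa_{wc}$, whence $A\notin\II^\kappa_{wc}$. The crux throughout is exactly this inertness of the collapse on $\kappa+1$, which is what keeps the ultrafilter, its defining properties, and the membership $A\in U$ intact; everything else is routine bookkeeping against Definition \ref{definition:Ideals}.
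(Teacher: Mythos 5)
Your proof is correct and follows essentially the same route as the paper: both containments come from Lemma \ref{lemma:WCmodelsBoundedIdeal} together with the observation that a uniform $M$-ultrafilter contains no bounded sets. The only difference is expository --- where the paper handles $\II^\kappa_{wc}$ by invoking the lemma with $\lambda=\kappa$ and $\theta=\kappa^+$ and leaving the passage to a transitive model implicit, you spell out the transitive collapse step, correctly noting that $\kappa+1\subseteq M$ forces the collapse to fix $x$, $A$, every element of $U$, and the witnesses to $\Psi_{wc}$.
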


\begin{proof}
 By uniformity, Lemma \ref{lemma:WCmodelsBoundedIdeal} implies that both $\II^\kappa_{{\prec}wc}$ and $\II^{{<}\kappa}_{wc}$ are the bounded ideal on $\kappa$. Moreover, by choosing $\kappa=\lambda$ and $\theta=\kappa^+$ in Lemma \ref{lemma:WCmodelsBoundedIdeal}, we can conclude that $\II^\kappa_{\prec wc}$ is also equal to this ideal. 
\end{proof}

 Corollary \ref{corollary:WCmodelsBoundedIdeal} suggests that the ideals $\II^\kappa_{wc}$, $\II^\kappa_{{\prec}wc}$ and $\II^{{<}\kappa}_{wc}$ are not canonically connected to weak  compactness. 
  We will present such an ideal  in Section \ref{section:weaklycompactideal}.  
 The next result directly implies Theorem \ref{theorem:SchemesSummary}.(\ref{item:wca}).

\begin{theorem}\label{theorem:weaklycompact}
 The following statements are equivalent for every uncountable cardinal $\kappa$, every cardinal $\lambda\leq\kappa$ and every regular cardinal $\theta>\kappa$: 
  \begin{enumerate} 
    \item $\kappa$ is weakly compact. 

    \item For many $(\lambda,\kappa)$-models (equivalently, for some $(\lambda,\kappa)$-model) $M\prec\HH{\theta}$, there exists a uniform $M$-ultrafilter $U$ on $\kappa$ with $\Psi_{wc}(M,U)$.  

    \item For many $(\lambda,\kappa)$-models (equivalently, for some $(\lambda,\kappa)$-model) $M\prec\HH{\theta}$, there exists a $\kappa$-powerset preserving elementary embedding $\map{j}{M}{\langle N,\IN_N\rangle}$ with $\crit{j}=\kappa$. 

    \item For many transitive weak $\kappa$-models $M$, there exists a uniform $M$-ultrafilter $U$ on $\kappa$ with $\Psi_{wc}(M,U)$. 

    \item For many transitive weak $\kappa$-models $M$, there exists a $\kappa$-powerset preserving  elementary embedding $\map{j}{M}{\langle N,\IN_N\rangle}$ with $\crit{j}=\kappa$. 
  \end{enumerate}
\end{theorem}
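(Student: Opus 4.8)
The plan is to run all four equivalences off a single cycle through $(1)$, treating the ultrafilter/embedding dictionary as a black box and isolating one genuinely new idea for the reverse directions. I would first dispatch the two ``horizontal'' equivalences $(2)\Leftrightarrow(3)$ and $(4)\Leftrightarrow(5)$ by invoking Proposition \ref{proposition:correspondence1}.(5): the property ``$U$ is a non-principal $M$-ultrafilter on $\kappa$ that is $<\kappa$-complete and $\kappa$-amenable for $M$'' corresponds to ``$\crit{j}=\kappa$ and $j$ is $\kappa$-powerset preserving''. Since this is a statement about individual pairs, it transfers verbatim under both the ``for many'' and ``for some'' quantifiers and over both classes of models. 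The only bookkeeping is that $\Psi_{wc}$ also requires uniformity, which is automatic: a $<\kappa$-complete non-principal $M$-ultrafilter is uniform (if some $x\in U$ had $M$-cardinality $<\kappa$, intersecting the $U$-sets $\kappa\setminus\{\alpha\}$ for $\alpha\in x$ would force $\kappa\setminus x\in U$), and $\kappa$-amenability implies $<\kappa$-amenability by padding a short sequence out to length $\kappa$ inside $M$.

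For the forward direction I would prove $(1)\Rightarrow(2)$ and $(1)\Rightarrow(4)$ in their ``for many'' forms, after which ``many $\Rightarrow$ some'' is trivial. Statement $(2)$ is immediate from Lemma \ref{lemma:WCmodelsBoundedIdeal} with $A=\kappa$: for every $x\in\HH{\theta}$ that lemma supplies a $(\lambda,\kappa)$-model $M\prec\HH{\theta}$ containing $x$ together with a uniform $U$ satisfying $\Psi_{wc}(M,U)$. For $(4)$ I would apply the same lemma with $\lambda=\kappa$ to obtain a weak $\kappa$-model $M\prec\HH{\theta}$ with $x\in M$ and take its transitive collapse $\pi\colon M\to\bar M$; as $\kappa+1\subseteq M$, the map $\pi$ fixes $\kappa+1$ and every subset of $\kappa$ lying in $M$, so $\bar M$ is a transitive weak $\kappa$-model with $x\in\bar M$ and $U$ unchanged, whence $\Psi_{wc}(\bar M,U)$ persists.

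The heart of the argument is the reverse direction, and the main obstacle is that a $(\lambda,\kappa)$-model with $\lambda<\kappa$ need not contain $\kappa$ as a subset: since $|M\cap\kappa|=\lambda<\kappa=\cof(\kappa)$, the set $M\cap\kappa$ is bounded in $\kappa$, so the naive ``read a cofinal branch off a node at level $\kappa$ of $j(T)$'' argument produces only a branch through the bounded set of levels present in $M$. I would resolve this by collapsing. Given a single $M\prec\HH{\theta}$ with a uniform $U$ satisfying $\Psi_{wc}(M,U)$, first note that $j_U$ is $<\kappa$-powerset preserving, so Lemma \ref{lemma:inaccessible2} already gives that $\kappa$ is inaccessible. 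Now let $\pi\colon M\to\bar M$ be the transitive collapse and $\bar\kappa=\pi(\kappa)=\otp{M\cap\kappa}$; then $\langle\bar M,\bar U\rangle\cong\langle M,U\rangle$ inherits all internal properties, so $\bar U$ is a uniform, $<\bar\kappa$-complete, $\bar\kappa$-amenable $\bar M$-ultrafilter, and crucially $\bar M$ is transitive, so $\bar\kappa\subseteq\bar M$ and the obstruction disappears.

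Working inside $\bar M$, for any $\bar T\in\bar M$ that $\bar M$ regards as a $\bar\kappa$-tree I would pick a node $[f]_{\bar U}$ at level $[\id]_{\bar U}$ of $j_{\bar U}(\bar T)$ (this level is nonempty in $\Ult{\bar M}{\bar U}$ since uniformity gives $[\id]_{\bar U}<_{\bar U}j_{\bar U}(\bar\kappa)$) and set $\bar b=\{t : \{\xi<\bar\kappa : t<_{\bar T}f(\xi)\}\in\bar U\}$. That $\bar b$ is a $<_{\bar T}$-chain follows from $\bar U$ being a filter; its meeting every level $\alpha<\bar\kappa$ follows from $<\bar\kappa$-completeness applied to the partition of a $\bar U$-large set according to the level-$\alpha$ ancestor of $f(\xi)$ (a partition into $<\bar\kappa$ pieces, all data lying in $\bar M$); and $\bar\kappa$-amenability, together with $\bar\kappa\subseteq\bar M$, places $\bar b\in\bar M$. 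Hence $\bar M\models$``every $\bar\kappa$-tree has a cofinal branch''. Reflecting through the elementary map $\pi^{-1}\colon\bar M\to\HH{\theta}$ yields $\HH{\theta}\models$``$\kappa$ has the tree property'', which is absolute for $\theta>\kappa$; combined with the inaccessibility of $\kappa$ this gives that $\kappa$ is weakly compact, proving $(2)\Rightarrow(1)$. Finally, $(4)\Rightarrow(1)$ is the same branch construction used directly: ``for many'' transitive weak $\kappa$-models lets me place any genuine $\kappa$-tree into such an $M$ (where already $\kappa\subseteq M$), extract a real cofinal branch, and conclude the tree property, while Lemma \ref{lemma:inaccessible2} again supplies inaccessibility. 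Chaining $(1)\Rightarrow(2\text{-many})\Rightarrow(2\text{-some})\Rightarrow(1)$ and $(1)\Leftrightarrow(4)$ with the horizontal correspondences then closes every equivalence.
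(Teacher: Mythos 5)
Your proposal is correct, and while its skeleton matches the paper's proof --- the horizontal equivalences $(2)\Leftrightarrow(3)$ and $(4)\Leftrightarrow(5)$ via Proposition \ref{proposition:correspondence1}.(5), the forward direction via Lemma \ref{lemma:WCmodelsBoundedIdeal}, and inaccessibility via Lemma \ref{lemma:inaccessible2} --- your reverse direction takes a genuinely different route. The paper argues by contradiction on the uncollapsed model: assuming $\kappa$ is not weakly compact, it puts a $\kappa$-Aronszajn tree $T$ into $M$ by elementarity, picks a node of $j(T)$ at a level witnessing the jump at $\kappa$ in the (possibly ill-founded) target, uses $\kappa$-powerset preservation to pull its trace back to a set $x\in M$ that is a downward-closed chain in $T$, and refutes Aronszajnness by mapping a small surjection onto a level missed by $x$ through $j$ --- all without ever collapsing $M$, which is exactly how it handles $(\lambda,\kappa)$-models with $M\cap\kappa$ bounded. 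You instead collapse $M$ to a transitive $\bar M$ with $\bar\kappa=\otp{M\cap\kappa}$, run the classical ultrafilter-to-branch construction (chain via the filter property, fullness via ${<}\bar\kappa$-completeness applied to the level-$\alpha$ ancestor partition, membership $\bar b\in\bar M$ via $\bar\kappa$-amenability and $\bar\kappa\subseteq\bar M$) to show $\bar M$ believes the tree property at $\bar\kappa$, and reflect through $\pi^{-1}\colon\bar M\to\HH{\theta}$. Your identification of the obstruction (boundedness of $M\cap\kappa$ for small $M$) and its repair by collapsing is exactly the right move, and it buys a more self-contained, positive argument: everything happens inside a transitive model, no jump-witness bookkeeping in an ill-founded target is needed (note your construction never uses well-foundedness of $\Ult{\bar M}{\bar U}$), and the conclusion reflects cleanly since \anf{$T$ is a $\kappa$-tree} and \anf{$b$ is a cofinal branch} are absolute between $\HH{\theta}$ and $\VV$ for regular $\theta>\kappa$. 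What the paper's contrapositive route buys in exchange is uniformity --- a single argument covers both the elementary-submodel and transitive cases with no collapse step --- and it exercises the $\kappa$-powerset preservation machinery directly, which is the form reused for the stronger notions later in the paper. Your auxiliary observations (uniformity follows from non-principality plus internal ${<}\kappa$-completeness, $\kappa$-amenability yields ${<}\kappa$-amenability by padding, and the collapse fixes $\POT{\kappa}\cap M$ pointwise when $\kappa\subseteq M$, giving $(1)\Rightarrow(4)$) are all correct and fill in steps the paper leaves implicit.
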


\begin{proof}
  The implication from (1) to (2)  and (4) follows from Lemma \ref{lemma:WCmodelsBoundedIdeal}.
 Using Proposition \ref{proposition:correspondence1}.(5), we can see that both statements in (2) are equivalent to the respective statements in (3) and both statements in (4) are equivalent to the respective statements in (5). 
 Now, assume that (1) fails and $\map{j}{M}{\langle N,\IN_N\rangle}$ witnesses that the existential statement in (3) holds. 
 %
 Then, Lemma  \ref{lemma:inaccessible2} implies that  $\kappa$ is inaccessible and our assumption implies  that there exists a $\kappa$-Aronszajn tree. 
 By elementarity, there is such a tree $T$ in $M$ with underlying set $\kappa$. 
 Then, $j(T)$ is a $j(\kappa)$-Aronszajn tree with underlying set $j(\kappa)$ in $N$. 
 Pick $\gamma\in N$ witnessing that $\crit{j}=\kappa$,  let $\delta$ be a node of $j(T)$ on level $\gamma$ in $N$ and let $y\in N$ be the set of predecessors of $\delta$ in $j(T)$ in $N$. 
 By $\kappa$-powerset preservation, there is $x\in M$ with $M\cap x=\Set{\beta\in M\cap\kappa}{j(\beta)\IN_N y}$. 
 By elementarity and the fact that $\crit{j}=\kappa$, the set $x$ is linearly ordered and downwards-closed in $T$. 
 Since $T$ is a $\kappa$-Aronszajn tree, there is some $\alpha\in M\cap\kappa$ such that $x$ does not intersect the $\alpha$-th level of $T$. Then there is $\varepsilon\in M\cap\kappa$ and a surjection $s$ from $\varepsilon$ onto the $\alpha$-th level of $T$ in $M$. 
 Since $\varepsilon<\crit{j}$, there we can find  $\xi\in M\cap\varepsilon$ with $j(s(\xi))\IN_N y$ and hence $\xi\in x$, a contradiction.  
The argument that (5) implies (1) again proceeds analogously by first using Lemma \ref{lemma:inaccessible2} 
 to show that $\kappa$ is inaccessible and then pick a transitive weak $\kappa$-model $M$ that contains a $\kappa$-Aronszajn tree $T$ as an element and is the domain of a $\kappa$-powerset preserving  elementary embedding with critical point $\kappa$. 
\end{proof}

In the above result, instead of using all $M\prec H(\theta)$, as in Kunen's result for countable models, and as in our earlier sections, we pass to a characterization using only \emph{many} models $M\prec H(\theta)$. The results of our later sections will show that this is in fact necessary, for if $M\prec H(\theta)$ were closed under countable sequences and satisfies (2) in Theorem \ref{theorem:weaklycompact}, then we would obtain that $U$ induces a well-founded ultrapower of $M$, which would imply that $\kappa$ is completely ineffable by Theorem \ref{theorem:completelyineffable}.


\section{Weakly compact cardinals without $\kappa$-amenability}\label{section:weaklycompactideal}

 In order to find a characterization of weak compactness that is connected to a canonical ideal, we now consider characterization using models of the same cardinality as the given cardinal. 
 We start by recalling the definition of the weakly compact ideal, which is due to L{\'e}vy.

\begin{definition}\label{definition:weaklycompactideal}
  Let $\kappa$ be a weakly compact cardinal. The \emph{weakly compact ideal on $\kappa$} consists of all $A\subseteq\kappa$ for which there exists a $\Pi^1_1$-formula $\varphi(v^1)$ and $Q\subseteq\VV_\kappa$ with $\VV_\kappa\models\varphi(Q)$ and $\VV_\alpha\models\neg\varphi(Q\cap\VV_\alpha)$ for all $\alpha\in A$. 
\end{definition}

 It is well-known that the weakly compact ideal is strictly larger than the non-stationary ideal whenever $\kappa$ is a weakly compact cardinal, because the weakly compact ideal contains the stationary set $\NN^\kappa_{ia}$ of smaller accessible cardinals in this case (see \cite[Theorem 2.8]{MR0540770}). 
 By a classical result of Levy (see \cite[Proposition 6.11]{MR1994835}), the weakly compact ideal is a normal ideal. 
 We now provide a characterization of the weakly compact ideal which resembles our earlier characterizations, and which in particular shows that $\II_{WC}^\kappa$ is the weakly compact ideal on $\kappa$, proving Theorem \ref{theorem:Ideals}.(\ref{item:Ideals:WC}).   
 This result is a variant of results of Baumgartner in \cite[Section 2]{MR0540770}. 
 %
 In the proof of the first item, we proceed somewhat similar to the argument for {\cite[Theorem 1.3]{MR1133077}}.

 In the following, whenever $M$ is a $\Sigma_0$-correct $\ZFC^-$-model, $\kappa$ is a cardinal of $M$ and $U$ is an $M$-ultrafilter on $\kappa$ that is $M$-normal with respect to $\subseteq$-decreasing sequences and contains all final segments of $\kappa$ in $M$, then we write $\kappa^U$ instead of $\kappa^{\Ult{M}{U}}$ (see Proposition \ref{proposition:correspondence2}).

\begin{theorem}\label{theorem:weaklycompactideal}
  Let $\kappa$ be a weakly compact cardinal. 
  \begin{enumerate} 
    \item If $A\subseteq\kappa$ is not contained in the weakly compact ideal, $\theta>\kappa$ is a regular cardinal and  $M\prec\HH{\theta}$ is a weak $\kappa$-model with $A\in M$, then there is a uniform $M$-ultrafilter $U$ on $\kappa$ with $A\in U$ and $\Psi_{WC}(M,U)$. 

  \item $\II^\kappa_\delta=\II^\kappa_{\prec\delta}=\II^\kappa_{WC}=\II^\kappa_{{\prec}WC}$ is the weakly compact ideal on $\kappa$. 

 \end{enumerate}
\end{theorem}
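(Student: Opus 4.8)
The plan is to establish the existence statement (1) first and then read off the identification of ideals in (2) from it together with Definition \ref{definition:weaklycompactideal}.

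For (1) the first point is that $\kappa$ is inaccessible, so $\VV_\kappa=\VV_\kappa^M\subseteq M$ whenever $M\prec\HH{\theta}$. Since every trace $\Set{\beta<\gamma}{x_\beta\in U}$ of a sequence $\seq{x_\beta}{\beta<\gamma}\in M$ with $\gamma<\kappa$ is a bounded subset of $\kappa$, hence an element of $\VV_\kappa\subseteq M$, every $M$-ultrafilter is automatically ${<}\kappa$-amenable for such $M$. Thus $\Psi_{WC}(M,U)$ reduces to the $M$-normality of $U$, and it suffices to produce a uniform, $M$-normal $M$-ultrafilter $U$ on $\kappa$ with $A\in U$; by Corollaries \ref{corollary:correspondence1} and \ref{corollary:correspondence2} together with Corollary \ref{corollary:inaccessible}, this is equivalent to producing a $\kappa$-embedding $\map{j}{M}{\langle N,\IN_N\rangle}$ with $\crit{j}=\kappa$ and $\kappa^N\IN_N j(A)$.

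I would construct $U$ by an $\omega$-step recursion in the spirit of Lemma \ref{lemma:WCmodelsBoundedIdeal}: build an increasing chain of weak $\kappa$-models $M_n\prec\HH{\theta}$ with $M_0=M$ and an increasing chain of uniform, ${<}\kappa$-complete filters $U_n$ of size $\kappa$ so that $A\in U_0$, each $U_n$ measures $M_n\cap\POT{\kappa}$, $M_n,U_n\in M_{n+1}$, and $U_{n+1}$ contains the diagonal intersection of every $\kappa$-sequence of $U_n$-sets lying in $M_n$; each passage from $U_n$ to $U_{n+1}$ is carried out using the filter extension property, Lemma \ref{filterextension}. Putting $\tilde M=\bigcup_n M_n$ and $U=\bigcup_n U_n$ yields a uniform, $\tilde M$-normal $\tilde M$-ultrafilter with $A\in U$, and as $M\prec\tilde M$ its restriction to $M$ is as desired. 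The \textbf{main obstacle} is to ensure that the filter generated by $U_n$ together with $A$ and all the demanded diagonal intersections stays \emph{proper and uniform}: closing a merely ${<}\kappa$-complete filter under $\kappa$-many diagonal intersections can in principle create bounded sets. This is exactly where the hypothesis $A\notin\II^\kappa_{WC}$ is used, via the reformulation of Definition \ref{definition:weaklycompactideal} that $A$ lies outside the weakly compact ideal if and only if $A$ meets every \anf{$\Pi^1_1$-definable club} in a positive set; since a diagonal intersection of such definable clubs is again one, the relevant intersections with $A$ remain positive. Making this maintenance precise is the heart of the argument, and is the step where I would follow the technique of \cite[Theorem 1.3]{MR1133077}.

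For (2), I would first observe that $\Psi_\delta(M,U)$ and $\Psi_{WC}(M,U)$ coincide on every model $M$ with $\VV_\kappa\subseteq M$, by the same triviality about traces as above. Since this is automatic for $M\prec\HH{\theta}$, and since enlarging the witnessing parameter in Definition \ref{definition:Ideals} to code a bijection $\map{b}{\kappa}{\VV_\kappa}$ restricts attention to transitive models with $\VV_\kappa\subseteq M$, we obtain $\II^\kappa_\delta=\II^\kappa_{WC}$ and $\II^\kappa_{\prec\delta}=\II^\kappa_{{\prec}WC}$. Next I would show that the weakly compact ideal is contained in $\II^\kappa_\delta$ and in $\II^\kappa_{\prec\delta}$: given $A$ in the weakly compact ideal, fix a witnessing $\Pi^1_1$-formula $\varphi\equiv\forall X\,\psi$ and parameter $Q\subseteq\VV_\kappa$, and let the witness $x$ code $Q$, $\varphi$ and $b$. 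For any relevant weak $\kappa$-model $M\ni x$ and any uniform, $M$-normal $U$, suppose $A\in U$ (if $A\notin M$ then $A\notin U$ trivially). Then $j_U$ is a $\kappa$-embedding with $\kappa^U\IN_U j_U(A)$, and applying elementarity to the true statement \anf{$\forall\alpha\in A\ \VV_\alpha\models\neg\varphi(Q\cap\VV_\alpha)$} produces an $X$ in $\Ult{M}{U}$ lying $\IN_U$-below $\VV_{\kappa^U}$ and witnessing $\neg\varphi$ at $\kappa^U$. Pulling $X$ back along the $\IN$-isomorphism $j_U\restriction\VV_\kappa$ onto the $N$-elements of $\VV_{\kappa^U}$ (the $\VV_\kappa$-level part of the proof of Lemma \ref{lemma:kappapowersetpreservings}, which needs only that $j_U$ is a $\kappa$-embedding) yields a genuine $X^*\subseteq\VV_\kappa$ with $\langle\VV_\kappa,\IN,Q,X^*\rangle\models\neg\psi$, contradicting $\VV_\kappa\models\varphi(Q)$. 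Hence $A\notin U$, so $A$ belongs to $\II^\kappa_\delta$ (respectively $\II^\kappa_{\prec\delta}$).

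Finally, (1) supplies the reverse inclusions. It gives $\II^\kappa_{{\prec}WC}\subseteq$ the weakly compact ideal directly, since for positive $A$ no parameter can witness $A\in\II^\kappa_{{\prec}WC}$: one picks $M\prec\HH{\theta}$ containing the parameter and $A$ and applies (1). For the transitive ideal $\II^\kappa_{WC}$, given positive $A$ and an arbitrary $x\subseteq\kappa$, I would apply (1) to some $M'\prec\HH{\theta}$ with $x,A\in M'$ and take the transitive collapse $\map{\pi}{M'}{M}$; as $\pi$ fixes $\VV_\kappa$ and hence every subset of $\kappa$ in $M'$, the model $M$ is a transitive weak $\kappa$-model containing $x$ and $A$, and $U=\pi[U']$ retains $\Psi_{WC}(M,U)$ with $A\in U$, so $A\notin\II^\kappa_{WC}$. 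Combining all the inclusions yields $\II^\kappa_\delta=\II^\kappa_{\prec\delta}=\II^\kappa_{WC}=\II^\kappa_{{\prec}WC}=$ the weakly compact ideal on $\kappa$.
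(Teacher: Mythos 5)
Your part (2) is essentially sound and runs parallel to the paper's own argument: the observation that ${<}\kappa$-amenability is automatic over weak $\kappa$-models containing $\VV_\kappa$, the pull-back of a $\Pi^1_1$-counterexample along the $\in$-isomorphism $j_U\restriction\VV_\kappa$ onto the $\IN_U$-elements of $\VV_{\kappa^U}$, and your handling of the transitive ideals are all fine (the paper instead takes $\theta=\kappa^+$, where weak $\kappa$-models $M\prec\HH{\kappa^+}$ are automatically transitive; your collapse argument is an acceptable variant). But everything rests on (1), and there your proposal has a genuine gap that you yourself flag: the \anf{maintenance} step of the iterative construction is never carried out, and it cannot be carried out by the means you cite. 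Lemma \ref{filterextension} gives you \emph{some} uniform ${<}\kappa$-complete extension measuring a prescribed family, with no control whatsoever over which new sets enter the filter. The invariant you would need in order to keep the demanded diagonal intersections proper and uniform is that every set in $U_{n+1}$ contains $A\cap C$ for some $C$ in the weakly compact \emph{filter}; normality of the weakly compact ideal closes the dual filter under diagonal intersections but says nothing about diagonal intersections of merely positive sets, and an uncontrolled extension step immediately destroys this invariant -- indeed, as soon as some $U_{n+1}$-set is non-stationary, suitably chosen sets of the generated filter already have bounded diagonal intersection. A filter extension lemma preserving such an invariant is essentially equivalent to statement (1) itself, so the argument as proposed is circular at its crucial point.

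The paper's actual proof of (1) is a one-shot reflection argument -- this is the role of \cite[Theorem 1.3]{MR1133077}, which you cite but for a purpose its technique does not serve. Assume there is no uniform $M$-normal $M$-ultrafilter containing $A$; code the transitive collapse of $M$ as a structure $\langle\kappa,E\rangle$ together with its elementary diagram $T\subseteq\VV_\kappa$, and observe that the conjunction of \anf{there is no $U\subseteq\kappa$ making $\langle\kappa,E,U\rangle$ think that $U$ is a uniform normal ultrafilter on $b^{-1}(\kappa)$ containing $b^{-1}(A)$} with \anf{$T$ is the elementary diagram of $\langle\kappa,E\rangle$} is $\Pi^1_1$ over $\VV_\kappa$. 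Since $A$ is positive for the weakly compact ideal, this statement reflects to an inaccessible $\alpha\in A$ above the code of $A$; the reflected elementary substructure corresponds to an elementary submodel $M_*\prec\HH{\theta}$ of size less than $\kappa$ with $A\in M_*$, and since $A$ is in particular stationary, Theorem \ref{theorem:Ideals}.(\ref{item:Ideals:reg}) (that is, Lemma \ref{lemma:regular}) hands you a uniform $M_*$-normal $M_*$-ultrafilter $U_*$ with $A\in U_*$ for free; its code then contradicts the reflected $\Pi^1_1$-statement. After that, ${<}\kappa$-amenability follows from Corollary \ref{corollary:inaccessible} together with Propositions \ref{proposition:correspondence1} and \ref{proposition:correspondence2}, exactly as in your reduction. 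So the step you deferred is precisely the point where a different idea -- indescribability-style reflection reducing to the small-model case, rather than iterated filter extension -- is required.
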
 

\begin{proof}
 (1) First, assume towards a contradiction, that there is no uniform, $M$-normal $M$-ultrafilter $U$ on $\kappa$ with $A\in U$. 
 Let $\map{\pi}{M}{X}$ denote the transitive collapse of $M$, pick a bijection $\map{b}{\kappa}{X}$ and define $$E ~ = ~ \Set{\langle\alpha,\beta\rangle\in\kappa\times\kappa}{b(\alpha)\in b(\beta)}.$$ 
Let $T$ be the elementary diagram of $\langle\kappa,E\rangle$, coded as a subset of $\VV_\kappa$ in a canonical way. 
 Now, let $\varphi(E,T)$ be a $\Pi^1_1$-statement expressing the conjunction of the following two statements over $\VV_\kappa$: 
\begin{enumerate} 
  \item[(i)] There is no $U\subseteq\kappa$ such that $\langle\kappa,E,U\rangle$ thinks that $U$ is a uniform, normal ultrafilter on $b^{{-1}}(\kappa)$ that contains $b^{{-}1}(A)$. 

  \item[(ii)] $T$ is the elementary diagram of $\langle\kappa,E\rangle$. 
\end{enumerate}
 Then $\VV_\kappa\models\varphi(E,T)$ and, since $A$ is not contained in the weakly compact ideal on $\kappa$, we can find an inaccessible $\alpha\in A$ with $\alpha>b^{{-}1}(A)=(b^{{-}1}\circ \pi)(A)$ and $\VV_\alpha\models\varphi(E\cap\VV_\alpha,T\cap\VV_\alpha)$.  
 Since (ii) is reflected to $\alpha$, the structure $\langle\alpha,E\cap\VV_\alpha\rangle$ is an elementary  substructure of $\langle\kappa,E\rangle$. 
 Set $M_*=(\pi^{{-}1}\circ b)[\alpha]$. Then $M_*\prec\HH{\theta}$ with $\betrag{M_*}<\kappa$ and $A\in M_*$. 
  Since $A$ is stationary in $\kappa$, Theorem \ref{theorem:Ideals}.(\ref{item:Ideals:reg}) yields a uniform, $M_*$-normal $M_*$-ultrafilter $U_*$ on $\kappa$ with $A\in U_*$. Set $\bar{U}=(b^{{-}1}\circ\pi)[U_*]\subseteq\kappa$. 
 Then, $\langle\alpha,E\cap\VV_\alpha,\bar{U}\rangle$  thinks that $\bar{U}$ is a uniform, normal ultrafilter on $b^{{-1}}(\kappa)$ that contains $b^{{-}1}(A)$, contradicting the fact that (i) reflects to $\alpha$.

 Now, pick uniform, $M$-normal $M$-ultrafilter $U$ on $\kappa$ with $A\in U$. By Proposition \ref{proposition:correspondence1}.(3) and Proposition \ref{proposition:correspondence2}, the map $j_U$ is a $\kappa$-embedding with $\crit{j_U}=\kappa$ and  Corollary \ref{corollary:inaccessible} implies that $j_U$ is ${<}\kappa$-powerset preserving.
 Since $U=U_{j_U}$, Proposition \ref{proposition:correspondence1}.(4) now shows that $U$ is ${<}\kappa$-amenable for $M$ and hence we can conclude $\Psi_{WC}(M,U)$ holds. 

 (2) By definition, we have $\II^\kappa_\delta\subseteq\II^\kappa_{WC}$ and $\II^\kappa_{{\prec}\delta}\subseteq\II^\kappa_{{\prec}WC}$. 
  Moreover,  (1) shows that $\II^\kappa_{{\prec}WC}$ is contained in the weakly compact ideal on $\kappa$. 
 Now, pick $A\in\POT{\kappa}\setminus\II^\kappa_{{\prec}\delta}$. 
 Then there is a regular cardinal $\theta>\kappa$, a weak $\kappa$-model $M\prec\HH{\theta}$, and a  uniform, $M$-normal $M$-ultrafilter $U$ on $\kappa$ with $A\in U$. 
  By Proposition \ref{proposition:correspondence2}, the induced ultrapower map $\map{j_U}{M}{\langle\Ult{M}{U},\IN_U\rangle}$ is a $\kappa$-embedding. 
 Then Lemma \ref{correspondence3} implies that $\kappa^U\IN_U j_U(A)$. 
 Now, let $\varphi(v_0^1,v_1^1)$ be a $\Sigma^1_0$-formula and assume that there is $Q\subseteq\VV_\kappa$ with $\VV_\kappa\models\forall^1 Z ~ \varphi(Q,Z)$ and $\VV_\alpha\models\exists^1 Z ~ \neg\varphi(Q\cap\VV_\alpha,Z)$ for all $\alpha\in A$. 
 Since $M\prec\HH{\theta}$, we may assume that $Q\in M$, and that the above statements hold in $M$. 
 In this situation, the elementarity of $j$ implies that there is $S\in\Ult{M}{U}$ such that $S\subseteq\VV_{\kappa^U}$ and $\VV_{\kappa^U}\models\neg\varphi(j(Q)\cap\VV_{\kappa^U},S)$ hold in $\Ult{M}{U}$. 
 Since $\kappa\subseteq M$ is inaccessible, we have $\VV_\kappa\subseteq M$ and the $M$-normality of $U$ implies that $j_U\restriction\VV_\kappa$ is an $\in$-isomorphism between $\langle\VV_\kappa,\in\rangle$ and $\langle\VV_{\kappa^U}^{\Ult{M}{U}},\IN_U\rangle$. Set $R=\Set{x\in\VV_\kappa}{j_U(x)\in S}$. 
 Then we have $$j[Q] ~ = ~ \Set{y\in\Ult{M}{U}}{y\IN_U(j(Q)\cap\VV_{\kappa^U})^{\Ult{M}{U}}}$$ and $j[R]=\Set{y\in\Ult{M}{U}}{y\IN_U S}$. This allows us to conclude that $\VV_\kappa\models\neg\varphi(Q,R)$, a contradiction. 
 This shows that $A$ is not contained in the weakly compact ideal on $\kappa$. 

 The above computations show that $\II^\kappa_{{\prec}\delta}$ is the weakly compact ideal on $\kappa$. 
 Finally, by choosing $\theta=\kappa^+$ in (1), we know that $\II^\kappa_{WC}$ is contained in the weakly compact ideal and we can show that these ideals are equal by proceeding as in the above argument, however picking a weak $\kappa$-model $M$ containing the set $Q$ as an element.
\end{proof}

Note that a small variation of the argument used in the last paragraph of the first part of the above proof shows that $\II^\kappa_\delta=\II^\kappa_{WC}$ and $\II^\kappa_{{\prec}\delta}=\II^\kappa_{{\prec}WC}$ holds for every inaccessible cardinal $\kappa$.  
As pointed out to us by the anonymous referee, it is possible to use an unpublished result of Hamkins (see \cite{HamkinsTalk}) to separate these ideals. 
 His result shows that, after adding $\kappa^+$-many Cohen reals 	to a model of set theory containing a weakly compact cardinal $\kappa$, the cardinal $\kappa$ still possesses the \emph{weakly compact embedding property}, i.e. for every subset $A$ of $\kappa$, there are transitive models $M$ and $N$ of $\ZFC^-$ with $A,\kappa\in M$ and an elementary embedding $\map{j}{M}{N}$ with $\crit{j}=\kappa$. 
 In particular, the results of Section \ref{section:ultrapowersandembeddings} show that $\II^\kappa_\delta$ is a proper ideal in this model. 
  Since the results contained in the remainder of this section show that weak compactness can be characterized through the property $\Psi_{WC}$ using Scheme \ref{schemeNoel}, it follows that $\kappa\in\II^\kappa_{WC}\supsetneq\II^\kappa_\delta$ holds in the constructed forcing extension.

 Hamkins' result can easily be generalized to show that, by adding $\kappa^+$-many Cohen subsets of $\omega_1$ for some weakly compact cardinal $\kappa$, we obtain a forcing extension with the property that for every subset $A$ of $\kappa$, there is an elementary embedding $\map{j}{M}{N}$ between transitive models of $\ZFC^-$ such that $\crit{j}=\kappa$, ${}^\omega M\cup\{A,\kappa\}\subseteq M$ and all stationary subsets of $\kappa$ in $M$ are stationary in $\VV$.  
 This shows that the existence of many transitive weak $\kappa$-models $M$ with the property that there exists a uniform, $M$-normal and stationary-complete $M$-ultrafilter is not a large cardinal property of $\kappa$, i.e. it does not imply the inaccessibility of $\kappa$. 
 In particular, this explains why Table \ref{table:schemeSmall} does not consider normality properties for filters on transitive weak $\kappa$-models that are weaker than genuineness.

 Finally, it should be noted that, analogous to Theorem \ref{theorem:inaccessiblelimit} and \ref{theorem:mahlolike}, Theorem \ref{theorem:weaklycompactideal} can be used to obtain an explicit characterization of weakly compact cardinals with the property that certain definable subsets of these cardinals do not lie in the corresponding weakly compact ideal. 
 Similar generalizations can be proven for all stronger large cardinal notions discussed below.

We now provide the desired characterization of weak compactness. The following result directly implies Item (\ref{item:wcb}) of Theorem \ref{theorem:SchemesSummary}.

\begin{theorem}
 The following statements are equivalent for every uncountable cardinal $\kappa$ and every regular cardinal $\theta>\kappa$: 
  \begin{enumerate} 
    \item $\kappa$ is weakly compact. 
    
     \item For all weak $\kappa$-models $M\prec\HH{\theta}$, there exists a uniform $M$-ultrafilter $U$ on $\kappa$ with $\Psi_{WC}(M,U)$.  

   \item For many weak $\kappa$-models $M\prec\HH{\theta}$, there exists a uniform, stationary-complete $M$-ultrafilter $U$ on $\kappa$ with $\Psi_{WC}(M,U)$. 
   
   \item For some weak $\kappa$-model $M\prec\HH{\theta}$, there exists a uniform $M$-ultrafilter $U$ on $\kappa$ with $\Psi_{ia}(M,U)$. 

    

   \item For many transitive weak $\kappa$-models $M$, there exists a uniform, stationary-complete $M$-ultrafilter $U$ on $\kappa$ with $\Psi_{WC}(M,U)$. 
    
      \item For many transitive weak $\kappa$-models $M$, there exists a uniform $M$-ultrafilter $U$ on $\kappa$ with $\Psi_{ia}(M,U)$. 
   
  \end{enumerate}
\end{theorem}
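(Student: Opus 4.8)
The plan is to prove the six statements equivalent by running the implication cycle $(1)\Rightarrow(2)\Rightarrow(4)\Rightarrow(1)$ for the elementary models and $(1)\Rightarrow(5)\Rightarrow(6)\Rightarrow(1)$ for the transitive models, folding in the remaining statement via $(1)\Rightarrow(3)\Rightarrow(4)$. Most of these arrows rest on two routine observations. First, since a uniform $M$-ultrafilter contains all final segments of $\kappa$ in $M$, the fact that $\ZFC^-$ proves every normal ultrafilter containing all cobounded sets to be ${<}\kappa$-complete shows that $\Psi_{WC}(M,U)$ always implies $\Psi_{ia}(M,U)$; this gives $(2)\Rightarrow(4)$, $(3)\Rightarrow(4)$ and $(5)\Rightarrow(6)$ at once (dropping stationary-completeness and passing from ``for all/many'' to ``for some'' where needed). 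Second, applying Theorem~\ref{theorem:weaklycompactideal}.(1) with $A=\kappa$, which is not contained in the weakly compact ideal, produces for every weak $\kappa$-model $M\prec\HH{\theta}$ a uniform $M$-ultrafilter $U$ with $\Psi_{WC}(M,U)$, yielding $(1)\Rightarrow(2)$.

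The forward implications $(1)\Rightarrow(3)$ and $(1)\Rightarrow(5)$ are where stationary-completeness must be secured. Given $x\in\HH{\theta}$, I would use that weak compactness forces $\kappa$ inaccessible, hence $\kappa^\omega=\kappa$, so there is a weak $\kappa$-model $M\prec\HH{\theta}$ with $x\in M$ that is closed under $\omega$-sequences. Theorem~\ref{theorem:weaklycompactideal}.(1) again supplies a uniform, $M$-normal, ${<}\kappa$-amenable $U$. The point is that such a $U$ is automatically stationary-complete: for any $\langle x_n\mid n<\omega\rangle$ with each $x_n\in U$, closure of $M$ places the sequence, and hence $\bigcap_n x_n$, in $M$; as $U$ is ${<}\kappa$-complete for $M$ we get $\bigcap_n x_n\in U$, so $M$ contains no club disjoint from it, so $M$ believes $\bigcap_n x_n$ to be stationary, and elementarity in $\HH{\theta}$ transfers stationarity to $\VV$. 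For $(1)\Rightarrow(5)$ I would pass to the transitive collapse $\pi\colon M\to\bar M$; since $\pi$ fixes $\kappa+1$ pointwise it fixes every subset of $\kappa$ lying in $M$, so $\bar M$ stays $\omega$-closed and stationary-correct and $U$ stays a uniform, $\bar M$-normal, ${<}\kappa$-amenable, stationary-complete $\bar M$-ultrafilter with $x\in\bar M$.

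The substantial work is the reverse direction $(4)\Rightarrow(1)$, and identically $(6)\Rightarrow(1)$. From a uniform $U$ with $\Psi_{ia}(M,U)$, Proposition~\ref{proposition:correspondence1}.(4) yields a ${<}\kappa$-powerset preserving $\map{j}{M}{\langle N,\IN_N\rangle}$ with $\crit{j}=\kappa$, and Lemma~\ref{lemma:inaccessible2} already gives inaccessibility of $\kappa$. To upgrade to weak compactness I would establish the tree property by contradiction. If a $\kappa$-Aronszajn tree existed, elementarity would place one, $T$, with underlying set $\kappa$ inside $M$ (and $\VV_\kappa\subseteq M$); then $j(T)$ is a $j(\kappa)$-tree in $N$, and a node $\delta$ of $j(T)$ on a level $\gamma$ witnessing $\crit{j}=\kappa$ has a branch of predecessors meeting every level $j(\alpha)$ with $\alpha<\kappa$. \emph{The main obstacle is precisely that, unlike the $\kappa$-powerset-preserving argument of Theorem~\ref{theorem:weaklycompact}, only bounded pieces of this branch can be pulled back.} The remedy is to argue level by level: for each $\alpha<\kappa$ the initial segment $T\restriction\alpha$ has size $<\kappa$ by inaccessibility, so transporting the branch below level $j(\alpha)$ through $j$ applied to a bijection of $|T\restriction\alpha|$ onto $T\restriction\alpha$ makes it a subset of some $j(\eta)$ with $\eta<\kappa$, which ${<}\kappa$-powerset preservation pulls back to an $x_\alpha\in M$. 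These $x_\alpha$ cohere and each meets every earlier level, so their union is a cofinal branch of $T$, contradicting that $T$ is Aronszajn; thus $\kappa$ is weakly compact. The implication $(6)\Rightarrow(1)$ runs verbatim, taking a transitive weak $\kappa$-model containing a code for $T$. Combining all of these arrows closes both cycles and, together with $(1)\Rightarrow(3)\Rightarrow(4)$, establishes the equivalence of all six statements.
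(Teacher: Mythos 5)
Your proposal is correct and follows essentially the same route as the paper: $(1)\Rightarrow(2)$ via Theorem~\ref{theorem:weaklycompactideal}.(1), stationary-completeness in (3) and (5) secured by countably closed weak $\kappa$-models (with the transitive collapse, which fixes $\POT{\kappa}\cap M$, handling (5)), the downward implications via the internal fact that a uniform $M$-normal $M$-ultrafilter is ${<}\kappa$-complete for $M$, and the reverse directions by using Proposition~\ref{proposition:correspondence1}.(4) and Lemma~\ref{lemma:inaccessible2} to get inaccessibility and then killing a $\kappa$-Aronszajn tree through the induced embedding. The only local difference is the branch pullback in $(4)\Rightarrow(1)$: the paper recovers the branch node by node, using only that $\crit{j}=\kappa$ makes every $N$-ordinal below $j(\bar{\alpha})$ with $\bar{\alpha}<\kappa$ of the form $j(\xi)$, whereas you pull back bounded initial segments of the branch as sets via ${<}\kappa$-powerset preservation --- both are sound, the paper's variant simply needing no powerset preservation at that step.
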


\begin{proof}
 The implication from (1) to (2) follows directly from Theorem \ref{theorem:weaklycompactideal}. 
  Moreover, since the inaccessibility of $\kappa$ implies that every element of $\HH{\theta}$ is contained in a weak $\kappa$-model $M\prec\HH{\theta}$ that is closed under countable sequence and all $M$-normal $M$-ultrafilters for such models $M$ are stationary-complete, Theorem \ref{theorem:weaklycompactideal} also shows that (1) implies both (3) and (5). 
  %
  %
  Now, assume, towards a contradiction, that $\kappa$ is not weakly compact, $M\prec\HH{\theta}$ is a weak $\kappa$-model and $U$ is a uniform $M$-ultrafilter that is ${<}\kappa$-amenable for $M$ and ${<}\kappa$-complete for $M$. 
  By Proposition \ref{proposition:correspondence1}.(4), we know that $j_U$ is  a ${<}\kappa$-powerset preserving elementary embedding  with $\crit{j}=\kappa$.  
   In this situation, Lemma \ref{lemma:inaccessible2} shows that $\kappa$ is inaccessible and hence our assumption implies the existence of a $\kappa$-Aronszajn tree $T$ with underlying set $\kappa$ in $M$. 
  Pick an $\Ult{M}{U}$-ordinal $\gamma$ that witnesses that $j_U$ jumps at $\kappa$ and pick an element $\beta$ of the $\gamma$-th level of $j_U(T)$ in $\langle\Ult{M}{U},\IN_U\rangle$. 
   Given $\alpha<\kappa$, there is $\bar{\alpha}<\kappa$ with the property that the initial segment of $T$ of height $\alpha$ is a subset of $\bar{\alpha}$ and hence we can find $\xi_\alpha<\bar{\alpha}$ with the property that, in $\langle\Ult{M}{U},\IN_U\rangle$, the ordinal $j_U(\xi_\alpha)$ is the unique element of the $j_U(\alpha)$-th level of $j_U(T)$ that lies below $\beta$. 
   But then elementarity implies that the set $\Set{\xi_\alpha}{\alpha<\kappa}$ is a cofinal branch through $T$, a contradiction. 
  These computations show that (4) implies (1). By first using Lemma \ref{lemma:inaccessible2}  to show that $\kappa$ is inaccessible and then capturing a $\kappa$-Aronszajn tree in a transitive weak $\kappa$-model, a variation of the previous argument shows that (6) also implies (1). 
  Since both (2) and (3) imply (4), and (5) implies (6), this completes the proof of the theorem. 
\end{proof}

 We end this section by proving the second statement of Theorem \ref{theorem:IdealContain}.(\ref{item:cont:WC}).

\begin{lemma}\label{lemma:notinweaklycompactideal}
  If $\kappa$ is weakly compact, then $\NN^\kappa_{wc}\not\in I_{WC}^\kappa$.
\end{lemma}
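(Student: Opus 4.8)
The plan is to exploit the identification of $\II^\kappa_{WC}$ with L\'evy's weakly compact ideal furnished by Theorem \ref{theorem:weaklycompactideal}.(2), together with the classical Hanf--Scott characterization of weak compactness as $\Pi^1_1$-indescribability (see \cite[Theorem 6.4]{MR1994835}). Given this, it suffices to show that $\NN^\kappa_{wc}$ does not belong to the weakly compact ideal on $\kappa$ in the sense of Definition \ref{definition:weaklycompactideal}. The whole point of the argument will be that the complexity underlying this ideal is precisely $\Pi^1_1$, which is exactly the complexity of weak compactness itself, so the ideal cannot ``see'' non-weak-compactness as a small set.

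Arguing by contradiction, I would assume $\NN^\kappa_{wc}$ lies in the weakly compact ideal and fix a witnessing $\Pi^1_1$-formula $\varphi(v^1)$ and a set $Q\subseteq\VV_\kappa$ with $\VV_\kappa\models\varphi(Q)$ and $\VV_\alpha\models\neg\varphi(Q\cap\VV_\alpha)$ for every $\alpha\in\NN^\kappa_{wc}$. Reading the second clause contrapositively, every ordinal $\alpha<\kappa$ with $\VV_\alpha\models\varphi(Q\cap\VV_\alpha)$ must fail to be in $\NN^\kappa_{wc}$, that is, it must be weakly compact. Writing $R=\Set{\alpha<\kappa}{\VV_\alpha\models\varphi(Q\cap\VV_\alpha)}$ for the reflection set, this says that every element of $R$ is weakly compact.

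I would then apply indescribability twice. Since $\kappa$ is weakly compact and hence $\Pi^1_1$-indescribable, the truth of $\varphi(Q)$ over $\VV_\kappa$ reflects to some $\alpha<\kappa$, so $R\neq\emptyset$; let $\alpha_0=\min R$. By the observation above, $\alpha_0$ is itself weakly compact, hence also $\Pi^1_1$-indescribable. But $\VV_{\alpha_0}\models\varphi(Q\cap\VV_{\alpha_0})$ is a true $\Pi^1_1$-assertion about the parameter $Q\cap\VV_{\alpha_0}\subseteq\VV_{\alpha_0}$, so indescribability of $\alpha_0$ reflects it to some $\beta<\alpha_0$; since $(Q\cap\VV_{\alpha_0})\cap\VV_\beta=Q\cap\VV_\beta$, this yields $\VV_\beta\models\varphi(Q\cap\VV_\beta)$, i.e. $\beta\in R$ with $\beta<\alpha_0$, contradicting the minimality of $\alpha_0$.

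The argument is short, and its only load-bearing ingredient is the Hanf--Scott equivalence applied at the minimal reflection point $\alpha_0$ rather than at $\kappa$: the feature that $\alpha_0$ inherits weak compactness from being a reflection point is exactly what lets me reflect once more and collide with minimality. I do not expect any serious obstacle here; the one point requiring care is the trivial verification that the parameter restricts correctly down the cumulative hierarchy, namely $(Q\cap\VV_{\alpha_0})\cap\VV_\beta=Q\cap\VV_\beta$, which is immediate from $\beta<\alpha_0$.
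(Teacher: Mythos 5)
Your proof is correct, but it takes a genuinely different route from the paper's. Both arguments start by invoking Theorem \ref{theorem:weaklycompactideal} to identify $\II^\kappa_{WC}$ with L\'evy's weakly compact ideal, but from there the paper runs a minimal-counterexample argument over weakly compact cardinals: assuming $\kappa$ is the least weakly compact cardinal with $\NN^\kappa_{wc}\in\II^\kappa_{WC}$, it uses the $1$-club characterization of the weakly compact ideal (from \cite{MR1245524}) to get a $1$-club inside the set $A$ of weakly compacts below $\kappa$, applies stationary reflection at $\kappa$ to find an $\alpha<\kappa$ at which this $1$-club reflects, observes that $\alpha\in A$ because a $1$-club contains its reflection points and that $A\cap\alpha$ is then a $1$-club in $\alpha$, and so contradicts the minimality of $\kappa$. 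You instead work directly from Definition \ref{definition:weaklycompactideal} together with the Hanf--Scott equivalence of weak compactness with $\Pi^1_1$-indescribability: a witnessing pair $(\varphi,Q)$ forces every reflection point to be weakly compact, one application of indescribability at $\kappa$ yields a least reflection point $\alpha_0$, and a second application at $\alpha_0$ --- legitimate precisely because $\alpha_0$, being a reflection point, is itself weakly compact, and sound because the parameter restricts correctly via $(Q\cap\VV_{\alpha_0})\cap\VV_\beta=Q\cap\VV_\beta$ --- produces a reflection point below $\alpha_0$, contradicting minimality. Your approach buys self-containedness: no $1$-clubs, no stationary reflection, and no minimization over weakly compact cardinals, only a least element of a nonempty set of ordinals; moreover, the same double-reflection argument generalizes verbatim to the $\Pi^1_n$-indescribability ideals. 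The paper's approach, in exchange, extracts extra structural information along the way (the trace of the weakly compacts on the reflecting $\alpha$ is again a $1$-club), placing the lemma within the $1$-club analysis of the weakly compact ideal, at the cost of depending on that external machinery.
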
 

\begin{proof}
 Assume that $\NN_{wc}^\kappa\in\II^\kappa_{WC}$. Then Theorem \ref{theorem:weaklycompactideal} implies that $\NN_{wc}^\kappa$ is an element of the weakly compact ideal. Then we may assume that $\kappa$ is  the least weakly compact cardinal with this property. 
 Then {\cite[Lemma 1.15]{MR1245524}} shows that the set  $A=\Set{\alpha<\kappa}{\textit{$\alpha$ is weakly compact}}$ contains a \emph{$1$-club}, i.e. there is a stationary subset of $A$ that contains all of its reflection points. 
 Since weak compactness implies stationary reflection, there is an $\alpha<\kappa$ with the property that $A\cap\alpha$ is stationary in $\alpha$. 
 Then $\alpha\in A$, $\alpha$ is weakly compact and $A\cap\alpha$ is a $1$-club in $\alpha$. 
Since the results of \cite{MR1245524} show that a subset of a weakly compact cardinal is an element of the weakly compact ideal if and only if its complement contains a $1$-club, we can conclude that $\NN_{wc}^\alpha\in\II^\kappa_{WC}$, a contradiction. 
\end{proof}


\section{Weakly ineffable and ineffable cardinals}

Remember that, given a set $A$, an \emph{$A$-list} is a sequence $\seq{d_a}{a\in A}$ with $d_a\subseteq a$ for all $a\in A$. 
 Given an uncountable regular cardinal $\kappa$, a set $A\subseteq\kappa$ is then called \emph{ineffable} (respectively, \emph{weakly ineffable}) if for every $A$-list $\seq{d_\alpha}{\alpha\in A}$, there is $D\subseteq\kappa$ such that the set $\Set{\alpha\in A}{D\cap\alpha=d_\alpha}$ is stationary (respectively, \emph{unbounded}) in $\kappa$. 
The \emph{ineffable} (respectively, \emph{weakly ineffable}) \emph{ideal on $\kappa$} is the collection  of all subsets of $\kappa$ that are not ineffable (respectively, weakly ineffable). 
 These ideals were introduced by Baumgartner, and he has shown them to be normal ideals on $\kappa$ whenever $\kappa$ is (weakly) ineffable (see \cite{MR0384553}). 
 The key proposition is now an adaptation of \cite[Theorem 1.2.1]{MR0460120}, which shows that the ideals $\II_{wie}^\kappa$ and $\II_{{\prec}wie}^\kappa$ (respectively, the ideals $\II_{ie}^\kappa$ and $\II_{{\prec}ie}^\kappa$) both agree with the weakly ineffable (respectively, the ineffable) ideal, yielding Theorem \ref{theorem:Ideals}.(\ref{item:Ideals:wie}) and \ref{theorem:Ideals}.(\ref{item:Ideals:ie}). 
Moreover, the following result also immediately yields Theorem \ref{theorem:SchemesSummary} (\ref{item:wie}) and \ref{theorem:SchemesSummary}(\ref{item:ie}).

\begin{theorem}\label{theorem:ineffableideal}
 \begin{enumerate} 
  \item If $\kappa$ is an uncountable cardinal, $\vec{d}=\seq{d_\alpha}{\alpha\in A}$ is an $A$-list with $A\subseteq\kappa$, $M$ is a weak $\kappa$-model with $\vec{d}\in M$ and $U$ is an $M$-ultrafilter with $\Psi_{wie}(M,U)$ and with $A\in U$, then there is $D\subseteq\kappa$ such that the set $\Set{\alpha\in A}{D\cap\alpha=d_\alpha}$ is unbounded in $\kappa$. 

  \item   Let $\kappa$ be a weakly ineffable cardinal. 
   \begin{enumerate}
    \item If $A\subseteq\kappa$ is not contained in the weakly ineffable ideal, $\theta>\kappa$ is a regular cardinal and  $M\prec\HH{\theta}$ is a weak $\kappa$-model with $A\in M$, then there is an $M$-ultrafilter $U$ on $\kappa$ with $A\in U$ and $\Psi_{wie}(M,U)$. 

    \item $\II^\kappa_{wie}=\II^\kappa_{\prec wie}$ is the weakly ineffable ideal on $\kappa$.
  \end{enumerate}

  \item If $\kappa$ is an uncountable cardinal, $\vec{d}=\seq{d_\alpha}{\alpha\in A}$ is an $A$-list with $A\subseteq\kappa$, $M$ is a weak $\kappa$-model with $\vec{d}\in M$ and $U$ is an $M$-ultrafilter with $\Psi_{ie}(M,U)$ and with $A\in U$, then there is $D\subseteq\kappa$ with the property that the set $\Set{\alpha\in A}{D\cap\alpha=d_\alpha}$ is stationary in $\kappa$. 

  \item Let $\kappa$ be an ineffable cardinal. 
    \begin{enumerate}
      \item If $A\subseteq\kappa$ is not contained in the  ineffable ideal, $\theta>\kappa$ is a regular cardinal and  $M\prec\HH{\theta}$ is a weak $\kappa$-model with $A\in M$, then there is an $M$-ultrafilter $U$ on $\kappa$ with $A\in U$ and $\Psi_{ie}(M,U)$. 

      \item  $\II_{ie}^\kappa=\II^\kappa_{\prec ie}$ is the ineffable ideal on $\kappa$. 
    \end{enumerate} 
 \end{enumerate}
\end{theorem}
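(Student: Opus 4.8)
The plan is to prove the four items in parallel, since the weakly ineffable case (governed by \emph{genuineness}, $\Psi_{wie}$) and the ineffable case (governed by \emph{normality}, $\Psi_{ie}$) differ only by replacing ``unbounded'' with ``stationary'' throughout, and items (2) and (4) each split into a direction supplied by items (1) and (3) together with a converse construction.

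\emph{Forward directions (1) and (3).} Here $\kappa$ is arbitrary and I am handed a weak $\kappa$-model $M\ni\vec d$ and an $M$-ultrafilter $U$ with $A\in U$ satisfying $\Psi_{wie}(M,U)$ (resp.\ $\Psi_{ie}(M,U)$). I would read the coherent set directly off the ultrafilter: for $\xi<\kappa$ put $X_\xi=\Set{\alpha\in A}{\xi\in d_\alpha}\in M$ and $D=\Set{\xi<\kappa}{X_\xi\in U}$, and let $Y_\xi=X_\xi$ if $\xi\in D$ and $Y_\xi=A\setminus X_\xi$ otherwise. Since $A\in U$, each $Y_\xi\in U$, and $\seq{Y_\xi}{\xi<\kappa}$ is a sequence of elements of $U$ computed in $\VV$ (not necessarily lying in $M$). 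If $U$ is genuine, then $\Delta_{\xi<\kappa}Y_\xi$ is unbounded (in the degenerate case $|U|<\kappa$ one uses that $\bigcap U$ is unbounded instead), and unravelling the definition shows that every nonzero $\alpha$ in this diagonal intersection lies in $A$ (as each $Y_\xi\subseteq A$) and satisfies $D\cap\alpha=d_\alpha$; hence $\Set{\alpha\in A}{D\cap\alpha=d_\alpha}$ is unbounded, giving (1). For (3) the same computation applies once I record the elementary fact that for a normal $U$ the diagonal intersection of \emph{every} $\kappa$-sequence from $U$, not merely of an enumeration, is stationary: writing $Y_\xi=U_{g(\xi)}$ for a fixed enumeration $\seq{U_\eta}{\eta<\kappa}$ of $U$, the club $E$ of closure points of $g$ satisfies $\Delta_\eta U_\eta\cap E\subseteq\Delta_\xi Y_\xi$, so stationarity transfers.

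\emph{Reverse directions (2)(a) and (4)(a).} Now $A$ is weakly ineffable (resp.\ ineffable) and $M\prec\HH{\theta}$ is a weak $\kappa$-model with $A\in M$. I would fix a bijective enumeration $\seq{X_\xi}{\xi<\kappa}$ of $M\cap\POT{\kappa}$ and form the $A$-list $d_\alpha=\Set{\xi<\alpha}{\alpha\in X_\xi}$. Weak ineffability (resp.\ ineffability) of $A$ yields $D\subseteq\kappa$ with $B=\Set{\alpha\in A}{D\cap\alpha=d_\alpha}$ unbounded (resp.\ stationary), and I set $U=\Set{X_\xi}{\xi\in D}$. The content of the coherence is that for $\alpha\in B$ the family $U$ agrees, on all $X_\xi$ with $\xi<\alpha$, with the principal ultrafilter concentrating on $\alpha$. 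Since each ultrafilter axiom, the statement $A\in U$, and the statements ``$[\gamma,\kappa)\in U$'' and ``no bounded set lies in $U$'' each involve only finitely many indices, all of them can be checked by choosing a single $\alpha\in B$ above those indices; this shows $U$ is a uniform $M$-ultrafilter with $A\in U$, and in particular $|U|=\kappa$.

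The remaining and genuinely hard task is the diagonal-intersection property. In the \emph{ineffable} case it is clean: given any $\VV$-sequence $\seq{U_\beta}{\beta<\kappa}$ from $U$, write $U_\beta=X_{\xi_\beta}$ with $\xi_\beta\in D$ and let $E$ be the club of closure points of $\beta\mapsto\xi_\beta$; for $\alpha\in B\cap E$ coherence gives $\alpha\in X_{\xi_\beta}=U_\beta$ for all $\beta<\alpha$, so $B\cap E\subseteq\Delta_{\beta<\kappa}U_\beta$, which is stationary because $B$ is stationary and $E$ is club. The \emph{weakly ineffable} case is where I expect the main obstacle: the identical argument only yields $B\cap E\subseteq\Delta_\beta U_\beta$ with $B$ merely unbounded, and an unbounded set need not meet the club $E$ cofinally. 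This is precisely the asymmetry built into Definition \ref{ultrafilterproperties} (genuineness quantifies over \emph{all} $\kappa$-sequences from $U$, normality only over enumerations), so genuineness cannot be deduced from an enumeration-level statement. To overcome this I would not argue from the single set $B$, but feed the given sequence back into weak ineffability: for an arbitrary $\seq{U_\beta}{\beta<\kappa}$ from $U$ I would apply weak ineffability to a list simultaneously coding the defining data $\Set{\xi<\alpha}{\alpha\in X_\xi}$ and the sequence data $\Set{\beta<\alpha}{\alpha\in U_\beta}$, and exploit the resulting coherence to produce unboundedly many $\alpha$ with $\alpha\in U_\beta$ for all $\beta<\alpha$. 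Turning this combination into an \emph{unbounded} diagonal set is the crux of the theorem and is presumably where the adaptation of \cite[Theorem 1.2.1]{MR0460120} does its real work.

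\emph{Identifying the ideals, (2)(b) and (4)(b).} Finally I would assemble the two directions. If $A$ lies in the weakly ineffable ideal, fix an $A$-list $\vec d$ with no unbounded coherent $D$ and let $x\subseteq\kappa$ code it; then (1) shows that for no weak $\kappa$-model $M\ni x$ is there a genuine $U$ with $A\in U$, whence $A\in\II^\kappa_{wie}\cap\II^\kappa_{\prec wie}$, so the weakly ineffable ideal is contained in both. Conversely, if $A$ is weakly ineffable, then (2)(a) produces, for every $x$ and every large $\theta$, a weak $\kappa$-model $M\prec\HH{\theta}$ with $x,A\in M$ and a genuine $U$ with $A\in U$, giving $A\notin\II^\kappa_{\prec wie}$; and the transitive collapse $\map{\pi}{M}{\bar M}$, which fixes $\kappa$ and every subset of $\kappa$ in $M$ and hence carries $U$ to a genuine $\bar M$-ultrafilter with $A\in U$, provides a transitive witness and so $A\notin\II^\kappa_{wie}$. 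Thus $\II^\kappa_{wie}=\II^\kappa_{\prec wie}$ is the weakly ineffable ideal, and the ineffable case is identical with ``stationary'' in place of ``unbounded''.
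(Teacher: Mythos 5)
Most of your proposal coincides with the paper's own argument: the flip construction $u_\xi=x_\xi$ or $A\setminus x_\xi$ for the forward directions (your $D=\Set{\xi<\kappa}{X_\xi\in U}$ is the same set as the paper's $D=\bigcup\Set{d_\alpha}{\alpha\in H}$), the enumeration-list construction with the coherence set (your $B$, the paper's $H$) for the backward directions, including the key fact that every element of $U$ contains a final segment of the coherence set, and the same assembly for the ideal identifications. Your added care is sound and fills in steps the paper compresses: the closure-point-club transfer from enumerations to arbitrary $\kappa$-sequences for normality, and the transitive-collapse argument producing witnesses for $\II^\kappa_{wie}$ and $\II^\kappa_{ie}$, which the paper dispatches with ``by choosing $\theta=\kappa^+$''. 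Your items (1), (3), (4)(a), (4)(b) and the two (b)-parts are complete and correct.

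The gap is in (2)(a), and you name it yourself: you never prove that the constructed $U$ is genuine, i.e.\ that $\Delta_{\beta<\kappa}U_\beta$ is unbounded for \emph{every} sequence $\seq{U_\beta}{\beta<\kappa}$ of elements of $U$, as the paper's definition demands (and as item (1) genuinely uses, since the sequence extracted from an arbitrary $A$-list is not an enumeration of $U$). Your sketched repair does not work as stated, for a quantifier-order reason: if you re-apply weak ineffability to a combined list coding both the enumeration data and the given sequence $\seq{U_\beta}$, the resulting coherent set yields a new flip $D'$ and hence a new ultrafilter, and nothing forces $D'=D$; so you verify the unbounded-diagonal property for \emph{some} ultrafilter depending on the sequence, not for the fixed $U$. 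Moreover, even when $D'=D$, coherence only yields $\alpha\in U_\beta$ for those $\beta<\alpha$ whose index $\xi_\beta$ lies below $\alpha$, and closure under $\beta\mapsto\xi_\beta$ is again a club condition that a merely unbounded coherent set may entirely miss. For what it is worth, the paper itself handles this case only by declaring it ``in complete analogy'' with the ineffable case: its Claim establishes $H\subseteq\Delta_{\xi<\kappa}u_\xi$ for the canonical enumeration $\seq{u_\xi}{\xi<\kappa}$ of $U$, which settles normality because the paper's definition of normal asks only for \emph{some} enumeration (stationarity being club-invariant), whereas for genuineness the same computation covers only that one enumeration, and the upgrade to all sequences is exactly the step left implicit. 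So you have correctly located the one delicate point of the theorem, but since your submission explicitly stops there rather than closing it (for instance by following the original argument for \cite[Theorem 1.2.1]{MR0460120}), the proposal does not yet constitute a proof of item (2)(a), and hence of the containment of $\II^\kappa_{wie}$ and $\II^\kappa_{\prec wie}$ in the weakly ineffable ideal in (2)(b).
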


\begin{proof}
  We only prove (3) and (4), since the proof for the case  of weakly ineffable cardinals proceeds in complete analogy (replacing \emph{stationary} by \emph{unbounded}, and replacing \emph{normal} by \emph{genuine} throughout). 

(3)  For every $\xi<\kappa$, let $x_\xi=\Set{\alpha\in A}{\xi\in d_\alpha}$. Then $\seq{x_\xi}{\xi<\kappa}\in M$. 
   Now, given $\xi<\kappa$, set $u_\xi=x_\xi$ if $x_\xi\in U$, and set $u_\xi=A\setminus x_\xi$ otherwise. 
 By our assumptions on $U$, we have $u_\xi\in U$ for all $\xi<\kappa$ and hence $H=\Delta_{\xi<\kappa}u_\xi$  is a stationary subset of $\kappa$. 
 Now, fix $\alpha,\beta\in H$ with $\alpha<\beta$ and $\xi<\alpha$. Then $\alpha,\beta\in u_\xi$. If $x_\xi\in U$, then $\alpha,\beta\in x_\xi$ and hence $\xi\in d_\alpha\cap d_\beta$. In the other case, if $x_\xi\notin U$, then $\alpha,\beta\in A\setminus x_\xi$ and hence $\xi\notin d_\alpha\cup d_\beta$. 
 In combination, this shows that $d_\alpha=d_\beta\cap\alpha$ holds for all $\alpha,\beta\in H$ with $\alpha<\beta$. Define $D=\bigcup\Set{d_\alpha}{\alpha\in H}$. Then our arguments show that the set $\Set{\alpha\in A}{D\cap\alpha=d_\alpha}$ is stationary in $\kappa$. 

 (4)(a) Let $\kappa$ be an ineffable cardinal and let $A\subseteq\kappa$ be ineffable, $\theta>\kappa$ be regular and $M\prec\HH{\theta}$ be a weak $\kappa$-model. 
  Pick an enumeration $\Set{x_\xi}{\xi<\kappa}$ of all subsets of $\kappa$ in $M$, and, for every $\alpha\in A$, set $d_\alpha=\Set{\xi<\alpha}{\alpha\in x_\xi}$. 
 Then there is $H\subseteq A$  stationary in $\kappa$, and $D\subseteq\kappa$ with $D\cap\alpha=d_\alpha$ for all $\alpha\in H$. 
Given $\xi<\kappa$, set $u_\xi=x_\xi$ if $\xi\in D$, and set $u_\xi=A\setminus x_\xi$ otherwise. 
 Define $U=\Set{u_\xi}{\xi<\kappa}$. The next claim  provides the desired conclusion.

\begin{claim*}
  $U$ is a normal $M$-ultrafilter with $A\in U$.
\end{claim*}

\begin{proof}[Proof of the Claim]
$H\setminus(\xi+1)\subseteq x_\xi$ for all $\xi\in D$ and $H\cap x_\xi\subseteq\xi+1$ for all $\xi\in\kappa\setminus D$. 
 Hence, we have $H\setminus(\xi+1)\subseteq u_\xi$ for all $\xi<\kappa$ and this directly implies that $U$ is an $M$-ultrafilter. 
 Moreover, it shows that  $H\subseteq\Delta_{\xi<\kappa}u_\xi$, and hence $U$ is normal. 
\end{proof}

 (4)(b) Let $\kappa$ be ineffable and assume that $A\subseteq\kappa$ is not an element of $\II_{ie}^\kappa\cap\II_{\prec ie}^\kappa$. 
 Then every $A$-list is contained in a weak $\kappa$-model $M$ with the property that there is a normal $M$-ultrafilter $U$ on $\kappa$ with $A\in U$. 
 By (3), this shows that $A$ is ineffable. This argument shows that the ineffable ideal is contained in both $\II^\kappa_{ie}$ and $\II^\kappa_{\prec ie}$. In the other direction, (4)(a) directly shows that $\II^\kappa_{\prec ie}$ is contained in the ineffable ideal. Moreover, by choosing $\theta=\kappa^+$ in (4)(a), the same conclusion can be established for $\II^\kappa_{ie}$. 
\end{proof}

We close this section by verifying Theorem \ref{theorem:IdealContain} (\ref{item:cont:wie}) and \ref{theorem:IdealContain} (\ref{item:cont:ie}).

\begin{lemma}
  If $\kappa$ is weakly ineffable, then $\NN^\kappa_{wc}\in\II^\kappa_{wie}$ and $\NN_{wie}^\kappa\notin\II^\kappa_{wie}$.
\end{lemma}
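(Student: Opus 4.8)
The ideal $\II^\kappa_{wie}$ is the weakly ineffable ideal on $\kappa$ by Theorem~\ref{theorem:Ideals}.(\ref{item:Ideals:wie}), so the plan is to show that $\NN^\kappa_{wc}$ fails to be weakly ineffable while $\NN^\kappa_{wie}$ is weakly ineffable.

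\emph{First statement.} I would begin by reducing to inaccessible cardinals. Since every weakly compact cardinal is inaccessible, $\NN^\kappa_{ia}\subseteq\NN^\kappa_{wc}$, and $\NN^\kappa_{ia}$ lies in the weakly compact ideal (\cite[Theorem~2.8]{MR0540770}), which is contained in $\II^\kappa_{wie}$ (\cite[Theorem~7.2]{MR0384553}). As ideals are closed under subsets and unions, it then suffices to prove that the set $A_0$ of inaccessible, non-weakly-compact $\alpha<\kappa$ belongs to $\II^\kappa_{wie}$, i.e.\ that $A_0$ is not weakly ineffable. For each $\alpha\in A_0$ the failure of the tree property yields an $\alpha$-Aronszajn tree, which---using a fixed coherent coding via the G\"odel pairing on $\alpha$---I would encode as a set $d_\alpha\subseteq\alpha$ carrying the tree order on underlying set $\alpha$. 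Assuming $A_0$ were weakly ineffable, applying this to the $A_0$-list $\seq{d_\alpha}{\alpha\in A_0}$ produces $D\subseteq\kappa$ and an unbounded $H\subseteq A_0$ with $D\cap\alpha=d_\alpha$ for $\alpha\in H$. The coherence of the $d_\alpha$ guarantees that $D$ codes a tree order $E$ on $\kappa$ whose restriction below any $\alpha\in H$ is the $\alpha$-Aronszajn tree $T_\alpha$; since $H$ is unbounded, $E$ has height $\kappa$ and all levels of size less than $\kappa$, so it is a $\kappa$-tree. The weak compactness of $\kappa$ then gives a cofinal branch $b$, and by downward closure and cofinality of $b$ the set $b\cap\alpha$ is a cofinal branch of $T_\alpha$ for every $\alpha\in H$, contradicting that $T_\alpha$ is Aronszajn. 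The only delicate point is the bookkeeping in the coding, i.e.\ arranging a single coherent scheme under which $E$ is literally a tree and branches restrict correctly; this is routine.

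\emph{Second statement.} Here I would show directly that $\NN^\kappa_{wie}$ is weakly ineffable. Fix an arbitrary $\NN^\kappa_{wie}$-list $\vec d$, a sufficiently large regular $\theta$, and a weak $\kappa$-model $M\prec\HH{\theta}$ with $\vec d\in M$. It suffices to produce a uniform genuine $M$-ultrafilter $U$ with $\NN^\kappa_{wie}\in U$: by the lemma following Definition~\ref{ultrafilterproperties} such $U$ is $M$-normal (hence, by Proposition~\ref{proposition:correspondence2}, $j_U$ is a $\kappa$-embedding) and stationary-complete, so in particular countably complete, and therefore $\langle\Ult{M}{U},\IN_U\rangle$ is well-founded and may be identified with a transitive $N$. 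By Corollary~\ref{corollary:correspondence2}, $\NN^\kappa_{wie}\in U$ is equivalent to $N\models$``$\kappa^N$ is not weakly ineffable'', and given such $U$, Theorem~\ref{theorem:ineffableideal}.(1) applied to $\vec d$ and $U$ furnishes the coherent $D$ witnessing weak ineffability of $\NN^\kappa_{wie}$ on an unbounded set.

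Thus everything reduces to constructing, for the given $M$, a genuine $U$ whose (well-founded) ultrapower $N$ satisfies that $\kappa^N$ is not weakly ineffable; this is the main obstacle. The mechanism I expect to use is a reflection/minimality argument in the spirit of the well-foundedness of the Mitchell order and of Lemma~\ref{lemma:PropertiesOfMeasurableIdeal}: among the genuine ultrafilters available at $M$ (which exist because $\kappa$ is weakly ineffable, via Theorem~\ref{theorem:ineffableideal}.(2)), choose one of least rank; if its transitive ultrapower $N$ still believed $\kappa^N$ weakly ineffable, then the characterization read inside $N$ would reflect a genuine witness of strictly smaller rank, contradicting minimality. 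Making this precise---in particular isolating the correct well-founded rank on genuine ultrafilters in the present non-transitive setting, and checking that weak ineffability of $\kappa^N$ in $N$ really reflects such a witness downward---is the crux. An alternative route would be a direct Baumgartner-style diagonalization showing that the set of weakly ineffable $\alpha<\kappa$ is itself non-weakly-ineffable, whence $\NN^\kappa_{wie}$, as the complement of an ideal set in a proper normal ideal (Lemma~\ref{lemma:AllIdealsNormal}), is positive.
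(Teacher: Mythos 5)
Your proof of the first statement is correct, but it takes a different route from the paper. The paper codes, for each inaccessible non-weakly-compact $\alpha<\kappa$, a non-reflecting $\Pi^1_1$-witness: a formula $\varphi_\alpha$ and a set $X_\alpha\subseteq\VV_\alpha$ with $\VV_\alpha\models\varphi_\alpha(X_\alpha)$ but $\VV_\beta\models\neg\varphi_\alpha(X_\alpha\cap\VV_\beta)$ for all $\beta<\alpha$, packed into $d_\alpha\subseteq\alpha$ via a bijection $\map{b}{\VV_\kappa}{\kappa}$ with $b[\VV_\alpha]=\alpha$ at inaccessibles; coherence on an unbounded set then yields $\varphi_\alpha\equiv\varphi_\beta$ and $X_\alpha=X_\beta\cap\VV_\alpha$ for $\alpha<\beta$, an immediate contradiction. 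Your version instead codes $\alpha$-Aronszajn trees and glues them into a $\kappa$-tree, then uses the tree property at $\kappa$. That works, but the "routine bookkeeping" you defer is not quite free: you must normalize each $T_\alpha$ so that the tree order refines the ordinal order and the enumeration is level-monotone, so that coherence really makes $T_\alpha$ a level-initial segment of $T_\beta$ and a cofinal branch of $E$ restricts to cofinal branches of the $T_\alpha$. The $\Pi^1_1$-indescribability coding avoids all of this, since witnesses cohere by literal restriction. Your reduction via $\NN^\kappa_{ia}\in\II^\kappa_{WC}\subseteq\II^\kappa_{wie}$ is exactly the paper's.

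For the second statement there is a genuine gap, and you have flagged it yourself: everything reduces to constructing a genuine $M$-ultrafilter $U$ with $\NN^\kappa_{wie}\in U$, but by Theorem \ref{theorem:ineffableideal} the existence of such $U$ is \emph{equivalent} to $\NN^\kappa_{wie}\notin\II^\kappa_{wie}$, so this construction is the entire content of the lemma, and your proposed mechanism does not close the circle. A Mitchell-style rank on genuine $M$-ultrafilters is not available here: unlike normal measures, these filters are external to $M$, they are not $\kappa$-amenable, and the ultrapower $N$ does not absorb $\HH{\kappa^+}^M$, so there is no way to push a "smaller-rank" witness into $N$ or to compare $U$ with the genuine ultrafilters that $N$ sees for $\kappa^N$ (this is exactly why Lemma \ref{lemma:IdealsNicelyProper} is restricted to the $\kappa$-amenable, Ramsey-like setting). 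Worse, your fallback route is provably false: for ineffable $\kappa$ one has $\NN^\kappa_{wie}\in\II^\kappa_{ie}$ (the very next lemma of the paper), so the set of weakly ineffable $\alpha<\kappa$ lies in the ineffable filter and is in particular weakly ineffable; hence one cannot show in general that this set is non-weakly-ineffable. The paper's actual argument is elementary and purely combinatorial: take $\kappa$ least weakly ineffable with $\NN^\kappa_{wie}\in\II^\kappa_{wie}$; given an $\NN^\kappa_{wie}$-list $\vec{d}$, minimality supplies for each weakly ineffable $\alpha<\kappa$ a set $D_\alpha\subseteq\alpha$ cohering with $\vec{d}\restriction\alpha$ on an unbounded subset of $\NN^\alpha_{wie}$; since $A=\kappa\setminus\NN^\kappa_{wie}$ is positive for the proper ideal $\II^\kappa_{wie}$, weak ineffability of $A$ applied to the $A$-list $\seq{D_\alpha}{\alpha\in A}$ yields $D\subseteq\kappa$ cohering with unboundedly many $D_\alpha$, and pasting the two layers of coherence shows $\Set{\alpha\in\NN^\kappa_{wie}}{D\cap\alpha=d_\alpha}$ is unbounded in $\kappa$, contradicting the choice of $\kappa$. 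You should replace your filter-theoretic sketch by an argument of this minimal-counterexample form.
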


\begin{proof}
  For the first statement, let $A$ be the set of all inaccessibles $\alpha<\kappa$ which are not weakly compact. 
  Fix a bijection $\map{b}{\VV_\kappa}{\kappa}$ with $b[\VV_\alpha]=\alpha$ for all inaccessible $\alpha<\kappa$. 
  For every $\alpha\in A$, define $$d_\alpha ~ = ~ \{\goedel{0}{\lceil\varphi_\alpha\rceil}\} ~ \cup ~ \Set{\goedel{1}{b(x)}}{x\in X_\alpha} ~ \subseteq ~ \alpha,$$ where $X_\alpha\subseteq\VV_\alpha$ and $\lceil\varphi_\alpha\rceil\in\VV_\omega$ is the G\"odel number of a $\Pi^1_1$-formula $\varphi_\alpha(v^1)$ such that $\VV_\alpha\models\varphi_\alpha(X_\alpha)$ and $\VV_\beta\models\neg\varphi_\alpha(X_\alpha\cap\VV_\beta)$ for all $\beta<\alpha$.  
  Assume, for a contradiction, that $A$ is weakly ineffable. Then, the sequence $\seq{d_\alpha}{\alpha\in A}$ is an $A$-list, and, by the weak ineffability of $A$, we find $D\subseteq\kappa$ such that $U=\Set{\alpha\in A}{D\cap\alpha=d_\alpha}$ is an unbounded subset of $\kappa$. 
  Pick $\alpha,\beta\in U$ with $\alpha<\beta$. Then $\varphi_\alpha\equiv\varphi_\beta$ and $X_\alpha=X_\beta\cap\VV_\alpha$. Hence, $\VV_\alpha\models\varphi_\beta(X_\beta\cap\VV_\alpha)$, a contradiction. 
   Since $\NN^\kappa_{ia}\in\II^\kappa_{WC}\subseteq\II^\kappa_{wie}$, the above arguments show that $\NN^\kappa_{wc}\in\II^\kappa_{wie}$. 

  For the second statement, assume for a contradiction that $\kappa$ is the least weakly ineffable cardinal with the property that $\NN_{wie}^\kappa\in\II^\kappa_{wie}$. 
  Let $\vec{d}=\seq{d_\alpha}{\alpha\in\NN_{wie}^\kappa}$ be an $\NN_{wie}^\kappa$-list, and define $$A ~ = ~ \kappa\setminus\NN_{wie}^\kappa ~ = ~ \Set{\alpha<\kappa}{\textit{$\alpha$ is weakly ineffable}} ~ \notin ~ \II^\kappa_{wie}.$$ 
For every $\alpha\in A$, the fact that $\vec{d}\restriction\alpha$ is an $\NN_{wie}^\alpha$-list implies that there is  $D_\alpha\subseteq\alpha$ with the property that $\Set{\xi\in\NN_{wie}^\alpha}{d_\xi=D_\alpha\cap\xi}$ is an unbounded subset of $\alpha$. 
 But then, the sequence $\seq{D_\alpha}{\alpha\in A}$ is an $A$-list, and hence there is $D\subseteq\kappa$ such that $\Set{\alpha\in A}{D\cap\alpha=D_\alpha}$ is an unbounded subset of $\kappa$. 
 In this situation, the set $\Set{\alpha\in\NN_{wie}^\kappa}{D\cap\alpha=d_\alpha}$ is also unbounded  in $\kappa$. These computations show that the subset $N^\kappa_{wie}$ of $\kappa$ is weakly ineffable, contradicting our initial assumption. 
\end{proof}

\begin{lemma}
  If $\kappa$ is ineffable, then $\NN^\kappa_{wie}\in\II^\kappa_{ie}$ and $\NN_{ie}^\kappa\notin\II^\kappa_{ie}$.
\end{lemma}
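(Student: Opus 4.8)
The plan is to treat the two statements separately, reusing the two techniques from the preceding lemma: a coding-and-reflection argument for the containment $\NN^\kappa_{wie}\in\II^\kappa_{ie}$, and a minimality argument for the positivity $\NN^\kappa_{ie}\notin\II^\kappa_{ie}$.

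For the first statement I would split $\NN^\kappa_{wie}=\NN^\kappa_{wc}\cup B$, where $B=\Set{\alpha<\kappa}{\text{$\alpha$ is weakly compact but not weakly ineffable}}$; this is a genuine decomposition because weakly ineffable cardinals are weakly compact. The piece $\NN^\kappa_{wc}$ lies in $\II^\kappa_{wie}$ by the previous lemma (applicable since $\kappa$ is ineffable, hence weakly ineffable), and $\II^\kappa_{wie}\subseteq\II^\kappa_{ie}$ since every ineffable subset of $\kappa$ is weakly ineffable. It then remains to show that $B$ is not ineffable, and I would do this by contradiction. For each $\alpha\in B$ the cardinal $\alpha$ is inaccessible, so I fix a bad $\alpha$-list $\vec e^\alpha=\seq{e^\alpha_\xi}{\xi<\alpha}$ witnessing that $\alpha$ is not weakly ineffable, i.e.\ one for which $\Set{\xi<\alpha}{D\cap\xi=e^\alpha_\xi}$ is bounded in $\alpha$ for every $D\subseteq\alpha$. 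Fixing a bijection $\map{b}{\VV_\kappa}{\kappa}$ with $b[\VV_\alpha]=\alpha$ for all inaccessible $\alpha$, I code $\vec e^\alpha$ into a subset $d_\alpha\subseteq\alpha$ using G\"odel pairing, exactly as in the preceding proof, so that $D\cap\alpha=d_\alpha$ recovers $\vec e^\alpha$ whenever $\alpha$ is inaccessible. Applying the assumed ineffability of $B$ to the $B$-list $\seq{d_\alpha}{\alpha\in B}$ yields $D\subseteq\kappa$ with $H=\Set{\alpha\in B}{D\cap\alpha=d_\alpha}$ stationary; decoding $D$ on $H$ produces a single coherent $\kappa$-list $\vec e=\seq{e_\xi}{\xi<\kappa}$ with $\vec e\restriction\alpha=\vec e^\alpha$ for all $\alpha\in H$. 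Now I invoke the weak ineffability of $\kappa$ itself, applied to $\vec e$, to obtain $D^*\subseteq\kappa$ with $S^*=\Set{\xi<\kappa}{D^*\cap\xi=e_\xi}$ unbounded in $\kappa$. Since $\lim(S^*)$ is club and $H$ is stationary, I pick $\alpha\in H\cap\lim(S^*)$; for this $\alpha$ the set $D^*\cap\alpha$ satisfies $\Set{\xi<\alpha}{(D^*\cap\alpha)\cap\xi=e^\alpha_\xi}\supseteq S^*\cap\alpha$, which is unbounded in $\alpha$, directly contradicting the choice of $\vec e^\alpha$. Hence $B$, and therefore $\NN^\kappa_{wie}$, lies in $\II^\kappa_{ie}$.

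For the second statement I would run the minimality argument from the weakly ineffable case, replacing \emph{unbounded} by \emph{stationary} throughout. Assume for contradiction that $\kappa$ is the least ineffable cardinal with $\NN^\kappa_{ie}\in\II^\kappa_{ie}$, and set $A=\kappa\setminus\NN^\kappa_{ie}$, the ineffable-by-properness set of ineffable cardinals below $\kappa$. Given any $\NN^\kappa_{ie}$-list $\vec d$, minimality together with Theorem \ref{theorem:Ideals}.(\ref{item:Ideals:ie}) ensures that $\NN^\alpha_{ie}=\NN^\kappa_{ie}\cap\alpha$ is ineffable in $\alpha$ for each $\alpha\in A$, so $\vec d\restriction\NN^\alpha_{ie}$ coheres on a stationary set to some $D_\alpha\subseteq\alpha$; ineffability of $A$ applied to $\seq{D_\alpha}{\alpha\in A}$ then gives $D\subseteq\kappa$ with $H=\Set{\alpha\in A}{D\cap\alpha=D_\alpha}$ stationary. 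The final step is to verify that $E=\Set{\xi\in\NN^\kappa_{ie}}{D\cap\xi=d_\xi}$ is stationary: given any club $C$, any $\alpha\in H\cap\lim(C)$ (nonempty as $H$ is stationary) has $C\cap\alpha$ club in $\alpha$, hence meeting the stationary-in-$\alpha$ coherence set of $\vec d\restriction\NN^\alpha_{ie}$ against $D_\alpha=D\cap\alpha$, which produces a point of $E\cap C$. Thus $\NN^\kappa_{ie}$ is ineffable, contradicting $\NN^\kappa_{ie}\in\II^\kappa_{ie}$.

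I expect the main obstacle to be the bookkeeping in the first statement: verifying that the chosen coding is coherent across levels so that stationary agreement of the $d_\alpha$ translates into a genuine coherent global list $\vec e$, and then orchestrating the two distinct reflection steps (ineffability of $B$ to build $\vec e$, then weak ineffability of $\kappa$ to build $D^*$) so that the final reflection lands inside $H$ and violates non-weak-ineffability of a single $\alpha$. By contrast, the second statement should be essentially routine once the preceding lemma's template is in hand, the only delicate point being the preservation of stationarity, rather than mere unboundedness, through the double reflection.
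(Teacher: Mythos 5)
Your proposal is correct and follows essentially the same route as the paper: the same code-the-bad-lists-via-G\"odel-pairing argument with a double reflection (ineffability of the bad set to assemble a coherent $\kappa$-list, then (weak) ineffability of $\kappa$ to reflect into a point of the stationary agreement set) for the first statement, and the same least-counterexample argument with stationarity in place of unboundedness for the second. The only cosmetic differences are that you split off $\NN^\kappa_{wc}$ via the preceding lemma while the paper works directly with the inaccessible non-weakly-ineffable cardinals, and that you observe weak ineffability of $\kappa$ already suffices for the final reflection where the paper invokes full ineffability.
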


\begin{proof}
   First, let $A$ denote the set of all inaccessibles $\alpha<\kappa$ which are not weakly ineffable. 
   For every $\alpha\in A$, pick an $\alpha$-list $\seq{d^\alpha_\xi}{\xi<\alpha}$  witnessing that $\alpha$ is not weakly ineffable. Given $\alpha\in A$, define $$D_\alpha ~ = ~ \Set{\goedel{\xi}{\zeta}}{\xi<\alpha, ~ \zeta\in d^\alpha_\xi} ~ \subseteq ~ \alpha.$$
   
   Assume, for a contradiction, that $A$ is ineffable and pick $D\subseteq\kappa$ such that the set $S=\Set{\alpha\in A}{D\cap\alpha=D_\alpha}$ is stationary in $A$. 
 Then there is a unique $\kappa$-list $\seq{d_\xi}{\xi<\kappa}$ with $d_\xi=d_\xi^\alpha$ for all $\alpha\in S$ and $\xi<\alpha$. 
  Since $\kappa$ is ineffable, there is $E\subseteq\kappa$ with the property that the set $T=\Set{\xi<\kappa}{E\cap\xi=d_\xi}$ is stationary in $\kappa$. Pick $\alpha\in S\cap\Lim(T)\subseteq A$. If $\xi\in T\cap\alpha$, then $d^\alpha_\xi=d_\xi=E\cap\xi$. Since $T\cap\alpha$ is unbounded in $\alpha$, this shows that the set $\Set{\xi<\alpha}{E\cap\xi=d^\alpha_\xi}$ is unbounded in $\alpha$, contradicting the fact that $\seq{d^\alpha_\xi}{\xi<\alpha}$ witnesses that $\alpha$ is not weakly ineffable. 

  For the second statement, assume for a contradiction that $\kappa$ is the least ineffable cardinal for which $\NN_{ie}^\kappa\in\II^\kappa_{ie}$ holds. 
   Let $\seq{d_\alpha}{\alpha\in\NN_{ie}^\kappa}$ be an $\NN_{ie}^\kappa$-list, and let $A=\Set{\alpha<\kappa}{\textit{$\alpha$ is ineffable}}\notin\II^\kappa_{ie}$. 
    For every $\alpha\in A$, we find a set $D_\alpha\subseteq\alpha$ such that the set $\Set{\xi\in\NN_{ie}^\alpha}{D_\alpha\cap\xi=d_\xi}$ is stationary in $\alpha$.
    Then, the sequence $\seq{D_\alpha}{\alpha\in A}$ is an $A$-list, and hence there is $D\subseteq\kappa$ so that $S=\Set{\alpha\in A}{D\cap\alpha=D_\alpha}$ is a stationary subset of $\kappa$. 
    Let $C$ be a club subset of $\kappa$, and pick $\alpha\in\Lim(C)\cap S\subseteq A$. Then $\alpha$ is regular, $C\cap\alpha$ is a club in $\alpha$ and there is $\xi\in C\cap\NN_{ie}^\alpha$ with $d_\xi=D_\alpha\cap\xi=D\cap\xi$. This allows us to conclude that the set $\Set{\xi\in\NN_{ie}^\kappa}{D\cap\xi=d_\xi}$ is stationary in $\kappa$. 
    These arguments show that $\NN_{ie}^\kappa$ is ineffable, a contradiction. 
\end{proof}


\section{A formal notion of Ramsey-like cardinals}\label{section:ramseylike}

In this section, we generalize the $\alpha$-Ramsey cardinals from \cite{MR3800756} to the class of $\Psi$-$\alpha$-Ramsey cardinals, and verify analogous results for this larger class of large cardinal notions.  
We start by introducing a number of generalizations of notions from \cite{MR3800756}. 
In the later sections of our paper, we will consider a number of special cases of these fairly general concepts. Our generalizations will be based on games that are similar to those from \cite{MR3800756}, which however allow for quite general extra winning conditions $\Psi$. We will usually only require them to satisfy the property introduced in the next definition.

\begin{definition}
  We say that a first order $\IN$-formula $\Psi(v_0,v_1)$ \emph{remains true under restrictions} if $\Psi(X,F\cap X)$ holds whenever $\emptyset\neq X\subseteq M$, $F\subseteq M$ and $\Psi(M,F)$ holds. 
\end{definition}

\begin{definition}\label{generalfiltergames}
Given uncountable regular cardinals $\kappa<\theta$ with $\kappa=\kappa^{<\kappa}$, a limit ordinal $\gamma\leq\kappa^+$, an unbounded subset $A$ of $\kappa$ and a first order formula $\Psi(v_0,v_1)$, 
 we let $G\Psi_\gamma^\theta(A)$ denote the game of perfect information between two players, the {\sf Challenger} and the {\sf Judge}, who take turns to produce $\subseteq$-increasing sequences $\seq{M_\alpha}{\alpha<\gamma}$ and $\seq{F_\alpha}{\alpha<\gamma}$, such that the following holds for every $\alpha<\gamma$: 
\begin{enumerate} 
  \item At any stage $\alpha<\gamma$, the {\sf Challenger} plays a $\kappa$-model $M_\alpha\prec\HH{\theta}$ such that the set $A$ and the sequences $\seq{M_{\bar{\alpha}}}{\bar{\alpha}<\alpha}$ and $\seq{F_{\bar{\alpha}}}{\bar{\alpha}<\alpha}$ are contained in $M_\alpha$, and then the {\sf Judge} plays an $M_\alpha$-ultrafilter $F_\alpha$ on $\kappa$. 
  
  \item $A\in F_0$.
\end{enumerate}
 In the end, we let $M_\gamma=\bigcup_{\alpha<\gamma}M_\alpha$ and $F_\gamma=\bigcup_{\alpha<\gamma}F_\alpha$. The {\sf Judge} wins the run of the game 
if $F_\gamma$ is an $M_\gamma$-normal filter such that $\Psi(M_\gamma,F_\gamma)$ holds. 
  Otherwise, the {\sf Challenger} wins.
\end{definition}

%
 Note that if the {\sf Judge} ever plays a filter $F_\alpha$ that is not normal, then the {\sf Challenger} wins, for if $\Delta F_\alpha$ is non-stationary, then $F_\gamma$ cannot be $M_\gamma$-normal, for otherwise it has to contain $\Delta F_\alpha$ as an element. 
 On the other hand, if every $F_\alpha$ is $M_\alpha$-normal, then clearly also $F_\gamma$ is $M_\gamma$-normal.

\begin{definition}
  Let $\kappa$ be an uncountable cardinal with $\kappa=\kappa^{{<}\kappa}$, let $A$ be an unbounded subset of $\kappa$, let $\theta>\kappa$ be a regular cardinal, let $\gamma\leq\kappa^+$ be a limit ordinal, and let $\Psi(v_0,v_1)$ be a first order $\IN$-formula. 
  \begin{itemize} 
   \item $A$ has the \emph{$\Psi_\gamma^\theta$-filter property} if the {\sf Challenger} does not have a winning strategy in $G\Psi_\gamma^\theta(A)$. 
   
   \item $A$ has the \emph{$\Psi_\gamma^\forall$-filter property} if it has the $\Psi_\gamma^\vartheta$-filter property for all regular $\vartheta>\kappa$. 
  \end{itemize}
\end{definition}

%
 Extending notions from \cite{MR2830415} and from \cite{MR3800756}, we introduce a generalization of the notion of Ramseyness.

\begin{definition}\label{definition:ramseylike}
  Let $\kappa<\theta$ be uncountable regular cardinals, let $\alpha\leq\kappa$ be an  infinite regular cardinal, let $A$ be an unbounded subset of $\kappa$ and let $\Psi(v_0,v_1)$ be a first order $\IN$-formula. 
\begin{itemize} 
  \item $A$ is \emph{$\Psi_\alpha^\kappa$-Ramsey} if for every $x\subseteq\kappa$, there is a transitive weak $\kappa$-model $M$ closed under ${<}\alpha$-sequences and a uniform, $\kappa$-amenable $M$-normal $M$-ultrafilter $U$ on $\kappa$ such that $x\in M$, $A\in U$ and $\Psi(M,U)$ holds. 
  
  \item $A$ is \emph{$\Psi_\alpha^\theta$-Ramsey} if for every $x\in\HH{\theta}$, there is a weak $\kappa$-model $M\prec\HH{\theta}$ closed under ${<}\alpha$-sequences and a uniform, $\kappa$-amenable $M$-normal $M$-ultrafilter $U$ on $\kappa$ such that $x\in M$, $A\in U$ and $\Psi(M,U)$ holds.

  \item $A$ is \emph{$\Psi_\alpha^\forall$-Ramsey} if it is $\Psi_\alpha^\theta$-Ramsey for every regular cardinal $\theta>\kappa$. 
  
  \item If $\vartheta\in\{\kappa,\theta,\forall\}$, then $\kappa$ is a \emph{$\Psi_\alpha^\vartheta$-Ramsey cardinal} if $\kappa$ is $\Psi_\alpha^\vartheta$-Ramsey as a subset of itself. 
 \end{itemize}
\end{definition}

 The above definition of Ramsey-like cardinals fits well with the main topics of this paper: 
Given an ordinal $\alpha$ and a property $\Psi(M,U)$ of models $M$ and $M$-ultrafilters $U$, we may form a stronger property $\bar{\Psi}_\alpha(M,U)$ by conjuncting the properties that $M$ is closed under ${<}\alpha$-sequences and $U$ is uniform, $M$-normal and $\kappa$-amenable for $M$.
 Then Scheme \ref{schemeNoel} holds true for $\Psi^\kappa_\alpha$-Ramsey cardinals and the property $\bar{\Psi}_\alpha(M,U)$ and Scheme \ref{schemeKappa} holds true for $\Psi^\forall_\alpha$-Ramsey cardinals and the property $\bar{\Psi}_\alpha(M,U)$. 
 Moreover, Theorem \ref{theorem:filtervsramsey} will show that Scheme \ref{schemeSmall} holds true as well for $\Psi^\forall_\omega$-Ramsey cardinals and the property $\bar{\Psi}_\omega(M,U)$.

The above definition covers many instances of specific Ramsey-like cardinals that have already been defined in the set-theoretic literature. Let $\alpha\le\kappa$ be regular cardinals. 

\begin{itemize} 
  \item In Section \ref{section:completelyineffable}, we will show that a cardinal $\kappa$ is completely ineffable if and only if it is $\triv_\omega^\forall$-Ramsey, where $\triv(M,U)$ denotes the (trivial) property that $U=U$. 
  
  \item\cite[Definition 1.2]{MR2830415} A cardinal $\kappa$ is \emph{weakly Ramsey} if it is $\wf_\omega^\kappa$-Ramsey, where $\wf(M,U)$ denotes the property that the ultrapower $\Ult{M}{U}$ is well-founded. 
  
  \item\cite[Definition 2.11]{MR2830435} Given an ordinal $\beta\leq\omega_1$, a cardinal $\kappa$ is \emph{$\beta$-iterable} if it is $\wf\beta_\omega^\kappa$-Ramsey, where $\wf\beta(M,U)$ denotes the property that $U$ produces not only a well-founded ultrapower, but also $\beta$-many well-founded iterates of $M$. 
  
  \item\cite[Definition 4.5]{MR3800756} A cardinal $\kappa$ is \emph{super weakly Ramsey} if it is $\wf_\omega^{\kappa^+}$-Ramsey. 
  
  \item\cite[Definition 5.1]{MR3800756} A cardinal $\kappa$ is \emph{$\omega$-Ramsey} if it is $\wf_\alpha^\forall$-Ramsey. 
  
  \item\cite[Definition 4.11]{nielsen-welch} Given an ordinal $\beta\le\omega_1$, a cardinal $\kappa$ is $(\omega,\beta)$-Ramsey if it is $\wf\beta_\omega^\forall$-Ramsey. 
  
  \item\cite[Theorem 1.3]{MR2830415} A cardinal $\kappa$ is Ramsey if and only if it is $\cc_\omega^\kappa$-Ramsey, where $\cc(M,U)$ denotes the property that $U$ is countably complete. 

  \item\cite[Proof of Theorem 3.19]{mitchellforramsey} A cardinal $\kappa$ is \emph{weakly super Ramsey} if it is $\cc_\omega^{\kappa^+}$-Ramsey.
  
  \item Ineffably Ramsey cardinals were introduced by Baumgartner in \cite{MR0540770}. Adapting the above result on Ramsey cardinals, it will follow in Section \ref{section:ramsey} that a cardinal $\kappa$ is ineffably Ramsey if and only if it is $\scc_\omega^\kappa$-Ramsey, where $\scc(M,U)$ denotes the property that $U$ is stationary-complete. 
  
  \item In \cite[Definition 3.2]{MR1077260}, Feng introduced a hierarchy of Ramsey-like cardinals denoted as $\Pi_\beta$-Ramsey cardinals, for $\beta\in\Ord$, with $\Pi_0$-Ramsey cardinals being exactly Ramsey cardinals, and with $\Pi_1$-Ramsey cardinals being exactly ineffably Ramsey cardinals. All of these cardinals can be seen to fit into our hierarchy  of Ramsey-like cardinals. 

  \item\cite[Definition 5.1]{MR3800756} Given an uncountable regular cardinal $\alpha$, a cardinal $\kappa\geq\alpha$ is \emph{$\alpha$-Ramsey} if it is $\triv_\alpha^\forall$-Ramsey (or, equivalently, $\cc_\alpha^\forall$-Ramsey).  $\omega_1$-Ramsey cardinals were also called \emph{$\omega$-closed Ramsey} in \cite[Definition 2.6]{mitchellforramsey}.

  \item\cite[Definition 1.4]{MR2830415} A cardinal $\kappa$ is \emph{strongly Ramsey} if it is $\triv_\kappa^\kappa$-Ramsey (or, equivalently, $\cc_\kappa^\kappa$-Ramsey).

  \item\cite[Definition 1.5]{MR2830415} A cardinal $\kappa$ is \emph{super Ramsey} if it is $\triv_\kappa^{\kappa^+}$-Ramsey (or, equivalently, $\cc_\kappa^{\kappa^+}$-Ramsey). 
  

  \item\cite[Definition 2.7]{nielsen-welch} A cardinal $\kappa$ is a \emph{genuine $\alpha$-Ramsey} cardinal if $\kappa$ is $\infi_\alpha^\forall$-Ramsey, where $\infi(M,U)$ denotes the property that $U$ is genuine. 
  
  
  \item\cite[Definition 2.7]{nielsen-welch} A cardinal $\kappa$ is a \emph{normal $\alpha$-Ramsey} cardinal if $\kappa$ is $\delt_\alpha^\forall$-Ramsey, where $\delt(M,U)$ denotes the property that $U$ is normal. 

 \item A cardinal $\kappa$ is locally measurable if and only if it is $(\Psi_{ms})^\kappa_\omega$-Ramsey.  

 \item A cardinal $\kappa$ is measurable if and only if it is $(\Psi_{ms})^\forall_\alpha$-Ramsey for some (equivalently, for all) regular $\alpha\leq\kappa$. 
\end{itemize}

The following lemma is a straightforward generalization of \cite[Theorem 5.6]{MR3800756}.

\begin{lemma}\label{lemma:filtervsramsey}
  Let $\kappa<\theta$ be uncountable regular cardinals with $\kappa=\kappa^{{<}\kappa}$, let $A$ be an unbounded subset of $\kappa$, let $\gamma\leq\kappa$ be regular, and let $\Psi(v_0,v_1)$ be a first order $\IN$-formula. 
 Then, the following statements hold: 
\begin{enumerate} 
  \item If $A$ has the $\Psi_\gamma^\theta$-filter property, then $A$ is $\Psi_\gamma^\theta$-Ramsey. 

  \item If $\Psi$ remains true under restrictions and $\vartheta>2^{{<}\theta}$ is a regular cardinal, and $A$ is $\Psi_\gamma^{\vartheta}$-Ramsey, then $A$ has the $\Psi_\gamma^\theta$-filter property.
\end{enumerate}
\end{lemma}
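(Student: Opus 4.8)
The plan is to prove the two implications separately, in both cases exploiting the bookkeeping built into the game $G\Psi_\gamma^\theta(A)$: whenever the {\sf Challenger} reveals a model $M_\alpha$ and the {\sf Judge} answers with $F_\alpha$, the next model $M_{\alpha+1}$ is required to contain $F_\alpha$ as an element, and this is precisely what yields $\kappa$-amenability of the limit filter. Throughout I will use that $\gamma$, being a regular cardinal, is a limit ordinal, so $\alpha+1<\gamma$ for every $\alpha<\gamma$.

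For (1), I would fix $x\in\HH{\theta}$ and consider the partial strategy for the {\sf Challenger} that at each stage plays \emph{some} $\kappa$-model $M_\alpha\prec\HH{\theta}$ containing $x$, $A$ and the entire history. Since $A$ has the $\Psi_\gamma^\theta$-filter property, this strategy is not winning, so there is a run $\seq{M_\alpha,F_\alpha}{\alpha<\gamma}$ consistent with it in which the {\sf Judge} wins. Setting $M=M_\gamma=\bigcup_{\alpha<\gamma}M_\alpha$ and $U=F_\gamma=\bigcup_{\alpha<\gamma}F_\alpha$, I then check that this pair witnesses $\Psi_\gamma^\theta$-Ramseyness at $x$: the model $M\prec\HH{\theta}$ is an increasing elementary union containing $x$; it is closed under ${<}\gamma$-sequences because any such sequence has, by regularity of $\gamma$, its range inside a single $M_\alpha$, which is closed under ${<}\kappa$-sequences; $U$ is an $M$-ultrafilter since every $y\in M\cap\POT{\kappa}$ lies in some $M_\alpha$ and is measured by $F_\alpha$; $U$ is $M$-normal with $\Psi(M,U)$ by the winning condition, and hence (being a nonprincipal $M$-normal ultrafilter containing the unbounded set $A\in F_0$) is uniform; and $\kappa$-amenability holds because any $\seq{x_\beta}{\beta<\kappa}\in M$ lies in some $M_\alpha$, so that $\Set{\beta<\kappa}{x_\beta\in F_\alpha}$ is computed inside $M_{\alpha+1}$ (which contains $F_\alpha$) and coincides with $\Set{\beta<\kappa}{x_\beta\in U}$.

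For (2), I would assume toward a contradiction that the {\sf Challenger} has a winning strategy $\tau$ in $G\Psi_\gamma^\theta(A)$. Since positions and moves of the game are hereditarily of size at most $|\HH{\theta}|=2^{{<}\theta}$, the hypothesis $\vartheta>2^{{<}\theta}$ guarantees $\HH{\theta},\tau\in\HH{\vartheta}$. Applying $\Psi_\gamma^\vartheta$-Ramseyness to $x=\{\HH{\theta},\tau,A\}$ yields a weak $\kappa$-model $M^*\prec\HH{\vartheta}$ closed under ${<}\gamma$-sequences and a uniform, $\kappa$-amenable, $M^*$-normal $M^*$-ultrafilter $U^*$ with $\HH{\theta},\tau\in M^*$, $A\in U^*$ and $\Psi(M^*,U^*)$. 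I then simulate, inside $M^*$, a run of $G\Psi_\gamma^\theta(A)$ in which the {\sf Challenger} follows $\tau$ while the {\sf Judge} answers each $M_\alpha$ by $F_\alpha=U^*\cap M_\alpha$. Inductively every position stays in $M^*$ (by closure under ${<}\gamma$-sequences), so each $M_\alpha=\tau(\text{position})\in M^*$; moreover $M_\alpha\subseteq M^*$ since $M^*\models\anf{|M_\alpha|=\kappa}$ and $\kappa\subseteq M^*$, so $F_\alpha=U^*\cap M_\alpha$ is a legitimate $M_\alpha$-ultrafilter, and $A\in U^*\cap M_0=F_0$. Crucially, $\kappa$-amenability of $U^*$ shows $U^*\cap M_\alpha\in M^*$, keeping the {\sf Judge}'s move inside $M^*$ so the induction continues. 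Passing to unions gives $M_\gamma\subseteq M^*$ and $F_\gamma=U^*\cap M_\gamma$; this limit filter is $M_\gamma$-normal because any relevant sequence lies in $M_\gamma\subseteq M^*$ (with its diagonal intersection absolutely computed in the elementary union $M_\gamma$) and $U^*$ is $M^*$-normal, and $\Psi(M_\gamma,F_\gamma)$ holds because $\Psi$ remains true under restrictions applied to $X=M_\gamma\subseteq M^*$ and $F=U^*$. Thus the {\sf Judge} wins this run, contradicting that $\tau$ is winning.

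The main obstacle is the bookkeeping that keeps the simulated run of (2) genuinely inside $M^*$ and genuinely following $\tau$: this is exactly where $\kappa$-amenability of $U^*$ (to secure $F_\alpha=U^*\cap M_\alpha\in M^*$) and closure of $M^*$ under ${<}\gamma$-sequences (to secure each position in $M^*$) are both indispensable, dovetailing with the game rule from (1) that forces $M_{\alpha+1}\ni F_\alpha$. The cardinal arithmetic assumption $\vartheta>2^{{<}\theta}$ enters only to place $\tau$ in $\HH{\vartheta}$.
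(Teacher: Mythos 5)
Your proof is correct and follows essentially the same route as the paper's: for (1) you extract a run won by the {\sf Judge} from a non-winning {\sf Challenger} strategy and verify that the union model and filter inherit ${<}\gamma$-closure and $\kappa$-amenability from the game's bookkeeping (the paper delegates the amenability argument to the proof of its weakly compact lemma), and for (2) you refute a putative winning strategy by letting the {\sf Judge} answer with the restrictions $U^*\cap M_\alpha$ inside a $\Psi_\gamma^{\vartheta}$-Ramsey witness, using $\kappa$-amenability, ${<}\gamma$-closure of the witness model and the assumption that $\Psi$ remains true under restrictions, exactly as the paper does. The only deviation is cosmetic: you place the strategy $\tau$ directly into the witnessing model via the parameter $x$, whereas the paper first observes $\sigma\in\HH{\vartheta}$ and then obtains a winning strategy in $M$ by elementarity of $M\prec\HH{\vartheta}$.
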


\begin{proof}
  First, assume that $A$ has the $\Psi_\gamma^\theta$-filter property. Given $x\in\HH{\theta}$, let $\sigma$ be a any strategy for the {\sf Challenger} in the game $G\Psi_\gamma^\theta(A)$ that ensures that $x\in M_0$. 
Since, by our assumption, $\sigma$ cannot be a winning strategy for the {\sf Challenger}, it follows that there is a run $\langle\seq{M_\alpha}{\alpha<\gamma},\seq{F_\alpha}{\alpha<\gamma}\rangle$ of this game which the {\sf Judge} wins. 
Then, $M=\bigcup_{\alpha<\gamma}M_\alpha$ is closed under ${<}\gamma$-sequences, and $F_\gamma=\bigcup_{\alpha<\gamma}F_\alpha$ is a uniform $M$-normal $M$-ultrafilter such that $A\in F_\gamma$ and $\Psi(M,F_\gamma)$ holds. 
 %
 By the same argument as in the proof of Lemma  \ref{lemma:WCmodelsBoundedIdeal}, the filter $F_\gamma$ is $\kappa$-amenable for $M$, as desired. 

 Now, assume that $\vartheta>2^{{<}\theta}$ is regular, and that $A$ is $\Psi_\gamma^{\vartheta}$-Ramsey, as witnessed by $M\prec\HH{\vartheta}$ and $U$. 
 Assume for a contradiction that $A$ does not have the $\Psi_\gamma^\theta$-filter property. Then, there exists a winning strategy $\sigma\subseteq\HH{\theta}$ for the {\sf Challenger} in the game $G\Psi_\gamma^\theta(A)$. It follows that $\sigma\in\HH{\vartheta}$, and hence, by elementarity, such a winning strategy exists also in $M$. 
 But this is a contradiction, because the {\sf Judge} can obviously win any run of the game in $M$ by playing  suitable pieces of $U$, using that initial segments of the run of the game are contained in $M$, since $M$ is closed under ${<}\gamma$-sequences, and that $\Psi$ remains true under restrictions. 
\end{proof}

The next lemma is a generalization of {\cite[Lemma 3.3]{MR3800756}}, and shows, together with Lemma \ref{lemma:filtervsramsey}, that in many cases the $\Psi_\gamma^\forall$-Ramseyness of some cardinal $\kappa$ is in fact a local property -- namely it is equivalent to its $\Psi_\gamma^\theta$-Ramseyness for $\theta=(2^\kappa)^+$. We will need the following, which is a property that is shared by all $\Psi$'s considered in this paper, except for the case when $\Psi(M,U)\equiv\wf(M,U)$.

\begin{definition}
  A first order $\IN$-formula $\Psi(v_0,v_1)$ \emph{remains true under $\kappa$-restrictions} if $\Psi(X,F\cap X)$ holds whenever $\emptyset\neq\mathcal P(\kappa)^X\subseteq\mathcal P(\kappa)^M$ and $\Psi(M,F)$ holds. 
\end{definition}

\begin{lemma}\label{lemma:filterequivalence}
    Let $\kappa=\kappa^{<\kappa}$ be an uncountable cardinal, let $\gamma\leq\kappa^+$ be a limit ordinal, let $\theta>\kappa$ be regular, and let $\Psi(M,U)$ be a property that remains true under $\kappa$-restrictions. Then, an unbounded subset $A$ of $\kappa$ has the $\Psi_\gamma^\forall$-filter property if and only if it has the $\Psi_\gamma^\theta$-filter property.    %
\end{lemma}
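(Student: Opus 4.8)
The plan is to reduce the statement to the claim that \emph{for any two regular cardinals $\kappa<\theta<\vartheta$, the set $A$ has the $\Psi_\gamma^\theta$-filter property if and only if it has the $\Psi_\gamma^\vartheta$-filter property}; equivalently, that the {\sf Challenger} has a winning strategy in $G\Psi_\gamma^\theta(A)$ precisely when he has one in $G\Psi_\gamma^\vartheta(A)$. Granting this two-cardinal equivalence, the filter property no longer depends on its regular parameter, so the $\Psi_\gamma^\forall$-filter property (the filter property at \emph{every} regular cardinal above $\kappa$) coincides with the $\Psi_\gamma^\theta$-filter property at the single fixed $\theta$. The implication from $\Psi_\gamma^\forall$-filter property to $\Psi_\gamma^\theta$-filter property is of course trivial, so the content is the converse, for which I would use the equivalence once with $\vartheta>\theta$ and once with $\vartheta<\theta$.

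The observation driving both transfers is that all data relevant to the {\sf Judge}'s winning condition lives in the common part of the ambient structures. Since $\kappa^{{<}\kappa}=\kappa$, every subset of $\kappa$ lies in $\HH{\vartheta'}$ for every regular $\vartheta'>\kappa$, and every $\kappa$-sequence of subsets of $\kappa$ is coded by a single subset of $\kappa$. Hence, for any $\kappa$-model $M$, the set $\POT\kappa^M$, the collection of $M$-ultrafilters on $\kappa$, the $M$-normality of such a filter, and---by the hypothesis that $\Psi$ remains true under $\kappa$-restrictions---the truth of $\Psi(M,\cdot)$ are all determined by $\POT\kappa^M$ together with the filter, independently of which $\HH{}$ is used. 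I would also use $\kappa^{{<}\kappa}=\kappa$ in the form that every subset of $\HH{\vartheta'}$ of size at most $\kappa$ is contained in some $\kappa$-model $\prec\HH{\vartheta'}$, which is available at each stage $\alpha<\gamma$ since $\alpha<\kappa^+$ makes the play so far of size at most $\kappa$.

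To pass a winning {\sf Challenger} strategy $\sigma$ for $G\Psi_\gamma^\vartheta(A)$ \emph{down} to $G\Psi_\gamma^\theta(A)$, I would run $\sigma$ in a background $\vartheta$-game while playing the real $\theta$-game as follows: when $\sigma$ calls for a $\kappa$-model $N_\alpha\prec\HH{\vartheta}$, I would enlarge it to a $\kappa$-model $\tilde N_\alpha\prec\HH{\vartheta}$ containing $\theta$ and all previously played real models and filters, and then play the trace $M_\alpha=\tilde N_\alpha\cap\HH{\theta}$, which is a $\kappa$-model elementary in $\HH{\theta}$ precisely because $\theta\in\tilde N_\alpha$; when the {\sf Judge} answers with an $M_\alpha$-ultrafilter $F_\alpha$, I would feed $F_\alpha\cap N_\alpha$ back into $\sigma$, a legitimate $N_\alpha$-ultrafilter since $\POT\kappa^{N_\alpha}\subseteq\POT\kappa^{M_\alpha}$. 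The reverse transfer, \emph{up} from $G\Psi_\gamma^\theta(A)$ to $G\Psi_\gamma^\vartheta(A)$, is the mirror image: whenever the background strategy produces a small model $M_\alpha\prec\HH{\theta}$, I would play in the real $\vartheta$-game a $\kappa$-model $N_\alpha\prec\HH{\vartheta}$ extending $M_\alpha$ and the previous real moves, and feed the restriction $F_\alpha\cap M_\alpha$ of the {\sf Judge}'s answer back into the background game.

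In both transfers the background model always satisfies $\POT\kappa^{\mathrm{bg}}\subseteq\POT\kappa^{\mathrm{real}}$ and the background filter is exactly the restriction of the real filter to the background model. Assuming for contradiction that the real run is won by the {\sf Judge}, I would verify that the background run is then also won by the {\sf Judge}, contradicting that $\sigma$ is winning for the {\sf Challenger}: the $M$-normality of the real filter descends to its restriction because diagonal intersections of sequences coded by subsets of $\kappa$ are computed absolutely, and the truth of $\Psi$ descends because $\Psi$ remains true under $\kappa$-restrictions. This verification of the winning condition is the crux. The main obstacle is purely organizational but delicate: arranging the simulation so that the auxiliary models are genuine $\kappa$-models elementary in the correct $\HH{}$---which forces the enlargement step and the checks that $\tilde N_\alpha\cap\HH{\theta}$ is closed under ${<}\kappa$-sequences and elementary in $\HH{\theta}$---while keeping every translated play legal and preserving the powerset-of-$\kappa$ inclusions needed for the final descent argument.
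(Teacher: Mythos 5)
Your proposal is correct and follows essentially the same route as the paper's proof: a strategy simulation between the two games in which the {\sf Judge}'s real moves are fed back to the background strategy by restricting filters along an inclusion $\POT{\kappa}^{\mathrm{background}}\subseteq\POT{\kappa}^{\mathrm{real}}$, with the final winning condition descending because normality is determined by ($\kappa$-coded) sequences of subsets of $\kappa$ and because $\Psi$ remains true under $\kappa$-restrictions. The only difference is organizational: the paper runs one symmetric transfer between arbitrary regular $\theta_0,\theta_1>\kappa$, requiring of each simulated move $M_\alpha^*$ only that $M_\alpha^*\supseteq\POT{\kappa}\cap M_\alpha$, so your split into upward and downward directions and the enlargement-and-trace construction $M_\alpha=\tilde N_\alpha\cap\HH{\theta}$ are more machinery than is needed.
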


\begin{proof}
  Let $\theta_0$ and $\theta_1$ both be regular cardinals greater than $\kappa$, and assume that the {\sf Challenger} has a winning strategy $\sigma_0$ in the game $G\Psi_\gamma^{\theta_0}(A)$. We construct a winning strategy $\sigma_1$ for the {\sf Challenger} in the game $G\Psi_\gamma^{\theta_1}(A)$. Whenever the {\sf Challenger} would play $M_\alpha$ in a run of the game $G\Psi_\gamma^{\theta_0}(A)$ where he is following his winning strategy $\sigma_0$, then $\sigma_1$ shall tell him to play some $M_\alpha^*$ which is a valid move in the game $G\Psi_\gamma^{\theta_1}(A)$ such that $M_\alpha^*\supseteq\mathcal P(\kappa)\cap M_\alpha$. Every possible response $F_\alpha^*$ of the {\sf Judge} in the game $G\Psi_\gamma^{\theta_1}(A)$ induces a response $F_\alpha=F_\alpha^*\cap M_\alpha$ in the game $G\Psi_\gamma^{\theta_0}(A)$. We use this induced response together with the strategy $\sigma_0$ to obtain the next move of the {\sf Challenger} in the game $G\Psi_\gamma^{\theta_0}(A)$, and continue playing these two games in this way for $\gamma$-many steps. As the {\sf Challenger} is following a winning strategy in the game $G\Psi_\gamma^{\theta_0}(A)$, it follows that $F_\gamma$ is either not $M_\gamma$-normal or $\Psi(M_\gamma,F_\gamma)$ fails. But, using our assumptions, the same is the case for $M_\gamma^*=\bigcup_{\alpha<\gamma}M_\alpha^*$ and $F_\gamma^*=\bigcup_{\alpha<\gamma}F_\alpha^*$, showing that $\sigma_1$ is indeed a winning strategy.
\end{proof}

The following is now immediate from Lemma \ref{lemma:filtervsramsey} and Lemma \ref{lemma:filterequivalence}.

\begin{corollary}\label{corollary:RamseyLocal}
  Let $\kappa$ be an uncountable cardinal, let $\gamma\leq\kappa^+$, and let $\Psi(M,U)$ be a property that remains true under $\kappa$-restrictions. Then, an unbounded subset $A$ of $\kappa$ is $\Psi_\gamma^\forall$-Ramsey if and only if it is $\Psi_\gamma^\theta$-Ramsey for some regular cardinal $\theta>2^\kappa$.
\end{corollary}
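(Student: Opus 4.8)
The plan is to treat the two directions separately, with the forward direction immediate and the backward direction routed through the filter property. For the forward implication, if $A$ is $\Psi_\gamma^\forall$-Ramsey then by Definition \ref{definition:ramseylike} it is $\Psi_\gamma^\vartheta$-Ramsey for \emph{every} regular $\vartheta>\kappa$, and in particular for some regular $\theta>2^\kappa$; so nothing beyond unravelling the definition is needed here.

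For the backward implication, suppose $A$ is $\Psi_\gamma^\theta$-Ramsey for some fixed regular $\theta>2^\kappa$. First I would record that, since $\Psi$ remains true under $\kappa$-restrictions, it also remains true under restrictions: if $X\subseteq M$ then every element of $\mathcal P(\kappa)^X$ lies in $X\subseteq M$ and is seen to be a subset of $\kappa$ there as well, so $\mathcal P(\kappa)^X\subseteq\mathcal P(\kappa)^M$, and the $\kappa$-restriction hypothesis applies. This makes Lemma \ref{lemma:filtervsramsey} available. The key arithmetic point is that $2^{<\kappa^+}=2^\kappa$, so the hypothesis $\theta>2^\kappa$ reads as $\theta>2^{<\kappa^+}$. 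I then invoke Lemma \ref{lemma:filtervsramsey}.(2) with the base regular cardinal taken to be $\kappa^+$ and the larger cardinal taken to be $\theta$, concluding that $A$ has the $\Psi_\gamma^{\kappa^+}$-filter property.

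From here the proof is a short chain. Lemma \ref{lemma:filterequivalence}, applied with its ambient regular cardinal equal to $\kappa^+$, upgrades the $\Psi_\gamma^{\kappa^+}$-filter property to the $\Psi_\gamma^\forall$-filter property; that is, $A$ has the $\Psi_\gamma^\vartheta$-filter property for every regular $\vartheta>\kappa$. Finally, for each such $\vartheta$, Lemma \ref{lemma:filtervsramsey}.(1) converts the $\Psi_\gamma^\vartheta$-filter property back into $\Psi_\gamma^\vartheta$-Ramseyness; as $\vartheta$ ranges over all regular cardinals above $\kappa$, this is exactly the statement that $A$ is $\Psi_\gamma^\forall$-Ramsey.

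The argument is essentially bookkeeping, so the only points needing genuine care — and hence the main obstacle — are the cardinal-arithmetic threshold and the standing hypotheses of the cited lemmas. I would verify explicitly that $2^{<\kappa^+}=2^\kappa$ (true because $\kappa$ is the largest cardinal below $\kappa^+$, whence $\sup_{\mu<\kappa^+}2^\mu=2^\kappa$), confirm that the background assumption $\kappa=\kappa^{{<}\kappa}$ is in force so that Lemmas \ref{lemma:filtervsramsey} and \ref{lemma:filterequivalence} genuinely apply, and keep track of the admissible range of $\gamma$: the corollary is to be read under the hypotheses common to both lemmas, so that each application is legitimate.
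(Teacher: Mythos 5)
Your proof is correct and takes essentially the same route as the paper, which derives this corollary as immediate from Lemma \ref{lemma:filtervsramsey} and Lemma \ref{lemma:filterequivalence} — precisely your chain of applying Lemma \ref{lemma:filtervsramsey}.(2) with base cardinal $\kappa^+$ (via $2^{{<}\kappa^+}=2^\kappa$), then Lemma \ref{lemma:filterequivalence}, then Lemma \ref{lemma:filtervsramsey}.(1). Your supplementary checks — that remaining true under $\kappa$-restrictions implies remaining true under restrictions for the models at hand, and that the standing hypotheses $\kappa=\kappa^{{<}\kappa}$ and the admissible range of $\gamma$ must be in force for the cited lemmas — correctly fill in exactly the details the paper leaves implicit.
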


The next result immediately yields Theorem \ref{theorem:SchemesSummary}.(\ref{item:schemeSmall:wR}) and (\ref{item:schemeSmall:nRobvious}), and also justifies the entries for $\triv_\omega^\kappa$-Ramsey, $\infi_\omega^\kappa$-Ramsey, $\delt_\omega^\kappa$-Ramsey, $\cc_\omega^\forall$-Ramsey, and the final entry for $\Delta_\omega^\forall$-Ramsey cardinals in Table \ref{table:schemeKappa}.  We will show in Section \ref{section:gnramsey} that the notions of $\Delta_\omega^\forall$-Ramseyness, $\infty_\omega^\forall$-Ramseyness and $\scc_\omega^\forall$-Ramseyness coincide.

\begin{theorem}\label{theorem:filtervsramsey}
  Let $\kappa$ be an uncountable cardinal, let $A$ be an unbounded subset of $\kappa$, let $\gamma\leq\kappa$ be a regular cardinal, and let $\Psi(v_0,v_1)$ be a first order $\IN$-formula that remains true under restrictions. 
 Then, the following statements are equivalent for all $\gamma\leq\lambda\leq\kappa$ with $\lambda^{{<}\gamma}=\lambda$: 
  \begin{enumerate} 
    \item $A$ is $\Psi_\gamma^\forall$-Ramsey. 

    \item For any regular cardinal $\theta>\kappa$ and many $(\lambda,\kappa)$-models $M\prec\HH{\theta}$ closed under ${<}\gamma$-sequences, there exists a uniform, $\kappa$-amenable, $M$-normal $M$-ultrafilter $U$ on $\kappa$ such that $A\in U$ and $\Psi(M,U)$ holds. 

    \item For any regular cardinal $\theta>\kappa$, and many $(\lambda,\kappa)$-models $M\prec\HH{\theta}$ closed under ${<}\gamma$-sequences, there exists a $\kappa$-powerset preserving $\kappa$-embedding $\map{j}{M}{\langle N,\IN_N\rangle}$ such that $\kappa^N\IN_N j(A)$ and $\Psi(M,U_j)$ holds. 
  \end{enumerate}
\end{theorem}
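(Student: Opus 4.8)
The plan is to run the equivalences through the cycle $(1)\Rightarrow(2)\Leftrightarrow(3)\Rightarrow(1)$, after recording the key observation that statement $(1)$ is \emph{literally} statement $(2)$ in the case $\lambda=\kappa$: unfolding the definition of $\Psi_\gamma^\forall$-Ramseyness, it asserts exactly that for every regular $\theta>\kappa$ and for many weak $\kappa$-models (i.e.\ $(\kappa,\kappa)$-models) $M\prec\HH{\theta}$ closed under ${<}\gamma$-sequences there is a suitable $U$. Thus the genuine content of the theorem is that the cardinality of the models is irrelevant, provided $\lambda^{{<}\gamma}=\lambda$. For $(2)\Leftrightarrow(3)$ I would argue model by model, so that the \anf{many models} quantifier is automatically preserved. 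Fixing a $(\lambda,\kappa)$-model $M\prec\HH{\theta}$ closed under ${<}\gamma$-sequences, the conjunction \anf{$U$ is uniform, $\kappa$-amenable, $M$-normal, $A\in U$, and $\Psi(M,U)$} $\kappa$-corresponds to \anf{$\crit{j}=\kappa$, $j$ is a $\kappa$-powerset preserving $\kappa$-embedding, $\kappa^N\IN_N j(A)$, and $\Psi(M,U_j)$}. This follows by combining Corollary \ref{corollary:correspondence1} (for $\kappa$-amenability together with $M$-normality), Corollary \ref{corollary:correspondence2} (for the clause $A\in U$), and Lemma \ref{lemma:general} (for $\Psi$), using that a $\kappa$-amenable $M$-normal $M$-ultrafilter is non-principal, hence contains all final segments of $\kappa$ in $M$, so that uniformity and \anf{containing all final segments} coincide here, and that under the correspondence $U=U_{j_U}$.

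For $(1)\Rightarrow(2)$, that is, passing from size $\kappa$ down to size $\lambda$, I would use a restriction argument. Given $x\in\HH{\theta}$, statement $(1)$ yields a weak $\kappa$-model $M'\prec\HH{\theta}$ with $x\in M'$ and a witnessing ultrafilter $U'$. Using $\lambda^{{<}\gamma}=\lambda$, I would build a $(\lambda,\kappa)$-model $\bar M$ with $\langle\bar M,\bar U\rangle\prec\langle M',U'\rangle$, $x,\lambda+1,\kappa\in\bar M$, closed under ${<}\gamma$-sequences, as the continuous union of a chain of length $\gamma$ of size-$\lambda$ elementary submodels, each absorbing all the ${<}\gamma$-sequences of the previous one (of which there are only $\lambda^{{<}\gamma}=\lambda$ many), and set $\bar U=U'\cap\bar M$. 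Elementarity transfers uniformity, $\kappa$-amenability, $M$-normality, and $A\in U$, since each of these is a first order property of the two-sorted structure $\langle M,U\rangle$; and $\Psi(\bar M,\bar U)$ holds because $\Psi$ remains true under restrictions.

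The hard direction is $(2)\Rightarrow(1)$, where the size-$\lambda$ witnesses must be assembled into a size-$\kappa$ witness. The route I would take is through the filter games of Definition \ref{generalfiltergames}: by Lemma \ref{lemma:filtervsramsey}, together with the hypothesis that $\Psi$ remains true under restrictions, $A$ is $\Psi_\gamma^\forall$-Ramsey if and only if $A$ has the $\Psi_\gamma^\theta$-filter property for every regular $\theta>\kappa$. Hence it suffices to show that $(2)$ implies that the \textsf{Challenger} has no winning strategy in $G\Psi_\gamma^\theta(A)$. Given a purported \textsf{Challenger} strategy, I would produce a run that the \textsf{Judge} wins by a recursion of length $\gamma\le\kappa$: the models $M_\alpha$ dictated by the strategy are genuine $\kappa$-models, so the \textsf{Judge} must answer each with an $M_\alpha$-ultrafilter $F_\alpha$ extending all previous answers, steering the increasing union toward a final pair $(M_\gamma,F_\gamma)$ that is $M_\gamma$-normal and satisfies $\Psi(M_\gamma,F_\gamma)$. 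Here it is crucial that, in the game, normality and $\Psi$ are only tested \emph{at the limit}, so the intermediate $F_\alpha$ need merely be coherent $M_\alpha$-ultrafilters; coherence is then built into the rules, and $\kappa$-amenability guarantees that the relevant pieces of the filter are captured inside the models, exactly as in the proofs of Lemma \ref{lemma:WCmodelsBoundedIdeal} and Lemma \ref{lemma:filtervsramsey}.

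The main obstacle is precisely this last step: bridging the size gap between the size-$\lambda$ data furnished by $(2)$ and the size-$\kappa$ models occurring in the game, while securing $\Psi$ at the limit. Since $\Psi$ is only assumed to survive restrictions (downward), one cannot simply read $\Psi(M_\gamma,F_\gamma)$ off the $\Psi$-goodness of the small pieces; instead the construction must be arranged so that the limit structure $(M_\gamma,F_\gamma)$ sits \emph{inside} a good witness provided by $(2)$ at a sufficiently large parameter $\vartheta>2^{{<}\theta}$, so that $\Psi(M_\gamma,F_\gamma)$ is inherited by restriction from the witness. Making the \textsf{Judge}'s answers both legal $M_\alpha$-ultrafilters on the full size-$\kappa$ models and simultaneously restrictions of a single such witness is the technical heart of the argument, and is where the regularity of $\gamma$, the closure of the models under ${<}\gamma$-sequences, the $\kappa$-amenability of the filters, and the preservation of $\Psi$ under restrictions must all be combined; once the filter property is established, Lemma \ref{lemma:filtervsramsey} returns $\Psi_\gamma^\theta$-Ramseyness for all regular $\theta>\kappa$, which is statement $(1)$.
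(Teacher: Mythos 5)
Your global architecture coincides with the paper's: the observation that (1) is literally (2) at $\lambda=\kappa$, the model-by-model translation between (2) and (3) via the correspondence machinery (the paper cites Proposition \ref{proposition:correspondence1}.(5), Corollary \ref{corollary:correspondence2} and Lemma \ref{lemma:general}, which is what your combination amounts to), and the implication from (1) to (2) by taking a ${<}\gamma$-closed elementary substructure $\langle\bar M,\bar U\rangle\prec\langle M',U'\rangle$ of size $\lambda$ and invoking restriction-invariance of $\Psi$ are all exactly the paper's steps; your chain construction using $\lambda^{{<}\gamma}=\lambda$ correctly fills in the one line the paper leaves implicit there. The paper also routes (2)$\Rightarrow$(1) through the $\Psi_\gamma^\theta$-filter property and Lemma \ref{lemma:filtervsramsey}, as you propose. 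One smaller omission first: the theorem assumes only that $\kappa$ is uncountable, while Definition \ref{generalfiltergames} and Lemma \ref{lemma:filtervsramsey} presuppose $\kappa=\kappa^{{<}\kappa}$; the paper secures this by noting that Lemma \ref{lemma:inaccessible2} makes $\kappa$ inaccessible from (3), a step your proposal silently skips before invoking the game machinery.

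The genuine gap is the hard direction itself: you do not prove it, you explicitly declare the decisive step (defeating a {\sf Challenger} strategy using only the size-$\lambda$ data from (2)) to be ``the technical heart'' and leave it open, and moreover the repair you gesture at cannot work. You propose to arrange matters so that the limit pair $(M_\gamma,F_\gamma)$ ``sits inside'' a witness $(M,U)$ furnished by (2) at some $\vartheta>2^{{<}\theta}$, so that $\Psi(M_\gamma,F_\gamma)$ is inherited by restriction; but witnesses from (2) have cardinality $\lambda<\kappa$, whereas $M_\gamma$ contains the {\sf Challenger}'s first move $M_0$, a genuine $\kappa$-model, as a subset, so $M_\gamma\nsubseteq M$ and $(M_\gamma,F_\gamma)$ is never a restriction of $(M,U)$ in the sense of the paper's definition. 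The paper's actual proof is the observation that the argument for Lemma \ref{lemma:filtervsramsey}.(2) nowhere uses $\betrag{M}=\kappa$, only $\gamma+1\subseteq M$ and closure under ${<}\gamma$-sequences: using ``many models'' one gets $\sigma\in M$, the positions of the run lie in $M$ by ${<}\gamma$-closure, and the {\sf Judge}'s answers are not literal restrictions of $U$ but ``suitable pieces of $U$'' available \emph{inside} $M$ --- for each Challenger model $M_\alpha\in M$, fixing an enumeration $\seq{x_\xi}{\xi<\kappa}\in M$ of $\POT{\kappa}\cap M_\alpha$, $\kappa$-amenability yields an element of $M$ coding the trace of $U$ along it, and any first-order failure of the resulting $F_\alpha$ (ultrafilter-hood, coherence, normality) would reflect via $M\prec\HH{\vartheta}$ to indices in $M\cap\kappa$, where $F_\alpha$ agrees with the genuine $M$-ultrafilter $U$. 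This idea --- that the {\sf Judge}'s moves are elements of $M$ supplied by amenability rather than restrictions of $U$ --- is exactly what your sketch lacks, and without it (2)$\Rightarrow$(1) for $\lambda<\kappa$ remains unproved in your proposal.
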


\begin{proof}
   The implication from (1) to (2) is trivial in case $\lambda=\kappa$. Given $\lambda<\kappa$, pick a $(\lambda,\kappa)$-model $\langle\bar M,\bar U\rangle\prec\langle M,U\rangle$ closed under ${<\gamma}$-sequences and containing $x$ and $A$ as elements. 
 Then, by elementarity, and since $\Psi$ remains true under restrictions, $\langle\bar M,\bar U\rangle$ is as desired. 
  The equivalence between (2) and (3) follows from Proposition \ref{proposition:correspondence1}.(5), Corollary \ref{corollary:correspondence2}, and Lemma \ref{lemma:general}. 
 Next, note that Lemma \ref{lemma:inaccessible2} shows that (3) implies that $\kappa$ is inaccessible. 
  The implication from (2) to (1) is again trivial in case $\lambda=\kappa$. For smaller $\lambda$, note that the size of $M$ did not matter in the proof of Lemma \ref{lemma:filtervsramsey}.(2), as long as $\gamma+1\subseteq M$. 
 This shows that (2) implies $A$ to have the $\Psi_\gamma^\forall$-filter property. Applying Lemma \ref{lemma:filtervsramsey}.(1) then yields $A$ to be $\Psi_\gamma^\forall$-Ramsey.
\end{proof}

Let us now introduce ideals that are canonically induced by our Ramsey-like cardinals.

\begin{definition}\label{definition:ramseyidealtheta}
 Let $\Psi(v_0,v_1)$ be a first order $\IN$-formula and let $\kappa$ be a $\Psi_\alpha^\theta$-Ramsey cardinal with $\theta\geq\kappa$ regular and $\alpha\leq\kappa$ regular and infinite. 
 We define the \emph{$\Psi_\alpha^\theta$-Ramsey ideal on $\kappa$} to be the set $$\II\Psi_\alpha^\theta(\kappa) ~ = ~ \Set{A\subseteq\kappa}{\textit{$A$ is not $\Psi_\alpha^\theta$-Ramsey}}.$$ 
\end{definition}

 If $\theta=\kappa$, the above ideals are particular instances of the ideals defined in Definition \ref{definition:Ideals}.(\ref{definition:Ideals-KappaSizedNonelem}): 
 Given an ordinal $\alpha$ and a property $\Psi(M,U)$ of models $M$ and $M$-ultrafilters $U$, if $\bar{\Psi}_\alpha(M,U)$ is the induced property defined in the discussion following Definition \ref{definition:ramseylike}, then $\II^\kappa_{\bar{\Psi}_\alpha}=\II\Psi^\kappa_\alpha(\kappa)$ holds for every $\Psi^\kappa_\alpha$-Ramsey cardinal $\kappa$. In particular, the discussion following Definition \ref{definition:Ideals} and Lemma \ref{lemma:AllIdealsNormal} show that these ideals are proper and normal.
 Similarly, if we further strengthen the property $\bar{\Psi}_\alpha(M,U)$ to obtain a property $\bar{\Psi}^+_\alpha(M,U)$ that also demands the model $M$ to be an elementary submodel of $\HH{\kappa^+}$, then $\II^\kappa_{\bar{\Psi}^+_\alpha}=\II\Psi^{\kappa^+}_\alpha(\kappa)$ holds for every $\Psi^{\kappa^+}_\alpha$-Ramsey cardinal $\kappa$, and hence these ideals are also proper and normal.

\begin{proposition}\label{proposition:compareideals}
  Let $\Psi(v_0,v_1)$ and $\Omega(v_0,v_1)$ be first order $\IN$-formulas that remain true under restrictions, such that $\Omega$ implies $\Psi$, let $\alpha\leq\beta\leq\kappa$ be regular infinite cardinals, let $\vartheta\geq\theta\ge\kappa$ be regular cardinals, and let $\kappa$ be an $\Omega_\beta^{\vartheta}$-Ramsey cardinal. 
 Then, $\II\Psi_\alpha^{\theta}(\kappa)\subseteq\II\Omega_\beta^{\vartheta}(\kappa)$. 
\end{proposition}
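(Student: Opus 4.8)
The plan is to prove the contrapositive at the level of Ramseyness. Unfolding Definition \ref{definition:ramseyidealtheta}, the asserted inclusion $\II\Psi_\alpha^\theta(\kappa)\subseteq\II\Omega_\beta^\vartheta(\kappa)$ is equivalent to the implication: \emph{if $A\subseteq\kappa$ is $\Omega_\beta^\vartheta$-Ramsey, then $A$ is $\Psi_\alpha^\theta$-Ramsey}. So I would fix such an $A$ together with an arbitrary witness $x$ (a subset of $\kappa$ when $\theta=\kappa$, an element of $\HH{\theta}$ otherwise) for which the defining clause of $\Psi_\alpha^\theta$-Ramseyness must be verified, and then manufacture the required witnessing model and ultrafilter out of a witness for $\Omega_\beta^\vartheta$-Ramseyness. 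There are three independent weakenings to exploit: since $\Omega$ implies $\Psi$, I may relax the filter condition; since $\alpha\leq\beta$, a model closed under ${<}\beta$-sequences is in particular closed under ${<}\alpha$-sequences; and since $\vartheta\geq\theta$, I must pass from the larger ambient structure down to the smaller one. Only this last passage requires real work, and it naturally splits according to the regime of $\theta$.

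Concretely, I would first apply $\Omega_\beta^\vartheta$-Ramseyness to a parameter coding $x$ (and, in the case $\kappa<\theta<\vartheta$, also coding $\HH{\theta}$) to obtain a model $M'$ closed under ${<}\beta$-sequences and a uniform, $\kappa$-amenable, $M'$-normal $M'$-ultrafilter $U'$ on $\kappa$ with $A\in U'$ and $\Omega(M',U')$, whence $\Psi(M',U')$. If $\theta=\vartheta$ there is nothing to do and I take $M=M'$, $U=U'$. If $\kappa<\theta<\vartheta$ I set $M=M'\cap\HH{\theta}$; since $\HH{\theta}\in M'\prec\HH{\vartheta}$ this is an elementary submodel of $\HH{\theta}$, it remains a weak $\kappa$-model closed under ${<}\alpha$-sequences, and the key computation is that $\kappa<\theta$ forces $\POT{\kappa}\subseteq\HH{\theta}$, so that $M\cap\POT{\kappa}=M'\cap\POT{\kappa}$ and $U'\subseteq M$. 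Hence $U=U'$ is literally the same filter, and uniformity, $\kappa$-amenability, $M$-normality and $A\in U$ all transfer verbatim, as these predicates only mention subsets of $\kappa$ and $\kappa$-sequences of such, which $M$ and $M'$ share. Finally, if $\theta=\kappa<\vartheta$ I would instead take the transitive collapse $\map{\pi}{M'}{M}$; because $(\kappa+1)\subseteq M'$, the map $\pi$ fixes every ordinal $\leq\kappa$ and every subset of $\kappa$, so again $M\cap\POT{\kappa}=M'\cap\POT{\kappa}$ and $U=\pi[U']=U'$, while closure under ${<}\alpha$-sequences survives the collapse.

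It then remains to see that $\Psi(M,U)$ holds, and this is where I expect the main obstacle to lie. In the two cases with $M\subseteq M'$ (namely $\theta=\vartheta$ and $\kappa<\theta<\vartheta$) this is exactly an instance of $\Psi$ \emph{remaining true under restrictions}, applied with ambient model $M'$, subset $X=M$, and $F=U'$: from $\Psi(M',U')$ and $U'\cap M=U'=U$ one reads off $\Psi(M,U)$. The genuinely delicate point is the transitive-collapse case $\theta=\kappa<\vartheta$, in which $M$ is \emph{not} a subset of $M'$, so that ``remains true under restrictions'' does not apply directly. Here I would argue instead that $\pi$ is an $\in$-isomorphism of $\langle M',\in\rangle$ onto $\langle M,\in\rangle$ that fixes $\kappa$ and every subset of $\kappa$ and carries $U'$ to $U$, so that $M$-ultrafilterhood, uniformity, $\kappa$-amenability, $M$-normality, and the property $\Psi$ itself all depend only on the isomorphism type of the structure $\langle M,\in,U,\kappa\rangle$ together with its (shared) subsets of $\kappa$, and are therefore preserved; combined with $\Omega\Rightarrow\Psi$ this yields $\Psi(M,U)$. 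In each case the resulting pair $(M,U)$ witnesses the relevant instance of $\Psi_\alpha^\theta$-Ramseyness for $A$, which completes the proof. The only subtlety worth isolating is thus the verification that passing to the collapse preserves $\Psi$, which is why the restrictability hypothesis on $\Psi$ (rather than an arbitrary first-order formula) is essential.
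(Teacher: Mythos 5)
Your proposal is correct and follows the same basic route as the paper: argue the contrapositive, extract an $\Omega_\beta^\vartheta$-Ramsey witness $(M',U')$ for a parameter coding $x$ (and $\HH{\theta}$ when $\kappa<\theta<\vartheta$), and cut it down using $\Omega\Rightarrow\Psi$, $\alpha\leq\beta$, and restrictability; in the regime $\kappa<\theta\leq\vartheta$, your passage to $M=M'\cap\HH{\theta}$ via $\POT{\kappa}\subseteq\HH{\theta}$ is exactly the paper's argument. The one point of divergence is the case $\theta=\kappa<\vartheta$: the paper simply asserts that $M'\cap\HH{\kappa^+}$ together with $U'$ witnesses $A\notin\II\Psi_\alpha^{\kappa}(\kappa)$, glossing over the fact that Definition \ref{definition:ramseylike} demands a \emph{transitive} weak $\kappa$-model and $M'\cap\HH{\kappa^+}$ need not be transitive when $M'\prec\HH{\vartheta}$ is non-transitive; you instead pass to the transitive collapse, which is the honest way to manufacture the required model, and you correctly note that the collapse fixes $\kappa+1$ and every subset of $\kappa$ in $M'$, so that $U=U'$ and uniformity, $\kappa$-amenability and $M$-normality transfer. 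The caveat — which you rightly isolate as the delicate step, but do not fully discharge — is the preservation of $\Psi$ itself: \anf{remains true under restrictions} only yields $\Psi(X,F\cap X)$ for $X\subseteq M'$, and it does not formally imply that $\Psi$ is invariant under $\in$-isomorphisms fixing $\POT{\kappa}$, so your appeal to \anf{isomorphism type} is an additional hypothesis rather than a consequence of restrictability. That said, this invariance does hold for every $\Psi$ the paper actually applies the proposition to ($\triv$, $\wf$, $\cc$, $\scc$, $\infi$, $\delt$, $\Psi_{ms}$), and the paper's own one-line treatment of the $\theta=\kappa$ case elides the very same issue; so your version is, if anything, the more explicit of the two, trading the paper's silent identification of $M'\cap\HH{\kappa^+}$ with a transitive model for an explicitly stated (if unproved) invariance assumption on $\Psi$.
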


\begin{proof}
  Assume that $A\not\in\II\Omega_\beta^{\vartheta}(\kappa)$. Then, for any $x\in\HH{\theta}\cup\POT{\kappa}$, there is a weak $\kappa$-model $M$, elementary in $\HH{\vartheta}$ in case $\vartheta>\kappa$, and transitive in case $\vartheta=\kappa$, that is closed under ${<}\beta$-sequences, with $x\in M$, with $\theta\in M$ in case $\theta<\vartheta$, and with a uniform, $\kappa$-amenable, $M$-normal $M$-ultrafilter $U$ on $\kappa$ with $A\in U$, such that $\Omega(M,U)$ holds. 
 But then, using that $\Omega$ implies $\Psi$, which remains true under restrictions, either $M\cap \HH{\theta}$ (in case $\theta>\kappa$) or $M\cap\HH{\kappa^+}$ (in case $\theta=\kappa$) witnesses, together with $U$, that $A\not\in\II\Psi_\alpha^{\theta}(\kappa)$. 
\end{proof}

If $\kappa$ is a $\Psi_\alpha^\forall$-Ramsey cardinal, it follows by a trivial cardinality argument that the ideals $\II\Psi_\alpha^\theta$ on $\kappa$ stabilize for sufficiently large $\theta$.\footnote{In general, we do not know of any way to find a non-trivial bound on what a sufficiently large $\theta$ would be relative to $\kappa$.}  
 We can thus make the following definition, that corresponds to Definition \ref{definition:Ideals}.(\ref{definition:Ideals-KappaSizedElem}).

\begin{definition}\label{definition:ramseyideal}
  Let $\Psi(v_0,v_1)$ be a first order $\IN$-formula that remains true under restrictions and let $\kappa$ be a $\Psi_\alpha^\forall$-Ramsey cardinal with $\alpha\leq\kappa$ regular and infinite. We define the \emph{$\Psi_\alpha^\forall$-Ramsey ideal on $\kappa$} to be the set  $$\II\Psi_\alpha^\forall(\kappa) ~ = ~ \bigcup\Set{\II\Psi_\alpha^\theta(\kappa)}{\textit{$\theta>\kappa$ regular}}.$$
\end{definition}

 Given an ordinal $\alpha$ and a property $\Psi(M,U)$ of models $M$ and $M$-ultrafilters $U$, if $\bar{\Psi}_\alpha(M,U)$ is the induced property defined in the discussion following Definition \ref{definition:ramseylike}, then the above remarks directly show that $\II^\kappa_{{\prec}\bar{\Psi}_\alpha}=\II\Psi^\forall_\alpha(\kappa)$ holds for all $\Psi^\forall_\alpha$-Ramsey cardinals $\kappa$. In particular, these ideals are normal and proper. 
 %
 %
 In addition, Proposition \ref{proposition:compareideals} shows that, for properties $\Psi$ and $\Omega$ that remain true under restrictions such that $\Omega$ implies $\Psi$, and for regular infinite cardinals $\alpha\leq\beta\leq\kappa$, if $\kappa$ is $\Omega_\beta^\forall$-Ramsey, then $\II\Psi_\alpha^\forall(\kappa)\subseteq\II\Omega_\beta^\forall(\kappa)$.

 In the remainder of this section, we prove results concerning the relations of the ideals produced by Definition \ref{definition:ramseyidealtheta}  and Definition \ref{definition:ramseyideal}. 
The following sets will be central for this analysis. 
 Given regular cardinals $\alpha<\kappa$ and a first order $\IN$-formula $\Psi(v_0,v_1)$, we make the following definitions:
 \begin{itemize}
   \item $\NN\Psi^\kappa_\alpha(\kappa) =  \Set{\gamma\in(\alpha,\kappa)}{\textit{$\gamma$ is not a $\Psi_\alpha^{\gamma}$-Ramsey cardinal}}$. 

     \item $\NN\Psi^\kappa_\kappa(\kappa) =  \Set{\gamma<\kappa}{\textit{$\gamma$ is not a $\Psi_\gamma^{\gamma}$-Ramsey cardinal}}$. 

  \item $\NN\Psi^{\kappa^+}_\alpha(\kappa) ~ = ~ \Set{\gamma\in(\alpha,\kappa)}{\textit{$\gamma$ is not a $\Psi_\alpha^{\gamma^+}$-Ramsey cardinal}}$. 

 \item $\NN\Psi^{\kappa^+}_\kappa(\kappa) ~ = ~ \Set{\gamma<\kappa}{\textit{$\gamma$ is not a $\Psi_\gamma^{\gamma^+}$-Ramsey cardinal}}$.   
 \end{itemize}

The following lemmas now show that under mild assumptions on the formula $\Psi$, the $\Psi_\alpha^\kappa$-Ramsey, $\Psi_\alpha^{\kappa^+}$-Ramsey and $\Psi_\alpha^\forall$-Ramsey cardinals are strictly increasing in terms of consistency strength, thus strengthening and generalizing {\cite[Proposition 5.2 and Proposition 5.3]{MR3800756}} and {\cite[Theorem 3.14]{MR2830415}}. 
 They also show that if $\kappa$ is a $\Psi_\alpha^\forall$-Ramsey cardinal, then $\II\Psi_\alpha^\forall(\kappa)\supsetneq\II\Psi_\alpha^\kappa(\kappa)$.

\begin{lemma}\label{lemma:notinkapparamseylikeideal}
 Let $\Psi(v_0,v_1)$ be a first order $\IN$-formula that remains true under restrictions, let $\alpha$ be a regular cardinal, and let $\kappa\ge\alpha$ be a $\Psi_\alpha^{\kappa^+}$-Ramsey cardinal such that $\Psi$ is absolute between $\VV$ and $\HH{\kappa^+}$. Then, the following statements hold true.
  \begin{enumerate} 
      \item
$\NN\Psi^\kappa_\alpha(\kappa)\in\II\Psi_\alpha^{\kappa^+}(\kappa)$. 

  \item $\NN\Psi^{\kappa^+}_\alpha(\kappa)\notin\II\Psi_\alpha^{\kappa}(\kappa)$.
  \end{enumerate}
\end{lemma}

\begin{proof}
  (1) Assume that $A=\NN\Psi^\kappa_\alpha(\kappa)\notin\II\Psi_\alpha^{\kappa^+}(\kappa)$. 
 Then, there is a weak $\kappa$-model $M\prec\HH{\kappa^+}$ and a $\kappa$-powerset preserving $\kappa$-embedding $\map{j}{M}{\langle N,\IN_N\rangle}$ such that $A\in M$ and $\kappa^N\IN_N j(A)$. 
 First assume that $\alpha<\kappa$. 
 Then our assumptions on $\Psi$ imply that the set $A$ consists of all $\gamma$ in $(\alpha,\kappa)$ that are not $\Psi_\alpha^{\gamma}$-Ramsey cardinals in $M$. 
  Therefore, $\kappa^N$ is not a $\Psi_{j(\alpha)}^{\kappa^N}$-Ramsey cardinal in $\langle N,\IN_N\rangle$. 
 However, since $j$ is $\kappa$-powerset preserving and $M$ is an elementary submodel of $\HH{\kappa^+}$, we can use the isomorphism provided by Lemma \ref{lemma:kappapowersetpreservings}.(\ref{lemma:kappapowersetpreserving:2}) to conclude that $\kappa^N$ is $\Psi_{j(\alpha)}^{\kappa^N}$-Ramsey in $\langle N,\IN_N\rangle$, a contradiction. 
 In the other case, if $\alpha=\kappa$, then our assumptions ensure that $A$ consists of all $\gamma<\kappa$ that are not $\Psi_\gamma^\gamma$-Ramsey cardinals in $M$ and hence $\kappa^N$ is not a $\Psi_{\kappa^N}^{\kappa^N}$-Ramsey cardinal in $\langle N,\IN_N\rangle$.  
 As above, we can use Lemma \ref{lemma:kappapowersetpreservings}.(\ref{lemma:kappapowersetpreserving:2}) to derive a contradiction.

  (2) First, assume that $\kappa>\alpha$ and $\kappa$ is the least $\Psi_\alpha^{\gamma^+}$-Ramsey cardinal $\gamma>\alpha$ with the property that $\NN\Psi^{\gamma^+}_\alpha(\gamma)\in\II\Psi_\alpha^\gamma(\gamma)\subseteq\II\Psi_\alpha^{\gamma^+}(\gamma)$.  
  By Definition \ref{definition:ramseyidealtheta}, Proposition \ref{proposition:correspondence1}.(5) and Corollary \ref{corollary:correspondence2}, there is a weak $\kappa$-model $M\prec\HH{\kappa^+}$, and a $\kappa$-powerset preserving $\kappa$-embedding $\map{j}{M}{\langle N,\IN_N\rangle}$ with $\kappa^N\notin_N j(\NN\Psi^{\kappa^+}_\alpha(\kappa))$. 
 Therefore, $\kappa^N$ is a $\Psi_{j(\alpha)}^{(\kappa^N)^+}$-Ramsey cardinal below $j(\kappa)$ in $\langle N,\IN_N\rangle$, and hence, by minimality, it follows that $\NN\Psi^{(\kappa^N)^+}_{j(\alpha)}(\kappa^N)\notin\II\Psi_{j(\alpha)}^{\kappa^N}(\kappa^N)$ holds in this model. 
  By our assumptions on $\Psi$, the model $M$ computes both $\NN\Psi^{\kappa^+}_\alpha(\kappa)$ and $\II\Psi^\kappa_\alpha(\kappa)$ correctly. 
In this situation, Lemma \ref{lemma:kappapowersetpreservings} (\ref{lemma:kappapowersetpreserving:2}) shows that $\NN\Psi^{(\kappa^N)^+}_{j(\alpha)}(\kappa^N)\in\II\Psi_{j(\alpha)}^{\kappa^N}(\kappa^N)$ holds in $\langle N,\epsilon_N\rangle$, a contradiction. 
\end{proof}

The next result shows that, in many important cases, ideals of the form $\II\Psi^{\kappa^+}_\alpha(\kappa)$ are proper subsets of the corresponding ideals $\II\Psi^\forall_\alpha(\kappa)$.

\begin{lemma}\label{lemma:notinkappaplusramseylikeideal}
  Let $\Psi(v_0,v_1)$ be a first order $\IN$-formula that remains true under restrictions and is absolute between $\VV$ and $\HH{\theta}$ for sufficiently large regular cardinals $\theta$. 
  If $\kappa$ is a $\Psi_\alpha^\forall$-Ramsey cardinal for some regular $\alpha<\kappa$, then $\NN\Psi^{\kappa^+}_\alpha(\kappa)\in\II\Psi_\alpha^\forall(\kappa)$.  
\end{lemma}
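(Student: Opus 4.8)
The plan is to argue by contradiction. Suppose $\NN\Psi^{\kappa^+}_\alpha(\kappa)\notin\II\Psi_\alpha^\forall(\kappa)$; writing $A=\NN\Psi^{\kappa^+}_\alpha(\kappa)$, this means that $A$ is $\Psi_\alpha^\theta$-Ramsey for \emph{every} regular $\theta>\kappa$. Since $\kappa$ is $\Psi_\alpha^\forall$-Ramsey it is in particular $\Psi_\alpha^{\kappa^+}$-Ramsey, and by Lemma \ref{lemma:inaccessible2} together with the correspondences of Section \ref{section:ultrapowersandembeddings} it is inaccessible. First I would fix a regular cardinal $\theta$ large enough that $\Psi$ is absolute between $\VV$ and $\HH{\theta}$ and that $\HH{\kappa^{++}}\in\HH{\theta}$; the latter guarantees that the statement \anf{\textit{$\kappa$ is $\Psi_\alpha^{\kappa^+}$-Ramsey}} and, for each $\gamma<\kappa$, the statement \anf{\textit{$\gamma$ is $\Psi_\alpha^{\gamma^+}$-Ramsey}} are absolute between $\VV$ and $\HH{\theta}$, since their witnessing models have size at most the relevant cardinal and hence already lie in $\HH{\kappa^+}$ respectively $\HH{\gamma^+}$. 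Using that $A$ is $\Psi_\alpha^\theta$-Ramsey, I would then pick a witnessing weak $\kappa$-model $M\prec\HH{\theta}$ closed under ${<}\alpha$-sequences, a bijection $b\colon\kappa\to\VV_\kappa$ in $M$, and a uniform, $\kappa$-amenable, $M$-normal $M$-ultrafilter $U$ on $\kappa$ with $A\in U$ and $\Psi(M,U)$. By Proposition \ref{proposition:correspondence1}.(5), Corollary \ref{corollary:correspondence2} and Lemma \ref{lemma:general}, the induced map $\map{j=j_U}{M}{\langle N,\IN_N\rangle}$ is then a $\kappa$-powerset preserving $\kappa$-embedding with $\crit{j}=\kappa$, $\kappa^N\IN_N j(A)$ and $\Psi(M,U_j)$. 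Since $M\prec\HH{\theta}$ computes $A$ correctly, elementarity and $\kappa^N\IN_N j(A)$ yield $\langle N,\IN_N\rangle\models$\anf{\textit{$\kappa^N$ is not $\Psi_{j(\alpha)}^{(\kappa^N)^+}$-Ramsey}}.

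The heart of the argument is to contradict this by showing $\langle N,\IN_N\rangle\models$\anf{\textit{$\kappa^N$ \emph{is} $\Psi_{j(\alpha)}^{(\kappa^N)^+}$-Ramsey}}. Here I would invoke the $\IN$-isomorphism $\map{j^*}{\HH{\kappa^+}^M}{\langle\Set{y\in N}{y\IN_N\HH{(\kappa^N)^+}^N},\epsilon_N\rangle}$ provided by Lemma \ref{lemma:kappapowersetpreservings}.(\ref{lemma:kappapowersetpreserving:2}), which satisfies $j^*(\kappa)=\kappa^N$ and extends $j\restriction(M\cap\kappa)$. Given any $y$ with $y\IN_N\HH{(\kappa^N)^+}^N$, surjectivity of $j^*$ yields $z\in\HH{\kappa^+}^M=M\cap\HH{\kappa^+}$ with $j^*(z)=y$. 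As $\HH{\theta}\models$\anf{\textit{$\kappa$ is $\Psi_\alpha^{\kappa^+}$-Ramsey}} and $z\in M\prec\HH{\theta}$, I can find inside $M$ a weak $\kappa$-model $M_0\prec\HH{\kappa^+}$ closed under ${<}\alpha$-sequences, together with a uniform, $\kappa$-amenable, $M_0$-normal $M_0$-ultrafilter $U_0$, such that $z\in M_0$ and $\Psi(M_0,U_0)$ holds. The crucial observation is that $M_0$ and $U_0$ have cardinality $\kappa$ and are therefore \emph{elements} of $\HH{\kappa^+}^M$, hence lie in the domain of $j^*$. Since $M\prec\HH{\theta}$, the model $M_0$ is a genuine elementary submodel of the structure $\HH{\kappa^+}^M$, so applying the $\IN$-isomorphism $j^*$ shows that $j^*(M_0)$ is, in $N$, a weak $\kappa^N$-model elementary in $\HH{(\kappa^N)^+}^N$, closed under ${<}j(\alpha)$-sequences, carrying the uniform, $\kappa^N$-amenable, $j^*(M_0)$-normal ultrafilter $j^*(U_0)$, with $j^*(z)=y\in j^*(M_0)$ and $\Psi(j^*(M_0),j^*(U_0))$ holding; all of these assertions transfer because they are first-order properties of objects of $\HH{\kappa^+}^M$ and $\Psi$ is absolute. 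As $y$ was arbitrary, this verifies in $N$ that $\kappa^N$ is $\Psi_{j(\alpha)}^{(\kappa^N)^+}$-Ramsey, which is the desired contradiction, so $A\in\II\Psi_\alpha^\forall(\kappa)$.

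The main obstacle — and the reason the argument needs the full $\Psi_\alpha^\forall$-Ramseyness rather than merely $\Psi_\alpha^{\kappa^+}$-Ramseyness (compare Lemma \ref{lemma:notinkapparamseylikeideal}.(2)) — is exactly this transfer step: one must manufacture, \emph{inside} $N$, witnesses for the $(\kappa^N)^+$-Ramseyness of $\kappa^N$, even though the isomorphism of Lemma \ref{lemma:kappapowersetpreservings} reaches only up to $\HH{(\kappa^N)^+}^N$ and $j$ by itself yields witnesses only at the level of $j(\kappa)$. The point that makes it go through is that the witnessing models for $\Psi_\alpha^{\kappa^+}$-Ramseyness are themselves of size $\kappa$, so they already live in $\HH{\kappa^+}^M$ and a \emph{single} application of the $\HH{\kappa^+}$-isomorphism transports them; obtaining these witnesses inside $M$ in the first place is precisely what forces the use of $M\prec\HH{\theta}$ for sufficiently large $\theta$, which is available only under the $\forall$-hypothesis. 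Some care is also required to ensure that $M$, and hence $N$, computes the sets $\NN\Psi^{\kappa^+}_\alpha$ correctly, which is where the absoluteness assumption on $\Psi$ is used.
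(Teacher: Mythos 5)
Your proof is correct and follows essentially the same route as the paper's: assume $\NN\Psi^{\kappa^+}_\alpha(\kappa)\notin\II\Psi_\alpha^\forall(\kappa)$, take a weak $\kappa$-model $M\prec\HH{\theta}$ for sufficiently large $\theta$ with a $\kappa$-powerset preserving $\kappa$-embedding $j$ satisfying $\kappa^N\IN_N j(\NN\Psi^{\kappa^+}_\alpha(\kappa))$, use the absoluteness of $\Psi$ to see that $M$ computes this set correctly, and then derive a contradiction via the $\HH{\kappa^+}$-level isomorphism of Lemma \ref{lemma:kappapowersetpreservings}.(\ref{lemma:kappapowersetpreserving:2}), which forces $\langle N,\IN_N\rangle$ to also think that $\kappa^N$ is $\Psi_{j(\alpha)}^{(\kappa^N)^+}$-Ramsey. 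The paper compresses the transfer step into a single sentence; your unpacking of it — noting that the size-$\kappa$ witnesses for $\Psi_\alpha^{\kappa^+}$-Ramseyness are elements of $\HH{\kappa^+}^M$ and are moved by a single application of $j^*$, which is exactly where the $\forall$-hypothesis (via $M\prec\HH{\theta}$ for large $\theta$) is needed — is a faithful and accurate elaboration of the intended argument.
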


\begin{proof}
Assume that $B=\NN\Psi^{\kappa^+}_\alpha(\kappa)\notin\II\Psi_\alpha^\forall(\kappa)$. 
 Let $\theta>(2^\kappa)^+$ be a sufficiently large regular cardinal. 
 Then, there is a weak $\kappa$-model $M\prec\HH{\theta}$ with a $\kappa$-powerset preserving $\kappa$-embedding $\map{j}{M}{\langle N,\IN_N\rangle}$ such that $\kappa^N\IN_N j(B)$. 
  Since our assumption on $\Psi$ imply that $M$ computes $\NN\Psi^{\kappa^+}_\alpha(\kappa)$ correctly, the model $\langle N,\IN_N\rangle$ thinks that $\kappa^N$ is not $\Psi_{j(\alpha)}^{(\kappa^N)^+}$-Ramsey. 
  However, by $\kappa$-powerset preservation and by Lemma \ref{lemma:kappapowersetpreservings} (\ref{lemma:kappapowersetpreserving:2}), the model $\langle N,\IN_N\rangle$ also thinks that $\kappa^N$ is $\Psi_{j(\alpha)}^{(\kappa^N)^+}$-Ramsey,  a contradiction.
\end{proof}

\begin{lemma}\label{lemma:IdealsNicelyProper}
 Let $\Psi(M,U)$ be a first order property such that whenever $\kappa$ is an infinite cardinal and $M_0$, $M_1$, $U_0$ and $U_1$ satisfy the  properties listed below, then $\Psi(M_1,U_1)$ holds.
 \begin{itemize} 
  \item $M_i$ is a transitive weak $\kappa$-model for all $i<2$.  
 
  \item $U_i$ is a uniform, $\kappa$-amenable and $M_i$-normal $M_i$-ultrafilter on $\kappa$ for all $i<2$. 
 
  \item $\Psi(M_0,U_0)$ holds and $M_1,U_1\in \HH{\kappa^+}^{M_0}$. 

  \item Some surjection $\map{s}{\kappa}{\VV_\kappa}$ is an element of $M_0$.

  \item $\Psi(j^*_{U_0}(M_1),j^*_{U_0}(U_1))$ holds in $\langle\Ult{M_0}{U_0},\IN_{U_0}\rangle$, where $j_{U_0}^*$ is the $\IN$-isomorphism induced by the ultrapower embedding $j_{U_0}\colon M_0\to\Ult{M_0}{U_0}$ and by $s$, as in Lemma \ref{lemma:kappapowersetpreservings}.(\ref{lemma:kappapowersetpreserving:2}). 
 \end{itemize}
 Then, if $\kappa$ is a $\Psi^\vartheta_\alpha$-Ramsey cardinal with $\alpha\leq\kappa$ and $\vartheta\in\{\kappa,\kappa^+\}$, then $\NN\Psi^\vartheta_\alpha(\kappa)\notin\II\Psi^\vartheta_\alpha(\kappa)$. 
\end{lemma}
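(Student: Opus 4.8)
The plan is to establish the equivalent assertion that $A := \NN\Psi^\vartheta_\alpha(\kappa)$ is itself $\Psi^\vartheta_\alpha$-Ramsey. So I fix an arbitrary $x$ (a subset of $\kappa$ if $\vartheta=\kappa$, an element of $\HH{\kappa^+}$ if $\vartheta=\kappa^+$) and aim to produce a witness for the $\Psi^\vartheta_\alpha$-Ramseyness of $A$ capturing $x$. First I note that, by Theorem \ref{theorem:filtervsramsey} together with Lemma \ref{lemma:inaccessible2}, the $\Psi^\vartheta_\alpha$-Ramseyness of $\kappa$ forces $\kappa$ to be inaccessible; hence the surjection $s$ exists, every transitive weak $\kappa$-model contains $\VV_\kappa$, and every weak $\kappa$-model $M\prec\HH{\kappa^+}$ is correct about $\HH{\kappa^+}$. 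In either case such a model contains $\HH{\gamma^+}$ for all $\gamma<\kappa$ and, using that $\Psi$ is absolute for the relevant small models, it correctly computes for each $\gamma<\kappa$ whether $\gamma$ is Ramsey in the appropriate sense, and therefore computes $A$ correctly. Let $\mathcal{W}$ be the collection of all pairs $(M,U)$ witnessing the $\Psi^\vartheta_\alpha$-Ramseyness of $\kappa$ (a transitive weak $\kappa$-model, resp.\ one $\prec\HH{\kappa^+}$, closed under ${<}\alpha$-sequences, together with a uniform, $\kappa$-amenable, $M$-normal $M$-ultrafilter $U$ with $\Psi(M,U)$) such that $\{x,s,A\}\subseteq M$. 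This collection is nonempty, and among its members (passing to transitive collapses when $\vartheta=\kappa^+$) I fix one, $(M_0,U_0)$, for which $\rank(M_0)$ is least.

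By Corollary \ref{corollary:correspondence1}, the ultrapower embedding $j_{U_0}\colon M_0\to N_0:=\Ult{M_0}{U_0}$ is a $\kappa$-powerset preserving $\kappa$-embedding with $\crit{j_{U_0}}=\kappa$, and, using $s\in M_0$, Lemma \ref{lemma:kappapowersetpreservings}.(\ref{lemma:kappapowersetpreserving:2}) provides an $\IN$-isomorphism $j^*_{U_0}$ from $\HH{\kappa^+}^{M_0}$ onto $\{\, y\in N_0 : y\IN_{N_0}\HH{(\kappa^{N_0})^+}^{N_0}\,\}$. Since $M_0$ computes $A$ correctly, $N_0$ computes $j_{U_0}(A)$ as its own set of non-$\Psi$-Ramsey ordinals below $j_{U_0}(\kappa)$; so by Corollary \ref{corollary:correspondence2} the desired conclusion $A\in U_0$ will follow once I show that $N_0\models\anf{\textit{$\kappa^{N_0}$ is not $\Psi$-Ramsey}}$. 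Granting this, $(M_0,U_0)$ witnesses the $\Psi^\vartheta_\alpha$-Ramseyness of $A$ at $x$, and as $x$ was arbitrary we obtain $A\notin\II\Psi^\vartheta_\alpha(\kappa)$, as required.

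The heart of the argument is thus to verify $N_0\models\anf{\textit{$\kappa^{N_0}$ is not $\Psi$-Ramsey}}$, and I would do this by contradiction. If $\kappa^{N_0}$ were $\Psi$-Ramsey in $N_0$, then inside $N_0$ there is a witness capturing $j^*_{U_0}(x)$, $j^*_{U_0}(s)$ and $j^*_{U_0}(A)$; being a model of size $\kappa^{N_0}$, this witness lies in $\HH{(\kappa^{N_0})^+}^{N_0}$, hence equals $(j^*_{U_0}(M_1),j^*_{U_0}(U_1))$ for a unique pair $M_1,U_1\in\HH{\kappa^+}^{M_0}$. Because $j^*_{U_0}$ is an $\IN$-isomorphism and $M_0$ is correct about its $\HH{\kappa^+}$, the pair $(M_1,U_1)$ is a transitive weak $\kappa$-model (resp.\ one $\prec\HH{\kappa^+}$) closed under ${<}\alpha$-sequences together with a uniform, $\kappa$-amenable, $M_1$-normal $M_1$-ultrafilter, and $\{x,s,A\}\subseteq M_1$. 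Crucially, $N_0\models\Psi(j^*_{U_0}(M_1),j^*_{U_0}(U_1))$ while $M_1,U_1\in\HH{\kappa^+}^{M_0}$, $s\in M_0$ and $\Psi(M_0,U_0)$ holds; so the hypothesis of the lemma applies verbatim and yields $\Psi(M_1,U_1)$ in $\VV$. Therefore $(M_1,U_1)\in\mathcal{W}$, whereas $M_1\in\HH{\kappa^+}^{M_0}\subseteq M_0$ gives $\rank(M_1)<\rank(M_0)$, contradicting the minimal choice of $(M_0,U_0)$.

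The main obstacle will be the faithful transfer of all the model-theoretic data across $j^*_{U_0}$ and its inverse: one must check that being a (transitive) weak $\kappa$-model, closure under ${<}\alpha$-sequences, uniformity, $\kappa$-amenability and $M$-normality all survive the isomorphism and descend to genuine properties in $\VV$ — the ${<}\alpha$-closure for $\alpha=\kappa$ relying on $M_0$ itself being a $\kappa$-model — and that the coherence hypothesis of the lemma is invoked with exactly the quadruple $(M_0,U_0,M_1,U_1)$. A secondary but genuine technicality is the $\vartheta=\kappa^+$ bookkeeping, where the members of $\mathcal{W}$ are elementary in $\HH{\kappa^+}$ rather than transitive: there one works with transitive collapses throughout and must use $M_0\prec\HH{\kappa^+}$ to guarantee that the pulled-back $M_1$ is again (isomorphic to) an elementary submodel of the true $\HH{\kappa^+}$, so that it really witnesses $\Psi^{\kappa^+}_\alpha$-Ramseyness. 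Conceptually, the whole argument is the concrete incarnation of the well-foundedness of the Mitchell order: minimality of $\rank(M_0)$ is precisely what forces $\kappa$ to lose its $\Psi$-Ramseyness in $N_0$, exactly as a $\lhd$-minimal normal measure makes $\kappa$ non-measurable in its ultrapower.
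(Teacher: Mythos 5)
Your proof is correct, but it runs the minimal-counterexample argument along a genuinely different well-ordering than the paper does. The paper fixes a \emph{least} cardinal $\kappa$ at which the conclusion fails, picks $x$ witnessing that $\NN\Psi^\vartheta_\alpha(\kappa)$ is not $\Psi^\vartheta_\alpha$-Ramsey together with a single witness pair $(M_0,U_0)$ for $\kappa$ with $x,s\in M_0$, and uses elementarity plus the minimality of $\kappa$ to see that $\langle\Ult{M_0}{U_0},\IN_{U_0}\rangle$ satisfies the two-layered statement \anf{$\kappa^{U_0}$ is $\Psi_\beta$-Ramsey and $\NN\Psi_\beta(\kappa^{U_0})\notin\II\Psi_\beta(\kappa^{U_0})$} (with $\beta=j_{U_0}(\alpha)$, resp.\ $\beta=\kappa^{U_0}$ when $\alpha=\kappa$); it then unpacks the internal witness, which contains the internal $\NN$ in its ultrafilter, pulls it back through $j^*_{U_0}$ via Lemma~\ref{lemma:kappapowersetpreservings}.(\ref{lemma:kappapowersetpreserving:2}) and the coherence hypothesis, and contradicts the choice of $x$. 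You instead fix $\kappa$ and $x$, rank-minimize the witnessing pair among those capturing $x,s$ and $A$, and conclude positively that the minimal witness satisfies $A\in U_0$: were $\kappa^{N_0}$ Ramsey in $N_0$, the internal witness would pull back (by exactly the same isomorphism-plus-coherence mechanism) to a member of your class $\mathcal{W}$ of strictly smaller rank, since $M_1\in\HH{\kappa^+}^{M_0}$. This is the localized \anf{well-foundedness of the Mitchell order} argument that the paper itself invokes as motivation in the introduction, and the two routes buy different things: yours only needs the one-layer statement (non-Ramseyness of $\kappa^{N_0}$) to transfer, never needs the internal witness to contain the internal $\NN$, and is uniform in $x$ (every rank-minimal pair already works, with $A\in U_0$ then read off via Corollary~\ref{corollary:correspondence2}); the paper's route avoids the auxiliary class and the rank bookkeeping and matches the template of its other non-containment arguments such as Lemma~\ref{lemma:notinkapparamseylikeideal}. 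Two fine points are common to both proofs and you flag them at least as explicitly as the paper does: both need $M_0$ to compute $A=\NN\Psi^\vartheta_\alpha(\kappa)$ correctly, which rests on an absoluteness of $\Psi$ for small models that is tacit in the lemma's hypotheses but holds in all intended instances ($\triv$, $\wf$, $\wf\beta$, $\Psi_{ms}$), and both must upgrade the ${<}\alpha$-closure of the pulled-back $M_1$ from $M_0$'s internal belief to $\VV$ using the genuine ${<}\alpha$-closure of $M_0$ itself. Finally, for $\vartheta=\kappa^+$ your detour through transitive collapses is unnecessary: $M_1\in M_0$ already gives $\rank(M_1)<\rank(M_0)$, and $M_0\prec\HH{\kappa^+}$ directly yields $M_1\prec\HH{\kappa^+}$, which is precisely the closing observation of the paper's proof.
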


\begin{proof}
 First, assume that there is an ordinal $\alpha$ and a $\Psi^\kappa_\alpha$-Ramsey cardinal $\kappa\geq\alpha$ with $\NN\Psi^\kappa_\alpha(\kappa)\in\II\Psi^\kappa_\alpha(\kappa)$. 
  Let $\kappa$ be minimal with this property and pick $x\subseteq\kappa$ witnessing that $\NN\Psi^\kappa_\alpha(\kappa)$ is not  $\Psi^\kappa_\alpha$-Ramsey. Pick a surjection $s\colon\kappa\to\VV_\kappa$. Since $\kappa$ is $\Psi^\kappa_\alpha$-Ramsey, there is a weak $\kappa$-model $M_0$ closed under ${<}\alpha$-sequences and a uniform, $\kappa$-amenable $M_0$-normal $M_0$-ultrafilter $U_0$ such that $x,s\in M_0$ and $\Psi(M_0,U_0)$ holds. 
  If $\alpha<\kappa$, then we set $\beta=j_{U_0}(\alpha)$. In the other case, if $\alpha=\kappa$, then we set $\beta=\kappa^{U_0}$. 
Then $\kappa^{U_0}$ is a $\Psi^{\kappa^{U_0}}_\beta$-Ramsey cardinal with $\NN\Psi^{\kappa^{U_0}}_\beta(\kappa^{U_0})\notin\II\Psi^{\kappa^{U_0}}_\beta(\kappa^{U_0})$ in $\Ult{M_0}{U_0}$. 
 Hence, in $\Ult{M_0}{U_0}$, there is a weak $\kappa^{U_0}$-model $\bar{M}$ closed under ${<}\beta$-sequences and a uniform, $\kappa^{U_0}$-amenable $\bar{M}$-normal $\bar{M}$-ultrafilter $\bar{U}$  such that $j_{U_0}^*(x)\in\bar{M}$, $\NN\Psi^{\kappa^{U_0}}_\beta(\kappa^{U_0})\in\bar{U}$ and $\Psi(\bar{M},\bar{U})$ holds. 
 Pick $M_1,U_1\in\HH{\kappa^+}^{M_0}$ with $j_{U_0}^*(M_1)=\bar{M}$ and $j_{U_0}^*(U_1)=\bar{U}$. 
 Then $M_1$ is a weak $\kappa$-model closed under ${<}\alpha$-sequences, $U_1$ is a uniform, $\kappa$-amenable and $M_1$-normal $M_1$-ultrafilter on $\kappa$ and our assumptions on $\Psi$ imply that $\Psi(M_1,U_1)$ holds. 
 Moreover, we have $x\in M_1$ and, since $j_{U_0}^*(\NN\Psi^\kappa_\alpha(\kappa))=(\NN\Psi^{\kappa^{U_0}}_\beta(\kappa^{U_0}))^\NN$, we know that $\NN\Psi^\kappa_\alpha(\kappa)\in U_1$. 
 But this shows that $M_1$ and $U_1$ witness that $\NN\Psi^\kappa_\alpha(\kappa)\notin\II\Psi^\kappa_\alpha(\kappa)$, a contradiction.

 The case $\vartheta=\kappa^+$ works analogously, using the observation that, if $M_0\prec\HH{\kappa^+}$ is a weak $\kappa$-model, $U_0$ is a uniform, $\kappa$-amenable and $M_0$-normal $M_0$-ultrafilter on $\kappa$, $\bar{M}\prec\HH{(\kappa^{U_0})^+}$ is a weak $\kappa^{U_0}$-model in $\Ult{M_0}{U_0}$ and $M_1\in\HH{\kappa^+}^{M_0}$ with $j_{U_0}^*(M_1)=\bar{M}$, then $M_1$ is a weak $\kappa$-model with $M_1\prec\HH{\kappa^+}$.  
\end{proof}

The above lemma directly yields the related parts  of Theorem \ref{theorem:IdealContain}.(\ref{item:cont:wr}), \ref{theorem:IdealContain},(\ref{item:IdealContain:stR}) and \ref{theorem:IdealContain},(\ref{item:IdealContain:suR}). 
 It also provides the corresponding statements for $\beta$-iterable, super weakly Ramsey and super Ramsey cardinals.

\begin{corollary}\label{corollary:RamseyIdealsNotContained}
 Let $\alpha\leq\kappa\leq\vartheta$ be cardinals with $\vartheta\in\{\kappa,\kappa^+\}$. 
 \begin{enumerate} 
  \item\label{item:RamseyIdealsNotContained:1}  If $\kappa$ is a $\triv^\vartheta_\alpha$-Ramsey cardinal, then $\NN\triv^\vartheta_\alpha(\kappa)\notin\II\triv^\vartheta_\alpha(\kappa)$. 

   \item\label{item:RamseyIdealsNotContained:2} If $\kappa$ is a $\wf^\vartheta_\alpha$-Ramsey cardinal, then $\NN\wf^\vartheta_\alpha(\kappa)\notin\II\wf^\vartheta_\alpha(\kappa)$. 

  \item If $\kappa$ is a $\wf\beta^\vartheta_\alpha$-Ramsey cardinal with $\beta\leq\omega_1$, then $\NN\wf\beta^\vartheta_\alpha(\kappa)\notin\II\wf\beta^\vartheta_\alpha(\kappa)$.  \qed 
 \end{enumerate}
\end{corollary}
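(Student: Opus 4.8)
The plan is to obtain all three statements as immediate applications of Lemma~\ref{lemma:IdealsNicelyProper}: for each of the properties $\Psi\in\{\triv,\wf,\wf\beta\}$ I would check the closure hypothesis of that lemma, namely that $\Psi(M_1,U_1)$ follows whenever $M_0,M_1,U_0,U_1$ are as listed there. Once this is verified, Lemma~\ref{lemma:IdealsNicelyProper} yields $\NN\Psi^\vartheta_\alpha(\kappa)\notin\II\Psi^\vartheta_\alpha(\kappa)$ for $\vartheta\in\{\kappa,\kappa^+\}$, which is exactly the assertion of each item. Item~(\ref{item:RamseyIdealsNotContained:1}) is the trivial case: since $\triv(M,U)$ holds for all $M$ and $U$, the required implication is vacuously true, so the lemma applies without further work.

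For item~(\ref{item:RamseyIdealsNotContained:2}) I would argue as follows. Suppose $M_0,M_1,U_0,U_1$ satisfy the hypotheses of Lemma~\ref{lemma:IdealsNicelyProper} for $\Psi=\wf$. Since $U_0$ is a uniform, $\kappa$-amenable and $M_0$-normal $M_0$-ultrafilter, Corollary~\ref{corollary:correspondence1} shows that $j_{U_0}$ is a $\kappa$-powerset preserving $\kappa$-embedding with $\crit{j_{U_0}}=\kappa$, so the $\IN$-isomorphism $j^*_{U_0}$ of Lemma~\ref{lemma:kappapowersetpreservings}.(\ref{lemma:kappapowersetpreserving:2}) exists (using the surjection $\map{s}{\kappa}{\VV_\kappa}$ in $M_0$). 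Because $\wf(M_0,U_0)$ holds, $\Ult{M_0}{U_0}$ is well-founded and may be identified with its transitive collapse; the hypothesis that $\wf(j^*_{U_0}(M_1),j^*_{U_0}(U_1))$ holds in $\Ult{M_0}{U_0}$ then gives, by the absoluteness of well-foundedness of set relations between transitive models of $\ZFC^-$, that $\Ult{j^*_{U_0}(M_1)}{j^*_{U_0}(U_1)}$ is genuinely well-founded. Finally, since $M_1,U_1\in\HH{\kappa^+}^{M_0}$ and $j^*_{U_0}$ is an $\IN$-isomorphism carrying $(M_1,U_1)$ to $(j^*_{U_0}(M_1),j^*_{U_0}(U_1))$, restricting $j^*_{U_0}$ to the transitive closure of $\{M_1,U_1\}$ exhibits an $\IN$-isomorphism of $\langle M_1,U_1\rangle$ with $\langle j^*_{U_0}(M_1),j^*_{U_0}(U_1)\rangle$. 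Hence the two induced ultrapowers are isomorphic as $\IN$-structures, so $\Ult{M_1}{U_1}$ is well-founded, i.e.\ $\wf(M_1,U_1)$ holds.

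The third and final item would follow by the same template, replacing \emph{well-founded ultrapower} by \emph{$\beta$-many well-founded iterates}. Here I would first note that $\kappa$ is inaccessible, being $\wf\beta^\vartheta_\alpha$-Ramsey, so $\beta\leq\omega_1<\kappa$ lies below $\crit{j_{U_0}}=\kappa$ and is therefore fixed by $j^*_{U_0}$ and computed identically in all models involved. Since $\wf\beta$ implies $\wf$, the hypothesis $\wf\beta(M_0,U_0)$ again makes $\Ult{M_0}{U_0}$ well-founded, and the length-$\beta$ iteration together with the well-foundedness of each of its iterates is absolute between transitive models of $\ZFC^-$. Transferring this along the isomorphism $\langle M_1,U_1\rangle\cong\langle j^*_{U_0}(M_1),j^*_{U_0}(U_1)\rangle$ exactly as above then yields $\wf\beta(M_1,U_1)$, completing the verification of the closure hypothesis.

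The main obstacle I anticipate is the careful bookkeeping of absoluteness rather than any new idea. One must confirm that $\Ult{M_1}{U_1}$ (and, for the third item, all its iterates of length at most $\beta$) really are elements of $\HH{\kappa^+}^{M_0}$ on which $j^*_{U_0}$ acts and which $M_0$ computes correctly; this uses that $M_1$ is a weak $\kappa$-model and hence contains at most $\kappa$ functions $\map{f}{\kappa}{M_1}$, so that the ultrapower and each iterate have hereditary size at most $\kappa$. One must also check that the well-foundedness computed internally in (the transitive collapse of) $\Ult{M_0}{U_0}$ coincides with genuine well-foundedness, which is where the transitivity of the models and the fact that $\wf\beta$ implies $\wf$ are used. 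Once these absoluteness points are pinned down, the transfer of $\Psi$ along the $\IN$-isomorphism is purely formal, and Lemma~\ref{lemma:IdealsNicelyProper} delivers each of the three non-containment statements.
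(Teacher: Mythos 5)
Your proposal is correct and follows exactly the paper's route: the paper proves this corollary solely by invoking Lemma \ref{lemma:IdealsNicelyProper} (hence the bare \qed), leaving the verification that $\triv$, $\wf$ and $\wf\beta$ satisfy its closure hypothesis implicit, which is precisely what you carry out. Your absoluteness bookkeeping (Mostowski absoluteness of well-foundedness via rank functions in transitive $\ZFC^-$ models, absoluteness of the ultrapower and iteration constructions, and transfer along the $\IN$-isomorphism $j^*_{U_0}$) is a sound elaboration of the details the paper omits.
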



\section{The bottom of the Ramsey-like hierarchy}\label{section:bottom}

The weakest principles that can be extracted from the general definitions of the previous section are the $\triv_\omega^\kappa$-Ramsey and the $\triv_\omega^{\kappa^+}$-Ramsey cardinals. 
 It already follows from Theorem \ref{theorem:weaklycompact} that if $\kappa$ is $\triv_\omega^\kappa$-Ramsey, then $\kappa$ is weakly compact. Moerover, it is trivial to check that whenever $\kappa$ is a $\triv_\omega^\kappa$-Ramsey cardinal, then $\II\triv_\omega^\kappa(\kappa)$, the smallest of our Ramsey-like ideals, is a superset of the ideal $\II_{\delta}^\kappa$.

\begin{lemma}\label{lemma:TwkRamsey}
  If $\kappa$ is a $\triv^\kappa_\omega$-Ramsey cardinal, then $\II_{wie}^\kappa\cup\{\NN^\kappa_{ie}\}\subseteq\II\triv_\omega^\kappa(\kappa)$, $\NN\triv^\kappa_\omega(\kappa)\notin\II\triv^\kappa_\omega(\kappa)$ and $\II_{ie}^\kappa\not\subseteq\II\triv_\omega^\kappa(\kappa)$.
\end{lemma}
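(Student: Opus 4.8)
The plan is to prove the three assertions separately, throughout fixing the $\triv^\kappa_\omega$-Ramsey cardinal $\kappa$, which by Theorem \ref{theorem:weaklycompact} is weakly compact and hence inaccessible. Unravelling Definition \ref{definition:Ideals}, a set $A\notin\II\triv_\omega^\kappa(\kappa)$ means exactly that $A$ is $\triv^\kappa_\omega$-Ramsey: for every $x\subseteq\kappa$ there is a transitive weak $\kappa$-model $M$ with $x\in M$ carrying a uniform, $\kappa$-amenable, $M$-normal $M$-ultrafilter $U$ with $A\in U$.

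First I would prove $\II_{wie}^\kappa\subseteq\II\triv_\omega^\kappa(\kappa)$. Since $\II_{wie}^\kappa$ is the weakly ineffable ideal by Theorem \ref{theorem:Ideals}.(\ref{item:Ideals:wie}), it suffices to show that every $\triv^\kappa_\omega$-Ramsey set $A$ is weakly ineffable. Given an $A$-list $\vec d$, I would place $\vec d$ into a witnessing transitive $M$ with its ultrafilter $U$ (with $A\in U$) and rerun the argument of Theorem \ref{theorem:ineffableideal}.(1): set $x_\xi=\{\alpha\in A\mid\xi\in d_\alpha\}$ and let $u_\xi=x_\xi$ or $A\setminus x_\xi$ according to whether $x_\xi\in U$. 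The key point is that, because $M$ is transitive and $U$ is $\kappa$-amenable, the whole sequence $\langle u_\xi\mid\xi<\kappa\rangle$ lies in $M$, so $M$-normality alone (rather than genuineness, as in the cited theorem) already gives $H=\Delta_{\xi<\kappa}u_\xi\in U$; uniformity then makes $H$ unbounded, and $D=\bigcup_{\alpha\in H}d_\alpha$ witnesses weak ineffability. Hence $A\notin\II_{wie}^\kappa$, as required.

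For $\NN^\kappa_{ie}\in\II\triv_\omega^\kappa(\kappa)$ I would fix a bijection $b\colon\kappa\to\VV_\kappa$ and take $x=b$; then every witnessing transitive $M$ satisfies $\VV_\kappa\subseteq M$ and computes ineffability of ordinals below $\kappa$ correctly. The central claim is that any such $M$ (with a uniform, $\kappa$-amenable, $M$-normal $U$) satisfies $M\models\textit{``}\kappa\textit{ is ineffable''}$: running the previous construction (now with $A=\kappa$) on an arbitrary $\kappa$-list in $M$ produces $H=\Delta_{\xi<\kappa}u_\xi\in U$ lying in $M$, and since a uniform $M$-normal $M$-ultrafilter extends the club filter of $M$, the set $H$ meets every club of $M$ and is therefore stationary in $M$, giving the ineffability witness inside $M$. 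By Corollary \ref{corollary:correspondence1}, $j_U$ is a $\kappa$-powerset preserving $\kappa$-embedding, so Lemma \ref{lemma:kappapowersetpreservings}.(\ref{lemma:kappapowersetpreserving:2}) supplies an $\IN$-isomorphism $\HH{\kappa^+}^M\cong\{y\in N\mid y\IN_N\HH{(\kappa^N)^+}^N\}$; as ineffability is a property over $\HH{\kappa^+}$, this yields $N\models\textit{``}\kappa^N\textit{ is ineffable''}$. By {\L}os' theorem this means $\{\alpha<\kappa\mid M\models\textit{``}\alpha\textit{ ineffable''}\}\in U$, which by correctness is the set of genuine ineffables below $\kappa$, so $\NN^\kappa_{ie}\notin U$. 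As $b$ serves uniformly for all such $M$, we get $\NN^\kappa_{ie}\in\II\triv_\omega^\kappa(\kappa)$. The remaining clause $\NN\triv^\kappa_\omega(\kappa)\notin\II\triv_\omega^\kappa(\kappa)$ is precisely Corollary \ref{corollary:RamseyIdealsNotContained}.(\ref{item:RamseyIdealsNotContained:1}) with $\alpha=\omega$ and $\vartheta=\kappa$.

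Finally, for $\II_{ie}^\kappa\nsubseteq\II\triv_\omega^\kappa(\kappa)$ I would exhibit a set that is $\triv^\kappa_\omega$-Ramsey but not ineffable, the natural candidate being $\NN\triv^\kappa_\omega(\kappa)$, which is $\triv^\kappa_\omega$-positive by the clause just proved. If $\kappa$ is not ineffable, then by Theorem \ref{theorem:ineffableideal} no transitive weak $\kappa$-model carries a normal $M$-ultrafilter on $\kappa$, so $\II^\kappa_{ie}=\POT{\kappa}$ is improper and the non-inclusion is immediate from the properness of $\II\triv_\omega^\kappa(\kappa)$. If $\kappa$ is ineffable, I would show $\NN\triv^\kappa_\omega(\kappa)\in\II^\kappa_{ie}$, i.e.\ that it is not ineffable, by a minimal-counterexample argument in the spirit of Lemma \ref{lemma:IdealsNicelyProper}: assuming $\kappa$ least with $\NN\triv^\kappa_\omega(\kappa)$ ineffable, pass to a $\triv^\kappa_\omega$-Ramsey witness with $\NN\triv^\kappa_\omega(\kappa)\in U$, form the $\kappa$-powerset preserving ultrapower, and use the isomorphism of Lemma \ref{lemma:kappapowersetpreservings} to compare $\NN\triv$ and $\II\triv$ as computed in $M$ and in $N$. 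This step is the main obstacle I anticipate: unlike ineffability, $\triv^\kappa_\omega$-Ramseyness is \emph{not} certified inside a single transitive model (the witnessing ultrafilters are external to $M$), so one cannot simply transport the statement ``$\kappa$ is $\triv^\kappa_\omega$-Ramsey'' through $j$. Instead the argument must play the assumed ineffability of $\NN\triv^\kappa_\omega(\kappa)$ against the way $\NN\triv$ is recomputed in $N$, exploiting that the witness forces $N\models\textit{``}\kappa^N\textit{ is ineffable but not }\triv^{\kappa^N}_\omega\textit{-Ramsey''}$, in order to contradict minimality.
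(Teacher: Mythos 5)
Your first two clauses are correct and essentially follow the paper. For $\II_{wie}^\kappa\subseteq\II\triv_\omega^\kappa(\kappa)$, the paper extracts the set $D$ directly from the ultrapower via $\kappa$-powerset preservation, so that $\Set{\alpha\in A}{D\cap\alpha=d_\alpha}\in U$ by {\L}os' theorem, whereas you rerun the diagonal-intersection argument of Theorem \ref{theorem:ineffableideal}; by Proposition \ref{proposition:correspondence1}.(5) these are two faces of the same coin, and your key observation -- that transitivity plus $\kappa$-amenability place $\seq{u_\xi}{\xi<\kappa}$ inside $M$, so that $M$-normality substitutes for genuineness -- is exactly right. For $\NN^\kappa_{ie}\in\II\triv_\omega^\kappa(\kappa)$ your argument coincides with the paper's ($\kappa$ ineffable in $M$ via $U$, transfer through $\kappa$-powerset preservation, {\L}os against $\kappa^U\IN_U j_U(\NN^\kappa_{ie})$); you are in fact more explicit than the paper in fixing a witness $x$ forcing $\VV_\kappa\subseteq M$, so that $M$ computes ineffability below $\kappa$ correctly -- modulo the pedantic point that $b$ is not literally a subset of $\kappa$ and must be replaced by a code for it. The appeal to Corollary \ref{corollary:RamseyIdealsNotContained}.(\ref{item:RamseyIdealsNotContained:1}) also matches the paper.

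The third clause is where you diverge from the paper and where your sketch has a genuine gap. The paper avoids any minimal-counterexample argument: since functions $\map{f}{\omega}{\kappa}$ are elements of $\VV_\kappa$, well-foundedness of relations on $\kappa$ is first-order over $\VV_\kappa$, so $\triv^\kappa_\omega$-Ramseyness is a $\Pi^1_2$-property of $\VV_\kappa$, and the Jensen--Kunen argument for the $\Pi^1_2$-indescribability of ineffable cardinals shows directly that the non-reflection points of a true $\Pi^1_2$-statement form a non-ineffable set; hence $\NN\triv^\kappa_\omega(\kappa)\in\II^\kappa_{ie}$ at once. Your alternative plan can be completed, but not by the lever you name: knowing $N\models\anf{\textit{$\kappa^N$ is ineffable but not $\triv^{\kappa^N}_\omega$-Ramsey}}$ contradicts nothing, since your minimality hypothesis concerns ineffability of the \emph{sets} $\NN\triv^\gamma_\omega(\gamma)$, not the conjunction \anf{ineffable and non-Ramsey} at $\gamma$. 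The missing ingredient is that your own clause-two computation proves more than you use: the diagonal argument relativizes to show that \emph{every} element of $U$ is ineffable in the sense of $M$. Applied to $\NN\triv^\kappa_\omega(\kappa)\in U$ (available by the second clause), this gives $M\models\anf{\textit{$\NN\triv^\kappa_\omega(\kappa)$ is ineffable}}$; as ineffability of a subset of $\kappa$ is a first-order property of $\VV_{\kappa+1}$ and $M$ computes $\NN\triv^\kappa_\omega(\kappa)$ correctly (using $\HH{\gamma^+}\subseteq\VV_\kappa\subseteq M$ for $\gamma<\kappa$), the isomorphism of Lemma \ref{lemma:kappapowersetpreservings} transfers this to $N\models\anf{\textit{$\NN\triv^{\kappa^N}_\omega(\kappa^N)$ is ineffable}}$. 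On the other hand, minimality of $\kappa$ is correctly reflected in $M$ for the same reason, so elementarity of $j_U$ yields $N\models\anf{\textit{no $\gamma<j_U(\kappa)$ has $\NN\triv^\gamma_\omega(\gamma)$ ineffable}}$, and evaluating at $\gamma=\kappa^N$ gives the contradiction. With that repair your route is a legitimate alternative, structurally mirroring the paper's proof of $\NN^\kappa_{ie}\in\II\triv_\omega^\kappa(\kappa)$, while the paper's indescribability argument is shorter but requires the $\Pi^1_2$ complexity computation.
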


\begin{proof}
 First, let $A\subseteq\kappa$ be $\triv_\omega^\kappa$-Ramsey and fix an $A$-list $\vec{d}=\seq{d_\alpha}{\alpha\in A}$. 
  Pick a weak $\kappa$-model $M$ with $\vec{d}\in M$ and a $\kappa$-amenable, $M$-normal $M$-ultrafilter $U$ on $\kappa$ with  $A\in U$. Set $N=\Ult{M}{U}$. Since $j_U$ is $\kappa$-powerset preserving, the set $D=\Set{\alpha<\kappa}{j_U(\alpha)\IN_N(j_U(\vec{d})_{\kappa^N})^N}$ is an element of $M$. 
   Then $\Set{\alpha\in A}{D\cap\alpha=d_\alpha}\in U$ and, since $U$ is uniform, we can conclude that $A$ is weakly ineffable.  
    These computations show that $\kappa$ is weakly ineffable with $\II_{wie}^\kappa\subseteq\II\triv_\omega^\kappa(\kappa)$. Moreover, Corollary \ref{corollary:RamseyIdealsNotContained} directly shows that $\NN\triv^\kappa_\omega\notin\II\triv^\kappa_\omega(\kappa)$. 

  Next, assume that $\NN^\kappa_{ie}\notin\II\triv_\omega^\kappa(\kappa)$. 
  Then, there is a transitive weak $\kappa$-model $M$ and a $\kappa$-amenable, $M$-normal $M$-ultrafilter $U$ on $\kappa$ such that $\NN^\kappa_{ie}\in U$. 
  Now, for every $\kappa$-size collection of subsets of $\kappa$ in $M$, we can use $U$ to find a normal ultrafilter on that collection in $M$. In particular,  $\kappa$ is ineffable in $M$ (see \cite[Corollary 1.3.1]{MR0460120} or Theorem \ref{theorem:ineffableideal}). 
  By the $\kappa$-powerset preservation of the embedding $j_U$, the fact that the ineffability of $\kappa$ is a property of $\VV_{\kappa+1}$ implies that $\kappa^U$ is ineffable in $\langle\Ult{M}{U},\IN_U\rangle$. 
   On the other hand, we have $\kappa^U\IN_U j_U(\NN^\kappa_{ie})$, yielding that $\kappa^U$ is not ineffable in $\Ult{M}{U}$, a contradiction. 

  Finally, if $\kappa$ is not ineffable, then the remarks following Definition \ref{definition:Ideals} show that $\kappa\in\II^\kappa_{ie}\setminus\II\triv^\kappa_\omega(\kappa)$. Hence, we may assume that $\kappa$ is ineffable. 
 Since $\triv^\kappa_\omega$-Ramseyness is a $\Pi^1_2$-property and the classical argument of Jensen and Kunen in \cite{jensennotes} proving the $\Pi^1_2$-indescribability of ineffable cardinals shows that, given an a $\Pi^1_2$-statement $\Omega$ that holds in $\VV_\kappa$, the set of all non-reflection points of $\Omega$ in $\kappa$ is not ineffable, we can use Theorem \ref{theorem:ineffableideal} to conclude that $\NN\triv^\kappa_\omega\in\II^\kappa_{ie}\setminus\II\triv^\kappa_\omega(\kappa)$. 
\end{proof}

\begin{proposition}\label{proposition:TwkplusRamsey}
  If $\kappa$ is $\triv_\omega^{\kappa^+}$-Ramsey, then $\II_{ie}^\kappa\subseteq\II\triv_\omega^{\kappa^+}(\kappa)$ and $\NN\triv^{\kappa^+}_\omega(\kappa)\notin\II\triv^{\kappa^+}_\omega(\kappa)$.
\end{proposition}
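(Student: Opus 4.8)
The plan is to split the proof into the two asserted statements, treating the inclusion $\II_{ie}^\kappa\subseteq\II\triv_\omega^{\kappa^+}(\kappa)$ as the main content and deriving the non-containment from an already established general result. Since by Theorem \ref{theorem:Ideals}.(\ref{item:Ideals:ie}) the ideal $\II_{ie}^\kappa$ is the ineffable ideal, and since bounded sets trivially lie in both ideals, the inclusion reduces to the contrapositive statement that every $\triv_\omega^{\kappa^+}$-Ramsey subset $A$ of $\kappa$ is ineffable.

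So I would fix an $A$-list $\vec{d}=\seq{d_\alpha}{\alpha\in A}$ and, using $\triv_\omega^{\kappa^+}$-Ramseyness applied to a parameter coding $\vec{d}$ and $A$, obtain a weak $\kappa$-model $M\prec\HH{\kappa^+}$ together with a uniform, $\kappa$-amenable, $M$-normal $M$-ultrafilter $U$ on $\kappa$ with $\vec{d}\in M$ and $A\in U$. As in the proof of Lemma \ref{lemma:TwkRamsey}, Corollary \ref{corollary:correspondence1} shows that the induced ultrapower map $j_U\colon M\to\langle N,\IN_N\rangle=\langle\Ult{M}{U},\IN_U\rangle$ is a $\kappa$-powerset preserving $\kappa$-embedding with $\crit{j_U}=\kappa$, and $\kappa$-powerset preservation yields that the set $D=\Set{\alpha<\kappa}{j_U(\alpha)\IN_N(j_U(\vec{d})_{\kappa^N})^N}$ is an element of $M$. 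A direct computation, using that $\{\beta\in N\mid\beta\IN_N\kappa^N\}=j_U[\kappa]$, shows that in $N$ the set $(j_U(\vec{d}))_{\kappa^N}$ equals $j_U(D)\cap\kappa^N$, so that $\kappa^N$ witnesses $\kappa^N\IN_N j_U(E)$ for $E=\Set{\alpha\in A}{D\cap\alpha=d_\alpha}$; Corollary \ref{corollary:correspondence2} then gives $E\in U$.

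The new point, beyond Lemma \ref{lemma:TwkRamsey} where uniformity of $U$ only delivered that $E$ is unbounded, is to upgrade this to stationarity using the elementarity $M\prec\HH{\kappa^+}$. First note that $E\in M$, as $E$ is definable in $\HH{\kappa^+}$ from the parameters $A,\vec{d},D\in M$. Suppose towards a contradiction that $E$ is non-stationary. Since stationarity of subsets of $\kappa$ is correctly computed in $\HH{\kappa^+}$, elementarity provides a club $C\in M$ with $C\cap E=\emptyset$. As in the proof of Theorem \ref{theorem:Ideals}.(\ref{item:Ideals:reg}), the $M$-normality and uniformity of $U$ imply that every club subset of $\kappa$ lying in $M$ belongs to $U$, so $C\in U$; together with $E\in U$ this contradicts $C\cap E=\emptyset$. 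Hence $E$ is stationary, which shows that $A$ is ineffable and completes the inclusion.

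Finally, the non-containment $\NN\triv^{\kappa^+}_\omega(\kappa)\notin\II\triv^{\kappa^+}_\omega(\kappa)$ is exactly the instance $\alpha=\omega$, $\vartheta=\kappa^+$ of Corollary \ref{corollary:RamseyIdealsNotContained}.(\ref{item:RamseyIdealsNotContained:1}), so nothing further is needed there. The step I expect to require the most care is the verification that $E\in U$, i.e.\ the bookkeeping around $D$, the slice $(j_U(\vec{d})_{\kappa^N})^N$, and the reflection of membership in $j_U(E)$ back to membership in $U$; the stationarity upgrade itself is then a short reflection argument once one has recorded that clubs of $M$ lie in $U$.
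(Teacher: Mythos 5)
Your proposal is correct and takes essentially the same route as the paper: the paper proves the inclusion exactly as the related part of Lemma \ref{lemma:TwkRamsey}, additionally noting that, by elementarity of $M\prec\HH{\kappa^+}$, every element of $U$ is stationary — which is precisely your club-reflection upgrade of the set $E\in U$ from unbounded to stationary. The non-containment $\NN\triv^{\kappa^+}_\omega(\kappa)\notin\II\triv^{\kappa^+}_\omega(\kappa)$ is likewise obtained in the paper from Corollary \ref{corollary:RamseyIdealsNotContained}, just as you cite it.
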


\begin{proof}
  The first statement is proven exactly as the related part of Lemma \ref{lemma:TwkRamsey}, additionally using that, by elementarity, every element of $U$ is stationary. 
 The second statement follows from Corollary \ref{corollary:RamseyIdealsNotContained}. 
\end{proof}


\section{Completely ineffable cardinals}\label{section:completelyineffable}

We start by recalling the definition of complete ineffability.

\begin{definition}\label{definition:completelyineffable}
 Let $\kappa$ be an uncountable regular cardinal. 
 \begin{enumerate} 
  \item A nonempty collection $\mathcal{S}\subseteq\POT{\kappa}$ is a \emph{stationary class} if the following statements hold: 
    \begin{enumerate}
      \item Every $A\in\mathcal S$ is a stationary subset of $\kappa$. 
      
      \item If $A\in\mathcal S$ and $A\subseteq B\subseteq \kappa$, then $B\in\mathcal S$.
    \end{enumerate}
    
   \item A subset $A$ of $\kappa$ is \emph{completely ineffable} if there is a stationary class $\mathcal S\subseteq\POT{\kappa}$ with $A\in\mathcal S$ and the property that for every $S\in\mathcal S$ and every function $\map{c}{[S]^2}{2}$, there is $H\in\mathcal S$ that is homogeneous for $f$. 
   
   \item The cardinal $\kappa$ is \emph{completely ineffable} if the set $\kappa$ is completely ineffable in the above sense.
 \end{enumerate}
\end{definition}

It is trivial to check that if there exists a stationary class $\mathcal S\subseteq\mathcal P(\kappa)$ witnessing the complete ineffability of $\kappa$, then the union of all such stationary classes is again a stationary class witnessing the complete ineffability of $\kappa$, and it is therefore the unique \emph{maximal stationary class} that does so. From \cite[Corollary 3]{MR853844} and its proof, and from the definition of the completely ineffable ideal in \cite{MR918427}, it is immediate that the completely ineffable ideal is the complement of this maximal stationary class.
 The following lemma is an easy adaption of Kunen's result that ineffability can be characterized either in terms of homogeneous sets for colourings or for lists (see \cite[Theorem 4]{jensennotes}).\footnote{\cite[Theorem 4]{jensennotes} also provides a characterization of ineffability in terms of regressive colourings. An analogous result would be possible for complete ineffability, however we do not need this in our paper, and hence omitted to present it.} It is probably a folklore result, and its substantial direction is implicit in the proof of \cite[Theorem 3.12]{nielsen-welch}.

\begin{lemma}\label{lemma:cewrtlists}
  A stationary class $\mathcal S\subseteq\POT{\kappa}$ with $0\not\in\bigcup\mathcal S$\footnote{Note that this is a harmless extra assumption, for if some stationary class witnesses $A\subseteq\kappa$ to be completely ineffable, then there is such a stationary class $\mathcal S$ which also satisfies $0\not\in\bigcup\mathcal S$.} witnesses that $A\subseteq\kappa$ is completely ineffable if and only if $\mathcal S$ witnesses $A$ to be \emph{completely ineffable with respect to lists}, in the sense that    $A\in\mathcal S$ and for every $S\in\mathcal S$, and every $S$-list $\vec{d}=\seq{d_\alpha}{\alpha\in S}$, there is $K\in\mathcal S$ with $d_\alpha=d_\beta\cap\alpha$ for all $\alpha,\beta\in K$ with $\alpha<\beta$. 
\end{lemma}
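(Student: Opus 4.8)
The plan is to establish the two implications separately, keeping the stationary class $\mathcal{S}$ fixed throughout, so that in each direction I only need to translate a single instance of one combinatorial property into an instance of the other, always landing the witnessing set back inside $\mathcal{S}$. Since both notions include the clause $A\in\mathcal{S}$, nothing needs to be checked there, and I may freely apply whichever of the two hypotheses I am assuming to \emph{every} $S\in\mathcal{S}$ and as often as I like.

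For the direction from lists to colourings, fix $S\in\mathcal{S}$ and a colouring $\map{c}{[S]^2}{2}$. I would encode $c$ as the $S$-list given by $d_\alpha=\Set{\beta\in S\cap\alpha}{c(\{\beta,\alpha\})=0}$ and apply the list hypothesis to obtain a coherent $K_0\in\mathcal{S}$, i.e.\ with $d_\alpha=d_\beta\cap\alpha$ for all $\alpha<\beta$ in $K_0$. Coherence makes the truth value of ``$\alpha\in d_\beta$'' independent of the choice of $\beta>\alpha$ in $K_0$, so I can define $e\colon K_0\to 2$ by $e(\alpha)=1$ exactly when $\alpha\in d_\beta$ (equivalently $c(\{\alpha,\beta\})=0$) for some, hence every, $\beta>\alpha$ in $K_0$. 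To find a single piece of $K_0$ on which $e$ is constant, I would run the list hypothesis a second time on the marker list $d'_\alpha=\{0\}$ if $e(\alpha)=1$ and $d'_\alpha=\emptyset$ otherwise; here the standing assumption $0\notin\bigcup\mathcal{S}$ guarantees $0<\alpha$, so $d'_\alpha\subseteq\alpha$ and $d'$ is a genuine $K_0$-list. A coherent $K_1\in\mathcal{S}$ for $d'$ then satisfies $0\in d'_\beta\cap\alpha\leftrightarrow 0\in d'_\beta$ for $\alpha<\beta$ in $K_1$, forcing $e$ to be constant on $K_1$; unravelling the definition of $e$ shows that $K_1$ is $c$-homogeneous (colour $0$ if $e\equiv 1$, colour $1$ if $e\equiv 0$), as required.

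For the converse, fix $S\in\mathcal{S}$ and an $S$-list $\vec d=\seq{d_\alpha}{\alpha\in S}$, and colour a pair $\alpha<\beta$ by $0$ iff $d_\alpha=d_\beta\cap\alpha$. The colouring hypothesis yields a homogeneous $H\in\mathcal{S}$; if its colour is $0$ then $H$ is coherent and we are done, so the whole difficulty is to rule out colour $1$. Assuming colour $1$, every pair $\alpha<\beta$ in $H$ has a least point of disagreement $\gamma(\alpha,\beta)=\min(d_\alpha\,\triangle\,(d_\beta\cap\alpha))<\alpha$, and I would colour pairs by whether this point lies in $d_\alpha$ or in $d_\beta$, extracting a further homogeneous $H_1\in\mathcal{S}$ via the colouring hypothesis. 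The key structural claim, proved by a three-point comparison of $d_\alpha,d_\beta,d_{\beta'}$ for $\alpha<\beta<\beta'$ in $H_1$, is that $\gamma(\alpha,\beta)\geq\gamma(\alpha,\beta')$; since a non-increasing sequence of ordinals stabilises, the value $\gamma^*(\alpha)=\lim_{\beta}\gamma(\alpha,\beta)<\alpha$ is well-defined. This is a regressive function on the stationary set $H_1$, so Fodor's lemma returns a stationary $H_2\subseteq H_1$ on which $\gamma^*$ takes a constant value $\gamma_0$. Choosing $\alpha<\alpha'<\beta$ in $H_2$ far enough apart that $\gamma(\alpha,\alpha')=\gamma(\alpha',\beta)=\gamma_0$, the homogeneity of $H_1$ places $\gamma_0$ simultaneously inside and outside $d_{\alpha'}$, a contradiction.

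I expect the main obstacle to be exactly this last step: verifying the monotonicity $\gamma(\alpha,\beta)\geq\gamma(\alpha,\beta')$ (both sub-cases, according to the colour of $H_1$, must be handled, and each splits into a short case analysis on the relative order of the disagreement points) and then organising the pressing-down argument so that the three chosen points share the common value $\gamma_0$. Everything else is bookkeeping, using only that every member of $\mathcal{S}$ is stationary (hence unbounded, and a legitimate target for Fodor's lemma) and that $\kappa$ is regular.
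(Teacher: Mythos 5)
Your proof is correct, and while your lists-to-colourings direction coincides with the paper's, your colourings-to-lists direction takes a genuinely different route. For lists to colourings you do exactly what the paper does: encode the colouring as an $S$-list, cohere on $K_0\in\mathcal S$, and then apply the list property a second time to a two-valued marker list to stabilise the induced function $e$ (the paper marks with $e_\alpha=\alpha$ versus $\emptyset$ where you mark with $\{0\}$ versus $\emptyset$; both exploit $0\notin\bigcup\mathcal S$ in the same way). For the converse, the paper does not colour pairs by coherence: it linearly orders bounded subsets of $\kappa$ by $a\prec b$ iff $a$ and $b$ agree below some $\alpha\in b\setminus a$, colours a pair $\alpha<\beta$ by whether $d_\alpha\preceq d_\beta$, observes that the homogeneous colour must be the weakly increasing one (there is no $\prec$-descending $\kappa$-sequence), and then intersects the homogeneous set $H$ with the club $C$ of fixed points of a stabilisation function --- which requires the additional observation, asserted without proof in the paper, that $\mathcal S$ is closed under intersections with clubs, so that $C\cap H\in\mathcal S$. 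You instead colour by coherence directly and rule out the incoherent colour outright: a second colouring according to whether the least disagreement point $\gamma(\alpha,\beta)$ lies in $d_\alpha$ or in $d_\beta$, the monotonicity $\gamma(\alpha,\beta)\geq\gamma(\alpha,\beta')$ on the resulting homogeneous $H_1\in\mathcal S$ (your three-point comparison does establish this: if $\gamma(\alpha,\beta)<\gamma(\alpha,\beta')$, then $d_\beta$ and $d_{\beta'}$ agree below $\gamma(\alpha,\beta)$ and differ there, so $\gamma(\beta,\beta')=\gamma(\alpha,\beta)$, and in either homogeneous colour this single ordinal is placed both inside and outside one of $d_\beta$, $d_{\beta'}$), stabilisation of the non-increasing ordinal sequence to define $\gamma^*$, and Fodor on the stationary set $H_1$ to fix $\gamma^*\equiv\gamma_0$ on some stationary $H_2$; choosing $\alpha<\alpha'<\beta$ in $H_2$ beyond the respective stabilisation thresholds then puts $\gamma_0$ both in and out of $d_{\alpha'}$. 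What your route buys is that the coherent witness is the homogeneous set $H$ itself, already a member of $\mathcal S$, so you never need club-closure of $\mathcal S$ (Fodor is applied only to a stationary set, and $H_2$ need not belong to $\mathcal S$ since it serves only to supply three points); what it costs is a second application of the partition property and a more delicate case analysis, in the style of the Jensen--Kunen pressing-down proof for ineffability.
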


\begin{proof}
  First, assume  that the stationary class $\calS$ witnesses that $A\subseteq\kappa$ is completely ineffable.
  Pick $S\in\calS$, and an $S$-list $\vec{d}=\seq{d_\alpha}{\alpha\in S}$. 
    Order the bounded subsets of $\kappa$ by letting $a\prec b$ if there is an $\alpha<\kappa$ such that $a\cap\alpha=b\cap\alpha$ and $\alpha\in b\setminus a$.\footnote{Note that for bounded subsets $a$ and $b$ of $\kappa$, either $a\prec b$, or $b\prec a$,  or $a=b$ holds.}
    Define a colouring $\map{c}{[\kappa]^2}{2}$ by setting, for $\alpha<\beta$, $c(\{\alpha,\beta\})=1$ in case $d_\alpha\prec d_\beta$ or $d_\alpha=d_\beta$, and setting $c(\{\alpha,\beta\})=0$ otherwise. 
    Let $H\in\calS$ be homogeneous for $c$. Since we cannot have a descending $\kappa$-sequence in the ordering $\prec$, it follows that $c$ takes constant value $1$ on $H$. If the sequence $\seq{d_\alpha}{\alpha\in H}$ is eventually constant, then some final segment $K$ of $H$ is as desired.
    Otherwise, for every $\xi<\kappa$, consider the sequence $\seq{d_\alpha\cap\xi}{\alpha\in H, ~ \alpha>\xi}$. 
    Since this is a weakly $\prec$-increasing $\kappa$-sequence of subsets of $\xi$, we can define a function $\map{f}{\kappa}{\kappa}$ by letting $f(\xi)$ be the minimal $\eta\geq\xi$ such that $d_\alpha\cap\xi=d_\beta\cap\xi$ whenever $\alpha,\beta\in H$ with $\eta\leq\alpha<\beta$. 
    Then,      $f$ is a continuous, increasing function that maps $\kappa$ cofinally into $\kappa$. 
    Let $C$ be the closed unbounded subset of $\kappa$ of fixed points of $f$.  
    Whenever $\zeta\in C\cap H$, we have in particular that $d_\zeta=d_\alpha\cap\zeta$ for every $\alpha>\zeta$ in $H$. But it is also easy to see that $\calS$ is closed under intersections with closed unbounded subsets of $\kappa$. Thus, we have $K=C\cap H\in\calS$, and hence $\calS$ witnesses $A$ to be completely ineffable with respect to lists, as desired.

 In the other direction, let $\calS$ be a stationary class with $0\not\in\bigcup\calS$ that witnesses that $A$ is completely ineffable with respect to lists.
 Pick $S\in\calS$, and a colouring $\map{c}{[S]^2}{2}$. Define an $S$-list $\vec{d}=\seq{d_\alpha}{\alpha\in S}$ by setting $d_\alpha=\Set{\beta<\alpha}{c(\{\alpha,\beta\})=1}$. 
  By our assumption, we find $H\in\calS$ such that $d_\alpha=d_\beta\cap\alpha$ whenever $\alpha<\beta$ are both elements of $H$. 
  Let $\map{f}{H}{2}$ be defined by setting $f(\alpha)=1$ if and only if $\alpha\in d_\beta$ for some (equivalently, for all) $\beta>\alpha$ in $H$. Now, define an $H$-list $\vec{e}=\seq{e_\alpha}{\alpha\in H}$ by setting $e_\alpha=\alpha$ in case $f(\alpha)=1$, and setting $e_\alpha=\emptyset$ otherwise. 
  By our assumption, we find $K\in\calS$ such that $f$ is homogeneous on $K$. Assume that $f$ takes value $i\in\{0,1\}$ on $K$. Then, if $\alpha<\beta$ are both elements of $K$, our definitions yield that $c(\{\alpha,\beta\})=i$, i.e.\ $K\in\mathcal S$ is homogeneous for $c$, as desired. 
\end{proof}

The following result is the crucial link connecting completely ineffable and Ramsey-like cardinals, and in particular implies Theorem \ref{theorem:Ideals} (\ref{item:Ideals:CI}), by the above and by Lemma \ref{lemma:filtervsramsey}. In particular, it shows that a cardinal is completely ineffable if and only if it is $\triv_\omega^\forall$-Ramsey. Its proof is a generalization, adaption and simplification of \cite[Theorem 3.12]{nielsen-welch}.

\begin{theorem}\label{nielsen}
 Given an uncountable regular cardinal $\kappa$, a subset   $A$ of $\kappa$ is completely ineffable if and only if $\kappa=\kappa^{{<}\kappa}$ holds and $A$ has the $\triv_\omega^\forall$-filter property. 
\end{theorem}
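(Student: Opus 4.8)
The plan is to prove both implications, using throughout that $\triv$ trivially remains true under restrictions and under $\kappa$-restrictions. By Lemma~\ref{lemma:filtervsramsey}, Lemma~\ref{lemma:filterequivalence} and Corollary~\ref{corollary:RamseyLocal}, this means that, for a fixed $A$, the $\triv_\omega^\forall$-filter property, the $\triv_\omega^\theta$-filter property for any single regular $\theta>\kappa$, and $\triv_\omega^\forall$-Ramseyness all coincide; I will pass between these freely.

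For the forward direction, suppose $A$ is completely ineffable. Since the witnessing stationary class is upward closed and $A\subseteq\kappa$, the set $\kappa$ is completely ineffable, hence ineffable, hence weakly compact and thus inaccessible, so $\kappa=\kappa^{{<}\kappa}$. To verify the filter property, fix a regular $\theta>\kappa$ and the maximal stationary class $\mathcal{S}$ witnessing complete ineffability of $A$, chosen with $0\notin\bigcup\mathcal{S}$; by Lemma~\ref{lemma:cewrtlists}, $\mathcal{S}$ witnesses $A$ to be completely ineffable \emph{with respect to lists}. I will describe a strategy for the {\sf Judge} in $G\triv_\omega^\theta(A)$ that wins against every {\sf Challenger} play. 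Along a run $\langle M_n,F_n\rangle$, the {\sf Judge} maintains a $\subseteq$-decreasing sequence $\langle H_n\mid n<\omega\rangle$ of elements of $\mathcal{S}$ with $H_0\subseteq A$: after the {\sf Challenger} plays $M_n$, the {\sf Judge} fixes an enumeration $\langle x_\xi\mid\xi<\kappa\rangle$ of $M_n\cap\POT{\kappa}$, forms the $H_{n-1}$-list $d_\beta=\Set{\xi<\beta}{\beta\in x_\xi}$ (with $H_{-1}:=A$), and applies the list property of $\mathcal{S}$ to get $H_n\in\mathcal{S}$ with $H_n\subseteq H_{n-1}$ along which this list coheres. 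Exactly as in Theorem~\ref{theorem:ineffableideal}, homogeneity forces precisely one of $x_\xi,\kappa\setminus x_\xi$ to contain a tail of $H_n$, so $F_n=\Set{x\in M_n\cap\POT{\kappa}}{x\supseteq H_n\setminus\gamma\text{ for some }\gamma<\kappa}$ is an $M_n$-ultrafilter, with $A\in F_0$ since $H_0\subseteq A$; and because a tail of $H_{n-1}$ contains a tail of $H_n\subseteq H_{n-1}$, the sequence $\langle F_n\rangle$ is $\subseteq$-increasing.

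It remains to check that $F_\omega=\bigcup_n F_n$ is $M_\omega$-normal, where $M_\omega=\bigcup_n M_n$. Given $\vec v=\langle v_\eta\mid\eta<\kappa\rangle\in M_\omega$ with each $v_\eta\in F_\omega$, fix $n$ with $\vec v\in M_n$, so $\Delta_\eta v_\eta\in M_{n+1}$. As $F_\omega$ is a proper filter extending the $M_{n+1}$-ultrafilter $F_{n+1}$, each $v_\eta$ already lies in $F_{n+1}$ and hence contains a tail of $H_{n+1}$. The club of closure points of the map sending $\eta$ to the least bound of such a tail meets the stationary set $H_{n+1}$ unboundedly, so $H_{n+1}\cap\Delta_\eta v_\eta$ is unbounded; thus $F_{n+1}$ cannot measure $\Delta_\eta v_\eta$ negatively, giving $\Delta_\eta v_\eta\in F_{n+1}\subseteq F_\omega$. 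Hence the {\sf Judge} wins every run, so the {\sf Challenger} has no winning strategy, and $A$ has the $\triv_\omega^\theta$-filter property. For the converse, assume $\kappa=\kappa^{{<}\kappa}$ and that $A$ has the $\triv_\omega^\forall$-filter property, so $A$ is $\triv_\omega^\forall$-Ramsey. Let $I=\II\triv_\omega^\forall(\kappa)$, a proper normal ideal by Lemma~\ref{lemma:AllIdealsNormal} and the remarks following Definition~\ref{definition:ramseyideal}, and set $\mathcal{S}=\Set{S\subseteq\kappa}{S\text{ is }\triv_\omega^\forall\text{-Ramsey}}$, the collection of $I$-positive sets. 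That $\mathcal{S}$ is a stationary class containing $A$ is routine: a witness $(M,U)$ for $S\in\mathcal{S}$ has $U$ uniform and $M$-normal, hence $U$ contains every club of $M$, so $S$ is stationary; and the same $(M,U)$ witnesses $S'\in\mathcal{S}$ whenever $S\subseteq S'\in M$. By Lemma~\ref{lemma:cewrtlists} it suffices to establish the partition property in list form: for every $S\in\mathcal{S}$ and $S$-list $\vec d$, there is a coherent $K\in\mathcal{S}$. A witness $(M,U)$ with $S,\vec d\in M$ and $S\in U$ yields, by the split-and-diagonalize computation of Theorem~\ref{theorem:ineffableideal}.(3) (using $M$-normality to form a diagonal intersection and $\kappa$-amenability to keep it in $M$), a set $K\in U\cap M$ along which $\vec d$ coheres.

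The main obstacle is the final step: verifying that this coherent $K$ is itself $\triv_\omega^\forall$-Ramsey, i.e.\ $K\in\mathcal{S}$. A single witnessing model only supplies an ultrafilter for one parameter, whereas $\triv_\omega^\forall$-Ramseyness of $K$ demands one for \emph{every} parameter, so positivity cannot simply be read off from $K\in U$. I would attempt to close this gap by passing to the $\kappa$-powerset preserving $\kappa$-embedding $\map{j_U}{M}{\langle N,\IN_N\rangle}$ induced by $U$, for which $\kappa^N\IN_N j_U(K)$ by Corollary~\ref{corollary:correspondence2}, and transferring the relevant $\HH{\kappa^+}$-level information through the isomorphism of Lemma~\ref{lemma:kappapowersetpreservings}.(\ref{lemma:kappapowersetpreserving:2})---the same mechanism by which Lemma~\ref{lemma:TwkRamsey} and Lemma~\ref{lemma:IdealsNicelyProper} control the canonical non-Ramsey sets---together with the locality of the filter property (Lemma~\ref{lemma:filterequivalence}), which lets $M\prec\HH{\theta}$ compute $I$ correctly on its elements. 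An alternative route I would keep in reserve is a direct game-theoretic one: assuming that every $c$-homogeneous subset of $S$ lacked the filter property, one merges the corresponding {\sf Challenger} strategies into a single {\sf Challenger} strategy in $G\triv_\omega^\theta(S)$, exploiting that the {\sf Judge}'s $M_\omega$-normal filter concentrates on a single-colour homogeneous set. Either way, correctly reflecting positivity of the extracted homogeneous (equivalently, coherent) set is the real work, and it is where I expect the argument of \cite{nielsen-welch} to be adapted and simplified.
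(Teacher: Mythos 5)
Your forward direction is essentially the paper's own argument and is fine: you describe a strategy for the {\sf Judge} that maintains a $\subseteq$-decreasing sequence $\langle H_n\rangle$ in the maximal stationary class, extract coherence via Lemma~\ref{lemma:cewrtlists}, and generate $F_n$ from tails of $H_n$; your limit-normality check is a correct (if slightly more laborious) version of the remark following Definition~\ref{generalfiltergames}. The genuine gap is in the converse, at exactly the step you yourself flag as ``the real work'': you never prove that the coherent set $K\in U$ belongs to $\mathcal S$, and the two repairs you sketch are not workable as stated. Passing to the ultrapower and transferring $\HH{\kappa^+}$-level statements through Lemma~\ref{lemma:kappapowersetpreservings}.(\ref{lemma:kappapowersetpreserving:2}) can at best show that $K$ is Ramsey-like \emph{inside} the (possibly ill-founded) target model $\langle N,\IN_N\rangle$; it does not produce, in $\VV$, the witnessing pairs for \emph{arbitrary} parameters $x\in\HH{\theta}$ that $\triv_\omega^\forall$-Ramseyness of $K$ requires. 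Likewise, there is no need to ``merge'' Challenger strategies over all candidate homogeneous sets, and no visible way to do so.

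The missing idea is that positivity \emph{can} be read off from $K\in U$, because a \emph{single} witnessing pair suffices: if $M\prec\HH{\vartheta}$ is a weak $\kappa$-model for sufficiently large regular $\vartheta$ and $U$ is a uniform, $\kappa$-amenable, $M$-normal $M$-ultrafilter, then \emph{every} $B\in U$ has the $\triv_\omega^\theta$-filter property for every regular $\theta$ with $2^{{<}\theta}<\vartheta$. This is precisely the argument in the proof of Lemma~\ref{lemma:filtervsramsey}.(2), which only ever uses one pair $(M,U)$: a winning strategy for the {\sf Challenger} in $G\triv_\omega^\theta(B)$ would lie in $\HH{\vartheta}$, so by elementarity one exists inside $M$, and the {\sf Judge} defeats it by playing the restrictions $U\cap M_n$ --- legal moves by $\kappa$-amenability (which puts $U\cap M_n$ into $M$), with the limit filter $M_\omega$-normal because $U$ is $M$-normal. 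Lemma~\ref{lemma:filterequivalence} then upgrades this to the $\triv_\omega^\forall$-filter property, and Lemma~\ref{lemma:filtervsramsey}.(1) converts it back into $\triv_\omega^\forall$-Ramseyness: witnesses for arbitrary parameters are generated by \emph{playing the game}, not by reflecting into an ultrapower. This is exactly how the paper justifies its one-line claim $H\in U\subseteq\mathcal S$ (one should take $\theta$ large enough, say $\theta>2^\kappa$, when choosing the witnessing pair, rather than an arbitrary regular $\theta>\kappa$). With this observation your backward direction closes immediately; without it, your proposal is incomplete at its central point, so as written it does not constitute a proof.
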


\begin{proof}
  First, assume that $A$ has the $\triv_\omega^\forall$-filter property, and  $\kappa=\kappa^{{<}\kappa}$ holds. 
   Let $\theta>\kappa$ be regular. By Lemma \ref{lemma:filtervsramsey}, $A$ is $\triv_\omega^\theta$-Ramsey. 
   Let $\mathcal S$ denote the collection of all subsets of $\kappa$ which are $\triv_\omega^\forall$-Ramsey.
   Then, $A\in \mathcal S$ and $\mathcal S$ is a stationary class. 
   Pick $X\in\mathcal S$ and $\map{c}{[X]^2}{2}$. 
   Pick $M\prec\HH{\theta}$ with $c\in M$, and an $M$-normal, $\kappa$-amenable $M$-ultrafilter $U$ on $\kappa$ with $X\in U$. 
   Let $\vec{Y}=\seq{Y_\alpha}{\alpha\in X}$ be defined by setting $Y_\alpha=\Set{\beta>\alpha}{c(\{\alpha,\beta\})=0}$.
    Define $\vec Z=\langle Z_\alpha\mid\alpha\in X\rangle$ by setting $Z_\alpha=Y_\alpha$ in case $Y_\alpha\in U$, and let $Z_\alpha=X\setminus Y_\alpha\in U$ otherwise. Then, $\vec Z\subseteq U$ implies $\Delta\vec Z\in U$. Let $H\in U$ either be $\Delta\vec Z\cap\Set{\alpha\in X}{Y_\alpha\in U}$ or $\Delta\vec Z\cap\Set{\alpha\in X}{Y_\alpha\notin U}$. 
     Then, it is easy to check that $H\subseteq X$ is homogeneous for $c$. Moreover, we have $H\in U\subseteq\mathcal S$ and hence $\calS$ is a stationary class witnessing that $A$ is completely ineffable.

 For the reverse direction, assume that $A\subseteq\kappa$ is completely ineffable, as witnessed by the stationary class $\mathcal S$. 
  Let $\theta>\kappa$ be a regular cardinal. We describe a strategy for the {\sf Judge} in the game $G\triv_\omega^\theta(A)$. As required by the rules of this game, the {\sf Challenger} and the {\sf Judge} take turns playing $\kappa$-models $M_n$ and $M_n$-ultrafilters $U_n$. We let the {\sf Judge} also pick, in each step $n<\omega$, an enumeration $\vec{X}^n=\seq{X_\xi^n}{\xi<\kappa}$ of $\POT{\kappa}\cap M_n$ and a set $H_n\in\mathcal S$ such that the following hold (the first two items are required by the rules of the game $G\triv_\omega^\theta(A)$): 
\begin{itemize} 
  \item $A\in U_0$,
  \item If $n>0$, then $U_n\supseteq U_{n-1}$ and $H_n\subseteq H_{n-1}$,
  \item $X_\xi^n\in U_n$ if and only if  $\gamma\in X_\xi^n$ for all $\xi<\gamma\in H_n$ if and only if $\gamma\in X_\xi^n$ for some $\xi<\gamma\in H_n$.
\end{itemize}
Assume that we have done this for $m<n$. We want to define the above objects at stage $n$. For the sake of a uniform argument when $n=0$, let $H_{-1}=A$, and let $C_{-1}=\kappa$. 
 For $\alpha\in H_{n-1}$, define $r_\alpha\subseteq\alpha$ by letting $\xi\in r_\alpha$ if and only if  $\alpha\in X_\xi^n$. 
  By Lemma \ref{lemma:cewrtlists}, we find a stationary set $H_n\subseteq H_{n-1}$ in $\mathcal S$ on which the $r_\alpha$'s cohere, that is, for every $\alpha<\beta$ in $H_n$, $r_\alpha=r_\beta\cap\alpha$. 
  But this means that for $\alpha<\beta\in H_n$, and $\xi<\alpha$, $\alpha\in X_\xi^n$ if and only if $\beta\in X_\xi^n$. This shows that if we now define $U_n$ using $H_n$ as required above, it will satisfy the required equivalence. In particular, this implies that $H_n\subseteq\Delta U_n$, making $U_n$ a normal $M_n$-measure.

It remains to show that $U_{n-1}\subseteq U_n$ in case $n>0$. Thus, let $X\in U_{n-1}$ be given, say $X=X_\xi^n=X_\zeta^{n-1}$. By the definition of $U_{n-1}$, every $\zeta<\gamma\in H_{n-1}$ is an element of $X$. In particular, we find some such $\gamma>\xi$ in $H_n$, witnessing that $X\in U_n$, as desired.
\end{proof}

We are now ready to generalize Kleinberg's result from \cite{MR513844}. Given the above, the following is now an easy consequence of Theorem \ref{theorem:filtervsramsey}, and in particular implies Theorem \ref{theorem:SchemesSummary} (\ref{item:schemeSmall:CI}).

\begin{theorem}\label{theorem:completelyineffable}
  Given an uncountable cardinal $\kappa$, the following statements are equivalent for all regular $\theta>2^\kappa$  and all $\lambda\leq\kappa$: 
  \begin{enumerate} 
    \item The cardinal $\kappa$ is completely ineffable. 
    
    \item For many $(\lambda,\kappa)$-models $M\prec\HH{\theta}$, there exists a uniform, $\kappa$-amenable $M$-normal $M$-ultrafilter on $\kappa$. 
    
    \item For many $(\lambda,\kappa)$-models $M\prec\HH{\theta}$, there exists a $\kappa$-powerset preserving $\kappa$-embedding $\map{j}{M}{\langle N,\IN_N\rangle}$.
  \end{enumerate}
\end{theorem}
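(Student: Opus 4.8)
The plan is to read the theorem off from Theorem~\ref{theorem:filtervsramsey} and Theorem~\ref{nielsen} by specializing to $A=\kappa$, $\gamma=\omega$, and $\Psi(M,U)\equiv\triv(M,U)$. First I would record the trivialities that make these results applicable: $\triv$ remains true under restrictions and under $\kappa$-restrictions (vacuously, since $\triv$ is the constantly true property); closure under ${<}\omega$-sequences is automatic for every $\ZFC^-$-model; $\lambda^{{<}\omega}=\lambda$ holds for every infinite cardinal $\lambda\leq\kappa$; and for $A=\kappa$ the side conditions $A\in U$ and $\Psi(M,U)$ occurring in Theorem~\ref{theorem:filtervsramsey} hold automatically. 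With these remarks in place, the hypotheses of Theorem~\ref{theorem:filtervsramsey} are met for every infinite $\lambda\leq\kappa$, and its clauses (2) and (3) become exactly the assertions that, for every regular $\theta>\kappa$, there are many $(\lambda,\kappa)$-models $M\prec\HH{\theta}$ carrying a uniform, $\kappa$-amenable, $M$-normal $M$-ultrafilter on $\kappa$ (respectively a $\kappa$-powerset preserving $\kappa$-embedding). Combining Theorem~\ref{nielsen} with Lemma~\ref{lemma:filtervsramsey} moreover yields the key identification that $\kappa$ is completely ineffable if and only if $\kappa$ is $\triv_\omega^\forall$-Ramsey.

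Next I would dispose of the two routine directions. The equivalence of clauses (2) and (3) is \emph{local}: for a fixed model $M$, the correspondences of Proposition~\ref{proposition:correspondence1}.(5), Corollary~\ref{corollary:correspondence2} and Lemma~\ref{lemma:general} (equivalently, Corollary~\ref{corollary:correspondence1}) convert a uniform, $\kappa$-amenable, $M$-normal $M$-ultrafilter on $\kappa$ into a $\kappa$-powerset preserving $\kappa$-embedding with critical point $\kappa$ and conversely, so clause~(2) at a fixed pair $(\theta,\lambda)$ holds precisely when clause~(3) does. For $(1)\Rightarrow(2)$, complete ineffability gives that $\kappa$ is $\triv_\omega^\forall$-Ramsey, hence Theorem~\ref{theorem:filtervsramsey}.(2) holds; since that clause asserts the model condition for \emph{every} regular $\theta>\kappa$, it holds in particular at our fixed $\theta>2^\kappa$ and the given $\lambda$.

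The implication $(2)\Rightarrow(1)$ is the crux, and the main obstacle is exactly the mismatch between the single fixed $\theta>2^\kappa$ in the statement and the universal quantification over $\theta$ built into Theorem~\ref{theorem:filtervsramsey}.(2). I would bridge this gap with the locality results. Starting from clause~(2) at the fixed $\theta>2^\kappa$, I would first use the already-established equivalence with clause~(3) together with Lemma~\ref{lemma:inaccessible2} to conclude that $\kappa$ is inaccessible, so that $\kappa=\kappa^{{<}\kappa}$ holds and the filter games are well defined. Then, since $2^{{<}\kappa^+}=2^\kappa<\theta$, I would run the (size-insensitive) argument of Lemma~\ref{lemma:filtervsramsey}.(2): were the {\sf Challenger} to have a winning strategy $\sigma$ for $G\triv_\omega^{\kappa^+}(\kappa)$, then $\sigma\in\HH{\theta}$, so clause~(2) furnishes a $(\lambda,\kappa)$-model $M\prec\HH{\theta}$ with $\sigma\in M$ and a witnessing ultrafilter $U$, and the {\sf Judge} defeats $\sigma$ inside $M$ by playing restrictions of $U$ (the only features of $M$ used are that it contains $\sigma$ and is closed under finite sequences, so the reduced model size $\lambda$ is harmless). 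This shows that $\kappa$ has the $\triv_\omega^{\kappa^+}$-filter property, and Lemma~\ref{lemma:filterequivalence} (applicable because $\triv$ remains true under $\kappa$-restrictions) upgrades it to the $\triv_\omega^\forall$-filter property, whence Theorem~\ref{nielsen} delivers complete ineffability. For the special case $\lambda=\kappa$ one can argue more directly, since then clause~(2) is literally the statement that $\kappa$ is $\triv_\omega^\theta$-Ramsey, which Corollary~\ref{corollary:RamseyLocal} identifies with $\triv_\omega^\forall$-Ramseyness as soon as $\theta>2^\kappa$. The only points demanding care are the verification that $\triv$ satisfies the hypotheses of these lemmas and the bookkeeping that $2^{{<}\kappa^+}<\theta$ permits the filter-property argument to be run at the level $\kappa^+$.
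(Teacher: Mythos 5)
Your proposal is correct and follows essentially the same route as the paper's own proof: $(1)\Rightarrow(2)$ via Theorem~\ref{theorem:filtervsramsey} together with Theorem~\ref{nielsen}, the equivalence $(2)\Leftrightarrow(3)$ via the correspondence results (the paper cites Corollary~\ref{corollary:correspondence1}, which packages your three references), and $(2)\Rightarrow(1)$ by first invoking Lemma~\ref{lemma:inaccessible2} for inaccessibility and then extracting the $\triv_\omega^{\kappa^+}$-filter property from the game argument of Lemma~\ref{lemma:filtervsramsey}.(2) before applying Lemma~\ref{lemma:filterequivalence}. You are in fact more explicit than the paper about the bookkeeping (that $2^{{<}\kappa^+}=2^\kappa<\theta$ places Challenger strategies inside $\HH{\theta}$, and that $\gamma=\omega$ makes the ${<}\gamma$-closure and size-of-$M$ issues harmless), which the paper compresses into the remark that the proof of $(2)\Rightarrow(1)$ in Theorem~\ref{theorem:filtervsramsey} applies.
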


\begin{proof}
  That (1) implies (2) is immediate from Theorem \ref{theorem:filtervsramsey} and Theorem \ref{nielsen}. 
  The equivalence between (2) and (3) follows from Corollary \ref{corollary:correspondence1}. 
  Now, assume that (2) holds. Then Lemma \ref{lemma:inaccessible2} implies that $\kappa$ is inaccessible. 
  Moreover, observe that the proof of the implication from (2) to (1) in Theorem \ref{theorem:filtervsramsey} shows that $\kappa$ has the 
  $\triv_\omega^{\kappa^+}$-filter property. By Lemma \ref{lemma:filterequivalence}, it follows that $\kappa$ is completely ineffable.
\end{proof}

In the remainder of this section, we verify Theorem \ref{theorem:IdealContain} (\ref{item:cont:ci}).

\begin{lemma}
  If $\kappa$ is completely ineffable, then $\II_{ie}^\kappa\cup\{N_{ie}^\kappa\}\subseteq\II_{{\prec}{cie}}^\kappa$ and $\NN_{cie}^\kappa\notin\II_{{\prec}{cie}}^\kappa$.
\end{lemma}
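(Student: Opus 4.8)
The statement comprises three assertions, which I would treat separately, using throughout that $\II^\kappa_{ie}$ is the ineffable ideal and $\II^\kappa_{{\prec}cie}$ the completely ineffable ideal (Theorem \ref{theorem:Ideals}.(\ref{item:Ideals:ie}) and (\ref{item:Ideals:CI})). Thus $\II^\kappa_{{\prec}cie}$ is the complement of the maximal stationary class $\mathcal T$ witnessing the complete ineffability of $\kappa$, and since stationary classes are upward closed, a set $B\subseteq\kappa$ lies in $\mathcal T$ precisely when it is completely ineffable. For the inclusion $\II^\kappa_{ie}\subseteq\II^\kappa_{{\prec}cie}$ it suffices, taking complements, to show that every completely ineffable subset of $\kappa$ is ineffable. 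Given such a set $A$ witnessed by a stationary class $\mathcal S$ (which I may assume satisfies $0\notin\bigcup\mathcal S$), I would invoke Lemma \ref{lemma:cewrtlists}: for an arbitrary $A$-list $\seq{d_\alpha}{\alpha\in A}$ there is $K\in\mathcal S$ on which the $d_\alpha$ cohere, and then $D=\bigcup_{\alpha\in K}d_\alpha$ gives $\Set{\alpha\in A}{D\cap\alpha=d_\alpha}\supseteq K$, which is stationary because $K\in\mathcal S$. Hence $A$ is ineffable.

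For $\NN^\kappa_{ie}\in\II^\kappa_{{\prec}cie}$ I would argue as in the second part of Lemma \ref{lemma:TwkRamsey}. Assuming $\NN^\kappa_{ie}\notin\II^\kappa_{{\prec}cie}$, I fix a large regular $\theta$, a weak $\kappa$-model $M\prec\HH{\theta}$, and a uniform, $\kappa$-amenable, $M$-normal $M$-ultrafilter $U$ with $\NN^\kappa_{ie}\in U$. By Corollary \ref{corollary:correspondence1} the embedding $j_U\colon M\to\langle N,\IN_N\rangle$ is a $\kappa$-powerset preserving $\kappa$-embedding with $\crit{j_U}=\kappa$, and by Corollary \ref{corollary:correspondence2} we have $\kappa^N\IN_N j_U(\NN^\kappa_{ie})$, so that $N$ regards $\kappa^N$ as non-ineffable. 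On the other hand, $\kappa$ is completely ineffable, hence ineffable and inaccessible, so $\VV_\kappa\subseteq M$; as ineffability of $\kappa$ is a property of $\HH{\kappa^+}$ correctly computed by $M\prec\HH{\theta}$, the $\IN$-isomorphism of Lemma \ref{lemma:kappapowersetpreservings}.(\ref{lemma:kappapowersetpreserving:2}) transfers it to $N$, which must then regard $\kappa^N$ as ineffable, a contradiction.

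For $\NN^\kappa_{cie}\notin\II^\kappa_{{\prec}cie}$ — equivalently, that $\NN^\kappa_{cie}$ is completely ineffable — the two cases of Corollary \ref{corollary:RamseyIdealsNotContained} do not apply, since complete ineffability is $\triv^\forall_\omega$-Ramseyness and not a $\triv^\vartheta_\omega$-Ramseyness for $\vartheta\in\{\kappa,\kappa^+\}$. I would therefore run a combinatorial minimal-counterexample argument in the spirit of the ineffable case. Let $\kappa$ be the least completely ineffable cardinal for which $\NN^\kappa_{cie}$ fails to be completely ineffable, and let $\mathcal T$ be its maximal stationary class. Since then $\NN^\kappa_{cie}\in\II^\kappa_{{\prec}cie}$, its complement $A=\Set{\gamma<\kappa}{\gamma \text{ is completely ineffable}}$ lies in $\mathcal T$, and by minimality $\NN^\kappa_{cie}\cap\gamma=\NN^\gamma_{cie}$ is completely ineffable in $\gamma$ for every $\gamma\in A$. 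I would consider the reflection family $\mathcal S=\Set{B\subseteq\kappa}{\Set{\gamma\in A}{B\cap\gamma \text{ is completely ineffable in }\gamma}\in\mathcal T}$. The previous observation yields $\NN^\kappa_{cie}\in\mathcal S$, and $\mathcal S$ is readily checked to be a nonempty, upward-closed family of stationary subsets of $\kappa$. By maximality of $\mathcal T$, it would then suffice to verify that $\mathcal S$ is closed under $2$-colorings, for this makes $\mathcal S$ a stationary class witnessing complete ineffability, whence $\mathcal S\subseteq\mathcal T$ and in particular $\NN^\kappa_{cie}\in\mathcal T$, contradicting the choice of $\kappa$.

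The main obstacle is exactly this closure of $\mathcal S$ under colorings. Given $S\in\mathcal S$ and $c\colon[S]^2\to 2$, for each $\gamma$ with $S\cap\gamma$ completely ineffable in $\gamma$ the local maximal stationary class $\mathcal T_\gamma$ produces a homogeneous $H_\gamma\subseteq S\cap\gamma$ in $\mathcal T_\gamma$; I would first thin $A$ to a set in $\mathcal T$ on which the homogeneous color is constant, and then apply complete ineffability with respect to lists (Lemma \ref{lemma:cewrtlists}) to the list $\gamma\mapsto H_\gamma$ to obtain a coherent subfamily along some $K\in\mathcal T$. Its union $H$ should then be globally homogeneous for $c$ and satisfy $H\cap\gamma=H_\gamma\in\mathcal T_\gamma$ for all $\gamma\in K$, placing $H$ back into $\mathcal S$. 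Making this gluing precise — in particular verifying that the reflected colors cohere, that $H$ is indeed homogeneous, and that the set of $\gamma$ with $H\cap\gamma\in\mathcal T_\gamma$ remains in $\mathcal T$ — is the delicate step on which the whole argument for this last part turns.
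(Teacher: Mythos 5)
Your first two assertions are proved correctly, by a route that is more self-contained than the paper's: the paper obtains $\II^\kappa_{ie}\cup\{\NN^\kappa_{ie}\}\subseteq\II^\kappa_{{\prec}cie}$ in one line by combining Lemma \ref{lemma:TwkRamsey}, Proposition \ref{proposition:TwkplusRamsey} and Proposition \ref{proposition:compareideals} (using that complete ineffability is $\triv^\forall_\omega$-Ramseyness, so that $\II^\kappa_{ie}\subseteq\II\triv^{\kappa^+}_\omega(\kappa)\subseteq\II\triv^\forall_\omega(\kappa)=\II^\kappa_{{\prec}cie}$ and $\NN^\kappa_{ie}\in\II\triv^\kappa_\omega(\kappa)\subseteq\II^\kappa_{{\prec}cie}$), whereas you reprove the inclusion combinatorially via Lemma \ref{lemma:cewrtlists} and rerun the embedding argument of Lemma \ref{lemma:TwkRamsey} for elementary submodels. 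Both of your arguments check out; in the embedding argument, note that the hypotheses of Lemma \ref{lemma:kappapowersetpreservings} (a bijection $\map{b}{\kappa}{\VV_\kappa}$ in $M$) are automatic for a weak $\kappa$-model $M\prec\HH{\theta}$ by elementarity, since $\kappa$ is inaccessible.

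For the third assertion, however, your proposal stops short of a proof: you yourself flag the closure of your family $\calS=\Set{B\subseteq\kappa}{\Set{\gamma\in A}{B\cap\gamma\text{ is completely ineffable in }\gamma}\in\mathcal T}$ under $2$-colourings as an unverified "delicate step", and that closure is the entire content of the argument — without it you have no stationary class containing $\NN^\kappa_{cie}$ and no contradiction. The gap is genuine but closable along the lines you indicate: the thinning to a constant colour follows from closure of $\mathcal T$ under pair-colourings applied to $d(\{\gamma,\delta\})=f(\min\{\gamma,\delta\})$, where $f(\gamma)$ is the colour of the chosen local homogeneous set $H_\gamma\in\mathcal T_\gamma$; Lemma \ref{lemma:cewrtlists} applied to the $A''$-list $\seq{H_\gamma}{\gamma\in A''}$ then yields a coherent $K\in\mathcal T$, and for $H=\bigcup_{\gamma\in K}H_\gamma$ coherence gives $H\cap\gamma=H_\gamma$ for all $\gamma\in K$, so $H\subseteq S$ is homogeneous (any pair lies in some $H_\gamma$) and $\Set{\gamma\in A}{H\cap\gamma\in\mathcal T_\gamma}\supseteq K$ is in $\mathcal T$ by upward closure, putting $H$ back in $\calS$. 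You should also record the two small facts your setup uses: $A=\kappa\setminus\NN^\kappa_{cie}\in\mathcal T$ (since otherwise $\kappa$ would lie in the proper ideal $\II^\kappa_{{\prec}cie}$) and, by minimality of $\kappa$, $\NN^\gamma_{cie}$ is completely ineffable in $\gamma$ for every completely ineffable $\gamma<\kappa$, so that $\NN^\kappa_{cie}\in\calS$.

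The paper's own proof of this third part is quite different and avoids the combinatorial gluing entirely: assuming $\kappa$ is the least completely ineffable cardinal with $\NN^\kappa_{cie}\in\II^\kappa_{{\prec}cie}$, it picks a weak $\kappa$-model $M\prec\HH{\theta}$ with $\HH{\kappa}\in M$ and a $\kappa$-amenable, $M$-normal $M$-ultrafilter $U$, which by the ideal membership must satisfy $\NN^\kappa_{cie}\notin U$; by {\L}os, $\kappa^U$ is completely ineffable in $\Ult{M}{U}$, and minimality plus elementarity give $\NN^{\kappa^U}_{cie}\notin\II^{\kappa^U}_{{\prec}cie}$ there; finally, Kleinberg's iterative construction of the maximal stationary class, together with $\kappa$-powerset preservation of $j_U$, shows that $j_U$-preimages of members of the maximal stationary class of $\Ult{M}{U}$ lie in the maximal stationary class $\mathcal T$ of $M$, yielding $\NN^\kappa_{cie}\in\mathcal T$, a contradiction. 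Your approach, once completed, buys a purely combinatorial argument with no ultrapowers; the paper's buys brevity at the cost of invoking the iterative construction from Kleinberg's work.
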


\begin{proof}
  The first statement is immediate from Lemma \ref{lemma:TwkRamsey}, Proposition \ref{proposition:TwkplusRamsey} and Proposition \ref{proposition:compareideals}.
%

Assume for a contradiction that $\NN_{cie}^\kappa\in\II_{{\prec}cie}^\kappa$, and assume that $\kappa$ is the least completely ineffable cardinal with this property. 
 Then there exists a regular cardinal $\theta>\kappa$, a weak $\kappa$-model $M\prec\HH{\theta}$ with $\HH{\kappa}\in M$ and a $\kappa$-amenable, $M$-normal $M$-ultrafilter $U$ with $\NN_{cie}^\kappa\notin U$. 
 It follows that $\kappa^U$ is completely ineffable in $\Ult{M}{U}$, however $\NN_{cie}^{\kappa^U}\notin\II_{{\prec}cie}^{\kappa^U}$ in $\Ult{M}{U}$ by elementarity of $j_U$ and by our minimality assumption on $\kappa$. 
  Let $\mathcal S$ be the maximal stationary class witnessing that $\kappa^U$ is completely ineffable in $\Ult{M}{U}$, that is, the complement of $\II_{{\prec}cie}^{\kappa^U}$ in $\Ult{M}{U}$. 
  The iterative construction of the maximal stationary class $\mathcal T$ witnessing that $\kappa$ is completely ineffable in $M$ (see \cite{MR513844}) together with the $\kappa$-powerset preservation of $j_U$ easily yields that the $j_U$-preimage of the collection of elements of $\mathcal S$ in $\Ult{M}{U}$ is contained in $\mathcal T$. However, this yields that $\NN_{cie}^\kappa\in\mathcal T$, and since $\mathcal T$ is the complement of $\II_{{\prec}cie}^\kappa$ in $M$, this is clearly a contradiction.
\end{proof}


\section{Weakly Ramsey cardinals, Ramsey cardinals and ineffably Ramsey cardinals}\label{section:ramsey}

We start this section by proving several statements from Theorem \ref{theorem:IdealContain} (\ref{item:cont:wr}).

\begin{lemma}\label{lemma:StructuralPropertiesWRIdeals}
  If $\kappa$ is a weakly Ramsey cardinal, then $\II^\kappa_{wie}\cup\{\NN^\kappa_{cie}\}\subseteq\II^\kappa_{\textup{w}R}$, $\NN^\kappa_{\textup{w}R}\notin\II^\kappa_{\textup{w}R}$ and $\II^\kappa_{ie}\nsubseteq\II^\kappa_{\textup{w}R}$. 
\end{lemma}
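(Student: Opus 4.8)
The three assertions split naturally, and two of them I would reduce to results already in place. Throughout, note that $\II^\kappa_{\textup{w}R}=\II\wf_\omega^\kappa(\kappa)$, by the remarks following Definition~\ref{definition:ramseyidealtheta} (the property $\Psi_{\textup{w}R}$ is exactly $\bar\wf_\omega$). For the inclusion $\II^\kappa_{wie}\subseteq\II^\kappa_{\textup{w}R}$ I would chain two facts. First, since $\kappa$ is weakly Ramsey it is $\wf_\omega^\kappa$-Ramsey; as $\wf$ implies $\triv$ and both remain true under restrictions, Proposition~\ref{proposition:compareideals} gives $\II\triv_\omega^\kappa(\kappa)\subseteq\II\wf_\omega^\kappa(\kappa)$, and since $\kappa\notin\II\wf_\omega^\kappa(\kappa)$ this forces $\kappa\notin\II\triv_\omega^\kappa(\kappa)$, i.e.\ $\kappa$ is $\triv_\omega^\kappa$-Ramsey. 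Lemma~\ref{lemma:TwkRamsey} then yields $\II_{wie}^\kappa\subseteq\II\triv_\omega^\kappa(\kappa)$, and composing gives $\II^\kappa_{wie}\subseteq\II^\kappa_{\textup{w}R}$. The assertion $\NN^\kappa_{\textup{w}R}\notin\II^\kappa_{\textup{w}R}$ is then immediate from Corollary~\ref{corollary:RamseyIdealsNotContained}.(\ref{item:RamseyIdealsNotContained:2}) with $\vartheta=\kappa$, $\alpha=\omega$, since $\NN\wf_\omega^\kappa(\kappa)=\NN^\kappa_{\textup{w}R}$.

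The substantial inclusion is $\NN^\kappa_{cie}\in\II^\kappa_{\textup{w}R}$ (due to Gitman), and I would argue by contradiction. If it failed, then unravelling Definition~\ref{definition:Ideals}.(\ref{definition:Ideals-KappaSizedNonelem}) and the correspondences of Section~\ref{section:ultrapowersandembeddings} there would be a transitive weak $\kappa$-model $M$ and a uniform, $\kappa$-amenable, $M$-normal $M$-ultrafilter $U$ with well-founded (hence transitive) ultrapower $N=\Ult{M}{U}$ and $\NN^\kappa_{cie}\in U$. By Corollary~\ref{corollary:correspondence2} this says $\kappa^N=\kappa\IN_N j_U(\NN^\kappa_{cie})$, i.e.\ $N\models\text{``}\kappa$ is not completely ineffable$\text{''}$. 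The plan is to contradict this by proving $N\models\text{``}\kappa$ is completely ineffable$\text{''}$. Here the well-foundedness of $N$ is decisive and is exactly what a mere $\triv_\omega^\kappa$-Ramsey witness lacks: together with the weak amenability of $U$ it makes the ultrapower iterable, and one can verify the filter-game characterization of complete ineffability (Theorem~\ref{nielsen}) inside $N$, feeding the {\sf Judge} suitable fragments of $U$ by $\kappa$-amenability. I expect this transfer of complete ineffability to $N$ to be the main obstacle of the whole lemma: unlike ineffability, which is an $\HH{\kappa^+}$-property and so passes through the isomorphism of Lemma~\ref{lemma:kappapowersetpreservings}.(\ref{lemma:kappapowersetpreserving:2}) as in Lemma~\ref{lemma:TwkRamsey}, complete ineffability quantifies over stationary classes and hence is not captured by $\kappa$-powerset preservation alone, which is precisely why the well-founded/iterable structure is needed.

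Finally, for $\II^\kappa_{ie}\nsubseteq\II^\kappa_{\textup{w}R}$ I would split on whether $\kappa$ is ineffable. If $\kappa$ is not ineffable then $\II^\kappa_{ie}$ is improper while $\II^\kappa_{\textup{w}R}$ is proper (as $\kappa\notin\II^\kappa_{\textup{w}R}$, $\kappa$ being weakly Ramsey), so already $\kappa\in\II^\kappa_{ie}\setminus\II^\kappa_{\textup{w}R}$. The genuine case is $\kappa$ ineffable, where I must exhibit a set that is weakly-Ramsey-positive yet non-ineffable. The conceptual point is that a weakly-Ramsey witnessing ultrafilter need only be $M$-normal (with well-founded ultrapower), not normal in $\VV$, so weakly-Ramsey-positivity is strictly weaker than the $\VV$-stationary coherence that ineffability demands through Theorem~\ref{theorem:ineffableideal}; I would therefore try to build, uniformly in a parameter $x\subseteq\kappa$, weakly-Ramsey witnesses whose filters fail $\VV$-normality and extract from that failure a single non-ineffable set lying in all the resulting filters. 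Producing this explicit witness and proving it non-ineffable is the delicate part of (III). It is worth stressing in the write-up why the obvious candidates are unavailable: $\NN^\kappa_{cie}$ and its relatives lie in $\II^\kappa_{\textup{w}R}$ by the second assertion, while $\NN^\kappa_{\textup{w}R}$ is not $\Pi^1_2$ over $\VV_\kappa$ (the ultrapower's well-foundedness raises its complexity), so the Jensen--Kunen reflection of Lemma~\ref{lemma:TwkRamsey} does not apply — indeed $\NN^\kappa_{\textup{w}R}$ may be all of $\kappa$, hence ineffable, when $\kappa$ is the least weakly Ramsey cardinal of its kind.
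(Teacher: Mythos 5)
Your handling of two of the three assertions matches the paper's proof: the chain $\II^\kappa_{wie}\subseteq\II\triv^\kappa_\omega(\kappa)\subseteq\II\wf^\kappa_\omega(\kappa)=\II^\kappa_{\textup{w}R}$ via Proposition~\ref{proposition:compareideals} and Lemma~\ref{lemma:TwkRamsey}, and $\NN^\kappa_{\textup{w}R}\notin\II^\kappa_{\textup{w}R}$ via Corollary~\ref{corollary:RamseyIdealsNotContained}, are exactly what the paper does. For $\NN^\kappa_{cie}\in\II^\kappa_{\textup{w}R}$ the paper simply cites the proof of {\cite[Theorem 3.7]{MR2830415}}, and your high-level plan (show that $\kappa$ is completely ineffable in the well-founded ultrapower $N$, using well-foundedness plus $\kappa$-amenability) is precisely Gitman's; but your proposed mechanism---verifying the filter-game characterization of Theorem~\ref{nielsen} inside $N$---has a hole. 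The run you produce against a strategy $\sigma\in N$ by feeding the {\sf Judge} the traces $U\cap M_n$ is an \emph{external} $\omega$-sequence, and $N=\Ult{M}{U}$ need not be closed under $\omega$-sequences, so this run need not be an element of $N$ and hence does not refute \anf{$\sigma$ is winning} \emph{inside} $N$. The robust argument (Gitman's) instead inducts externally along the genuinely well-ordered $\Ord^N$ on the levels $\mathcal S_\alpha^N$ of the maximal-stationary-class hierarchy, showing that every element of $U$ lies in every level: each instance is internal to $N$, only the induction is external, and $\kappa$-powerset preservation ($\POT{\kappa}^M=\POT{\kappa}^N$) lets $U$ process the colorings of $N$. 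This is where well-foundedness enters, and it sidesteps the $\omega$-sequence problem entirely.

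The genuine gap is in the third assertion. Your claim that $\NN^\kappa_{\textup{w}R}$ is \anf{not $\Pi^1_2$ over $\VV_\kappa$} because well-foundedness raises the complexity is false: since $\kappa$ is inaccessible, every $\omega$-sequence of ordinals below $\kappa$ is an element of $\VV_\kappa$, so well-foundedness of the $\kappa$-sized ultrapower relation is \emph{first-order} over $\langle\VV_\kappa,\in,R\rangle$ (\anf{no $s\colon\omega\to\kappa$ in $\VV_\kappa$ is descending}), and weak Ramseyness has the form $\forall x\,\exists M,U\,(\textit{first-order matrix})$, i.e.\ it is $\Pi^1_2$---exactly as the paper asserts. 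The paper's proof therefore runs as in the last part of Lemma~\ref{lemma:TwkRamsey}: if $\kappa$ is not ineffable, then $\kappa\in\II^\kappa_{ie}\setminus\II^\kappa_{\textup{w}R}$; if $\kappa$ is ineffable, the Jensen--Kunen $\Pi^1_2$-indescribability argument together with Theorem~\ref{theorem:ineffableideal} shows that the set of non-reflection points of the $\Pi^1_2$-statement \anf{weakly Ramsey}, which is $\NN^\kappa_{\textup{w}R}$, is not ineffable, whence $\NN^\kappa_{\textup{w}R}\in\II^\kappa_{ie}\setminus\II^\kappa_{\textup{w}R}$ by the second assertion. Your worry that $\NN^\kappa_{\textup{w}R}$ might be all of $\kappa$ and ineffable cannot arise: if $\kappa$ is ineffable, the reflection argument itself yields weakly Ramsey cardinals below $\kappa$, and if $\kappa$ is the least weakly Ramsey cardinal it is not ineffable, so the trivial case applies. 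Finally, your proposed substitute---building witnesses whose ultrafilters fail $\VV$-normality and extracting a single non-ineffable set lying in all the resulting filters---is a program rather than an argument: you never exhibit the set nor prove it non-ineffable, so as written the third assertion remains unproved in your proposal.
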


\begin{proof}
 First, note that, since the properties $\triv$ and $\wf$ remain true under restrictions, we can combine Proposition  \ref{proposition:compareideals} and Lemma \ref{lemma:TwkRamsey} to conclude that $\II^\kappa_{wie}\subseteq\II\triv^\kappa_\omega(\kappa)\subseteq\II\wf^\kappa_\omega(\kappa)=\II^\kappa_{\textup{w}R}$.  
 Moreover, the proof of {\cite[Theorem 3.7]{MR2830415}} directly shows that $\NN^\kappa_{cie}\in\II^\kappa_{\textup{w}R}$. 
 In addition, Corollary \ref{corollary:RamseyIdealsNotContained}.(\ref{item:RamseyIdealsNotContained:2}) directly implies that $$\NN^\kappa_{\textup{w}R} ~ = ~ \NN\wf^\kappa_\omega(\kappa) ~ \notin ~ \II\wf^\kappa_\omega(\kappa) ~ = ~ \II^\kappa_{\textup{w}R}.$$  
 Finally, since weak Ramseyness is a $\Pi^1_2$-property, the argument used in the last part of the proof of Lemma \ref{lemma:TwkRamsey} also shows that $\II^\kappa_{ie}\nsubseteq\II^\kappa_{\textup{w}R}$. 
\end{proof}

In \cite[Theorem 1.3]{MR2830415} and \cite[Theorem 5.1]{MR2817562}, isolating from folklore results (see for example \cite{MR534574}), Gitman, Sharpe and Welch have shown that a cardinal $\kappa$ is Ramsey if and only if (in our notation) it is $\cc_\omega^\kappa$-Ramsey. 
 In \cite{MR0540770},  Baumgartner introduced the notion of \emph{ineffably Ramsey} cardinals.

\begin{definition}
  A cardinal $\kappa$ is \emph{ineffably Ramsey} if and only if every function $\map{c}{[\kappa]^{{<}\omega}}{2}$ has a homogeneous set that is stationary in $\kappa$. 
\end{definition}


 In \cite{MR0540770}, Baumgartner also introduced the \emph{Ramsey ideal} and the \emph{ineffably Ramsey ideal} at $\kappa$, which can be described as follows (see also \cite{MR1077260}). 
A subset $A\subseteq\kappa$ is \emph{Ramsey} if every regressive function function $\map{c}{[A]^{{<}\omega}}{\kappa}$ has a homogeneous set of size $\kappa$. 
The \emph{Ramsey ideal} on $\kappa$  is the collection of all subsets of $\kappa$ that are not Ramsey. 
Moreover, a subset  $A\subseteq\kappa$ is \emph{ineffably Ramsey} if every regressive function $\map{c}{[A]^{{<}\omega}}{\kappa}$ has a homogeneous set that is stationary in $\kappa$ and the  \emph{ineffably Ramsey ideal} on $\kappa$ is the collection of all subsets of $\kappa$ that are not ineffably Ramsey. 

\medskip

The same argument as for \cite[Theorem 2.10]{brent} yields Item (1) of the following. A completely analogous argument, replacing unboundedness by stationarity throughout, then verifies Item (2) below, showing in particular that $\kappa$ is ineffably Ramsey if and only if it is $\scc_\omega^\kappa$-Ramsey, yielding Theorem \ref{theorem:SchemesSummary}.(\ref{item:CharIneffRamsey}).  
 %

\begin{proposition}
  \begin{enumerate}
    \item If $\kappa$ is a Ramsey cardinal, then $\II^\kappa_R=\II\cc_\omega^\kappa(\kappa)$ is the Ramsey ideal on $\kappa$.
    \item If $\kappa$ is ineffably Ramsey, then $\II^\kappa_{iR}=\II\scc_\omega^\kappa(\kappa)$ is the ineffably Ramsey ideal on $\kappa$.
  \end{enumerate}
\end{proposition}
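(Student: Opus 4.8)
The plan is to reduce both items to a single combinatorial equivalence and then run the argument of Mitchell \cite[Theorem 2.10]{brent} while keeping track of the precise ideal. First I would observe that the identities $\II^\kappa_R=\II\cc_\omega^\kappa(\kappa)$ and $\II^\kappa_{iR}=\II\scc_\omega^\kappa(\kappa)$ are immediate from the remark following Definition \ref{definition:ramseyidealtheta}, since $\Psi_R$ (respectively $\Psi_{iR}$) is exactly the induced property $\bar\cc_\omega$ (respectively $\bar\scc_\omega$): the uniformity of $U$ and the closure of weak $\kappa$-models under finite sequences are built into the relevant definitions. Thus it remains to show that, for $A\subseteq\kappa$, the set $A$ is $\cc_\omega^\kappa$-Ramsey if and only if every regressive $c\colon[A]^{<\omega}\to\kappa$ has a homogeneous set of size $\kappa$, and that the analogous statement with \emph{stationary} in place of \emph{size $\kappa$} characterizes $\scc_\omega^\kappa$-Ramseyness; these yield items (\ref{item:Ideals:r}) and (\ref{item:Ideals:ir}) of Theorem \ref{theorem:Ideals}.

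For the direction from filters to homogeneous sets, I would start with a witnessing transitive weak $\kappa$-model $M$ and a uniform, $\kappa$-amenable, $M$-normal, countably complete $M$-ultrafilter $U$ with $c,A\in M$ and $A\in U$. For each $n<\omega$ one builds, by the usual end-homogeneity induction, a decreasing sequence of sets $H_n\in U$ with $c$ constant on $[H_n]^n$; here $\kappa$-amenability is used to place the auxiliary sequences of homogeneous candidates into $M$, and $M$-normality to form the required diagonal intersections, all inside $U$. The set $H=\bigcap_{n<\omega}H_n$ is then homogeneous for $c$, and it has size $\kappa$: since $U$ is uniform it contains every final segment of $\kappa$, so for each $\beta<\kappa$ the sets $H_n\cap[\beta,\kappa)$ lie in $U$, whence countable completeness makes $\bigcap_n(H_n\cap[\beta,\kappa))$ nonempty and $H$ unbounded. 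Replacing countable completeness by stationary-completeness yields $H$ stationary, which handles item (2).

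For the converse, given $x\subseteq\kappa$ and $A$ Ramsey, I would fix a transitive weak $\kappa$-model $M$ with $x,A$ and a bijection $\kappa\cong\VV_\kappa$ as elements, together with an enumeration of $\POT{\kappa}^M$, and design a single regressive colouring $c\colon[A]^{<\omega}\to\kappa$ recording the membership pattern of increasing tuples in the enumerated sets. A homogeneous $H\subseteq A$ of size $\kappa$ then determines a coherent assignment, declaring $X\in U$ exactly when $X$ contains a final segment of $H$, for $X\in\POT{\kappa}^M$, and one verifies that $U$ is a uniform $M$-ultrafilter with $A\in U$ that is $M$-normal and $\kappa$-amenable. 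Countable completeness is automatic: finitely or countably many elements of $U$ each contain a final segment of $H$ past some bound below $\kappa$, so by regularity of $\kappa$ their intersection contains a final segment of $H$, which is nonempty since $\betrag{H}=\kappa$. Taking $H$ stationary (from ineffable Ramseyness) makes the induced $U$ stationary-complete, giving item (2); together with the Gitman--Sharpe--Welch characterization of Ramseyness as $\cc_\omega^\kappa$-Ramseyness this also records the promised equivalence of ineffable Ramseyness with $\scc_\omega^\kappa$-Ramseyness, hence Theorem \ref{theorem:SchemesSummary}.(\ref{item:CharIneffRamsey}).

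The main obstacle is the converse direction, and specifically the design of the colouring $c$ so that homogeneity delivers \emph{$\kappa$-amenability}: the homogeneous set must coherently decide $U$-membership for every $\kappa$-indexed family of subsets of $\kappa$ in $M$, and one must encode this information while keeping $c$ regressive, which is where the fixed coding $\kappa\cong\VV_\kappa\in M$ and the inaccessibility of $\kappa$ enter. By contrast, once a size-$\kappa$ (respectively stationary) homogeneous set is in hand, both completeness properties come essentially for free from the regularity of $\kappa$, so the stationary variant for item (2) really does follow by the stated substitution of stationarity for unboundedness throughout.
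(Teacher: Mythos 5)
Your proposal is correct and takes essentially the same route as the paper: the paper's entire proof of (1) is an appeal to Mitchell's argument (\cite[Theorem 2.10]{brent}), and (2) is obtained, exactly as you propose, by replacing unboundedness with stationarity throughout, with the identifications $\II^\kappa_R=\II\cc_\omega^\kappa(\kappa)$ and $\II^\kappa_{iR}=\II\scc_\omega^\kappa(\kappa)$ following from the remark after Definition \ref{definition:ramseyidealtheta}. The additional detail you sketch (countable respectively stationary completeness turning the intersection of the end-homogeneous sets into a size-$\kappa$ respectively stationary homogeneous set, and conversely final segments of a homogeneous set generating the ultrafilter, with the colouring design for amenability and normality correctly flagged as the technical heart and deferred to Mitchell) is sound and merely fills in what the paper delegates to the citation.
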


%

Finally, using results from \cite{MR1077260} and \cite{MR513844}, we verify several statements from Theorem \ref{theorem:IdealContain} (\ref{item:cont:r}) and (\ref{item:cont:ir}).

\begin{lemma}
 If $\kappa$ is a Ramsey cardinal, then $\NN^\kappa_R\notin\II^\kappa_R$ and $\II^\kappa_{ie}\nsubseteq\II^\kappa_R$. 
\end{lemma}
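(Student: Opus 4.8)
The plan is to exhibit the single set $\NN^\kappa_R$ as a witness for both assertions: I will show that $\NN^\kappa_R\notin\II^\kappa_R$ and that $\NN^\kappa_R\in\II^\kappa_{ie}$, and these together immediately yield $\II^\kappa_{ie}\nsubseteq\II^\kappa_R$.

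For the first statement, I would begin by invoking Theorem \ref{theorem:Ideals}.(\ref{item:Ideals:r}) to identify $\II^\kappa_R$ with the Ramsey ideal on $\kappa$. Since the bounded, and indeed the non-stationary, subsets of $\kappa$ all lie in the Ramsey ideal, the set $\NN^\kappa_R$ agrees, modulo a member of $\II^\kappa_R$, with the set of non-Ramsey cardinals in the interval $(\omega,\kappa)$. The assertion that this set is Ramsey-positive is precisely Feng's result, so I would appeal to \cite[Corollary 4.4 and Theorem 4.5]{MR1077260} to conclude that $\NN^\kappa_R\notin\II^\kappa_R$. It is worth recording why the general machinery of Section \ref{section:ramseylike} is not available here: Ramseyness coincides with $\cc^\kappa_\omega$-Ramseyness, and, unlike $\triv$ and $\wf$, the property $\cc$ is not among the cases handled by Corollary \ref{corollary:RamseyIdealsNotContained}. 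The obstruction is that the countable completeness of the small ultrafilter $U_1$ in $\VV$ is not evidently recoverable from its completeness inside the possibly ill-founded ultrapower $\Ult{M_0}{U_0}$, so the hypotheses of Lemma \ref{lemma:IdealsNicelyProper} cannot be readily verified for $\cc$; this is exactly why the combinatorial input of Feng's theorem is needed.

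For the second statement, I would prove that $\NN^\kappa_R\in\II^\kappa_{ie}$, closely mirroring the final paragraph of the proof of Lemma \ref{lemma:TwkRamsey}. If $\kappa$ is not ineffable, then the remarks following Definition \ref{definition:Ideals} give $\kappa\in\II^\kappa_{ie}$, so the ineffable ideal is improper and $\NN^\kappa_R\in\II^\kappa_{ie}$ holds trivially. Hence I may assume that $\kappa$ is ineffable, in which case $\II^\kappa_{ie}$ is the proper ineffable ideal by Theorem \ref{theorem:Ideals}.(\ref{item:Ideals:ie}) and Theorem \ref{theorem:ineffableideal}. Ramseyness is a $\Pi^1_2$-property of $\kappa$ over $\VV_\kappa$, being expressed by the partition statement $\kappa\to(\kappa)^{<\omega}_2$, so by the classical argument of Jensen and Kunen from \cite{jensennotes} for the $\Pi^1_2$-indescribability of ineffable cardinals, the set of non-reflection points of this statement is not ineffable. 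Since these non-reflection points are precisely the non-Ramsey ordinals below $\kappa$, up to a bounded error that is absorbed by the ideal, Theorem \ref{theorem:ineffableideal} yields $\NN^\kappa_R\in\II^\kappa_{ie}$.

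Combining the two parts, $\NN^\kappa_R$ is a member of $\II^\kappa_{ie}$ that fails to be a member of $\II^\kappa_R$, so $\II^\kappa_{ie}\nsubseteq\II^\kappa_R$. The main obstacle lies entirely in the first statement: it is not a routine verification but requires the genuinely combinatorial content of Feng's theorem, precisely because the countable-completeness property $\cc$ falls outside the preservation framework captured by Lemma \ref{lemma:IdealsNicelyProper}. The second statement, by contrast, is a direct transcription of the ineffability-versus-indescribability argument already carried out for $\triv^\kappa_\omega$ in Lemma \ref{lemma:TwkRamsey}.
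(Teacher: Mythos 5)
Your proposal is correct and takes essentially the same route as the paper: there, too, the first statement is quoted directly from Feng \cite[Theorem 4.5]{MR1077260}, and the second is obtained by observing that Ramseyness is a $\Pi^1_2$-property and running the Jensen--Kunen non-reflection argument from the last part of the proof of Lemma \ref{lemma:TwkRamsey}, so that $\NN^\kappa_R$ serves as the common witness, lying in $\II^\kappa_{ie}$ but not in $\II^\kappa_R$. Your side remark that Lemma \ref{lemma:IdealsNicelyProper} is not applicable to $\cc$ (so Feng's combinatorial input is genuinely needed) is also accurate and consistent with the paper leaving precisely this question open for $\Psi\in\{\cc,\infi,\delt\}$ in its final section.
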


\begin{proof}
 The first statement follows directly from {\cite[Theorem 4.5]{MR1077260}}. 
  Since Ramseyness is a $\Pi^1_2$-property, the argument used in the last part of the proof of Lemma \ref{lemma:TwkRamsey} also shows that $\II^\kappa_{ie}\nsubseteq\II^\kappa_R$. 
\end{proof}

\begin{lemma}
  If $\kappa$ is an ineffably Ramsey cardinal, then $\NN^\kappa_{iR}\notin\II^\kappa_{iR}$ and $\II^\kappa_{{\prec}cie}\nsubseteq\II^\kappa_{iR}$. 
\end{lemma}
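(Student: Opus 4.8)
The first statement is due to Feng, and we simply refer to \cite[Theorem 4.5]{MR1077260}. For the second statement, the plan is to use the set $\NN^\kappa_{iR}$ itself as a witness: the cited result gives $\NN^\kappa_{iR}\notin\II^\kappa_{iR}$, so it suffices to prove that $\NN^\kappa_{iR}\in\II^\kappa_{{\prec}cie}$. By Theorem \ref{theorem:Ideals}.(\ref{item:Ideals:CI}) the ideal $\II^\kappa_{{\prec}cie}$ is the completely ineffable ideal on $\kappa$, i.e.\ the complement of the maximal stationary class, so this reduces to showing that $\NN^\kappa_{iR}$ is \emph{not} completely ineffable. This is precisely the point where the present case differs from the treatment of $\II^\kappa_{ie}$: since $\II^\kappa_{ie}\subseteq\II^\kappa_{iR}$ by Item (\ref{item:cont:ir}), the ineffable ideal can no longer serve as a witnessing ideal, and one is forced to pass to the strictly larger completely ineffable ideal.

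I would argue by contradiction, assuming that $\NN^\kappa_{iR}$ is completely ineffable. By Theorem \ref{nielsen} this yields $\kappa=\kappa^{{<}\kappa}$ together with the $\triv^\forall_\omega$-filter property for $\NN^\kappa_{iR}$, and hence, via Lemma \ref{lemma:filtervsramsey}.(1), the $\triv^\theta_\omega$-Ramseyness of $\NN^\kappa_{iR}$ for every regular $\theta>\kappa$; recall also that $\kappa$ is inaccessible, being ineffably Ramsey. I would then fix a bijection $\map{b}{\kappa}{\VV_\kappa}$ and a regular cardinal $\theta$ large enough that $\HH{\kappa^+}\in\HH{\theta}$ and that the property of being ineffably Ramsey (equivalently, by the characterization of ineffably Ramsey cardinals as the $\scc^\kappa_\omega$-Ramsey cardinals obtained above, of being $\scc^\kappa_\omega$-Ramsey) is absolute between $\HH{\theta}$ and $\VV$. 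Applying $\triv^\theta_\omega$-Ramseyness to the parameter $x=\langle b,\NN^\kappa_{iR}\rangle$ produces a weak $\kappa$-model $M\prec\HH{\theta}$ with $b,\NN^\kappa_{iR}\in M$ carrying a uniform, $\kappa$-amenable, $M$-normal $M$-ultrafilter $U$ on $\kappa$ with $\NN^\kappa_{iR}\in U$.

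The contradiction is then extracted as follows. By Corollary \ref{corollary:correspondence1} the ultrapower embedding $\map{j_U}{M}{\langle N,\IN_N\rangle}$ is a $\kappa$-powerset preserving $\kappa$-embedding with $\crit{j_U}=\kappa$, and by Corollary \ref{corollary:correspondence2} the assumption $\NN^\kappa_{iR}\in U$ gives $\kappa^N\IN_N j_U(\NN^\kappa_{iR})$; by elementarity this means that $\langle N,\IN_N\rangle$ believes $\kappa^N$ not to be ineffably Ramsey. On the other hand, Lemma \ref{lemma:kappapowersetpreservings}.(\ref{lemma:kappapowersetpreserving:2}) supplies an $\IN$-isomorphism $\map{j^*}{\HH{\kappa^+}^M}{\langle\Set{y\in N}{y\IN_N\HH{(\kappa^N)^+}^N},\IN_N\rangle}$ that sends $\kappa$ to $\kappa^N$. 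Since $\kappa$ is genuinely ineffably Ramsey, and this is a first-order property of $\HH{\kappa^+}$ that is absolute to $\HH{\theta}$, elementarity of $M\prec\HH{\theta}$ gives $\HH{\kappa^+}^M\models\anf{\kappa\text{ is ineffably Ramsey}}$; transporting this statement across $j^*$ then shows that $\langle N,\IN_N\rangle$ believes $\kappa^N$ to be ineffably Ramsey, the desired contradiction. Thus $\NN^\kappa_{iR}$ is not completely ineffable, so that $\NN^\kappa_{iR}\in\II^\kappa_{{\prec}cie}\setminus\II^\kappa_{iR}$ and hence $\II^\kappa_{{\prec}cie}\nsubseteq\II^\kappa_{iR}$.

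The step I expect to require the most care is the verification that ``$\kappa$ is ineffably Ramsey'' is faithfully captured as a first-order property of the structure $\HH{\kappa^+}$, so that it is simultaneously absolute between $\HH{\theta}$ and $\VV$ and preserved under the $\IN$-isomorphism $j^*$. This rests on the characterization of ineffable Ramseyness via \emph{transitive} weak $\kappa$-models, whose defining data -- the subsets of $\kappa$, the clubs of $\kappa$ needed to evaluate stationary-completeness, and the transitive weak $\kappa$-models themselves -- all lie in $\HH{\kappa^+}$; it is exactly this bounded complexity that lets the property travel along $j^*$ in the same way that ineffability was transported in the proof of Lemma \ref{lemma:TwkRamsey}.
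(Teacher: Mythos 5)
Your proposal is correct, but for the second statement it takes a genuinely different route from the paper. The paper handles the statement in two quick steps: it first disposes of the case that $\kappa$ is not completely ineffable (then the remarks following Definition \ref{definition:Ideals} make $\II^\kappa_{{\prec}cie}$ improper, so $\kappa$ itself lies in $\II^\kappa_{{\prec}cie}\setminus\II^\kappa_{iR}$), and in the remaining case it invokes the proof of {\cite[Theorem 4]{MR513844}}, by which a completely ineffable cardinal reflects any true $\Sigma^2_0$-statement over $\VV_\kappa$ outside a set in the completely ineffable ideal; since ineffable Ramseyness is $\Pi^1_3$-definable over $\VV_\kappa$, this gives $\NN^\kappa_{iR}\in\II^\kappa_{{\prec}cie}$ at once, in the same indescribability style as the last part of Lemma \ref{lemma:TwkRamsey}. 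You instead run a direct ultrapower-transfer argument: complete ineffability of $\NN^\kappa_{iR}$ yields its $\triv^\theta_\omega$-Ramseyness via Theorem \ref{nielsen} and Lemma \ref{lemma:filtervsramsey}, and the isomorphism of Lemma \ref{lemma:kappapowersetpreservings}.(\ref{lemma:kappapowersetpreserving:2}), which indeed sends $\kappa$ to $\kappa^N$, transports \anf{$\kappa$ is ineffably Ramsey} --- correctly identified by you as a first-order statement about $\HH{\kappa^+}$ --- into $\langle N,\IN_N\rangle$, contradicting $\kappa^N\IN_N j_U(\NN^\kappa_{iR})$. This is precisely the template the paper itself uses for Lemma \ref{lemma:notinkapparamseylikeideal} and for the proof that $\NN^\kappa_{cie}\notin\II^\kappa_{{\prec}cie}$, and all the steps you flag as delicate do go through; in fact the raw combinatorial definition of ineffable Ramseyness (colourings in $\HH{\kappa^+}$, stationarity via clubs in $\HH{\kappa^+}$) already suffices, so the detour through the $\scc^\kappa_\omega$-characterization is dispensable. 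Your approach buys self-containedness within the machinery of Sections \ref{section:ultrapowersandembeddings} and \ref{section:ramseylike}, avoiding Kleinberg's reflection theorem and the complexity computation; the paper's approach buys brevity. One small imprecision to repair: Theorem \ref{theorem:Ideals}.(\ref{item:Ideals:CI}) identifies $\II^\kappa_{{\prec}cie}$ with the completely ineffable ideal only under the hypothesis that $\kappa$ is completely ineffable, which an ineffably Ramsey cardinal is not known to satisfy; you should add that if $\kappa$ fails to be completely ineffable, then $\II^\kappa_{{\prec}cie}$ is improper by the remarks following Definition \ref{definition:Ideals}, so $\NN^\kappa_{iR}\in\II^\kappa_{{\prec}cie}$ holds trivially there, and your contradiction argument covers the remaining case.
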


\begin{proof}
 The first statement again follows directly from {\cite[Theorem 4.5]{MR1077260}}. 
  For the second statement, we may assume that $\kappa$ is completely ineffable, because otherwise the remarks following Definition \ref{definition:Ideals} show that $\kappa\in\II^\kappa_{{\prec}cie}\setminus\II^\kappa_{iR}$. 
 Then the proof of {\cite[Theorem 4]{MR513844}} shows that, given 
 a $\Sigma^2_0$-statement $\Omega$ that holds in $\VV_\kappa$, the set of all non-reflection points of $\Omega$ in $\kappa$ is not completely ineffable. Since ineffable Ramseyness is $\Pi^1_3$-definable, the results of Section \ref{section:completelyineffable} now show that $\NN^\kappa_{iR}\in\II^\kappa_{{\prec}cie}\setminus\II^\kappa_{iR}$. 
\end{proof}


\section{$\Delta_\omega^\forall$-Ramsey cardinals}\label{section:gnramsey}

In this section we provide the short and easy proof that -- perhaps somewhat surprisingly -- the notions of $\scc_\omega^\forall$-Ramsey, $\infty_\omega^\forall$-Ramsey, and $\delt_\omega^\forall$-Ramsey cardinals are equivalent.\footnote{In particular, this contrasts the hierarchy of large cardinals treated in \cite[Section 3]{nielsen-welch}.} 
 Together with Theorem \ref{theorem:filtervsramsey}, this result shows why $\Delta_\omega^\forall$-Ramseyness appears three times in  Table \ref{table:schemeKappa}, yielding Theorem \ref{theorem:SchemesSummary} (\ref{item:schemeSmall:nR}), and in particular completes the tables presented in our introductory section.

\begin{proposition}\label{RamseysEquivalence}
  Let $\kappa$ be a cardinal. Then $\kappa$ is $\scc_\omega^\forall$-Ramsey if and only if $\kappa$ is $\infty_\omega^\forall$-Ramsey if and only if $\kappa$ is $\Delta_\omega^\forall$-Ramsey.
\end{proposition}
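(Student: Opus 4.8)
The plan is to prove the cycle of implications $\Delta_\omega^\forall$-Ramsey $\Rightarrow\infty_\omega^\forall$-Ramsey $\Rightarrow\scc_\omega^\forall$-Ramsey $\Rightarrow\Delta_\omega^\forall$-Ramsey. The first two implications are the easy, purely local ones, obtained by comparing the underlying filter properties. A normal $M$-ultrafilter is genuine: if $\seq{U_\alpha}{\alpha<\kappa}$ enumerates $U$, then $\Delta_{\alpha<\kappa}U_\alpha\subseteq^* U_\beta$ for every $\beta<\kappa$, so on the club of closure points of any reindexing the diagonal intersection of an \emph{arbitrary} $\kappa$-sequence of elements of $U$ contains the stationary (hence unbounded) set $\Delta_{\alpha<\kappa}U_\alpha$; and a genuine, $M$-normal $M$-ultrafilter containing all final segments of $\kappa$ in $M$ is stationary-complete by the first lemma of Section \ref{generalizations}. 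Since a model--filter pair witnessing a stronger property also witnesses a weaker one, monotonicity of the Ramsey notions in $\Psi$ yields $\Delta_\omega^\forall$-Ramsey $\Rightarrow\infty_\omega^\forall$-Ramsey $\Rightarrow\scc_\omega^\forall$-Ramsey.

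The hard implication is $\scc_\omega^\forall$-Ramsey $\Rightarrow\Delta_\omega^\forall$-Ramsey, and I would run it through the filter games of Definition \ref{generalfiltergames} rather than the Ramsey definitions directly, since the games supply the crucial $\omega$-chain structure. As $\kappa$ is inaccessible (via Lemma \ref{lemma:inaccessible2}) we have $\kappa=\kappa^{{<}\kappa}$, and since $\scc$ and $\delt$ both remain true under restrictions, Lemma \ref{lemma:filtervsramsey} together with Corollary \ref{corollary:RamseyLocal} reduces the task to showing, for every regular $\theta>\kappa$, that $A=\kappa$ has the $\scc_\omega^\theta$-filter property if and only if it has the $\delt_\omega^\theta$-filter property; equivalently, that the games $G\scc_\omega^\theta(A)$ and $G\delt_\omega^\theta(A)$ have exactly the same runs won by the {\sf Judge}. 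One direction is immediate: a $\delt$-normal filter is stationary-complete, because $\Delta_{\alpha<\kappa}U_\alpha\subseteq^* U_\beta$ for all $\beta$ forces the stationary set $\Delta_{\alpha<\kappa}U_\alpha$ to be almost contained in every countable intersection of elements of $U$.

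The main obstacle, and the heart of the proof, is the converse: every run won by the {\sf Judge} in $G\scc_\omega^\theta(A)$ already produces a $\delt$-normal filter $F_\omega$. Here I would use that in any won run each $F_n$ is $M_n$-normal (by the remark following Definition \ref{generalfiltergames}), that $F_n\in M_{n+1}$, and that each $M_{n+1}$ is a $\kappa$-model elementary in $\HH{\theta}$, so that $M_{n+1}$ sees $F_n$ as a family of size $\kappa$ and can form the diagonal intersection $\Delta^n$ of a $\kappa$-enumeration of $F_n$; by $M_{n+1}$-normality, $\Delta^n\in F_{n+1}\subseteq F_\omega$. The key computation will be that, interleaving these enumerations into a single $\kappa$-enumeration of $F_\omega=\bigcup_{n<\omega}F_n$ by a pairing function, one obtains $\Delta F_\omega=\bigcap_{n<\omega}\Delta^n$ on a club of closure points. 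Since $F_\omega$ is stationary-complete and each $\Delta^n\in F_\omega$, the right-hand side is stationary, whence $F_\omega$ is $\delt$-normal. This identity $\Delta F_\omega=_{\mathrm{club}}\bigcap_{n<\omega}\Delta^n$ is precisely what converts the countable-intersection strength of stationary-completeness into full $\kappa$-length normality, and it explains why the collapse occurs exactly at the $\omega$-$\forall$ level (contrasting the hierarchy of \cite{nielsen-welch}). Having matched the won runs of the two games, the {\sf Challenger} has a winning strategy in one exactly when it does in the other, so the two filter properties coincide; Lemma \ref{lemma:filtervsramsey} and Corollary \ref{corollary:RamseyLocal} then return the Ramsey equivalence $\scc_\omega^\forall$-Ramsey $\Leftrightarrow\Delta_\omega^\forall$-Ramsey, closing the cycle.
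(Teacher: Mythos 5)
Your proposal is correct and follows essentially the same route as the paper: for the nontrivial implication the paper likewise takes a run of $G\scc_\omega^\theta(\kappa)$ won by the {\sf Judge} and observes that each $\Delta F_i$ belongs to $F_\omega$, that $(\bigcap_{i<\omega}\Delta F_i)\setminus\Delta F_\omega$ is non-stationary (your interleaving identity), and that stationary-completeness then makes $\Delta F_\omega$ stationary, while the easy implications are, as in your first paragraph, immediate from the definitions together with the lemma following Definition \ref{ultrafilterproperties}. One cosmetic repair: a run won by the {\sf Judge} does not force each $F_{n+1}$ to be $M_{n+1}$-normal (only $F_\omega$ must be $M_\omega$-normal), so you should derive $\Delta^n\in F_\omega$ directly from the $M_\omega$-normality of $F_\omega$, using that a $\kappa$-enumeration of $F_n$ lies in $M_{n+1}\subseteq M_\omega$ --- exactly the mechanism in the remark following Definition \ref{generalfiltergames} --- rather than from a purported $M_{n+1}$-normality of $F_{n+1}$; this one-line change does not affect the rest of your argument.
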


\begin{proof}
  Assume that $\kappa$ is $\scc_\omega^\forall$-Ramsey, and let $A\subseteq\kappa$. Pick a sufficiently large regular cardinal $\theta$, and let $M_0\prec\HH{\theta}$ with $A\in M_0$ be a weak $\kappa$-model. Consider a run of the game $G\scc_\omega^\theta(\kappa)$, in which the {\sf Challenger} starts by playing $M_0$. As the {\sf Challenger} has no winning strategy in this game, there is a run of this game which is won by the {\sf Judge}. Let $M=M_\omega$ and $F=F_\omega$ be the final model and filter produced by this run. This means that $M\prec\HH{\theta}$ is a weak $\kappa$-model with $A\in M$, and that $F$ is a $\kappa$-amenable, $M$-normal and stationary-complete $M$-ultrafilter. It is now easy to verify that the set $(\bigcap_{i<\omega}\Delta F_i)\setminus\Delta F$ is non-stationary set. Since  $\Delta F_i\in F$ for all $i<\omega$, it follows that $\Delta F$ is stationary, for $F$ is stationary-complete.
\end{proof}

We want to close this section by mentioning that $\omega_1$-Ramsey cardinals are limits of $\Delta_\omega^\forall$-Ramsey cardinals. This (and the slightly stronger statement that we will actually mention below) is shown exactly as in \cite[Theorem 5.10]{MR3800756}, using Corollary \ref{corollary:RamseyLocal}.

\begin{proposition}\label{proposition:omega1r}
  $\NN_{nR}^\kappa\in\II\triv_{\omega_1}^\forall(\kappa)$. \qed
\end{proposition}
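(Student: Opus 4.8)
The plan is to argue by contradiction, following the reflection argument behind \cite[Theorem 5.10]{MR3800756}. Recall that the ideal $\II\triv_{\omega_1}^\forall(\kappa)$ is only defined when $\kappa$ is $\triv_{\omega_1}^\forall$-Ramsey, and that $\NN_{nR}^\kappa\in\II\triv_{\omega_1}^\forall(\kappa)$ means precisely that $\NN_{nR}^\kappa$ is \emph{not} $\triv_{\omega_1}^\forall$-Ramsey. So I would assume towards a contradiction that $\NN_{nR}^\kappa$ \emph{is} $\triv_{\omega_1}^\forall$-Ramsey. Since $\triv$ trivially remains true under $\kappa$-restrictions, Corollary \ref{corollary:RamseyLocal} allows me to fix a single sufficiently large regular cardinal $\theta>2^\kappa$ and obtain a weak $\kappa$-model $M\prec\HH{\theta}$ that is closed under countable sequences, together with a uniform, $\kappa$-amenable, $M$-normal $M$-ultrafilter $U$ on $\kappa$ with $\NN_{nR}^\kappa\in U$.

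The first substantial step is to see that the ultrapower is well-founded. Because $M$ is closed under $\omega$-sequences, every countable sequence of elements of $U$ is already an element of $M$, and the $M$-normality of $U$ makes it internally $<\kappa$-complete, so such countable intersections lie in $U$ and are in particular nonempty; hence $U$ is countably complete and $\Ult{M}{U}$ is well-founded. Identifying it with its transitive collapse $N$ and writing $j=j_U\colon M\to N$, Proposition \ref{proposition:correspondence2} together with Corollary \ref{corollary:correspondence1} shows that $j$ is a $\kappa$-powerset preserving $\kappa$-embedding with $\crit{j}=\kappa$ and $\kappa^N=\kappa<j(\kappa)$. Corollary \ref{corollary:correspondence2} and $\NN_{nR}^\kappa\in U$ then yield $\kappa=\kappa^N\IN_N j(\NN_{nR}^\kappa)$. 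Since $M\prec\HH{\theta}$ computes $\NN_{nR}^\kappa$ correctly and $N$ computes $j(\NN_{nR}^\kappa)$ as its internal set of non-$\Delta_\omega^\forall$-Ramsey ordinals below $j(\kappa)$, this means $N\models\anf{\textit{$\kappa$ is not $\Delta_\omega^\forall$-Ramsey}}$.

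The contradiction is then obtained by proving, on the contrary, that $N\models\anf{\textit{$\kappa$ is $\Delta_\omega^\forall$-Ramsey}}$. Applying Corollary \ref{corollary:RamseyLocal} inside $N$ (which is legitimate, as $\delt$ remains true under $\kappa$-restrictions), it suffices to verify the local version $\Delta_\omega^\vartheta$-Ramseyness for $\vartheta=((2^\kappa)^+)^N$, and by Proposition \ref{RamseysEquivalence} I may equivalently establish $\scc_\omega^\vartheta$-Ramseyness, that is, produce inside $N$, for each $x\in\HH{\vartheta}^N$, a weak $\kappa$-model $M'\prec\HH{\vartheta}^N$ with $x\in M'$ carrying a uniform, $\kappa$-amenable, $M'$-normal $M'$-ultrafilter that is stationary-complete as computed in $N$. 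The witnessing models and ultrafilters are manufactured from $M$, $U$ and the embedding $j$: since $j$ is $\kappa$-powerset preserving, $M$ and $N$ share the same subsets of $\kappa$ and, by Lemma \ref{lemma:kappapowersetpreservings}, the same $\HH{\kappa^+}$, so the traces of $U$ on the power sets appearing in such an $M'$ can be coded inside $N$ via $\kappa$-amenability, while the countable closure of $M$ ensures that the relevant countable intersections remain stationary with respect to the (coarser) club filter of $N$.

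I expect this last step to be the main obstacle. The difficulty is to transfer the $\triv_{\omega_1}$-witness $(M,U)$, which a priori lives in $V$, into a genuine $\Delta_\omega^\vartheta$- (equivalently $\scc_\omega^\vartheta$-) witness \emph{internal} to $N$; the delicate points are verifying that the restricted ultrafilters are elements of $N$ and that they stay stationary-complete from the point of view of $N$. This is exactly the place where the $\kappa$-powerset preservation of $j$, the agreement of $M$ and $N$ on $\HH{\kappa^+}$ supplied by Lemma \ref{lemma:kappapowersetpreservings}, and the upgrade of countable completeness to $N$-stationary-completeness afforded by the countable closure of $M$ must be combined, precisely as in the proof of \cite[Theorem 5.10]{MR3800756}.
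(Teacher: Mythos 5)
Your proposal is correct and is essentially the paper's own argument: the paper gives no details beyond remarking that the proposition is shown exactly as in \cite[Theorem 5.10]{MR3800756} using Corollary \ref{corollary:RamseyLocal}, which is precisely the reflection scheme you lay out (countable closure of $M$ yields countable completeness of $U$, hence a well-founded, $\kappa$-powerset preserving ultrapower $N$, and the contradiction between $\kappa\IN_N j(\NN_{nR}^\kappa)$ and $N\models\anf{\textit{$\kappa$ is $\Delta_\omega^\forall$-Ramsey}}$). The internalization step you flag as the main obstacle goes through just as you indicate, the one ingredient to make explicit being that $N$ is itself closed under countable sequences in $\VV$ (since $M$ is and $U$ is countably complete), so the $\omega$-unions of models and restricted filters constructed in $\VV$ are elements of $N$ --- this is exactly the content of the cited proof.
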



\section{Strongly Ramsey and super Ramsey cardinals}

In this section, we prove several statements about strong and super Ramsey cardinals contained in Theorem \ref{theorem:IdealContain} (\ref{item:IdealContain:stR}) and (\ref{item:IdealContain:suR}). 
 We start by using ideals similar to the ones used in the proof of Lemma \ref{lemma:notinkapparamseylikeideal} to derive the following result.

\begin{proposition}\label{proposition:StronglyRamseysinUltrapower}
 If $\kappa$ is a strongly Ramsey cardinal, then $\NN\triv^{\kappa^+}_\alpha(\kappa)\in\II\triv^\kappa_\kappa(\kappa)$ for all regular $\alpha<\kappa$. 
\end{proposition}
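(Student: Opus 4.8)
The plan is to argue by contradiction, following the template of Lemma \ref{lemma:notinkapparamseylikeideal} combined with the ultrapower--pullback device of Lemma \ref{lemma:IdealsNicelyProper}. Fix a regular $\alpha<\kappa$ and suppose, towards a contradiction and choosing $\kappa$ minimal, that $A:=\NN\triv^{\kappa^+}_\alpha(\kappa)$ is $\triv_\kappa^\kappa$-Ramsey. Picking a parameter $x\subseteq\kappa$ coding a surjection $s\colon\kappa\to\VV_\kappa$ (which exists, as strongly Ramsey cardinals are inaccessible), strong Ramseyness yields a transitive weak $\kappa$-model $M$ closed under ${<}\kappa$-sequences with $x\in M$ and a uniform, $\kappa$-amenable, $M$-normal $M$-ultrafilter $U$ with $A\in U$. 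Since $s\in M$ forces $\VV_\kappa\in M$, and since for each $\gamma<\kappa$ the property of being $\triv_\alpha^{\gamma^+}$-Ramsey is decided inside $\HH{\gamma^{++}}\in\VV_\kappa$, the model $M$ computes $A$ correctly. Writing $\map{j=j_U}{M}{\langle N,\IN_N\rangle}$ for the induced embedding, Proposition \ref{proposition:correspondence1}.(5) and Corollary \ref{corollary:correspondence2} make $j$ a $\kappa$-powerset preserving $\kappa$-embedding with $\crit{j}=\kappa$ and $\kappa^N\IN_N j(A)$; closure of $M$ under countable sequences together with $M$-normality makes $\langle N,\IN_N\rangle$ well-founded, so I identify $N$ with its transitive collapse. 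As $\alpha<\crit{j}$ we have $j(\alpha)=\alpha$, and by elementarity $N$ correctly forms $\HH{(\kappa^N)^+}$; hence $\kappa^N\in j(A)$ translates into $N\models$``$\kappa^N$ is not $\triv_\alpha^{(\kappa^N)^+}$-Ramsey''.

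The goal is then to contradict this by establishing $N\models$``$\kappa^N$ is $\triv_\alpha^{(\kappa^N)^+}$-Ramsey''. The key tool is the isomorphism $\map{j^*}{\HH{\kappa^+}^M}{\langle\Set{y\in N}{y\IN_N\HH{(\kappa^N)^+}^N},\IN_N\rangle}$ of Lemma \ref{lemma:kappapowersetpreservings}.(\ref{lemma:kappapowersetpreserving:2}), available since $M\supseteq\VV_\kappa$ carries a bijection $\kappa\to\VV_\kappa$. The observation that makes the superscript ``$\kappa^+$'' manageable is that any witness for the $\triv_\alpha^{(\kappa^N)^+}$-Ramseyness of $\kappa^N$ inside $N$ — a weak $\kappa^N$-model $\bar M\prec\HH{(\kappa^N)^+}^N$ together with a $\kappa^N$-amenable, $\bar M$-normal ultrafilter $\bar U$ — has $N$-cardinality $\kappa^N$ and so lies in $\HH{(\kappa^N)^+}^N=\ran{j^*}$. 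Working inside $N$, which has $\HH{(\kappa^N)^+}^N$ as a genuine set, one can Löwenheim--Skolem any prescribed parameter $\bar x$ into such a $\bar M\prec\HH{(\kappa^N)^+}^N$ that is moreover closed under ${<}\alpha$-sequences; pulling back along $j^*$ yields a corresponding $M_1\prec\HH{\kappa^+}^M$ with $M_1\in\HH{\kappa^+}^M$. Thus everything reduces to supplying, for each such $M_1$, a $\kappa$-amenable, $M_1$-normal ultrafilter $U_1$ with $U_1\in\HH{\kappa^+}^M$, since $j^*(U_1)$ is then the ultrafilter required for $\bar M=j^*(M_1)$.

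The delicate point, and the step I expect to be the main obstacle, is precisely this production of the ultrafilters $U_1$ as \emph{elements of} $M$. The naive candidate $U\cap M_1$ is $M_1$-normal but need not be $\kappa$-amenable for $M_1$: the amenability traces $\Set{\xi<\kappa}{y_\xi\in U}$ delivered by the $\kappa$-amenability of $U$ land in $M$ rather than in $M_1$, and $M$ has no access to $U$ with which to build an $M_1$ simultaneously closed under these traces and an element of $M$. This self-referential obstruction is exactly what the two-model argument of Lemma \ref{lemma:IdealsNicelyProper} is designed to resolve: one captures $M$, $U$ and the relevant parameter inside a further strongly Ramsey witness and transports the witnessing pair $\langle M_1,U_1\rangle$ down from that model's ultrapower through the isomorphism of Lemma \ref{lemma:kappapowersetpreservings}.(\ref{lemma:kappapowersetpreserving:2}), whereupon the $\kappa$-amenability and $M_1$-normality of $U_1$ follow from elementarity of the pullback and from the fact that $\triv$ trivially remains true under restrictions. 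The minimality of $\kappa$ ensures that the reflected configuration at $\kappa^N<j(\kappa)$ genuinely carries such witnesses, giving $N\models$``$\kappa^N$ is $\triv_\alpha^{(\kappa^N)^+}$-Ramsey'' and the desired contradiction; as $\alpha<\kappa$ was arbitrary, this proves the proposition.
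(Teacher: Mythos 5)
There is a genuine gap, and it sits exactly where you flagged ``the main obstacle'': the production of pairs $(M_1,U_1)\in\HH{\kappa^+}^M$. Your dismissal of the naive approach is mistaken here, and the machinery you substitute for it cannot deliver what is needed. Since the ideal in question is $\II\triv^\kappa_\kappa(\kappa)$, the ambient witness $M$ is a full $\kappa$-model, closed under ${<}\kappa$-sequences, and $\alpha^+<\kappa$. The paper's proof is precisely the ``naive'' construction, done as a tower: working in $\VV$, build a continuous chain $\seq{M_i}{i\leq\alpha^+}$ of elementary submodels of $\HH{\kappa^+}^M$, each an \emph{element} of $\HH{\kappa^+}^M$, with $x\in M_0$ and with ${}^{{<}\kappa}M_i$ and the trace $M_i\cap U$ placed into $M_{i+1}$. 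The trace is an element of $M$ by the $\kappa$-amenability of $U$ for $M$, and the ${<}\kappa$-closure of $M$ guarantees that each step, the whole sequence, the union $M_*=M_{\alpha^+}$, and $U_*=U\cap M_*$ all lie in $\HH{\kappa^+}^M=\HH{\kappa^+}^{\Ult{M}{U}}$. Feeding the traces into the next model is exactly what makes $U_*$ $\kappa$-amenable for $M_*$, and the regularity of $\alpha^+$ gives closure of $M_*$ under ${<}\alpha$-sequences. Your objection that ``$M$ has no access to $U$'' conflates building \emph{inside} $M$ with building in $\VV$ and landing in $M$: the recursion has length $\alpha^+<\kappa$, so ${<}\kappa$-closure does all the work. (The self-referential obstruction you describe is real when the ambient model has only small closure, as in Lemma \ref{lemma:IdealsNicelyProper} with $\alpha=\omega$; it is a non-obstacle under strong Ramseyness.) This direct argument shows, for \emph{every} admissible pair $(M,U)$, that $\Ult{M}{U}\models\anf{\textit{$\kappa$ is $\triv^{\kappa^+}_\alpha$-Ramsey}}$, whence $\NN\triv^{\kappa^+}_\alpha(\kappa)\notin U$ (using $\VV_\kappa\subseteq M$ to compute this set correctly), so the set lies in the ideal --- no contradiction and no minimal counterexample are needed.

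Your replacement step, by contrast, does not close the argument. What you must establish is the positive statement $N\models\anf{\textit{$\kappa^N$ is $\triv_\alpha^{(\kappa^N)^+}$-Ramsey}}$. Minimality of $\kappa$ transfers via {\L}os (using $\VV_\kappa\subseteq M$) only to: in $N$, for every $\gamma<j(\kappa)$, \emph{if} $\gamma$ is strongly Ramsey \emph{then} $\NN\triv^{\gamma^+}_\alpha(\gamma)\in\II\triv^\gamma_\gamma(\gamma)$. Applied at $\gamma=\kappa^N$, this first requires the unestablished premise that $\kappa^N$ is strongly Ramsey in $N$, and even granting that, its conclusion concerns the smallness of the non-Ramsey set \emph{below} $\kappa^N$, not the $\triv_\alpha^{(\kappa^N)^+}$-Ramseyness of $\kappa^N$ itself, which is what would contradict $\kappa^N\IN_N j(A)$. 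Moreover, the mechanism of Lemma \ref{lemma:IdealsNicelyProper} transfers a witnessing pair \emph{downward}, from the ultrapower to $\VV$, in order to contradict the choice of a bad parameter; it presupposes, rather than produces, the existence of witnesses inside the ultrapower, whereas your argument needs witnesses manufactured \emph{inside} $N$ --- the opposite direction. So the crux of your proof is unsupported, and the hypothesis you set aside, namely the ${<}\kappa$-closure of $M$, is exactly what supplies it; your setup (the reduction via the isomorphism of Lemma \ref{lemma:kappapowersetpreservings}.(\ref{lemma:kappapowersetpreserving:2}), the correct computation of $\NN\triv^{\kappa^+}_\alpha(\kappa)$ in $M$, and $j(\alpha)=\alpha$) is otherwise sound.
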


\begin{proof}
 Pick a $\kappa$-model $M$ and a uniform $M$-ultrafilter $U$ on $\kappa$ that is $M$-normal and $\kappa$-amenable for $M$. Then $\Ult{M}{U}$ is well-founded and $\HH{\kappa^+}^M=\HH{\kappa^+}^{\Ult{M}{U}}\in\Ult{M}{U}$. 
 Fix $x\in\POT{\kappa}^M$. 
 Using the closure properties of $M$ and the fact that $j_U$ is $\kappa$-powerset preserving, we can construct a continuous sequence $\seq{M_i\in\HH{\kappa^+}^M}{i\leq\alpha^+}$ of elementary submodels of $\HH{\kappa^+}^M$ with $x\in M_0$ and ${}^{{<}\kappa}M_i\cup\{M_i\cap U\}\in M_{i+1}$ for all $i<\alpha^+$. 
 Set $M_*=M_{\alpha^+}$ and $U_*=U\cap M_*\in\HH{\kappa^+}^M\subseteq\Ult{M}{U}$. 
 Our construction then ensures that, in $\Ult{M}{U}$, we have $x\in M_*\prec\HH{\kappa^+}$ is a weak $\kappa$-model closed under $\alpha$-sequences and $U_*$ is a uniform $M_*$-ultrafilter that is $M_*$-normal and $\kappa$-amenable for $M_*$. 
 These computations show that $\kappa$ is a $\triv^{\kappa^+}_{\alpha}$-Ramsey cardinal in $\Ult{M}{U}$. 
\end{proof}

The next result yields several statements from Theorem \ref{theorem:IdealContain} (\ref{item:IdealContain:stR}).

\begin{lemma}
 If $\kappa$ is a strongly Ramsey cardinal, then $\II^\kappa_R\cup\{\NN^\kappa_{iR}\}\subseteq\II^\kappa_{stR}$, $\NN^\kappa_{stR}\notin\II^\kappa_{stR}$ and $\II^\kappa_{ie}\nsubseteq\II^\kappa_{stR}$.  
\end{lemma}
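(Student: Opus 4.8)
The plan is to reduce all four assertions to the machinery of Section~\ref{section:ramseylike}, after recording the identifications $\II^\kappa_{stR}=\II\triv^\kappa_\kappa(\kappa)=\II\cc^\kappa_\kappa(\kappa)$, $\II^\kappa_R=\II\cc^\kappa_\omega(\kappa)$ and $\II^\kappa_{iR}=\II\scc^\kappa_\omega(\kappa)$, together with $\NN^\kappa_{stR}=\NN\triv^\kappa_\kappa(\kappa)$ and $\NN^\kappa_{iR}=\NN\scc^\kappa_\omega(\kappa)$ (the latter up to a bounded set of small ordinals, which is harmless: since $\kappa$ is strongly Ramsey, $\II^\kappa_{stR}$ is a proper normal ideal, and by uniformity of the filters involved it contains every bounded subset of $\kappa$). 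The identification $\II\triv^\kappa_\kappa(\kappa)=\II\cc^\kappa_\kappa(\kappa)$ is exactly the equivalence of $\triv^\kappa_\kappa$-Ramseyness and $\cc^\kappa_\kappa$-Ramseyness for subsets of $\kappa$.

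First, $\II^\kappa_R\subseteq\II^\kappa_{stR}$ is immediate from Proposition~\ref{proposition:compareideals} applied with $\Psi=\Omega=\cc$, $\alpha=\omega\le\beta=\kappa$ and $\theta=\vartheta=\kappa$: since $\cc$ remains true under restrictions and $\kappa$ is $\cc^\kappa_\kappa$-Ramsey, this gives $\II\cc^\kappa_\omega(\kappa)\subseteq\II\cc^\kappa_\kappa(\kappa)$. Likewise $\NN^\kappa_{stR}=\NN\triv^\kappa_\kappa(\kappa)\notin\II\triv^\kappa_\kappa(\kappa)=\II^\kappa_{stR}$ follows directly from Corollary~\ref{corollary:RamseyIdealsNotContained}.(\ref{item:RamseyIdealsNotContained:1}) with $\alpha=\vartheta=\kappa$.

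The heart of the argument is $\NN^\kappa_{iR}\in\II^\kappa_{stR}$. Here I would invoke Proposition~\ref{proposition:StronglyRamseysinUltrapower} with $\alpha=\omega_1$ to obtain $\NN\triv^{\kappa^+}_{\omega_1}(\kappa)\in\II\triv^\kappa_\kappa(\kappa)=\II^\kappa_{stR}$, and then show $\NN^\kappa_{iR}\cap(\omega_1,\kappa)\subseteq\NN\triv^{\kappa^+}_{\omega_1}(\kappa)$; downward closure of the ideal and the fact that it contains the bounded set $\NN^\kappa_{iR}\cap[0,\omega_1]$ then finish the proof. The required containment amounts to the implication that every $\triv^{\gamma^+}_{\omega_1}$-Ramsey cardinal $\gamma$ is ineffably Ramsey. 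To see this, note that a witnessing $M\prec\HH{\gamma^+}$ is closed under countable sequences, so any sequence $\seq{x_n}{n<\omega}$ of elements of the witnessing $M$-normal $\gamma$-amenable ultrafilter $U$ lies in $M$; padding it to a $\gamma$-sequence (with value $\gamma$ beyond $\omega$) and using $M$-normality shows that $\bigcap_{n<\omega}x_n$ agrees, modulo $\omega$, with a diagonal intersection lying in $U$, which is stationary because, by elementarity, every element of an $M$-normal ultrafilter for $M\prec\HH{\gamma^+}$ is stationary. Thus the witnesses are automatically stationary-complete, so $\gamma$ is $\scc^{\gamma^+}_{\omega_1}$-Ramsey, and then Proposition~\ref{proposition:compareideals} (with $\Psi=\Omega=\scc$, $\omega\le\omega_1\le\gamma$ and $\gamma\le\gamma^+$) yields that $\gamma$ is $\scc^\gamma_\omega$-Ramsey, i.e.\ ineffably Ramsey.

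Finally, for $\II^\kappa_{ie}\nsubseteq\II^\kappa_{stR}$ I would argue as in the last paragraph of the proof of Lemma~\ref{lemma:TwkRamsey}: if $\kappa$ is not ineffable then $\kappa\in\II^\kappa_{ie}\setminus\II^\kappa_{stR}$, so we may assume $\kappa$ is ineffable. Strong Ramseyness is a $\Pi^1_2$-property over $\VV_\kappa$ (coding the size-$\kappa$ models by subsets of $\kappa$, the well-foundedness and ${<}\kappa$-closure clauses become first order over $\VV_\kappa$, since the relevant sequences lie in $\VV_\kappa$), so the Jensen--Kunen argument for the $\Pi^1_2$-indescribability of ineffable cardinals shows that the set $\NN^\kappa_{stR}$ of non-reflection points is not ineffable, i.e.\ $\NN^\kappa_{stR}\in\II^\kappa_{ie}$ by Theorem~\ref{theorem:ineffableideal}; since $\NN^\kappa_{stR}\notin\II^\kappa_{stR}$ by the statement already proven, this set witnesses the non-inclusion. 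The main obstacle is the stationary-completeness upgrade in the third paragraph: it is precisely this step that bridges the $\triv$-based ideal produced by Proposition~\ref{proposition:StronglyRamseysinUltrapower} and the $\scc$-based ideal attached to ineffable Ramseyness, and it is where the elementarity $M\prec\HH{\gamma^+}$ and the countable closure of $M$ are essential.
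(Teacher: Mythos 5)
Your proof is correct and follows essentially the same route as the paper's: the inclusion $\II^\kappa_R\subseteq\II^\kappa_{stR}$ via the definitional chain through $\II\cc^\kappa_\kappa(\kappa)=\II\triv^\kappa_\kappa(\kappa)$, the non-containment $\NN^\kappa_{stR}\notin\II^\kappa_{stR}$ from Corollary \ref{corollary:RamseyIdealsNotContained}.(\ref{item:RamseyIdealsNotContained:1}), the membership $\NN^\kappa_{iR}\in\II^\kappa_{stR}$ from Proposition \ref{proposition:StronglyRamseysinUltrapower} with $\alpha=\omega_1$ combined with the fact that $\triv^{\gamma^+}_{\omega_1}$-Ramsey cardinals are ineffably Ramsey, and the non-inclusion $\II^\kappa_{ie}\nsubseteq\II^\kappa_{stR}$ by expressing strong Ramseyness as a $\Pi^1_2$-property and running the Jensen--Kunen argument as in Lemma \ref{lemma:TwkRamsey}. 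The only (harmless) deviations are that you inline the stationary-completeness upgrade that the paper factors out as Proposition \ref{proposition:InRamseyContainedInSuperRamsey}, and that you justify the $\Pi^1_2$-complexity of strong Ramseyness by coding ${<}\kappa$-sequences through index functions in $\VV_\kappa$ rather than via the paper's reformulation quantifying over the set of bounded subsets of $\kappa$ -- both are equivalent ways of making the same point.
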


\begin{proof}
 By definition, we have $\II^\kappa_R=\II\cc^\kappa_\omega(\kappa)\subseteq\II\cc^\kappa_\kappa(\kappa)=\II\triv^\kappa_\kappa(\kappa)=\II^\kappa_{stR}$.   
 Corollary \ref{corollary:RamseyIdealsNotContained} (\ref{item:RamseyIdealsNotContained:1}) shows that $$\NN^\kappa_{stR} ~ = ~ \NN\triv^\kappa_\kappa(\kappa) ~ \notin ~ \II\triv^\kappa_\kappa(\kappa) ~ = ~ \II^\kappa_{stR}.$$ 
 
 Next, since $\triv^{\kappa^+}_{\omega_1}$-Ramsey cardinals $\kappa$ are ineffably Ramsey, we can use Proposition \ref{proposition:StronglyRamseysinUltrapower}  to show that $\NN^\kappa_{iR}\subseteq\NN\triv^{\kappa^+}_{\omega_1}(\kappa)\in\II^\kappa_{stR}$. 
  Finally, rephrasing the ${<}\kappa$-closure of a model $M$ in a careful way, it is easy to see that a cardinal $\kappa$ is strongly Ramsey if and only if for all $x\subseteq\kappa$ and all $P\subseteq\POT{\kappa}$, either $P$ is not equal to the set of all bounded subsets of $\kappa$ or there exists a transitive weak $\kappa$-model $M$ with $x\in M$, a surjection $\map{s}{\kappa}{M}$ and a uniform $M$-ultrafilter $U$ such that $s[p]\in M$ for every $p\in P$, and $U$ is $\kappa$-amenable for $M$ and $M$-normal. 
  Since this equivalence shows that strong Ramseyness is a $\Pi^1_2$-property, the argument used in the last part of the proof of Lemma \ref{lemma:TwkRamsey} can be modified to show that $\II^\kappa_{ie}\nsubseteq\II^\kappa_{stR}$.  
\end{proof}

The following result will be useful below.

\begin{proposition}\label{proposition:InRamseyContainedInSuperRamsey}
 If $\kappa$ is a $\triv^{\kappa^+}_{\omega_1}$-Ramsey cardinal, then $\II\scc^\kappa_\omega(\kappa)\subseteq\II\triv^{\kappa^+}_{\omega_1}(\kappa)$. 
\end{proposition}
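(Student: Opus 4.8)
The plan is to prove the stated inclusion in the contrapositive form that every subset $A\subseteq\kappa$ which is $\triv_{\omega_1}^{\kappa^+}$-Ramsey is already $\scc_\omega^\kappa$-Ramsey; by Definition \ref{definition:ramseyidealtheta} this is exactly the assertion $\II\scc_\omega^\kappa(\kappa)\subseteq\II\triv_{\omega_1}^{\kappa^+}(\kappa)$. Note first that Proposition \ref{proposition:compareideals} does \emph{not} apply here, since it would require the formula $\triv$ to logically imply $\scc$, which is false. The genuine content is therefore that the stronger demands placed on the witnessing models for $\triv_{\omega_1}^{\kappa^+}$-Ramseyness --- countable closure together with elementarity in $\HH{\kappa^+}$ --- automatically force the associated $M$-ultrafilters to be stationary-complete. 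I would isolate this as the key lemma: if $M\prec\HH{\kappa^+}$ is a weak $\kappa$-model closed under ${<}\omega_1$-sequences and $U$ is a uniform, $\kappa$-amenable, $M$-normal $M$-ultrafilter on $\kappa$, then $U$ is stationary-complete.

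To prove this lemma I would first record that, since $U$ is uniform, it contains every final segment of $\kappa$ lying in $M$, so the footnote to Definition \ref{ultrafilterproperties} applies and $M$-normality yields that $U$ is ${<}\kappa$-complete for $M$. Now let $\seq{x_n}{n<\omega}$ be a sequence of elements of $U$. Closure of $M$ under countable sequences places $\seq{x_n}{n<\omega}$ in $M$, and then the ${<}\kappa$-completeness of $U$ for $M$, applied inside $\langle M,U\rangle$ to this $\omega$-sequence, gives $\bigcap_{n<\omega}x_n\in U$. It then remains to see that every $Y\in U$ is stationary in $\VV$: if some $Y\in U$ were nonstationary, then, since $\HH{\kappa^+}$ correctly computes stationarity of subsets of $\kappa$ and $M\prec\HH{\kappa^+}$, elementarity would produce a club $C\in M$ with $C\cap Y=\emptyset$; but an $M$-normal $M$-ultrafilter contains every club of $\kappa$ lying in $M$, so $C\in U$, contradicting that $U$ is a filter. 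Applying this to $Y=\bigcap_{n<\omega}x_n$ shows that $U$ is stationary-complete.

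With the lemma in hand I would finish as follows. Fix $A$ that is $\triv_{\omega_1}^{\kappa^+}$-Ramsey and fix $x\subseteq\kappa$. Using $x$ as the capture point in Definition \ref{definition:ramseylike}, obtain a weak $\kappa$-model $M\prec\HH{\kappa^+}$ closed under ${<}\omega_1$-sequences with $x\in M$ and a uniform, $\kappa$-amenable, $M$-normal $M$-ultrafilter $U$ on $\kappa$ with $A\in U$. By the lemma, $U$ is stationary-complete. Finally I would pass to a transitive model by letting $\pi\colon M\to\bar M$ be the transitive collapse: because $\kappa+1\subseteq M$ we have $\pi\restriction\kappa=\id_\kappa$, hence $\pi$ fixes every subset of $\kappa$ lying in $M$, so $\pi[U]=U$, $\pi(A)=A$ and $\pi(x)=x$. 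Thus $\bar M$ is a transitive weak $\kappa$-model with $x\in\bar M$, and $U$ is a uniform, $\kappa$-amenable, $\bar M$-normal and (still) stationary-complete $\bar M$-ultrafilter with $A\in U$, which witnesses that $A$ is $\scc_\omega^\kappa$-Ramsey.

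The main obstacle is the step establishing genuine stationarity in $\VV$ of the members of $U$: membership in $U$ only reflects largeness from the point of view of the small model, and the clubs that could threaten stationarity need not lie in $M$. The resolution --- and the only place where elementarity in $\HH{\kappa^+}$, rather than merely being a weak $\kappa$-model, is genuinely used --- is that elementarity lets $M$ see a witnessing disjoint club whenever one exists in $\VV$, after which $M$-normality forces that club into $U$. Everything else is routine bookkeeping, and the inaccessibility of $\kappa$, which is in any case available via Lemma \ref{lemma:inaccessible2}, is not required for the argument.
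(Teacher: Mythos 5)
Your proposal is correct and follows essentially the same route as the paper: the heart of both arguments is that for a countably closed weak $\kappa$-model $M\prec\HH{\kappa^+}$, any countable sequence from $U$ lies in $M$, so $M$-normality puts its intersection in $U$, and elementarity in $\HH{\kappa^+}$ (which computes stationarity of subsets of $\kappa$ correctly) upgrades stationarity in the sense of $M$ to genuine stationarity, yielding $\scc(M,U)$. Your write-up merely makes explicit two steps the paper leaves implicit -- deriving ${<}\kappa$-completeness from $M$-normality via the footnote to Definition \ref{ultrafilterproperties}, and passing to a transitive witness via the Mostowski collapse, which fixes $\kappa+1$ and hence $U$, $A$ and $x$ -- both of which are handled correctly.
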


\begin{proof}
 Let $M\prec\HH{\kappa^+}$ be a weak $\kappa$-model closed under countable sequences, let $U$ be a uniform $M$-ultrafilter that is $\kappa$-amenable for $M$ and $M$-normal, let $\seq{X_n}{n<\omega}$ be a sequence of elements of $U$ and set $X=\bigcap_{n<\omega}X_n$. Then $X\in U$, $X$ is stationary in $M$, and elementarity implies that $X$ is stationary in $\VV$.  
 This shows that that $\scc(M,U)$ holds. 
\end{proof}

The next lemma proves several statements from Theorem \ref{theorem:IdealContain} (\ref{item:IdealContain:suR}). 
 As mentioned earlier, the argument showing that $\II^\kappa_{{\prec}cie}$ is not a subset of $\II^\kappa_{suR}$ is due to Victoria Gitman. 

\begin{lemma}
 If $\kappa$ is a super Ramsey cardinal, then $\II^\kappa_{iR}\cup\II^\kappa_{stR}\cup\{\NN^\kappa_{stR}\}\subseteq\II^\kappa_{suR}$, $\NN^\kappa_{suR}\notin\II^\kappa_{suR}$, and $\II^\kappa_{{\prec}cie}\nsubseteq\II^\kappa_{suR}$. 
\end{lemma}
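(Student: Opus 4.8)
The inclusions and the non-containment statements for the $\NN$-sets are consequences of the general machinery of Section \ref{section:ramseylike}, so I would dispatch them first. Recall that $\II^\kappa_{stR}=\II\triv^\kappa_\kappa(\kappa)$, $\II^\kappa_{suR}=\II\triv^{\kappa^+}_\kappa(\kappa)$ and $\II^\kappa_{iR}=\II\scc^\kappa_\omega(\kappa)$. Since $\kappa$ is super Ramsey it is $\triv^{\kappa^+}_\kappa$-Ramsey, and as every $\kappa$-model is closed under ${<}\omega_1$-sequences (here $\omega_1\leq\kappa$ by inaccessibility), $\kappa$ is also $\triv^{\kappa^+}_{\omega_1}$-Ramsey. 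Proposition \ref{proposition:compareideals}, applied with $\Psi=\Omega=\triv$, $\alpha=\beta=\kappa$, $\theta=\kappa$ and $\vartheta=\kappa^+$, then gives $\II^\kappa_{stR}=\II\triv^\kappa_\kappa(\kappa)\subseteq\II\triv^{\kappa^+}_\kappa(\kappa)=\II^\kappa_{suR}$. For $\II^\kappa_{iR}\subseteq\II^\kappa_{suR}$ I would combine Proposition \ref{proposition:InRamseyContainedInSuperRamsey}, which yields $\II^\kappa_{iR}=\II\scc^\kappa_\omega(\kappa)\subseteq\II\triv^{\kappa^+}_{\omega_1}(\kappa)$, with Proposition \ref{proposition:compareideals} (now $\alpha=\omega_1\leq\beta=\kappa$, $\theta=\vartheta=\kappa^+$), giving $\II\triv^{\kappa^+}_{\omega_1}(\kappa)\subseteq\II\triv^{\kappa^+}_\kappa(\kappa)=\II^\kappa_{suR}$. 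Finally, $\NN^\kappa_{stR}=\NN\triv^\kappa_\kappa(\kappa)\in\II\triv^{\kappa^+}_\kappa(\kappa)=\II^\kappa_{suR}$ is Lemma \ref{lemma:notinkapparamseylikeideal}.(1) (with $\Psi=\triv$, $\alpha=\kappa$), and $\NN^\kappa_{suR}=\NN\triv^{\kappa^+}_\kappa(\kappa)\notin\II\triv^{\kappa^+}_\kappa(\kappa)=\II^\kappa_{suR}$ is exactly Corollary \ref{corollary:RamseyIdealsNotContained}.(\ref{item:RamseyIdealsNotContained:1}) with $\vartheta=\kappa^+$.

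The main assertion is $\II^\kappa_{{\prec}cie}\nsubseteq\II^\kappa_{suR}$, and I would witness it by the set $A=\NN^\kappa_{suR}$, which already satisfies $A\notin\II^\kappa_{suR}$ by the previous paragraph. It then suffices to show $A\in\II^\kappa_{{\prec}cie}$, i.e.\ that $A$ is \emph{not} completely ineffable. Assuming for a contradiction that $A$ is completely ineffable, Theorem \ref{nielsen} (using $\kappa=\kappa^{{<}\kappa}$) shows $A$ has the $\triv^\forall_\omega$-filter property, so by Lemma \ref{lemma:filtervsramsey} the set $A$ is $\triv^\theta_\omega$-Ramsey for every regular $\theta>\kappa$. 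I would fix $\theta>\kappa^+$ large enough that \anf{$\kappa$ is super Ramsey} is first-order over $\HH{\theta}$ with $\HH{\kappa^+}$ as a parameter (using the definable satisfaction predicate of $\HH{\theta}$ for the set $\HH{\kappa^+}$), and obtain a weak $\kappa$-model $M\prec\HH{\theta}$ together with a uniform, $\kappa$-amenable, $M$-normal $M$-ultrafilter $U$ on $\kappa$ with $A\in U$; since $\kappa$ genuinely is super Ramsey, $M\models\anf{\textit{$\kappa$ is super Ramsey}}$.

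By Proposition \ref{proposition:correspondence2} and Corollary \ref{corollary:correspondence1} the embedding $\map{j_U}{M}{\langle N,\IN_N\rangle}$ is a $\kappa$-powerset preserving $\kappa$-embedding with $\crit{j_U}=\kappa$, and Corollary \ref{corollary:correspondence2} gives $\kappa^N\IN_N j_U(A)=j_U(\NN^\kappa_{suR})$, so that $\langle N,\IN_N\rangle$ believes $\kappa^N$ is \emph{not} super Ramsey. Against this I would play the isomorphism $\map{j^*}{\HH{\kappa^+}^M}{\langle\Set{y\in N}{y\IN_N\HH{(\kappa^N)^+}^N},\IN_N\rangle}$ from Lemma \ref{lemma:kappapowersetpreservings}.(\ref{lemma:kappapowersetpreserving:2}). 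Because $\HH{\kappa^+}^M$ is a genuine set, its $j^*$-image lies in the well-founded part of $\langle N,\IN_N\rangle$, so $N$'s internal reading of \anf{$\kappa^N$ is super Ramsey}---which is evaluated entirely inside $\HH{(\kappa^N)^+}^N$ using $N$'s satisfaction predicate for that set---agrees with the external truth. Since $M\prec\HH{\theta}$ satisfies \anf{$\kappa$ is super Ramsey}, this property holds of the structure $\langle\HH{\kappa^+}^M,\IN\rangle$ with its satisfaction predicate and transfers across $j^*$, showing that $\langle N,\IN_N\rangle$ believes $\kappa^N$ \emph{is} super Ramsey---a contradiction. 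Hence $A$ is not completely ineffable.

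The hard part will be precisely this last transfer, which is why the statement eludes the indescribability argument used earlier (e.g.\ for $\II^\kappa_{{\prec}cie}\nsubseteq\II^\kappa_{iR}$): super Ramseyness of $\kappa$ is \emph{not} a first-order property of $\HH{\kappa^+}$ alone, since it quantifies over $\kappa$-sized $M'\prec\HH{\kappa^+}$ and the clause \anf{$M'\prec\HH{\kappa^+}$} refers to truth in $\HH{\kappa^+}$. The resolution I would carefully verify is to regard $\HH{\kappa^+}$ as a \emph{set} carrying its externally correct satisfaction predicate, so that super Ramseyness becomes a genuine first-order invariant of the $\IN$-structure $\langle\HH{\kappa^+}^M,\IN\rangle$ preserved by $j^*$; the delicate point is that $\langle N,\IN_N\rangle$ need not be well-founded when $M$ is merely a weak $\kappa$-model, and one must confirm that the image of $j^*$ lands in the well-founded part of $N$ (so that $N$'s definable satisfaction predicate for $\HH{(\kappa^N)^+}^N$ is absolute) and that $M$'s computation over its set $\HH{\kappa^+}^M$ matches the true one via $M\prec\HH{\theta}$.
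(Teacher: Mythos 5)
Your proposal is correct and follows essentially the same route as the paper: the inclusions and the two $\NN$-statements are obtained from Proposition \ref{proposition:compareideals}, Proposition \ref{proposition:InRamseyContainedInSuperRamsey}, Lemma \ref{lemma:notinkapparamseylikeideal} and Corollary \ref{corollary:RamseyIdealsNotContained} exactly as in the paper, and the non-inclusion $\II^\kappa_{{\prec}cie}\nsubseteq\II^\kappa_{suR}$ is Gitman's argument that the paper gives -- witnessing with $\NN^\kappa_{suR}$ and transferring \anf{$\kappa$ is super Ramsey} from $M\prec\HH{\theta}$ to $\langle N,\IN_N\rangle$ via $\kappa$-powerset preservation and the isomorphism of Lemma \ref{lemma:kappapowersetpreservings}.(\ref{lemma:kappapowersetpreserving:2}). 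Your elaboration of the transfer (the externalization of $\HH{(\kappa^N)^+}^N$ lying in the well-founded part of $N$, so that internal satisfaction is computed correctly) is a correct unpacking of what the paper's two-sentence argument leaves implicit, and your direct proof that $\NN^\kappa_{suR}\in\II^\kappa_{{\prec}cie}$ harmlessly dispenses with the paper's preliminary case split on whether $\kappa$ is completely ineffable.
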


\begin{proof}
 Let $\kappa$ be a super Ramsey cardinal.
 By definition, we have $$\II^\kappa_{stR} ~ = ~ \II\triv^\kappa_\kappa(\kappa) ~ \subseteq ~ \II\triv^{\kappa^+}_\kappa(\kappa) ~ = ~ \II^\kappa_{suR}.$$ 
   
  Next, Proposition \ref{proposition:InRamseyContainedInSuperRamsey} allows us to show that $$\II^\kappa_{iR} ~= ~ \II\scc^\kappa_\omega(\kappa) ~ \subseteq ~ \II\triv^{\kappa^+}_{\omega_1}(\kappa) ~ \subseteq ~ \II\triv^{\kappa^+}_\kappa(\kappa) ~ = ~ \II^\kappa_{suR}.$$  
  Moreover, Lemma \ref{lemma:notinkapparamseylikeideal} directly shows that $\NN^\kappa_{stR}=\NN\triv^\kappa_\kappa(\kappa)\in\II\triv^{\kappa^+}_\kappa(\kappa)=\II^\kappa_{suR}$. 
  Corollary \ref{corollary:RamseyIdealsNotContained} (\ref{item:RamseyIdealsNotContained:1}) now allows us to conclude  that $$\NN^\kappa_{suR} ~ = ~ \NN\triv^{\kappa^+}_\kappa(\kappa) ~ \notin ~ \II\triv^{\kappa^+}_\kappa(\kappa) ~ = ~ \II^\kappa_{suR}.$$

  Finally, we show that $\II^\kappa_{{\prec}cie}\nsubseteq\II^\kappa_{suR}$. In the following, we may assume $\kappa$ to be completely ineffable, for otherwise $\kappa\in\II^\kappa_{{\prec}cie}\setminus\II^\kappa_{suR}$. 
  We want to show that $\NN^\kappa_{suR}\in\II^\kappa_{{\prec}cie}$ holds under this additional assumption. 
  Assume, towards  a contradiction, that this is not the case, and pick a sufficiently large regular cardinal $\theta$, a weak $\kappa$-model $M\prec\HH{\theta}$, and a $\kappa$-powerset preserving $\kappa$-embedding $\map{j}{M}{\langle N,\epsilon_N\rangle}$ such that $\kappa$ is not super Ramsey in $\langle N,\epsilon_N\rangle$. However, since the embedding $j$ is $\kappa$-powerset preserving, this implies that $\kappa$ is not super Ramsey in $M$, and thus by elementarity of $M$, $\kappa$ is not super Ramsey in $\VV$, which yields our desired contradiction.
\end{proof}


\section{Locally measurable cardinals}\label{section:LocallyMeasurable}

In this section, we prove a few results about locally measurable cardinals that allow us to compare these cardinals and their ideals to the ones studied above, yielding several statements from Theorem \ref{theorem:IdealContain} (\ref{item:IdealContain:LMS}).

\begin{proposition}
 If $\kappa$ is locally measurable, then $\NN^\kappa_{lms}\notin\II^\kappa_{ms}$ and $\II^\kappa_{ie}\nsubseteq\II^\kappa_{ms}$. 
\end{proposition}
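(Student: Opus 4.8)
The plan is to derive the first statement as a direct instance of Lemma \ref{lemma:IdealsNicelyProper}, and to deduce the second from the first together with the $\Pi^1_2$-indescribability of ineffable cardinals.

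First I would record the translations into the language of Section \ref{section:ramseylike}. Since a cardinal is locally measurable if and only if it is $(\Psi_{ms})^\kappa_\omega$-Ramsey, the discussion following Definition \ref{definition:ramseyidealtheta} gives $\II^\kappa_{ms}=\II(\Psi_{ms})^\kappa_\omega(\kappa)$, while $\NN^\kappa_{lms}=\NN(\Psi_{ms})^\kappa_\omega(\kappa)$, using that locally measurable cardinals are uncountable. Here one notes that for a transitive weak $\kappa$-model $M$ the property $\Psi_{ms}(M,U)$ just asserts that $U\in M$ is $M$-normal, and that $U\in M$ automatically makes $U$ be $\kappa$-amenable for $M$ by Separation in $M$, so that $\Psi_{ms}$ indeed agrees with the induced property $\overline{(\Psi_{ms})}_\omega$ on transitive models.

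Then I would check that $\Psi_{ms}$ satisfies the hypothesis of Lemma \ref{lemma:IdealsNicelyProper}. So suppose $M_0,M_1,U_0,U_1$ are as listed there; in particular $U_1$ is a uniform, $\kappa$-amenable, $M_1$-normal $M_1$-ultrafilter, $M_1,U_1\in\HH{\kappa^+}^{M_0}$, and $\Psi_{ms}(j^*_{U_0}(M_1),j^*_{U_0}(U_1))$ holds in $\Ult{M_0}{U_0}$. Since $j^*_{U_0}$ is the $\IN$-isomorphism of Lemma \ref{lemma:kappapowersetpreservings}.(\ref{lemma:kappapowersetpreserving:2}), the membership statement \anf{$j^*_{U_0}(U_1)\in j^*_{U_0}(M_1)$} provided by $\Psi_{ms}$ in the ultrapower reflects back to $U_1\in M_1$. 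As $U_1$ is already $M_1$-normal and $U_1\subseteq M_1$, taking $F=U_1$ witnesses $\Psi_{ms}(M_1,U_1)$. Applying Lemma \ref{lemma:IdealsNicelyProper} with $\vartheta=\kappa$ and $\alpha=\omega$ now yields $\NN^\kappa_{lms}=\NN(\Psi_{ms})^\kappa_\omega(\kappa)\notin\II(\Psi_{ms})^\kappa_\omega(\kappa)=\II^\kappa_{ms}$, which is the first statement.

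For the second statement I would exhibit a set in $\II^\kappa_{ie}\setminus\II^\kappa_{ms}$, splitting on ineffability of $\kappa$. If $\kappa$ is not ineffable, then by the remarks following Definition \ref{definition:Ideals} the ideal $\II^\kappa_{ie}$ is improper while $\II^\kappa_{ms}$ is proper (as $\kappa$ is locally measurable), so $\kappa\in\II^\kappa_{ie}\setminus\II^\kappa_{ms}$. If $\kappa$ is ineffable, I would instead use $\NN^\kappa_{lms}$ itself: by the first statement it is not in $\II^\kappa_{ms}$, and since local measurability is a $\Pi^1_2$-property of $\kappa$ that holds at $\kappa$, the Jensen--Kunen argument for the $\Pi^1_2$-indescribability of ineffable cardinals (used already in the last part of the proof of Lemma \ref{lemma:TwkRamsey}) shows that the set of non-reflection points $\NN^\kappa_{lms}$ lies in the ineffable ideal, which equals $\II^\kappa_{ie}$ by Theorem \ref{theorem:Ideals}.(\ref{item:Ideals:ie}). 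Either way $\II^\kappa_{ie}\nsubseteq\II^\kappa_{ms}$.

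The main obstacle is the verification of the coherence hypothesis of Lemma \ref{lemma:IdealsNicelyProper} for $\Psi_{ms}$: one must be sure that the membership relation \anf{$U_1\in M_1$} is genuinely preserved by the isomorphism $j^*_{U_0}$ (rather than merely the first-order truth of \anf{$U$ is normal}), which is exactly what Lemma \ref{lemma:kappapowersetpreservings}.(\ref{lemma:kappapowersetpreserving:2}) delivers for objects in $\HH{\kappa^+}^{M_0}$. The case analysis in the second part, and the appeal to the $\Pi^1_2$ bound on local measurability, are then routine given the template of Lemma \ref{lemma:TwkRamsey}.
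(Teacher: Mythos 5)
Your proof is correct and follows essentially the same route as the paper: the first statement via Lemma \ref{lemma:IdealsNicelyProper} applied to $\Psi_{ms}$ (you usefully spell out the verification of its hypotheses via the $\IN$-isomorphism of Lemma \ref{lemma:kappapowersetpreservings}, which the paper leaves implicit), and the second via the $\Pi^1_2$-definability of local measurability together with the Jensen--Kunen reflection argument from the last part of the proof of Lemma \ref{lemma:TwkRamsey}. No changes needed.
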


\begin{proof}
 As noted in Section \ref{section:ramseylike}, if we set $\Psi\equiv\Psi_{ms}$, then local measurability coincides with $\Psi^\kappa_\omega$-Ramseyness and hence $\NN^\kappa_{lms}=\NN\Psi^\kappa_\omega(\kappa)$ as well as $\II^\kappa_{ms}=\II\Psi^\kappa_\omega(\kappa)$. 
 Since $\Psi$ satisfies the assumptions of Lemma \ref{lemma:IdealsNicelyProper}, we can use the lemma to conclude that $\NN\Psi^\kappa_\omega(\kappa)\notin\II\Psi^\kappa_\omega(\kappa)$. 
 Next, since local measurability is a $\Pi^1_2$-property, we can modify the proof of Lemma \ref{lemma:TwkRamsey} to show that $\II^\kappa_{ie}\nsubseteq\II^\kappa_{ms}$. 
\end{proof}

\begin{lemma}
 If $\kappa$ is a locally measurable cardinal, then $\kappa$ is strongly Ramsey and $\II^\kappa_{stR}\subseteq\II^\kappa_{ms}$.
\end{lemma}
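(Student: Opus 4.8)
The plan is to derive both assertions from the single claim that every $(\Psi_{ms})_\omega^\kappa$-Ramsey subset $A\subseteq\kappa$ is $\triv_\kappa^\kappa$-Ramsey, i.e.\ strongly Ramsey as a subset of $\kappa$. Taking $A=\kappa$ and recalling from Section~\ref{section:ramseylike} that local measurability coincides with $(\Psi_{ms})_\omega^\kappa$-Ramseyness yields that $\kappa$ is strongly Ramsey. For the inclusion, unravelling Definition~\ref{definition:Ideals} shows that $A\notin\II^\kappa_{ms}$ means precisely that $A$ is $(\Psi_{ms})_\omega^\kappa$-Ramsey (the $\kappa$-amenability being automatic from the clause $U\in M$ contained in $\Psi_{ms}$), while $A\notin\II^\kappa_{stR}=\II\triv_\kappa^\kappa(\kappa)$ means that $A$ is $\triv_\kappa^\kappa$-Ramsey; the claim then gives the contrapositive inclusion $\II^\kappa_{stR}\subseteq\II^\kappa_{ms}$. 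Note that by Lemma~\ref{lemma:inaccessible2} the cardinal $\kappa$ is inaccessible, so $\kappa^{{<}\kappa}=\kappa$ and transitive $\kappa$-models (transitive weak $\kappa$-models closed under ${<}\kappa$-sequences in $\VV$) containing any prescribed $x\subseteq\kappa$ do exist in $\VV$.

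Fix $x\subseteq\kappa$. The key tension is that the witnessing model for strong Ramseyness must be closed under ${<}\kappa$-sequences \emph{in $\VV$}, and hence cannot be located inside any weak $\kappa$-model, in particular not inside the models supplied by local measurability; I would therefore build it directly in $\VV$ and use local measurability only to \emph{measure} it. Concretely, I would produce a transitive $\kappa$-model $M'$ with $x,A\in M'$ and then apply the $(\Psi_{ms})_\omega^\kappa$-Ramseyness of $A$ to a subset of $\kappa$ coding $\POT{\kappa}\cap M'$, obtaining a transitive weak $\kappa$-model $M$ with $\POT{\kappa}\cap M'\subseteq M$ and a uniform, $M$-normal $M$-ultrafilter $U\in M$ with $A\in U$. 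Setting $U'=U\cap M'$ then gives an $M'$-ultrafilter on $\kappa$ with $A\in U'$ that measures every subset of $\kappa$ lying in $M'$. Here the genuine ${<}\kappa$-closure of $M'$ does the essential work: since every ${<}\kappa$-indexed sequence from $M'$ already lies in $M'$, and hence in $M$, the $M$-normality and ${<}\kappa$-completeness of $U$ transfer to show that $U'$ is $M'$-normal and countably complete, so that $\langle\Ult{M'}{U'},\IN_{U'}\rangle$ is well-founded. Thus $M'$-normality, completeness and well-foundedness come for free from the closure of $M'$.

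The one property not yet accounted for is the $\kappa$-amenability of $U'$ for $M'$: for a sequence $\vec{B}=\seq{B_\beta}{\beta<\kappa}\in M'$ of subsets of $\kappa$ one needs the transcript $\Set{\beta<\kappa}{B_\beta\in U'}$ to lie in $M'$, whereas a priori it only lies in $M$ (where it exists by the $\kappa$-amenability of $U$, which is automatic from $U\in M$). I would attempt to secure this exactly as in the proof of Lemma~\ref{lemma:WCmodelsBoundedIdeal}, by interleaving the construction of $M'$ with repeated applications of local measurability: build $M'=\bigcup_{n<\omega}M'_n$ as an increasing chain of transitive $\kappa$-models, at stage $n$ absorbing into $M'_{n+1}$ all transcripts of sequences from $M'_n$, while keeping each $M'_n$ closed under ${<}\kappa$-sequences so that the union remains a transitive $\kappa$-model. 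I expect the \emph{main obstacle} to be \emph{coherence}: independent applications of local measurability return a priori unrelated measures, so one must arrange that the measures used at the various stages agree on the previously decided subsets of $\kappa$ in order that $U'=\bigcup_{n<\omega}U'_n$ be a well-defined $M'$-ultrafilter. This is precisely the point where the weakly compact argument used the filter extension property to extend a single filter, a device not directly available here; the technical heart of the proof will be to realise all the stagewise measures as restrictions of one fixed measure, arising from a single local-measurability model that reflects the entire $\omega$-step construction, while simultaneously maintaining genuine ${<}\kappa$-closure of the approximations. Once a $\kappa$-amenable, $M'$-normal $M'$-ultrafilter $U'$ with $A\in U'$ is obtained, the pair $\langle M',U'\rangle$ witnesses that $A$ is $\triv_\kappa^\kappa$-Ramsey, and the two assertions of the lemma follow as in the first paragraph.
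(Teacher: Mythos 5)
Your reduction in the first paragraph and your diagnosis of what must be proved are fine, but the proposal breaks down at exactly the point you flag as its ``technical heart'', and the reason is that your guiding claim is false. You assert that a genuinely ${<}\kappa$-closed witnessing model ``cannot be located inside any weak $\kappa$-model, in particular not inside the models supplied by local measurability''. It can, and seeing this is the paper's entire proof: pick a transitive weak $\kappa$-model $M$ with $x,\HH{\kappa},U\in M$ and $A\in U$ (coding $\HH{\kappa}$ by a subset of $\kappa$, which is legitimate since $\kappa$ is inaccessible); note that $U\in M\models\ZFC^-$ yields $\POT{\kappa}^M\in M$ and hence $\HH{\kappa^+}^M\in M$; and build \emph{inside $M$} a continuous chain $\seq{M_i}{i\leq\kappa}$ of elementary submodels of $\HH{\kappa^+}^M$ with $\kappa,x,A\in M_0$ and $({}^{{<}\kappa}M_i)^M\cup\{M_i\cap U\}\subseteq M_{i+1}$ for all $i<\kappa$. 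The crucial observation is that $\HH{\kappa}\subseteq M$ makes the internal closure genuine: any ${<}\kappa$-sequence of elements of $M_\kappa$ existing in $\VV$ is coded, via a bijection $\map{b}{\kappa}{M_\kappa}$ in $M$, by an element of $\HH{\kappa}\subseteq M$, hence lies in $M$; by regularity of $\kappa$ its range lies in some $M_i$, so the sequence lies in $({}^{{<}\kappa}M_i)^M\subseteq M_{i+1}\subseteq M_\kappa$. Thus $M_\kappa$ is a genuine $\kappa$-model, and the \emph{single} measure $U\in M$ does all the measuring: $U\cap M_\kappa$ is $M_\kappa$-normal, and it is $\kappa$-amenable for $M_\kappa$ because transcripts of sequences in $M_i$ are definable from $M_i\cap U\in M_{i+1}\prec\HH{\kappa^+}^M$. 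The coherence problem you identify therefore never arises.

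Your fallback construction, besides leaving that coherence problem unresolved (as you concede), contains a further concrete error: the union $M'=\bigcup_{n<\omega}M'_n$ of a strictly increasing $\omega$-chain of ${<}\kappa$-closed models is in general \emph{not} ${<}\kappa$-closed --- an $\omega$-sequence whose $n$-th term first appears in $M'_n$ need not belong to any $M'_n$, hence not to $M'$. This is precisely why the paper uses a continuous chain of length $\kappa$ and takes the top model, where regularity of $\kappa$ rescues closure; but running a length-$\kappa$ chain with $\kappa$-many independent applications of local measurability would make your coherence problem strictly worse. The only visible repair is the one you explicitly ruled out: realize the whole construction, and its single measuring ultrafilter, inside one local-measurability model $M$ with $U\in M$.
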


\begin{proof}
  Pick $A\in\POT{\kappa}\setminus\II^\kappa_{ms}$ and some $x\subseteq\kappa$. 
Then, there is a transitive weak $\kappa$-model $M$ and an $M$-normal $M$-ultrafilter $U$ such that $x,\HH{\kappa},U\in M$ and $A\in U$. 
 But then, $U\in M\models\ZFC^-$ implies that $\POT{\kappa}^M\in M$, and hence $\HH{\kappa^+}^M\in M$. 
 Then, $M$ contains a continuous sequence $\seq{M_i}{i\leq\kappa}$ of elementary submodels of $\HH{\kappa^+}^M$ with $\kappa,x,A\in M_0$ and $({}^{{<}\kappa}M_i)^M\cup\{M_i\cap U\}\subseteq M_{i+1}$ for all $i<\kappa$. 
 By construction, the model $M_\kappa$ has cardinality $\kappa$ in $M$ and, since $\HH{\kappa}\subseteq M$, we know that ${}^{{<}\kappa}M_\kappa\subseteq M$. 
 But this implies that $M_\kappa$ is a $\kappa$-model. 
 Since our construction also ensures that $U$ is $M_\kappa$-normal and $\kappa$-amenable for $M_\kappa$, we can conclude that $A\notin\II^\kappa_{stR}$. 
\end{proof}

The following result shows that locally measurable cardinals are consistency-wise strictly above all the large cardinals mentioned in Table \ref{table:schemeKappa}, noting that $\Delta_\kappa^\forall$-Ramsey cardinals are implication-wise stronger than all these large cardinal notions.

\begin{theorem}
 If $\kappa$ is locally measurable, then $\NN\delt^\forall_\kappa\in\II^\kappa_{ms}$. 
\end{theorem}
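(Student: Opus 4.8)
The plan is to reflect the property ``$\delt^\forall_\kappa$-Ramsey'' \emph{inside} each witnessing model, exploiting that local measurability hands us genuine normal measures rather than mere embeddings. Recall that $\II^\kappa_{ms}=\II(\Psi_{ms})^\kappa_\omega(\kappa)$, so that $\NN\delt^\forall_\kappa(\kappa)\in\II^\kappa_{ms}$ means precisely: there is some $x\subseteq\kappa$ such that no transitive weak $\kappa$-model $M$ with $x\in M$ carrying a uniform $M$-ultrafilter $U$ with $\Psi_{ms}(M,U)$ has $\NN\delt^\forall_\kappa(\kappa)\in U$. Since $M$ is transitive, $\Psi_{ms}(M,U)$ forces $U\in M$, and then $M\models\anf{\textit{$U$ is a normal, hence ${<}\kappa$-complete, ultrafilter on $\kappa$}}$, so $M\models\anf{\textit{$\kappa$ is measurable}}$. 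First I would fix $x$ to be a code for a surjection $s\colon\kappa\to\VV_\kappa$, so that every such $M$ satisfies $\HH{\kappa}=\VV_\kappa\subseteq M$; as the witnessing objects for $\delt^\forall_\gamma$-Ramseyness of any $\gamma<\kappa$ live in $\VV_\kappa\subseteq M$, the model $M$ then computes the set $\NN\delt^\forall_\kappa(\kappa)$ correctly.

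Two $\ZFC^-$-provable facts drive the argument. First, measurability implies $\delt^\forall_\kappa$-Ramseyness: a normal measure $\mathcal U$ yields, for every regular $\theta>\kappa$ and every $x'\in\HH{\theta}$, a $\kappa$-model $M'\prec\HH{\theta}$ with $x',\mathcal U\in M'$ whose induced ultrafilter $\mathcal U\cap M'$ is uniform, normal, $\kappa$-amenable and $M'$-normal; hence $M\models\anf{\textit{$\kappa$ is $\delt^\forall_\kappa$-Ramsey}}$. Second, being $\delt^\forall_\delta$-Ramsey is, uniformly in $\delta$, a $\Pi^2_1$-property over $\VV_\delta$: using Corollary \ref{corollary:RamseyLocal} to replace the quantifier over all $\theta$ by the single instance $\theta=(2^\delta)^+$, and coding the elements of $\HH{(2^\delta)^+}$ by subsets of $\VV_{\delta+1}$, the statement becomes ``for every $W\subseteq\VV_{\delta+1}$ there is a $\delta$-model together with an ultrafilter, coded in $\VV_{\delta+1}$, satisfying $\ldots$'', where the remaining conditions, \emph{including} the elementarity clause $M'\prec\HH{(2^\delta)^+}$, are first order over the coded structure on $\VV_{\delta+1}$.

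With these in hand I would invoke, inside $M$, the standard fact that a normal measure concentrates on the ordinals satisfying a true $\Pi^2_1$-statement: since $M\models\anf{\textit{$\kappa$ is measurable}}$, the $\Pi^2_1$-statement ``$\kappa$ is $\delt^\forall_\kappa$-Ramsey'' holds at $\kappa$ in $M$, and the ultrapower $\Ult{M}{U}$ formed in $M$ is closed under $\kappa$-sequences and therefore computes $\VV_{\kappa+1}$ correctly; a leading universal third-order quantifier can only shrink in this ultrapower, so truth is preserved and one obtains $M\models\anf{\Set{\gamma<\kappa}{\textit{$\gamma$ is $\delt^\forall_\gamma$-Ramsey}}\in U}$. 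Because $M$ computes $\NN\delt^\forall_\kappa(\kappa)$ correctly, this $U$-large set is exactly $\kappa\setminus\NN\delt^\forall_\kappa(\kappa)$; as $U$ is an ultrafilter, $\NN\delt^\forall_\kappa(\kappa)\notin U$. Since $M$ and $U$ were arbitrary subject to $x\in M$, this establishes $\NN\delt^\forall_\kappa(\kappa)\in\II^\kappa_{ms}$.

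The hard part will be the complexity computation of the second fact: confirming that $\delt^\forall_\kappa$-Ramseyness is genuinely $\Pi^2_1$ over $\VV_\kappa$, and not of higher order. The two delicate points are that the quantifier over all regular $\theta$ must be collapsed to one bounded instance---this is exactly where Corollary \ref{corollary:RamseyLocal}, and hence the fact that $\delt$ remains true under $\kappa$-restrictions, is essential---and that the external elementarity requirement $M'\prec\HH{(2^\kappa)^+}$ must be folded into the first-order matrix of the $\Pi^2_1$-formula via the coding of $\HH{(2^\kappa)^+}$ on $\VV_{\kappa+1}$, so that no quantifier over $\VV_{\kappa+2}$ beyond the single leading universal one is introduced. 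Once this is verified, the measurable-cardinal reflection, together with the routine implication from measurability to $\delt^\forall_\kappa$-Ramseyness, completes the proof.
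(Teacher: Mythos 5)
Your overall strategy --- prove a suitable statement about $\kappa$ \emph{inside} $M$ and reflect it into $\Ult{M}{U}$ via the $\Pi^2_1$-indescribability argument for measurables, instead of building witnessing models by hand --- is viable and genuinely different from the paper's proof. But the step you yourself isolate as the crux fails as you set it up. After collapsing the $\theta$-quantifier to $\theta=(2^\kappa)^+$, the clause ``there is a $\kappa$-model $M'\prec\HH{(2^\kappa)^+}$ with $x\in M'$'' is an \emph{existential} quantifier over a genuinely third-order object: since $x$ ranges over all of $\HH{(2^\kappa)^+}$, the elements of $M'$ must themselves be coded by subsets of $\VV_{\kappa+1}$, so $M'$ (together with whatever satisfaction apparatus expresses $M'\prec\HH{(2^\kappa)^+}$) is coded by a subset of $\VV_{\kappa+1}$, i.e.\ a third-order object over $\VV_\kappa$. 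Your formula therefore has the shape $\forall^3\exists^3(\ldots)$, not $\Pi^2_1$, and this is fatal precisely for the transfer you rely on: $\Ult{M}{U}$ computes $\VV_{\kappa+1}$ correctly, but $\POT{\VV_{\kappa+1}}^{\Ult{M}{U}}$ is a proper subcollection of $\POT{\VV_{\kappa+1}}^M$ (it does not contain $U$, for instance), so the inner third-order witnesses $M'$ that exist in $M$ need not exist in the ultrapower; only a \emph{leading} universal third-order quantifier survives the passage. A secondary problem: your chain ``$M\models$ measurable $\Rightarrow$ $M\models$ $\delt^\forall_\kappa$-Ramsey $\Rightarrow$ (Corollary \ref{corollary:RamseyLocal} applied in $M$) $M\models\Phi$'' needs $\HH{(2^\kappa)^+}^M$ to be a set of $M$; but a transitive weak $\kappa$-model with $U\in M$ is only guaranteed to satisfy ``$\POT{\kappa}$ exists'' (hence ``$\HH{\kappa^+}$ exists''), not ``$\POT{\POT{\kappa}}$ exists''.

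Both defects are repairable within your architecture by working with the paper's game machinery at $\theta=\kappa^+$ rather than the Ramsey formulation at $\theta=(2^\kappa)^+$. By Lemmas \ref{lemma:filtervsramsey} and \ref{lemma:filterequivalence}, $\delt^\forall_\kappa$-Ramseyness is equivalent to the $\delt_\kappa^{\kappa^+}$-filter property, and at $\kappa^+$ everything the inner quantifiers range over is second order: elements of $\HH{\kappa^+}$, the models $M_\alpha\prec\HH{\kappa^+}$, the filters $F_\alpha$, and entire plays of length $\kappa$ are all codable by subsets of $\kappa$. The only third-order objects are the {\sf Challenger}'s strategy $\sigma$ and a satisfaction class $S$ for $\HH{\kappa^+}$, and both can be quantified \emph{universally} ($\sigma$ because the assertion is ``no winning strategy'', and $S$ because the satisfaction class of a set structure is unique, so $\exists S$ may be replaced by $\forall S$); merging the two universal quantifiers by pairing yields an honest $\Pi^2_1$ form. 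Moreover, $M\models$ ``$\kappa$ has the $\delt_\kappa^{\kappa^+}$-filter property'' is then proved directly by letting the {\sf Judge} answer any strategy in $M$ with the restrictions $U\cap M_\alpha$ of the measure $U\in M$, which bypasses $\HH{(2^\kappa)^+}^M$ entirely; and since for $\gamma<\kappa$ the $\gamma^+$-games live inside $\VV_\kappa\subseteq M$, the statement reflected into $U$ is absolute between $M$ and $\VV$, whence Lemmas \ref{lemma:filterequivalence} and \ref{lemma:filtervsramsey} in $\VV$ give genuine $\delt^\forall_\gamma$-Ramseyness, as you intend. By contrast, the paper's proof avoids all complexity computations: it shows outright that $\kappa$ is $\delt^\forall_\kappa$-Ramsey in $\Ult{M}{U}$, constructing for each regular $\theta$ of the ultrapower a chain of models $M_i\in\Ult{M}{U}$ with $M_i\cap U\in M_{i+1}$ --- the point being that although $U\notin\Ult{M}{U}$, its $\kappa$-sized restrictions belong to the ultrapower by $\kappa$-closure. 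Your reflection route, once repaired along the above lines, buys a reusable template (any property with a genuine $\Pi^2_1$ presentation reflects below the measure of $M$), at the cost of a delicate coding argument that the paper's direct construction never needs.
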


\begin{proof}
Assume that $\kappa$ is locally measurable. 
 Let $M$ be a transitive weak $\kappa$-model with $\VV_\kappa\in M$, let $U$ be an $M$-normal $M$-ultrafilter with $U\in M$,  let $\theta>\kappa$ be a regular cardinal in $\Ult{M}{U}$ and let $x\in\HH{\theta}^{\Ult{M}{U}}$. 
 Using the fact that $U$ is a normal ultrafilter on $\kappa$ in $M$ and $\Ult{M}{U}$ can be identified with the ultrapower $\Ult{M}{U}^M$ of $M$ by $U$ constructed in $M$, we know that $\Ult{M}{U}$ is closed under $\kappa$-sequences in $M$, and we can find an increasing continuous sequence $\seq{M_i}{i\leq\kappa}$ of weak $\kappa$-models in $\Ult{M}{U}$ with the properties that $x\in M_0$, that $M_i\prec\HH{\theta}^{\Ult{M}{U}}$, and that $M_i^{{<}\kappa}\cup\{M_i\cap U\}\subseteq M_{i+1}$ for all $i<\kappa$. 
 Then, $x\in M_\kappa\prec\HH{\theta}^{\Ult{M}{U}}$, and our construction ensures that $U_\kappa=U\cap M_\kappa\in\Ult{M}{U}$ is a $\kappa$-amenable $M_\kappa$-ultrafilter in $\Ult{M}{U}$. 
Moreover,  since $U$ is a normal ultrafilter in $M$, the filter $U_\kappa$ is normal in $\Ult{M}{U}$. 
 These computations show that $\kappa$ is $\delt^\forall_\kappa$-Ramsey in $\Ult{M}{U}$, and therefore $\NN\delt^\forall_\kappa\notin U$. 
This allows us to conclude that the set $\NN\delt^\forall_\kappa$ is contained in $\II^\kappa_{ms}$. 
 %
 %
\end{proof}

 Note that $\delt^\forall_\kappa$-Ramsey cardinals are in particular super Ramsey, and therefore the above theorem provides a proof for the statement $\NN^\kappa_{suR}\in\II^\kappa_{ms}$ from Theorem \ref{theorem:IdealContain} (\ref{item:IdealContain:LMS}).


\section{The measurable ideal}\label{section:MeasurableIdeal}

We close our paper with the investigation of the ideal induced by the property $\Psi_{ms}(M,U)$ with respect to Scheme \ref{schemeSmall} and Scheme \ref{schemeKappa}, and its relations with the ideals studied above. 
We start by verifying Theorem \ref{theorem:Ideals} (\ref{item:Ideals:measurable}) and Theorem \ref{theorem:IdealContain} (\ref{item:IdealContain:MS}), and then make some further observations concerning this ideal and its induced partial order $\POT{\kappa}/\II_{\prec{ms}}^\kappa$.

\begin{lemma}\label{lemma:measurableIdelaUnion}
  If $\kappa$ is a measurable cardinal, then $\II^{{<}\kappa}_{{ms}}=\II^\kappa_{\prec{ms}}$ and this ideal is equal to the complement of the union of all normal ultrafilters on $\kappa$. 
\end{lemma}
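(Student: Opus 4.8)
The plan is to prove the sharper statement that both ideals are equal to $\POT{\kappa}\setminus\mathcal{N}$, where $\mathcal{N}=\bigcup\Set{W}{\textit{$W$ is a normal ultrafilter on $\kappa$}}$; the equality $\II^{{<}\kappa}_{ms}=\II^\kappa_{\prec ms}$ is then immediate. Recall that $\Psi_{ms}(M,U)$ asserts that $U$ is $M$-normal and $U=F\cap M$ for some $F\in M$. The engine of the argument is the following transfer fact, which I would prove first: fix a regular $\theta>2^\kappa$; whenever $M\prec\HH{\theta}$ is a $\Sigma_0$-correct $\ZFC^-$-model with $\kappa\in M$ and $U$ is an $M$-ultrafilter with $\Psi_{ms}(M,U)$, the witness $F\in M$ is a genuine normal ultrafilter on $\kappa$ in $\VV$ and $U=F\cap M\subseteq F$. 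Indeed, by $\Sigma_0$-correctness, for $w\in M$ we have $w\in U\iff w\in F\iff M\models w\in F$, so the predicate $U$ and the set $F$ have the same extension over $M$; since the first-order statement \anf{\textit{$x$ is a normal ultrafilter on $\kappa$}} refers to $x$ only through membership assertions, the $M$-normality of $U$ transfers to $M\models$\anf{\textit{$F$ is a normal ultrafilter on $\kappa$}}. Elementarity gives $\HH{\theta}\models$\anf{\textit{$F$ is a normal ultrafilter on $\kappa$}}, and this property is absolute between $\HH{\theta}$ and $\VV$ for $\theta>2^\kappa$, so $F$ is a normal ultrafilter in $\VV$. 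Consequently every element of $U$ lies in $\mathcal{N}$.

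From the transfer fact, the inclusions $\II^{{<}\kappa}_{ms},\II^\kappa_{\prec ms}\supseteq\POT{\kappa}\setminus\mathcal{N}$ follow at once: if $A\notin\mathcal{N}$, then no $\Psi_{ms}$-witnessing pair $(M,U)$ over any regular $\theta>2^\kappa$ (of either the $(\lambda,\kappa)$-model kind or the weak $\kappa$-model kind) can satisfy $A\in U$, since that would place $A$ in the normal ultrafilter $F$. Taking the witnessing parameter $x=\emptyset$ in the relevant clauses of Definition \ref{definition:Ideals} then shows $A\in\II^{{<}\kappa}_{ms}$ and $A\in\II^\kappa_{\prec ms}$.

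For the reverse inclusions $\II^{{<}\kappa}_{ms},\II^\kappa_{\prec ms}\subseteq\POT{\kappa}\setminus\mathcal{N}$, I would take $A\in\mathcal{N}$, fixed by a normal ultrafilter $W$ with $A\in W$, and directly realize $A$ in a suitable $M$-ultrafilter. Given any regular $\theta>2^\kappa$ and any $x\in\HH{\theta}$, pick a $(\lambda,\kappa)$-model (respectively a weak $\kappa$-model) $M\prec\HH{\theta}$ with $x,W,A\in M$, and set $U=W\cap M$. Then $U$ is a uniform $M$-ultrafilter: $M\prec\HH{\theta}$ correctly computes cardinalities $\le\kappa$, so $\betrag{y}^M=\kappa$ for every $y\in W\cap M$, and $M\models$\anf{\textit{$W$ is a normal ultrafilter on $\kappa$}} makes $U$ an $M$-normal $M$-ultrafilter. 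Since $U=W\cap M$ with $W\in M$, we have $\Psi_{ms}(M,U)$ with witness $F=W$, and $A\in W\cap M=U$. This shows $A\notin\II^{{<}\kappa}_{ms}$ and $A\notin\II^\kappa_{\prec ms}$. Combining all four inclusions yields $\II^{{<}\kappa}_{ms}=\II^\kappa_{\prec ms}=\POT{\kappa}\setminus\mathcal{N}$, as desired.

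The main obstacle is the transfer step. Because the models $M$ are not assumed transitive, one cannot simply read off that $F$ is a normal ultrafilter in $\VV$; the work lies in showing, via $\Sigma_0$-correctness and the coincidence of the extensions of $U$ and $F$ over $M$, that the \emph{external} condition $\Psi_{ms}(M,U)$ forces $M$ to genuinely believe $F$ is a normal ultrafilter, and then using the absoluteness of this property between $\HH{\theta}$ and $\VV$ for $\theta>2^\kappa$ to upgrade $M$'s belief to a fact in $\VV$. All remaining verifications (uniformity, $M$-normality, and the existence of the required models) are routine consequences of elementarity.
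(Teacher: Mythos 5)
Your proof is correct and takes essentially the same route as the paper's: in the forward direction you unwind $\Psi_{ms}(M,U)$ and use elementarity of $M\prec\HH{\theta}$ together with the absoluteness of \anf{\textit{normal ultrafilter on $\kappa$}} for $\theta>2^\kappa$ to conclude that the witness $F$ is a genuine normal ultrafilter containing $A$, and in the reverse direction you restrict a normal ultrafilter $W\in M$ to obtain $U=W\cap M$ with $\Psi_{ms}(M,U)$. Your explicit \anf{transfer step} merely spells out, for non-transitive $M$, what the paper compresses into \anf{\textit{elementarity implies that $F$ is a normal ultrafilter on $\kappa$}}.
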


\begin{proof}
 First, if $A\subseteq\kappa$ with $A\notin\II^{{<}\kappa}_{ms}\cap\II^\kappa_{\prec{ms}}$, then there is a regular cardinal $\theta>(2^\kappa)^+$, an infinite cardinal $\lambda\leq\kappa$, a weak $(\lambda,\kappa)$-model $M\prec\HH{\theta}$ with $A\in M$ and an $M$-ultrafilter $U$ on $\kappa$ with $A\in U$ and $\Psi_{ms}(M,U)$. 
 Then $U$ is $M$-normal and $U=F\cap M$ for some $F$ in $M$. Therefore elementarity implies that $F$ is a normal ultrafilter on $\kappa$ with $A\in F$. 
 In the other direction, assume that $\lambda\leq\kappa$ is an infinite cardinal, $F$ is a normal ultrafilter on $\kappa$ and $A\in F$. If $\theta>(2^\kappa)^+$ is regular and $x\in\HH{\theta}$, then we can pick a weak $(\lambda,\kappa)$-model $M\prec\HH{\theta}$ with  $x,A,F\in M$. In this situation, it is easy to see that $\Psi_{ms}(M,F\cap M)$ holds and hence $A\notin\II^{{<}\kappa}_{{ms}}\cup\II^\kappa_{\prec{ms}}$. 
\end{proof}

\begin{lemma}\label{lemma:PropertiesOfMeasurableIdeal}
  If $\kappa$ is measurable, then $\II\delt^\forall_\kappa(\kappa)\cup\II^\kappa_{ms}\cup\{\NN^\kappa_{lms}\}\subseteq\II^\kappa_{\prec{ms}}$ 
and $\NN^\kappa_{ms}\notin\II^\kappa_{\prec{ms}}$. 
\end{lemma}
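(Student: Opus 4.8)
The plan is to reduce everything to the normal-ultrafilter description of the target ideal provided by Lemma \ref{lemma:measurableIdelaUnion}: since $\II^\kappa_{\prec{ms}}=\II^{{<}\kappa}_{ms}$ is the complement of the union of all normal ultrafilters on $\kappa$, a set $A\subseteq\kappa$ lies in $\II^\kappa_{\prec{ms}}$ if and only if $A\notin U$ for every normal ultrafilter $U$ on $\kappa$. I would treat the four assertions separately, each reducing to a statement about normal ultrafilters and proved by an ultrapower or elementary-submodel construction starting from a normal ultrafilter $U$ on $\kappa$ (which exists as $\kappa$ is measurable).

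For $\II\delt^\forall_\kappa(\kappa)\subseteq\II^\kappa_{\prec{ms}}$ I would show, contrapositively, that any $A$ lying in a normal ultrafilter $U$ is $\delt^\forall_\kappa$-Ramsey. Fixing a regular $\theta$ and $x\in\HH{\theta}$, build a continuous $\subseteq$-increasing chain $\langle M_i\mid i<\kappa\rangle$ of elementary submodels of $\HH{\theta}$ of size $\kappa$ with $(\kappa+1)\cup\{x,A,U\}\subseteq M_0$ and ${}^{<\kappa}M_i\cup\{M_i\cap U\}\subseteq M_{i+1}$, and put $M=\bigcup_{i<\kappa}M_i$, $W=U\cap M$. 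Then $M$ is a weak $\kappa$-model closed under ${<}\kappa$-sequences and $W$ is a uniform, $M$-normal $M$-ultrafilter with $A\in W$; the clause $M_i\cap U\in M_{i+1}$ (exactly as in the proof of Lemma \ref{lemma:WCmodelsBoundedIdeal}) yields $\kappa$-amenability, and the normality of $U$ in $\VV$ makes $W$ normal in the sense of Definition \ref{ultrafilterproperties}. Hence $A$ is $\delt^\theta_\kappa$-Ramsey for every $\theta$, so $A\notin\II\delt^\forall_\kappa(\kappa)$.

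For $\II^\kappa_{ms}\subseteq\II^\kappa_{\prec{ms}}$ the same hypothesis $A\in U$ should be converted into transitive witnesses: given $x\subseteq\kappa$, choose $X\prec\HH{\theta}$ (for a sufficiently large regular $\theta$) with $(\kappa+1)\cup\{x,A,U\}\subseteq X$ and $|X|=\kappa$, and let $\pi\colon X\to M$ be the transitive collapse. Since $\pi\restriction(\kappa+1)=\id$ we get $\pi(\kappa)=\kappa$, $\pi(A)=A$, and $F:=\pi(U)=U\cap X\in M$ is, by elementarity, a normal ultrafilter on $\kappa$ from the point of view of $M$ with $A\in F$. As $F\subseteq M$ we have $F\cap M=F$, so $\Psi_{ms}(M,F)$ holds and $M,F$ witness $A\notin\II^\kappa_{ms}$. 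The claim $\NN^\kappa_{lms}\in\II^\kappa_{\prec{ms}}$ I would prove by showing that for every normal ultrafilter $U$ the cardinal $\kappa$ is locally measurable in $N=\Ult{\VV}{U}$, whence $\{\alpha<\kappa\mid\alpha\text{ is locally measurable}\}\in U$ and so $\NN^\kappa_{lms}\notin U$; this uses that local measurability is a $\Pi^1_2$-property over $\VV_\kappa$ which holds in $\VV$ (measurability implies local measurability, via the collapse of this paragraph) and transfers to $N$ because $\VV_{\kappa+1}^N=\VV_{\kappa+1}$, as $N$ is closed under $\kappa$-sequences.

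The genuinely delicate point, and the one the introduction promises to handle without the Mitchell order, is $\NN^\kappa_{ms}\notin\II^\kappa_{\prec{ms}}$, i.e. producing a normal ultrafilter $U$ with $\NN^\kappa_{ms}\in U$ (equivalently, with $\kappa$ non-measurable in $\Ult{\VV}{U}$). The plan is a minimal-counterexample argument: suppose $\kappa$ is the least measurable cardinal for which $\NN^\kappa_{ms}$ lies in no normal ultrafilter, i.e. every normal ultrafilter concentrates on measurables, and fix a normal ultrafilter $U$ with $j\colon\VV\to N=\Ult{\VV}{U}$. Then $\{\alpha<\kappa\mid\alpha\text{ measurable}\}\in U$, so $\kappa$ is measurable in $N$, while elementarity makes $j(\kappa)$ the least cardinal in $N$ with the displayed property; since $\kappa<j(\kappa)$ is measurable in $N$, the property must fail for $\kappa$ in $N$, producing some $W'\in N$ that $N$ believes to be a normal ultrafilter on $\kappa$ with $\NN^\kappa_{ms}\in W'$. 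The absoluteness of $\VV_\kappa$ and of $\POT{\kappa}$ under $j$ ensures both that $W'$ is a genuine normal ultrafilter on $\kappa$ and that the set $N$ computes as the non-measurables below $\kappa$ is really $\NN^\kappa_{ms}$, contradicting the choice of $\kappa$. I expect the main work to be bookkeeping the absoluteness claims — that measurability below $\kappa$, normality of an ultrafilter on $\kappa$, and membership in such an ultrafilter are computed identically in $N$ and $\VV$ — which follow at once from $\POT{\kappa}^N=\POT{\kappa}$ and $\VV_\kappa^N=\VV_\kappa$.
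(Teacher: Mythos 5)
Your proposal is correct and takes essentially the same route as the paper: both reduce everything via Lemma \ref{lemma:measurableIdelaUnion} to the normal-ultrafilter description of $\II^\kappa_{\prec ms}$, witness $A\notin\II\delt^\forall_\kappa(\kappa)\cup\II^\kappa_{ms}$ with elementary-submodel chains and transitive collapses carrying $U\cap M$ (details the paper compresses to ``easily seen''), obtain $\NN^\kappa_{lms}\in\II^\kappa_{\prec ms}$ by transferring local measurability to $\Ult{\VV}{U}$, and prove $\NN^\kappa_{ms}\notin\II^\kappa_{\prec ms}$ by exactly the paper's minimal-counterexample ultrapower argument, reflecting minimality to pull a normal ultrafilter containing $\NN^\kappa_{ms}$ back from the ultrapower. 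Your absoluteness bookkeeping via $\VV_\kappa^N=\VV_\kappa$ and $\POT{\kappa}^N=\POT{\kappa}$ matches the paper's appeal to $\HH{\kappa^+}\subseteq M$ and $\kappa$-closure.
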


\begin{proof}  
  Assume that $A\subseteq\kappa$ is not in $\II^\kappa_{\prec{ms}}$. Using Lemma \ref{lemma:measurableIdelaUnion}, we may pick a normal ultrafilter $U$ on $\kappa$ such that $A\in U$. But then, $A\notin\II\delt^\forall_\kappa(\kappa)\cup\II^\kappa_{ms}$ is easily seen to be witnessed using suitable models $M$ that contain $U$ as an element together with the $M$-ultrafilter $U\cap M$. 
 Moreover, if $U$ is a normal ultrafilter on $\kappa$, then $\VV$ and $\Ult{\VV}{U}$ contain the same weak $\kappa$-models and hence $\kappa$ is locally measurable in $\Ult{\VV}{U}$. By Lemma \ref{lemma:measurableIdelaUnion}, this shows that $\NN^\kappa_{lms}\in\II^\kappa_{{\prec}ms}$. 
Finally, assume that there is a measurable cardinal $\kappa$ with $\NN^\kappa_{ms}\in\II^\kappa_{\prec{ms}}$, and let $\kappa$ be minimal with this property. 
 By Lemma \ref{lemma:measurableIdelaUnion}, we can pick a normal ultrafilter $U$ on $\kappa$ with $\kappa\setminus\NN^\kappa_{ms}\in U$. 
 Set $M=\Ult{\VV}{U}$. Then $\kappa$ is measurable in $M$. Moreover, since $\HH{\kappa^+}\subseteq M$, we have $\NN^\kappa_{ms}=(\NN^\kappa_{ms})^M$, and therefore, the minimality of $\kappa$ implies that $\NN^\kappa_{ms}\notin(\II^\kappa_{\prec{ms}})^M$. 
 Again, by Lemma \ref{lemma:measurableIdelaUnion}, this yields a normal ultrafilter $F$ on $\kappa$ in $M$ with $\NN^\kappa_{ms}\in F$. Since the closure properties of $M$ ensure that $F$ is a normal ultrafilter on $\kappa$ in $\VV$, another application of Lemma \ref{lemma:measurableIdelaUnion} shows that $\NN^\kappa_{ms}\notin\II^\kappa_{\prec{ms}}$, a contradiction. 
\end{proof}

Note that if $\kappa$ is measurable, then we have $$\II^\kappa_{{\prec}cie} ~ \subseteq ~ \II\delt^\forall_\omega(\kappa) ~ \subseteq ~ \II\delt^\forall_\kappa(\kappa) ~ \subseteq ~ \II^\kappa_{{\prec}ms}$$ and $$\II^\kappa_{suR} ~ \subseteq ~ \II\delt^{\kappa^+}_\kappa(\kappa) ~ \subseteq ~ \II\delt^\forall_\kappa(\kappa) ~ \subseteq ~ \II^\kappa_{{\prec}ms}.$$   
In particular, the above lemma implies the related statements in Theorem \ref{theorem:IdealContain} (\ref{item:IdealContain:MS}).

 Lemma \ref{lemma:measurableIdelaUnion} shows that the ideal $\II^\kappa_{\prec{ms}}$ can consistently be the complement of a normal ultrafilter on $\kappa$. 
For example, this holds when there is such a filter $U$ on $\kappa$ with the property that $\VV=\LL[U]$ holds (see {\cite[Corollary 20.11]{MR1994835}}).
 This shows that it is possible that the canonical partial order $\POT{\kappa}/\II^\kappa_{<{ms}}$ induced by this ideal is atomic. 
In contrast, Theorem \ref{theorem:Ideals}.(\ref{item:Ideals:ia}) directly implies that for every inaccessible cardinal $\kappa$, the corresponding partial order $\POT{\kappa}/\II^{{<}\kappa}_{ia}$ is not atomic. 
The following results will allow us to show that, for many  of the large cardinal properties characterized in this paper that are weaker than measurability, their  corresponding ideals do not induce atomic quotient partial orders.

\begin{lemma}\label{lemma:atoms}
 Let $\II$ be a normal ideal on an uncountable regular cardinal $\kappa$. 
 \begin{enumerate} 
  \item\label{item:LemmaTrivialForcing1} If $[A]_\II$ is an atom in the partial order $\POT{\kappa}/\II$, then $U_A=\Set{B\subseteq\kappa}{A\setminus B\in\II}$ is a normal ultrafilter on $\kappa$ containing $A$, with $\II^+\cap\POT{A}=U_A\cap\POT{A}$. 

  \item\label{item:LemmaTrivialForcing2} If the partial order $\POT{\kappa}/\II$ is atomic, then $\kappa$ is a measurable cardinal with $\II^\kappa_{\prec{ms}}\subseteq\II$, and the ideal $\II$ is precipitous. 
 \end{enumerate}
\end{lemma}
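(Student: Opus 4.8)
The plan is to prove the two items in turn, deriving item (\ref{item:LemmaTrivialForcing2}) from item (\ref{item:LemmaTrivialForcing1}). For item (\ref{item:LemmaTrivialForcing1}), suppose $[A]_\II$ is an atom, so that $A\in\II^+$ and, for every $B\subseteq\kappa$, either $A\setminus B\in\II$ or $A\cap B\in\II$ (these being the cases $[A\cap B]_\II=[A]_\II$ and $[A\cap B]_\II=0$). First I would check directly that $U_A$ is a filter: it is closed upward because $\II$ is closed downward, and closed under pairwise intersection because $A\setminus(B_0\cap B_1)=(A\setminus B_0)\cup(A\setminus B_1)\in\II$. The atom dichotomy immediately upgrades this to an ultrafilter, since if $A\setminus B\notin\II$ then $A\cap B\in\II$, whence $A\setminus(\kappa\setminus B)=A\cap B\in\II$ and $\kappa\setminus B\in U_A$. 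As a normal (proper) ideal, $\II$ contains every bounded subset of $\kappa$, so no singleton lies in $U_A$ and $U_A$ is non-principal; combined with normality this yields $<\kappa$-completeness through the standard diagonal-intersection computation (for $\beta<\kappa$ and $\langle X_\alpha\mid\alpha<\beta\rangle$ in $U_A$, pad the sequence with $\kappa$ and observe that the diagonal intersection agrees with $\bigcap_{\alpha<\beta}X_\alpha$ on a final segment), so in particular $U_A$ contains all final segments.

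The one genuinely interactive point of item (\ref{item:LemmaTrivialForcing1}) is the normality of $U_A$, and this is where the normality of $\II$ enters, in its equivalent form that any regressive function on an $\II$-positive set is constant on an $\II$-positive subset. Given a regressive $f$ on some $B\in U_A$, the set $A\cap B$ lies in $\II^+$ (as $A\setminus B\in\II$ while $A\in\II^+$), so $f$ is constant on some positive $S'\subseteq A\cap B$; the atom property then forces $A\setminus S'\in\II$, i.e.\ $S'\in U_A$, giving a member of $U_A$ on which $f$ is constant. The same dichotomy yields the displayed equation: if $B\subseteq A$ is positive then $[B]_\II$ is a nonzero element below the atom $[A]_\II$, hence equals it, so $A\setminus B\in\II$ and $B\in U_A$; conversely $B\in U_A$ with $B\subseteq A$ gives $B\in\II^+$, for otherwise $A=B\cup(A\setminus B)\in\II$.

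For item (\ref{item:LemmaTrivialForcing2}), atomicity provides an atom below the top element $[\kappa]_\II$ (nonzero since $\II$ is proper), and item (\ref{item:LemmaTrivialForcing1}) turns it into a normal ultrafilter on $\kappa$, so $\kappa$ is measurable and Lemma \ref{lemma:measurableIdelaUnion} becomes available. To see $\II^\kappa_{\prec{ms}}\subseteq\II$ I would argue contrapositively: if $A\in\II^+$, atomicity gives an atom $[A']_\II$ with $A'\subseteq A$, and by item (\ref{item:LemmaTrivialForcing1}) the normal ultrafilter $U_{A'}$ contains $A'$, hence contains $A$; thus $A$ lies in the union of all normal ultrafilters on $\kappa$, which by Lemma \ref{lemma:measurableIdelaUnion} is precisely the complement of $\II^\kappa_{\prec{ms}}$, so $A\notin\II^\kappa_{\prec{ms}}$.

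The last and most delicate part is precipitousness, which I expect to be the main obstacle. I would use the combinatorial characterization: $\II$ is precipitous iff for every $S\in\II^+$ and every $\subseteq_\II$-refining $\omega$-sequence of partitions of $S$ into $\II$-positive sets there is a choice of a block $X_n$ from the $n$-th partition with $X_{n+1}\subseteq_\II X_n$ and $\bigcap_{n<\omega}X_n\neq\emptyset$. Given such data, pick by atomicity an atom $[A]_\II$ with $A\subseteq S$. Since $[A]_\II$ is an atom and each partition is a maximal antichain below $[S]_\II\geq[A]_\II$, exactly one block $X_n$ satisfies $[A]_\II\leq[X_n]_\II$, i.e.\ $A\setminus X_n\in\II$, and refinement forces $[X_{n+1}]_\II\leq[X_n]_\II$ by uniqueness. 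Now I would invoke that $\II$, being normal with $\kappa>\omega$, is $\kappa$-complete and in particular $\sigma$-complete, so $A\setminus\bigcap_{n<\omega}X_n=\bigcup_{n<\omega}(A\setminus X_n)\in\II$; as $A\in\II^+$, this makes $\bigcap_{n<\omega}X_n$ positive, hence nonempty. The hard part will be stating the combinatorial definition in a form compatible with the paper and carefully managing the distinction between genuine inclusion of the chosen blocks and inclusion mod $\II$, the $\sigma$-completeness of $\II$ being exactly what bridges "$A\setminus X_n\in\II$ for each $n$" to a nonempty actual intersection. (Alternatively, one can note that a generic filter on the atomic algebra $\POT{\kappa}/\II$ must meet the dense set of atoms, so the generic ultrafilter coincides with a ground-model $U_{A'}$ and the generic ultrapower, being an ultrapower by a $\kappa$-complete ultrafilter, is well-founded.)
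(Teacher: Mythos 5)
Your proof is correct, and for item~(\ref{item:LemmaTrivialForcing1}) and the first half of item~(\ref{item:LemmaTrivialForcing2}) it follows essentially the paper's route: the paper obtains the ultrafilter dichotomy by noting that $B\notin U_A$ and $\kappa\setminus B\notin U_A$ would make $[A\cap B]_\II$ and $[A\setminus B]_\II$ incompatible conditions below the atom, verifies $\II^+\cap\POT{A}=U_A\cap\POT{A}$ by the same two computations you give, and derives $\II^\kappa_{\prec ms}\subseteq\II$ exactly as you do, by finding below each $A\in\II^+$ an atom whose associated normal ultrafilter contains $A$ (your contrapositive via Lemma~\ref{lemma:measurableIdelaUnion}). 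You are in fact more explicit than the paper on one point: the paper dismisses normality of $U_A$ with the single clause ``since $\II$ is a normal ideal'', whereas you spell out the Fodor-style argument (regressive on $B\in U_A$, pass to the positive set $A\cap B$, press to a positive constant set $S'$, apply the atom); just note that your stated order of deductions --- completeness ``combined with normality'' before normality is established --- should be rearranged into the clean chain: Fodor property first, then diagonal closure, then ${<}\kappa$-completeness and final segments. The genuine divergence is precipitousness. The paper plays Jech's precipitous game $\mathcal{G}_\II$ ({\cite[Lemma 22.21]{MR1940513}}), giving {\sf Nonempty} a strategy that answers {\sf Empty}'s first move $A$ with a $B\in\II^+\cap\POT{A}$ representing an atom; item~(\ref{item:LemmaTrivialForcing1}) then places every subsequent move in the normal ultrafilter $U_B$, and the countable completeness of $U_B$ makes the intersection nonempty. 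You instead verify the refining-functionals characterization, using the atom to isolate, in each partition, the unique block compatible with $[A]_\II$, and then using the $\kappa$-completeness of the normal ideal $\II$ itself to see $A\setminus\bigcap_{n<\omega}X_n\in\II$, so that the literal intersection is $\II$-positive and in particular nonempty --- a step that correctly bridges the mod-$\II$ chain to a genuine intersection, as you flag. The two arguments are equivalent in substance, both reducing to ``positive subsets of an atom lie in $U_A$'' plus countable completeness, but yours needs only completeness of the ideal rather than of the ultrafilter, while the paper's is shorter once the game characterization is in hand; your parenthetical forcing remark (the atoms are dense, so any generic contains one, the quotient forcing is trivial below it, and the generic ultrapower is a genuine ultrapower by a ground-model $\kappa$-complete ultrafilter) is also sound and is arguably the quickest conceptual route.
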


\begin{proof}
 (\ref{item:LemmaTrivialForcing1}) Assume that there is $B\in\POT{\kappa}\setminus U_A$ with $\kappa\setminus B\notin U_A$. Then $A\cap B,A\setminus B\in\II^+$, and this implies that $[A\cap B]_\II$ and $[A\setminus B]_\II$ are incompatible conditions in $\POT{\kappa}/\II$ below $[A]_\II$, a contradiction. Since $\II$ is a normal ideal on $\kappa$, this shows that $U_A$ is a normal ultrafilter on $\kappa$. Moreover, if $B\in U_A\cap\POT{A}$, then $A\setminus B\in\II$ and $A\in\II^+$ implies that $B\in\II^+$. Finally, if $B\in\POT{A}\setminus U_A$, then the above computations show that $A\setminus B\in U_A$, and hence, $B\in\II$.

  (\ref{item:LemmaTrivialForcing2}) By (\ref{item:LemmaTrivialForcing1}), the existence of an atom in $\POT{\kappa}/\II$ implies the measurability of $\kappa$. Next, if $A\in\II^+$, then our assumption yields a $B\in\II^+\cap\POT{A}$ with the property that $[B]_\II$ is an atom in $\POT{\kappa}/\II$ and, by (\ref{item:LemmaTrivialForcing1}), the filter $U_B$ witnesses that $A$ is an element of $(I^\kappa_{ms})^+$. This shows that $\II^+\subseteq(\II^\kappa_{ms})^+$ and therefore $\II^\kappa_{ms}\subseteq\II$. Finally, let $\sigma$ be a strategy for the Player {\sf Nonempty} in the precipitous game $\mathcal{G}_\II$ (see {\cite[Lemma 22.21]{MR1940513}}), with the property that whenever Player {\sf Empty} plays $A\in\II^+$ for their first move of the game, then {\sf Nonempty} replies by playing $B\in\II^+\cap\POT{A}$ so that $[B]_\II$ is an atom in $\POT{\kappa}/\II$. Now, if $\seq{A_n}{n<\omega}$ is a run of $\mathcal{G}_\II$ in which {\sf Nonempty} played according to $\sigma$, then the above arguments show that $U=U_{A_1}$ is a normal ultrafilter on $\kappa$ with $A_n\in U$ for all $n<\omega$. Hence, $\emptyset\neq\bigcap_{n<\omega}A_n\in U$. This shows that $\sigma$ is a winning strategy for {\sf Nonempty} in $\mathcal{G}_\II$, and therefore that $\II$ is precipitous. 
\end{proof}

Note that 
the above lemma allows us to derive the statements on the non-atomicity of the induced ideals in all items of Theorem \ref{theorem:IdealContain}: 
  For any large cardinal notion weaker than measurability that is mentioned in the theorem, the results of Theorem \ref{theorem:IdealContain} that we have already verified show that their corresponding ideals on a measurable cardinal $\kappa$ are strictly contained in the measurable ideal $\II_{\prec{ms}}^\kappa$ on $\kappa$. 
  Together with Lemma \ref{lemma:atoms}.(2), this shows that these ideals can never be atomic.

 In the remainder of this section, we consider the question whether the partial order induced by the measurable ideal has to be atomic. 
 The following lemma gives a useful criterion for the atomicity of these partial orders. 
Note that the assumption of the lemma is satisfied if the Mitchell order on the collection of normal ultrafilters on the given measurable cardinal is linear.
  As noted in \cite{MR3809587}, this statement holds in all known canonical inner models for large
cardinal hypotheses, and is expected to also be true in potential canonical inner models for supercompact cardinals.

\begin{lemma}
 If  $\kappa$ is a measurable cardinal with the property that any set of pairwise incomparable elements in the Mitchell ordering at $\kappa$ has size at most $\kappa$,  
then the partial order $\POT{\kappa}/I^\kappa_{\prec{ms}}$ is atomic. 
\end{lemma}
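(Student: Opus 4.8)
The plan is to reduce the statement to the construction of \emph{isolating sets} for normal ultrafilters and then to build these by a recursion governed by the Mitchell order. First I would record, using Lemma \ref{lemma:measurableIdelaUnion}, that $\II^\kappa_{\prec ms}$ is the dual ideal of the union of all normal ultrafilters on $\kappa$, so that a set $A\subseteq\kappa$ is $\II^\kappa_{\prec ms}$-positive if and only if $A$ belongs to some normal ultrafilter on $\kappa$. Combining this with Lemma \ref{lemma:atoms}.(\ref{item:LemmaTrivialForcing1}), one checks that $[A]$ is an atom of $\POT{\kappa}/\II^\kappa_{\prec ms}$ precisely when $A$ is positive and lies in a \emph{unique} normal ultrafilter: if two distinct normal ultrafilters both contained $A$, they would disagree on some $X$, and then $A\cap X$ and $A\setminus X$ would be two positive sets splitting $[A]$; conversely, if $U$ is the only normal ultrafilter containing $A$, then every positive subset of $A$ lies in $U$, so no positive subset has positive complement in $A$. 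Hence it suffices to prove that every normal ultrafilter $U$ on $\kappa$ has an isolating set, i.e. a set $A_U\in U$ belonging to no other normal ultrafilter; for then, given any positive $A\in U$, the set $A\cap A_U\in U$ is an atom below $[A]$, witnessing atomicity.

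To produce isolating sets I would exploit the well-foundedness of the Mitchell order together with the antichain hypothesis. Writing $o(W)$ for the Mitchell rank of a normal ultrafilter $W$ and $g(\alpha)=o(\alpha)$ for the Mitchell rank of $\alpha$ (with $g(\alpha)=0$ when $\alpha$ is not measurable), elementarity yields $[g]_W=o^{\Ult{\VV}{W}}(\kappa)=o(W)$ for every normal ultrafilter $W$, since the normal ultrafilters of $\Ult{\VV}{W}$ on $\kappa$ are exactly those $\triangleleft W$ and the Mitchell rank is absolute for them. Moreover, as $\kappa$ is inaccessible we have $o(\alpha)<\kappa$ for all $\alpha<\kappa$, so $g$ is a function $\kappa\to\kappa$. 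Thus two normal ultrafilters of \emph{different} rank are separated by $g$, while normal ultrafilters of the same rank are pairwise $\triangleleft$-incomparable, hence form an antichain and so, by hypothesis, number at most $\kappa$. Given $U$ with $o(U)=\gamma$, I would first enumerate the at most $\kappa$ many other rank-$\gamma$ ultrafilters as $\Seq{W_i}{i<\kappa}$, choose $X_i\in U\setminus W_i$, and set $F=\nabla_{i<\kappa}X_i$; normality of $U$ gives $F\in U$, and the diagonal-intersection property gives $F\notin W_i$ for every $i$. It then remains to manufacture a set $E\in U$ excluding all normal ultrafilters of rank different from $\gamma$, and the isolating set will be $A_U=E\cap F$.

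When $\gamma<\kappa$, the set $E=\Set{\alpha<\kappa}{o(\alpha)=\gamma}$ works at once, since $E\in W$ if and only if $o(W)=\gamma$; and for $\kappa\le\gamma<\kappa^+$ one can instead use the canonical functions $f_\gamma$ (which satisfy $[f_\gamma]_W=\gamma$ for every normal ultrafilter $W$) together with the set $\Set{\alpha<\kappa}{g(\alpha)=f_\gamma(\alpha)}$, which again belongs to $W$ exactly when $o(W)=\gamma$. The genuine difficulty, and the point I expect to be the main obstacle, is the case $\gamma\ge\kappa^+$: here no single term-definable function represents the ordinal $\gamma$ in all rank-$\gamma$ ultrafilters simultaneously while failing to do so in the others, so there is no uniform way to turn the condition $o(W)=\gamma$ into a subset of $\kappa$.

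I plan to overcome this by a recursion on the Mitchell rank that builds the isolating sets $A_W$ coherently for all normal ultrafilters at once. Having isolating sets for all ultrafilters of rank below $\gamma$, I would reflect the construction into $M_U=\Ult{\VV}{U}$, using that the antichain hypothesis reflects (it holds in $M_U$, since antichains there are genuine antichains of size at most $\kappa$, and by elementarity of $j_U$ it holds at $U$-almost every $\alpha<\kappa$) and that the normal ultrafilters $\triangleleft U$ are precisely those of $M_U$. The coherence of the recursively defined assignment should force $\kappa\in j_W(A_U)$ to fail for every $W$ with $U\triangleleft W$, thereby separating $U$ from the ultrafilters lying above it, while the reflected isolating sets separate $U$ from those below and incomparable to it. Verifying this coherence — essentially that the map $W\mapsto A_W$ commutes with the ultrapower embeddings — is the technical heart of the argument, and the rest of the proof reduces, as above, to the elementary rank-and-diagonal-intersection bookkeeping.
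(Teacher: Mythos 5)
Your reduction in the first paragraph is correct ($[A]$ is an atom of $\POT{\kappa}/\II^\kappa_{\prec ms}$ if and only if $A$ is positive and lies in a unique normal ultrafilter, by Lemma \ref{lemma:measurableIdelaUnion}), and your diagonal-intersection step for separating $U$ from at most $\kappa$-many pairwise incomparable ultrafilters is exactly right. But the plan stands or falls with the claim that \emph{every} normal ultrafilter on $\kappa$ admits an isolating set, and this is strictly stronger than atomicity (which only requires isolating sets to be \emph{dense} below positive sets) — indeed it provably fails under the lemma's hypothesis. Note that an isolating set determines its ultrafilter (it is the unique normal ultrafilter containing it), so $U\mapsto A_U$ is injective into $\POT{\kappa}$ and at most $2^\kappa$ normal ultrafilters can be isolated. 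In Mitchell's models $L[\vec{U}]$ with $o(\kappa)=\kappa^{++}$, the Mitchell order on the normal measures on $\kappa$ is linear — every antichain is a singleton, so the hypothesis holds trivially — while $\GCH$ gives $2^\kappa=\kappa^+<\kappa^{++}=$ the number of normal measures. So the difficulty you flag for ranks $\gamma\geq\kappa^+$ is not a matter of verifying coherence of a recursion: no assignment $W\mapsto A_W$ of isolating sets to all normal ultrafilters can exist there, whether or not it commutes with ultrapower embeddings. The obstruction is fundamental, and this is precisely why your canonical-function trick stops at $\kappa^+$.

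The paper's proof avoids representing Mitchell ranks altogether. Given a positive set $A$, it does not isolate a prescribed ultrafilter; instead it chooses $U$ Mitchell-\emph{minimal} among the normal ultrafilters containing $A$ (well-foundedness of the Mitchell order provides minimal elements, and these are pairwise incomparable — this is the only place the size-$\leq\kappa$ antichain hypothesis is used). Your diagonalization then yields $B\in U\cap\POT{A}$ avoiding all the other minimal ultrafilters. The ultrafilters that are \emph{not} minimal are excluded not by computing $o(W)$ but by a single reflecting set: when $o(U)>0$, one takes $C=\Set{\alpha\in B}{\textit{$\alpha$ is measurable and $B\cap\alpha\notin F$ for every normal ultrafilter $F$ on $\alpha$}}$ (and the set of non-measurable $\alpha\in B$ when $o(U)=0$). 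By {\L}os' theorem and the fact that the normal measures on $\kappa$ in $\Ult{\VV}{U}$ are exactly those Mitchell-below $U$, minimality of $U$ gives $C\in U$; conversely, any normal $U'$ containing $C$ has no measure below it containing $B$, which together with the diagonalization step forces $U'=U$, so $[C]$ is an atom below $[A]$. If you replace your global goal by the density statement "below every positive set there is an isolating set for some Mitchell-minimal ultrafilter", your first paragraph's reduction and your same-rank diagonalization go through verbatim and you recover the paper's argument; your rank function $g$ and the canonical functions $f_\gamma$ become unnecessary.
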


\begin{proof}
 Let $\II=\II^\kappa_{\prec{ms}}$, and fix $A\in\II^+$. Let $\mathcal{F}$ denote the collection of all normal ultrafilters on $\kappa$ that contain $A$, and let $\mathcal{F}_0$ denote the set of all elements of $\mathcal{F}$ that are minimal in $\mathcal{F}$ with respect to the Mitchell ordering. Note that any two elements of $\mathcal F_0$ are incomparable, hence $\mathcal F_0$ has size at most $\kappa$ by our assumption. Lemma \ref{lemma:measurableIdelaUnion} implies that $\mathcal F_0\ne\emptyset$. 
We may thus pick some $U\in\mathcal F_0$.

 \begin{claim*}
  There exists $B\in \POT{A}\cap U$ with the property that $U$ is the unique element of $\mathcal{F}_0$ that contains $B$. 
 \end{claim*}

 \begin{proof}[Proof of the Claim]
  We may assume that $\mathcal{F}_0\neq\{U\}$. Let $\map{u}{\kappa}{\mathcal{F}_0\setminus\{U\}}$ be a surjection. Given $\alpha<\kappa$, fix $B_\alpha\in U\setminus u(\alpha)$. Define $B=A\cap\triangle_{\alpha<\kappa}B_\alpha\in\POT{A}\cap U$. Then, $B\notin u(\alpha)$ for any $\alpha<\kappa$, for otherwise we would have $B\cap(\alpha,\kappa)\in u(\alpha)$, and hence $B_\alpha\in u(\alpha)$ for some $\alpha<\kappa$, contradicting our assumption on $B_\alpha$. 
 \end{proof}

\begin{claim*}
 There exists $C\in\POT{B}\cap U$ with the property that $U$ is the unique normal ultrafilter on $\kappa$ that contains $C$. 
\end{claim*}

\begin{proof}[Proof of the Claim]
 First, assume that $U$ has Mitchell rank $0$. Then $$C ~ = ~ \Set{\alpha\in B}{\textit{$\alpha$ is not measurable}} ~ \in ~ \POT{B}\cap U.$$ Let $U^\prime$ be a normal ultrafilter on $\kappa$ that contains $C$. Then $A,B\in U^\prime$ and $U^\prime$ has Mitchell rank $0$. This implies $U^\prime\in\mathcal{F}_0$ and, by the previous claim, we can conclude that $U=U^\prime$. 

 Now, assume that $U$ has Mitchell rank greater than $0$. Define $$C ~ = ~ \Set{\alpha\in B}{\textit{$\alpha$ is measurable and $B\cap\alpha\notin F$ for every normal ultrafilter $F$ on $\alpha$}}.$$ Then $U\in\mathcal{F}_0$ implies that $C\in\POT{B}\cap U$. Let $U^\prime$ be a normal ultrafilter on $\kappa$ that contains $C$. Then $A,B\in U^\prime$ and $U'\in\mathcal{F}_0$. By the above claim, we know that $U=U^\prime$ . 
\end{proof}

 \begin{claim*}
  The condition $[C]_\II$ is an atom in the partial order $\POT{\kappa}/\II$. 
 \end{claim*}

 \begin{proof}[Proof of the Claim]
  Pick $D,E\in\II^+$ with $[D]_\II,[E]_\II\leq_{\POT{\kappa}/\II}[C]_\II$. By Theorem \ref{theorem:Ideals}.(\ref{item:Ideals:measurable}), we can find normal ultrafilters $U_0$ and $U_1$ on $\kappa$ with $D\in U_0$ and $E\in U_1$. Then $C\in U_0\cap U_1$, $U=U_0=U_1$, $D\cap E\in U\subseteq\II^+$, and $[D\cap E]_\II\leq_{\POT{\kappa}/\II}[D]_\II,[E]_\II$. 
 \end{proof}

 This completes the proof of the lemma. 
\end{proof}

In contrast to the situation studied in the above lemma, the next result shows that it is possible to combine ideas from a classical construction of Kunen and Paris from \cite{MR0277381} with results of Hamkins from \cite{MR2063629} to obtain a measurable cardinal $\kappa$ with the property that the  ideal $\II^\kappa_{\prec{ms}}$ induces an atomless partial order.

\begin{theorem}
 Let $\kappa$ be a measurable cardinal. Then, in a generic extension of the ground model $\VV$, the cardinal $\kappa$ is measurable and  the partial order $\POT{\kappa}/\II^\kappa_{\prec{ms}}$ is atomless. Moreover, if the ideal $\II^\kappa_{\prec{ms}}$ is precipitous in $\VV$, then the ideal $\II^\kappa_{\prec{ms}}$ is precipitous in the given generic extension. 
\end{theorem}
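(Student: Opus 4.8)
The plan is to realize the conclusion by a single forcing extension, combining the Kunen--Paris method of \cite{MR0277381} for manufacturing many normal measures with Hamkins' gap forcing analysis from \cite{MR2063629} to keep the resulting measures under control. Fix a normal ultrafilter on $\kappa$ in $\VV$, and let $\mathbb{P}$ be a Kunen--Paris style Easton-support product of length $\kappa$ that adds Cohen subsets to (say) the inaccessible cardinals below $\kappa$, chosen so that $\mathbb{P}$ has size $\kappa$, preserves the measurability of $\kappa$, and admits a gap below $\kappa$ in the sense of \cite{MR2063629}. The reduction I would use throughout is the following consequence of Lemma \ref{lemma:atoms}.(\ref{item:LemmaTrivialForcing1}): the quotient $\POT{\kappa}/\II^\kappa_{\prec{ms}}$ is atomless if and only if every $\II^\kappa_{\prec{ms}}$-positive set $A$ belongs to at least two distinct normal ultrafilters on $\kappa$. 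Indeed, if $U_0\neq U_1$ are normal ultrafilters with $A\in U_0\cap U_1$, then picking $X\in U_0\setminus U_1$ splits $A$ into the two positive pieces $A\cap X\in U_0$ and $A\setminus X\in U_1$, so $[A]_{\II^\kappa_{\prec{ms}}}$ is not an atom; conversely an atom yields a unique measure by the cited lemma. Thus the problem of atomlessness reduces to producing, in the extension, two normal measures through every positive set.

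Second, I would verify the structural properties of $\mathbb{P}$. Preservation of measurability and the production of normal measures is the classical Kunen--Paris lifting argument: for any ground model embedding $\map{j}{\VV}{M}$ with $\crit{j}=\kappa$, the forcing factors as $j(\mathbb{P})\cong\mathbb{P}\times\mathbb{P}_{\mathrm{tail}}$, where $\mathbb{P}_{\mathrm{tail}}$ is the portion of $j(\mathbb{P})$ between $\kappa$ and $j(\kappa)$ and is highly closed in $M[G]$; any $M[G]$-generic $H$ for $\mathbb{P}_{\mathrm{tail}}$ lets one lift $j$ to $\map{j_H}{\VV[G]}{M[G][H]}$ and read off a normal measure $U_H=\Set{B\in\POT{\kappa}^{\VV[G]}}{\kappa\in j_H(B)}$. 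The closure of $\mathbb{P}_{\mathrm{tail}}$ together with the small number of dense sets in $M[G]$ allows the construction of $2^{\kappa^+}$ many such generics. The role of Hamkins' gap forcing theorem from \cite{MR2063629} is the converse direction: since $\mathbb{P}$ admits a gap below $\kappa$, every elementary embedding witnessing the measurability of $\kappa$ in $\VV[G]$, and hence every normal measure on $\kappa$ in $\VV[G]$, is the lift of a ground model embedding via some such tail generic. This classification is what makes the positive sets tractable.

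Third comes the atomlessness argument proper. Let $A$ be $\II^\kappa_{\prec{ms}}$-positive in $\VV[G]$, so $A\in U$ for some normal measure $U$; by the gap forcing theorem $U=U_H$ for a ground model embedding $j$ and a tail generic $H$, and $A\in U$ means $\kappa\in j_H(A)$. Writing $A=\dot{A}_G$ for a $\mathbb{P}$-name $\dot A\in\VV$, the statement $\check\kappa\in j(\dot A)$ is forced by some condition $q\in H$ in $\mathbb{P}_{\mathrm{tail}}$. The key point is that $\mathbb{P}_{\mathrm{tail}}$ remains rich below $q$: there is a further set $B$ and incompatible extensions of $q$ deciding $\check\kappa\in j(\dot B)$ in opposite ways, so one can build a second $M[G]$-generic $H'\ni q$ with $U_{H'}\neq U_H$ while still $\kappa\in j_{H'}(A)$, i.e.\ $A\in U_{H'}$. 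This yields two distinct normal measures through $A$, and by the reduction in the first paragraph $[A]_{\II^\kappa_{\prec{ms}}}$ is not an atom. As $A$ was an arbitrary positive set, $\POT{\kappa}/\II^\kappa_{\prec{ms}}$ is atomless in $\VV[G]$, while $\kappa$ is still measurable there.

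Finally, for the preservation of precipitousness I would argue that the generic ultrapower of $\VV[G]$ by $\POT{\kappa}/\II^\kappa_{\prec{ms}}$ is well-founded whenever the corresponding generic ultrapower of $\VV$ is. Using the game characterization of precipitousness (as in the proof of Lemma \ref{lemma:atoms}.(\ref{item:LemmaTrivialForcing2}) via $\mathcal{G}_\II$), a $\VV[G]$-generic filter $\mathcal{U}$ on the positive sets restricts to a $\VV$-generic filter on $\POT{\kappa}/(\II^\kappa_{\prec{ms}})^\VV$, and the gap structure of $\mathbb{P}$ (again via \cite{MR2063629}) ensures that the $\VV[G]$-ultrapower is the lift of the $\VV$-ultrapower, so no new ill-foundedness can appear; hence a winning strategy for {\sf Nonempty} in $\VV$ transfers to $\VV[G]$. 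The main obstacle I expect lies in the third paragraph: making precise, below an arbitrary condition and for an arbitrary positive $A$, that the tail forcing still carries the full Kunen--Paris freedom needed to separate two measures both containing $A$ --- this requires a careful, uniform-in-$A$ version of the measure-counting argument rather than its global form. A secondary difficulty is verifying the precise sense in which $\mathbb{P}$ admits a gap so that Hamkins' theorem applies at $\kappa$, and that this gap is compatible with the restriction argument used for precipitousness.
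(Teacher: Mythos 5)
Your overall architecture is right --- the reduction of atomlessness to \anf{every $\II^\kappa_{\prec ms}$-positive set lies in at least two normal ultrafilters} is exactly correct (and is how the paper's final claim works, via Lemma \ref{lemma:atoms} and Theorem \ref{theorem:Ideals}.(\ref{item:Ideals:measurable})), and the paper does indeed combine Kunen--Paris with Hamkins' results from \cite{MR2063629}. But there is a genuine gap at the heart of your second and third paragraphs: you propose to \emph{construct} the tail generics $H$ inside $\VV[G]$ by a closure-versus-dense-sets diagonalization (\anf{the small number of dense sets in $M[G]$ allows the construction of $2^{\kappa^+}$ many such generics}). The tail forcing is only ${\leq}\kappa$-closed in $\VV[G]$, while the number of dense subsets of the tail lying in $M[G]$ is $2^\kappa$ as computed in $\VV[G]$; so the diagonalization goes through only if $2^\kappa=\kappa^+$. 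The classical Kunen--Paris counting argument assumes GCH for precisely this reason, whereas the theorem here is stated for an arbitrary measurable cardinal with no cardinal-arithmetic hypothesis. The paper avoids this entirely by \emph{forcing} the tail generics to exist rather than building them: it first adds a single Cohen real $c$ (creating the gap at $\omega_1$ that makes \cite{MR2063629} applicable and pins the normal measures of the final model to the ground-model ones), and then forces over $\VV[c]$ with $\vec{\PPP}\times\vec{\SSS}$, where $\vec{\PPP}$ is the Easton product of the $\Add{\nu^+}{1}$'s below $\kappa$ and $\vec{\SSS}$ is a ${<}\kappa^+$-closed product adjoining, for each normal measure $U$ and each $\gamma<\kappa^+$, a mutually generic filter $H_{U,\gamma}$ for the tail $\RRR_U=[f]_U$ where $f(\alpha)=\vec\PPP\restriction[\alpha,\kappa)$. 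The liftings $j_{U,\gamma}$ then yield the measures $U_\gamma$, and the splitting of an arbitrary positive $\dot A^G$ is carried out by a density argument on $\vec{\SSS}$ (splitting a tail condition into incompatible $h_\gamma,h_\delta$ on two coordinates), not by re-running a measure-counting argument below a condition --- this is exactly the \anf{uniform-in-$A$} version you correctly anticipated as the main obstacle, but your framework has no mechanism to supply it without GCH.

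Your precipitousness paragraph is also not justified as written. Hamkins' gap forcing theorems concern embeddings existing in the extension $\VV[G]$ itself; a generic ultrapower by $\POT{\kappa}/\II^\kappa_{\prec ms}$ lives in a \emph{further} forcing extension, and it is not automatic that a $\VV[G]$-generic filter on the positive sets restricts to a $\VV$-generic filter for $\POT{\kappa}/(\II^\kappa_{\prec ms})^\VV$ --- that restriction property is precisely what needs proof. The paper instead first shows (via an explicit claim using the $\kappa$-cc of $\vec\PPP$ and the ${<}\kappa^+$-closure of $\vec\SSS$) that the ideal $\II^\kappa_{\prec ms}$ of the final model is exactly the ideal \emph{generated} by the ground-model one, and then applies Kakuda's theorem \cite{MR613283} that precipitousness is preserved by $\kappa$-cc forcing, stepping through $\VV\subseteq\VV[c]\subseteq\VV[c,H]\subseteq\VV[c,G,H]$ (the closed step being trivial). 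To repair your sketch you would need both the identification of the extension ideal as the generated ideal --- which your proposal never addresses but silently uses --- and a correct transfer principle in place of the gap-forcing appeal.
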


\begin{proof}
  Let $\mathcal{F}_0$ denote the collection of all normal ultrafilters on $\kappa$ in $\VV$, let $c$ be $\Add{\omega}{1}$-generic over $\VV$, let $\mathcal{F}$ denote the collection of all normal ultrafilters on $\kappa$ in $\VV[c]$ and set $\lambda=\kappa^+=(\kappa^+)^{\VV[c]}$. 
 By the L{\'e}vy--Solovay Theorem (see {\cite[Proposition 10.13]{MR1994835}}) and results of Hamkins (see {\cite[Corollary 8 and Lemma 13]{MR2063629}}), there is a bijection $\map{b}{\mathcal{F}}{\mathcal{F}_0}$ with $b(U)=U\cap\POT{\kappa}^\VV$ and $$U ~ = ~ \Set{A\in\POT{\kappa}^{\VV[c]}}{\exists B\in b(U) ~ A\supseteq B}$$ for all $U\in\mathcal{F}$. Given $U\in\mathcal{F}$, let $\map{j_U}{\VV[c]}{M_U=\Ult{\VV[c]}{U}}$ denote the corresponding ultrapower embedding. 
 Work in $\VV[c]$, and let $\vec{\PPP}$ denote the Easton-support product of all partial orders of the form $\Add{\nu^+}{1}$ for some infinite cardinal $\nu<\kappa$. Then $\vec{\PPP}$ has cardinality $\kappa$ and satisfies the $\kappa$-chain condition. 
Let $f$ be the function with domain $\kappa$ and $f(\alpha)=\vec{\PPP}\restriction[\alpha,\kappa)$ for all $\alpha<\kappa$.  Given $U\in\mathcal{F}$, set $\RRR_U=[f]_U$ and define $\vec{\RRR}_U$ to be the ${<}\lambda$-support product of $\lambda$-many copies of $\RRR_U$. Then $\RRR_U$ is a ${<}\lambda$-closed partial order in $M_U$ and there is a canonical isomorphism between $j(\vec{\PPP})$ and $\vec{\PPP}\times\RRR_U$ in $M_U$. Since we have ${}^\kappa M_U\subseteq M_U$, the partial orders $\RRR_U$ and $\vec{\RRR}_U$ are also ${<}\lambda$-closed in $\VV[c]$.  
 Finally, let $\vec{\SSS}$ denote the ${<}\lambda$-support product of all partial orders of the form $\vec{\RRR}_U$ with $U\in\mathcal{F}$.

 Let $G\times H$ be $(\vec{\PPP}\times\vec{\SSS})$-generic over $\VV[c]$. 
Given $U\in\mathcal{F}$ and $\gamma<\lambda$, we let $H_{U,\gamma}$ denote the filter induced by $H$ on the $\gamma$-th factor of $\vec{\RRR}_U$, and we let $\map{j_{U,\gamma}}{V[c,G]}{M_U[G,H_{U,\gamma}]}$ denote the corresponding canonical lifting of $j_U$ (see {\cite[Proposition 9.1]{MR2768691}}). 
 Finally, we set $\II=(\II^\kappa_{\prec{ms}})^{\VV[c,G,H]}$.

 \begin{claim*}
  $\POT{\kappa}^{\VV[c,G,H]}\subseteq\VV[c,G]$. 
 \end{claim*}

 \begin{proof}[Proof of the Claim]
  Fix $A\in\POT{\kappa}^{\VV[c,G,H]}$. Then, there is a $\vec{\PPP}$-nice name $\dot{A}$ in $\VV[c,H]$ with $A=\dot{A}^G$. Since $\vec{\SSS}$ is ${<}\lambda$-closed in $\VV[c]$, the above remarks imply that $\vec{\PPP}$ satisfies the $\kappa$-chain condition and has cardinality $\kappa$ in $\VV[c,H]$. This shows that $\dot{A}$ is an element of $\VV[c]$ and we can conclude that $A=\dot{A}^G\in\VV[c,G]$.  
 \end{proof}

 The above claim directly implies that if $U\in\mathcal{F}$ and $\gamma<\lambda$, then $$U_\gamma ~ = ~ \Set{A\in\POT{\kappa}^{\VV[c,G]}}{\kappa\in j_{U,\gamma}(A)}$$ is a normal ultrafilter on $\kappa$ in $\VV[c,G,H]$.

 \begin{claim*}
  Given $U\in\mathcal{F}$, $\gamma<\lambda$ and a $\vec{\PPP}$-name $\dot{A}\in\VV[c]$ for a subset of $\kappa$, 
the set $\dot{A}^G$ is an element of $U_\gamma$ if and only if there is 
a condition $\vec{p}\in G$, 
an element $E$ of $U$ and 
a function $g\in\VV[c]$ with domain $\kappa$ and $[g]_U\in H_{U,\gamma}$ 
such that for all $\alpha\in E$, we have $\supp(\vec{p})\subseteq\alpha$, $g(\alpha)\in\vec{\PPP}\restriction[\alpha,\kappa)$ and $\vec{p}\cup g(\alpha)\Vdash^{\VV[c]}_{\vec{\PPP}}\anf{\check{\alpha}\in\dot{A}}$. 
 \end{claim*}
 
 \begin{proof}[Proof of the Claim]
  Let $K$ denote the filter on $j_U(\vec{\PPP})$ induces by $G\times H_{U,\gamma}$. 
  First, assume that $\dot{A}^G$ is an element of $U_\gamma$. Then $\kappa\in j_{U,\gamma}(\dot{A}^G)=j_U(\dot{A})^K$ and there exists a condition $\vec{q}$ in $K$ with the property that $\vec{q}\Vdash^{M_U}_{j_U(\vec{\PPP})}\anf{\check{\kappa}\in j_U(\dot{A})}$. 
  Set $\vec{p}=\vec{q}\restriction\kappa\in G$ and pick a function $g$ in $\VV[c]$ with domain $\kappa$ satisfying $[g]_U=\vec{q}\restriction[\kappa,j_U(\kappa))\in H_{U,\gamma}$. 
  Moreover, define $$E ~ = ~ \Set{\alpha<\kappa}{\supp(\vec{p})\subseteq\alpha, ~ g(\alpha)\in\vec{\PPP}\restriction[\alpha,\kappa), ~ \vec{p}\cup g(\alpha)\Vdash^{\VV[c]}_{\vec{\PPP}}\anf{\check{\alpha}\in\dot{A}}} ~ \in ~ \VV[c].$$ 
  By {\L}os' Theorem, our assumptions on $\vec{q}$ directly imply that $E$ is an element of $U$. 
  In the other direction, if $\vec{p}$, $g$ and $E$ satisfy the properties listed in the statement of the claim, then {\L}os' Theorem implies that $$\vec{p}\cup[g]_U\Vdash^{M_U}_{j_U(\vec{\PPP})}\anf{\check{\kappa}\in j_U(\dot{A})}$$ and hence $j_{U,\gamma}(\dot{A}^G)=j_U(\dot{A})^K\in U_\gamma$. 
 \end{proof}

 \begin{claim*}
  If $W$ is a normal ultrafilter on $\kappa$ in $\VV[c,G,H]$, then $W\cap\VV[c]\in\mathcal{F}$. 
 \end{claim*}
 
 \begin{proof}[Proof of the Claim]
  Since the partial order $\vec{\PPP}\times\vec{\SSS}$ is $\sigma$-closed in $\VV[c]$, the results of \cite{MR2063629} mentioned above yield an ultrafilter $U\in\mathcal{F}$ with $W\cap\VV=b(U)=U\cap\VV$. Since $$U ~ = ~ \Set{A\in\POT{\kappa}^{\VV[c]}}{\exists B\in W\cap\VV ~ A\subseteq B}$$ is an ultrafilter in $\VV[c]$, we can conclude that $U=W\cap\VV[c]$. 
 \end{proof}

 \begin{claim*}
  We have
  \begin{equation*}
   \begin{split}
    \II ~ & = ~ \POT{\kappa}^{\VV[c,G]}\setminus\bigcup\Set{U_\gamma}{U\in\mathcal{F}, ~ \gamma<\lambda} \\
    ~ & = ~ \Set{A\in\POT{\kappa}^{\VV[c,G]}}{\exists B\in(\II^\kappa_{ms})^{\VV[c]} ~ A\subseteq B}. 
   \end{split}
  \end{equation*} 
 \end{claim*}

 \begin{proof}[Proof of the Claim]
  Fix a $\vec{\PPP}$-name $\dot{A}\in\VV[c]$ for a subset of $\kappa$, and let $O$ denote the set of all $\vec{p}$ in $\vec{\PPP}$ with   $$D_{\vec{p}} ~ = ~ \Set{\alpha<\kappa}{\vec{p}\Vdash^{\VV[c]}_{\vec{\PPP}}\anf{\check{\alpha}\notin\dot{A}}} ~ \in ~ \bigcap\mathcal{F}.$$

  First, assume that there is a $\vec{p}\in G\cap O$. Then $\dot{A}^G\cap D_{\vec{p}}=\emptyset$ and, if $W$ is a normal ultrafilter on $\kappa$ in $\VV[c,G,H]$, then $D_{\vec{p}}\in W\cap\VV[c]\in\mathcal{F}$ and hence $\dot{A}^G\notin W$. By Theorem \ref{theorem:Ideals}.(\ref{item:Ideals:measurable}), this shows that $\dot{A}^G\in\II$. In particular, we have $\dot{A}^G\notin U_\gamma$ for all $U\in\mathcal{F}$ and $\gamma<\lambda$. Finally, these arguments also directly show that $\dot{A}^G\subseteq\kappa\setminus D_{\vec{p}}\in(I^\kappa_{ms})^{\VV[c]}$ holds.

 Now, assume that $G\cap O=\emptyset$. Since $O$ is an open subset of $\vec{\PPP}$ in $\VV[c]$, there is $\vec{p}_0\in G$ with the property that no condition below $\vec{p}_0$ in $\vec{\PPP}$ is an element of $O$. 
 Fix some condition $(\vec{p}_1,\vec{s}_1)$ below $(\vec{p}_0,\mathbbm{1}_{\vec{\SSS}})$ in $\vec{\PPP}\times\vec{\SSS}$. 
 Then, there is $U\in\mathcal{F}$ with $$E ~ = ~ \Set{\alpha<\kappa}{\textit{$\alpha$ is inaccessible with $\vec{p}_1\not\Vdash^{\VV[c]}_{\vec{\PPP}}\anf{\check{\alpha}\notin\dot{A}}$}} ~ \in ~ U.$$
  This allows us to find a sequence $\seq{\vec{q}_\alpha\in\vec{\PPP}}{\alpha<\kappa}$ in $\VV[c]$ with $\vec{q}_\alpha\leq_{\vec{\PPP}}\vec{p}_1$ and $\vec{q}_\alpha\Vdash^{\VV[c]}_{\vec{\PPP}}\anf{\check{\alpha}\in\dot{A}}$ for all $\alpha\in E$. 
 Then the set $\supp(\vec{q}_\alpha\restriction\alpha)$ is bounded in $\alpha$ all for $\alpha\in E$ and hence,  
 using the normality of $U$ and the inaccessibility of the elements of $E$, we find a condition $\vec{p}$ below $\vec{p}_1$ in $\vec{\PPP}$ and an element $F$ of $U$ with $F\subseteq E$ and $\vec{q}_\alpha\restriction\alpha=\vec{p}$ for all $\alpha\in F$. 
 Now, pick a function $g\in\VV[c]$ with domain $\kappa$ and $g(\alpha)=\vec{q}_\alpha\restriction[\alpha,\kappa)$ for all $\alpha\in F$. 
 Then $\vec{p}\cup g(\alpha)\Vdash^{\VV[c]}_{\vec{\PPP}}\anf{\check{\alpha}\in\dot{A}}$ for all $\alpha\in F$.
 %
 Finally, fix $\gamma\in\lambda\setminus\supp(\vec{s}_1(U))$. 
Then, there is a condition $\vec{s}$ below $\vec{s}_1$ in $\vec{\SSS}$ with the property that $\gamma\in\supp(\vec{s}(U))$ and $\vec{s}(U)(\gamma)=[g]_U$. 
By genericity, these computations allow us to find a condition $(\vec{p},\vec{s})$ in $G\times H$ with the property that there are $U\in\mathcal{F}$, $E\in U$, $\gamma<\lambda$ and a function $g\in\VV[c]$ with domain $\kappa$ and $[g]_U=\vec{s}(U)(\gamma)$, such that for all $\alpha\in E$, we have $\supp(\vec{p})\subseteq\alpha$, $g(\alpha)\in\vec{\PPP}\restriction[\alpha,\kappa)$ and $\vec{p}\cup g(\alpha)\Vdash^{\VV[c]}_{\vec{\PPP}}\anf{\check{\alpha}\in\dot{A}}$. 
 By a previous claim, this shows that $\dot{A}^G$ is an element of $U_\gamma$, and hence $\dot{A}^G\notin\II$. 
 Finally, if $B\in(\II^\kappa_{ms})^{\VV[c]}$, then $\kappa\setminus B\in U\subseteq U_\gamma$ and hence $\dot{A}^G\nsubseteq B$. 
 \end{proof}

 \begin{claim*}
  In $\VV[c,G,H]$, the partial order $\POT{\kappa}/\II$ is atomless. 
 \end{claim*}

 \begin{proof}[Proof of the Claim] 
  Pick a $\PPP$-name $\dot{A}$ in $\VV[c]$ with $\dot{A}^G\notin\II$. By the previous claim, there is $U\in\mathcal{F}$ and $\gamma<\lambda$ with $\dot{A}^G\in U_\gamma$. 
 In this situation, earlier remarks show that we can find $(\vec{p}_0,\vec{s}_0)\in G\times H$, $E\in U$ and a function $g\in\VV[c]$ with domain $\kappa$ and $[g]_U=\vec{s}_0(U)(\gamma)$ such that $\supp(\vec{p}_0)\subseteq\alpha$, $g(\alpha)\in\vec{\PPP}\restriction[\alpha,\kappa)$ and $\vec{p}_0\cup g(\alpha)\Vdash^{\VV[c]}_{\vec{\PPP}}\anf{\check{\alpha}\in\dot{A}}$ for all $\alpha\in E$. 
 Fix a condition $(\vec{p}_1,\vec{s}_1)$ below $(\vec{p}_0,\vec{s}_0)$ in $\vec{\PPP}\times\vec{\SSS}$ and $\delta\in\lambda\setminus\supp(\vec{s}_1(U))$. Then, we can find $F\in U$ and a function $h\in\VV[c]$ with domain $\kappa$ and $[h]_U=\vec{s}_1(U)(\gamma)$ such that $\supp(\vec{p}_1)\subseteq\alpha$ and $g(\alpha)\in\vec{\PPP}\restriction(\alpha,\kappa)$ holds for all $\alpha\in F$. Since partial orders of the form $\vec{\PPP}\restriction[\alpha,\kappa)$ with $\alpha<\kappa$ are atomless, we can find functions $h_\gamma,h_\delta\in\VV[c]$ with domain $\kappa$ with the property that $h_\gamma(\alpha)$ and $h_\delta(\alpha)$ are incompatible conditions below $h(\alpha)$ in $\vec{\PPP}\restriction[\alpha,\kappa)$ for all $\alpha\in F$.  
Then, there is a $\vec{\PPP}$-name $\dot{B}\in\VV[c]$ with the property that whenever $K$ is $\vec{\PPP}$-generic over $\VV[c]$, then $\dot{B}^K=\Set{\alpha\in\dot{A}}{h_\gamma(\alpha)\in K}$. Then $\vec{p}_1\cup h_\gamma(\alpha)\Vdash^{\VV[c]}_{\vec{\PPP}}\anf{\check{\alpha}\in\dot{B}\subseteq\dot{A}}$ and $\vec{p}_1\cup h_\delta(\alpha)\Vdash^{\VV[c]}_{\vec{\PPP}}\anf{\check{\alpha}\in\dot{A}\setminus\dot{B}}$ for all $\alpha\in E\cap F$. Moreover, there is a condition $(\vec{p},\vec{s})$ below $(\vec{p}_1,\vec{s}_1)$ in $\vec{\PPP}\times\vec{\SSS}$ with $\vec{s}(U)(\gamma)=[h_\gamma]_U$ and $\vec{s}(U)(\delta)=[h_\delta]_U$. A genericity argument now shows that there is $B\in U_\gamma\cap\POT{\dot{A}^G}$ with the property that $\dot{A}^G\setminus B\in U_\delta$ for some $\delta<\lambda$. In particular, the condition $[\dot{A}^G]_\II$ is not an atom in the partial order $\POT{\kappa}/\II$ in $\VV[c,G,H]$. 
 \end{proof}

 \begin{claim*}
  If the ideal $\II^\kappa_{\prec{ms}}$ is precipitous in $\VV$, then the  ideal $\II$ is precipitous in $\VV[c,G,H]$. 
 \end{claim*}

 \begin{proof}[Proof of the Claim]
  A result of Kakuda (see {\cite[Theorem 1]{MR613283}}) shows that the set $$\Set{A\in\POT{\kappa}^{\VV[c]}}{\exists B\in(\II^\kappa_{\prec{ms}})^\VV ~ A\subseteq B}$$ is a precipitous ideal on $\kappa$ in $\VV[c]$. As observed above, this ideal is equal to $(\II^\kappa_{\prec{ms}})^{\VV[c]}$. 
 Since the partial order $\vec{\SSS}$ is ${<}\lambda$-closed in $\VV[c]$, this shows that $(\II^\kappa_{\prec{ms}})^{\VV[c]}$ is also a precipitous ideal on $\kappa$ in $\VV[c,H]$. 
Since the partial order $\vec{\PPP}$ satisfies the $\kappa$-chain condition in $\VV[c,H]$, another application of Kakuda's result shows that the set $\Set{A\in\POT{\kappa}^{\VV[c,G]}}{\exists B\in(\II^\kappa_{\prec{ms}})^{\VV[c]} ~ A\subseteq B}$ is a precipitous ideal on $\kappa$ in $\VV[c,G,H]$. By an above claim, this collection is equal to the ideal $\II$. 
 \end{proof}

 This completes the proof of the theorem. 
\end{proof}


\section{Concluding remarks and open questions}\label{section:questions}

For many large cardinal properties corresponding to some property of models and filters, 
 we either were able to show or it was known that the collection of all smaller cardinals without the given property is not contained in the induced ideal. 
 Since each of these arguments has its individual proof that relies on the specific large cardinal property, it is natural to ask whether this conclusion holds true in general.

\begin{question}
 Assume that Scheme \ref{schemeSmall} (respectively, Scheme \ref{schemeNoel} or Scheme \ref{schemeKappa}) holds true for some large cardinal property $\Phi(\kappa)$ and some property $\Psi(M,U)$ of models and filters. 
 Does $\NN^\kappa_\Phi\notin\II^{{<}\kappa}_\Psi$ (respectively, $\NN^\kappa_\Phi\notin\II^\kappa_\Psi$ or $\NN^\kappa_\Phi\notin\II^\kappa_{{\prec}\Psi}$) hold for every cardinal $\kappa$ with $\Phi(\kappa)$? 
\end{question}

For some large cardinal properties that can be characterized through Scheme \ref{schemeKappa}, we were not able to show that $\NN^\kappa_\Phi\notin\II^\kappa_{{\prec}\Psi}$ always holds. 
 The difficulties in these arguments are mostly caused by the fact that elementary submodels of some large $\HH{\theta}^M$ cannot be transferred between some weak $\kappa$-model $M$ and its ultrapowers. 
 In particular, the following statements are left open:

\begin{question}
 \begin{enumerate}
  \item Does $\NN^\kappa_{\omega R}\notin\II^\kappa_{{\prec}\textup{w}R}$ hold for every $\omega$-Ramsey cardinal $\kappa$? 
  
  \item Does $\NN^\kappa_{nR}\notin\II^\kappa_{{\prec}nR}$ hold for every $\delt^\forall_\omega$-Ramsey cardinal $\kappa$? 
 \end{enumerate}
\end{question}

 For Ramsey-like cardinals characterized through the validity of Scheme \ref{schemeNoel}, Lemma \ref{lemma:IdealsNicelyProper} is our main tool to show that $\NN^\kappa_\Phi\notin\II^\kappa_\Psi$. 
  Since we considered several properties of models and ultrafilters that are not absolute between $\VV$ and the corresponding ultrapowers, we naturally arrive at the following question:

\begin{question}
 Let $\kappa$ be a $\Psi^\vartheta_\alpha$-Ramsey cardinal with $\alpha\leq\kappa$, $\vartheta\in\{\kappa,\kappa^+\}$ and $\Psi\in\{\cc,\infi,\delt\}$. Is it true that $\NN\Psi^\vartheta_\alpha(\kappa)\notin\II\Psi^\vartheta_\alpha(\kappa)$? 
\end{question}

 The individual results of our paper strongly support the idea that most natural large cardinal notions below measurability canonically induce large cardinal ideals in a way that the relationship between those ideals reflects the relationship between the corresponding large cardinal notions, as is examplified by the results listed in Theorem \ref{theorem:IdealContain}. 
Therefore, it is natural to ask whether this can be done more generally:

\begin{question}
 Given large cardinal properties $\Phi_0$ and $\Phi_1$ and properties $\Psi_0$ and $\Psi_1$ of models and filters that are each connected through one of our characterization schemes in a canonical way,\footnote{Note that, in the case of weak compactness, the results of Sections \ref{section:WCamenablecomplete} and \ref{section:weaklycompactideal} already show that these questions can have negative answers by showing that the various characterizations of weak compactness induce both the bounded and the weakly compact ideal. Therefore, it does not make sense to consider these questions for arbitrary instances of our characterization schemes.} \ldots  
 \begin{itemize} 
  \item \ldots is it true that $\Phi_0(\kappa)$ provably implies $\Phi_1(\kappa)$ for every cardinal $\kappa$ if and only if it can be proven that for every cardinal $\kappa$ satisfying $\Phi_0(\kappa)$, the ideal on $\kappa$ induced by $\Phi_1$ and $\Psi_1$  is contained in the ideal on $\kappa$ induced by $\Phi_0$ and $\Psi_0$?

  \item \ldots is it true that $\Phi_0$ has strictly larger consistency strength than $\Phi_1$ if and only if it can be proven that for every cardinal $\kappa$ satisfying $\Phi_0(\kappa)$, the set $\NN^\kappa_{\Phi_1}$ is an element of the ideal on $\kappa$ induced by $\Phi_0$ and $\Psi_0$? 
 \end{itemize}
\end{question}

In this paper, we introduced several new large cardinal concepts, whose relationships to each other are only partially established. 
For example, every $\Delta_\omega^\forall$-Ramsey cardinal is trivially $\cc_\omega^\forall$-Ramsey. However, we do not know if this implication can be reversed, and, in the light of Proposition \ref{RamseysEquivalence}, we ask the following:

\begin{question}
  Are $\cc_\omega^\forall$-Ramseyness and $\Delta_\omega^\forall$-Ramseyness distinct large cardinal notions? Are their consistency strengths distinct?
\end{question}

The following related question is also open:

\begin{question}
  Are ineffable Ramseyness, $\infi_\omega^\kappa$-Ramseyness, and $\delt_\omega^\kappa$-Ramseyness distinct large cardinal notions? Are their consistency strengths distinct? What is their relationship with $\cc_\omega^\forall$-Ramsey cardinals?
\end{question}

 The results of Section \ref{section:MeasurableIdeal} show that the atomicity of partial orders of the form $\POT{\kappa}/\II^\kappa_{{\prec}ms}$ depends on the ambient model of set theory. 
 In all models constructed in this section however, the ideal  $\II^\kappa_{{\prec}ms}$ is precipitous, which motivates the following question:\footnote{Let us mention the following two related results: Hellsten has shown (see \cite{MR2653962}) that for a weakly compact cardinal, the weakly compact ideal can consistently be precipitous. Johnson (see \cite{MR918427}) has shown that for a completely ineffable cardinal, the completely ineffable ideal is never precipitous.}

\begin{question}
 If $\kappa$ is a measurable cardinal, is the ideal $\II^\kappa_{{\prec}ms}$ precipitous? 
\end{question}

\bibliographystyle{plain}
\bibliography{references}

\begin{thebibliography}{10}

\bibitem{MR0460120}
Fred~G. Abramson, Leo~A. Harrington, Eugene~M. Kleinberg, and William~S.
  Zwicker.
\newblock Flipping properties: a unifying thread in the theory of large
  cardinals.
\newblock {\em Ann. Math. Logic}, 12(1):25--58, 1977.

\bibitem{MR0384553}
James~E. Baumgartner.
\newblock Ineffability properties of cardinals. {I}.
\newblock In {\em Infinite and finite sets ({C}olloq., {K}eszthely, 1973;
  dedicated to {P}. {E}rd\H{o}s on his 60th birthday), {V}ol. {I}}, pages
  109--130. Colloq. Math. Soc. J\'{a}nos Bolyai, Vol. 10. North-Holland,
  Amsterdam, 1975.

\bibitem{MR0540770}
James~E. Baumgartner.
\newblock Ineffability properties of cardinals. {II}.
\newblock In {\em Logic, foundations of mathematics and computability theory
  ({P}roc. {F}ifth {I}nternat. {C}ongr. {L}ogic, {M}ethodology and {P}hilos. of
  {S}ci., {U}niv. {W}estern {O}ntario, {L}ondon, {O}nt., 1975), {P}art {I}},
  pages 87--106. Univ. Western Ontario Ser. Philos. Sci., Vol. 9. Reidel,
  Dordrecht, 1977.

\bibitem{mitchellforramsey}
Erin Carmody, Victoria Gitman, and Miha~E. Habi\v{c}.
\newblock A {M}itchell-like order for {R}amsey and {R}amsey-like cardinals.
\newblock {\em Fund. Math.}, 248(1):1--32, 2020.

\bibitem{brent}
Brent Cody.
\newblock A refinement of the {R}amsey hierarchy via indescribability.
\newblock To appear in the \emph{The Journal of Symbolic Logic}, 2020.

\bibitem{MR3348040}
Brent Cody and Victoria Gitman.
\newblock Easton's theorem for {R}amsey and strongly {R}amsey cardinals.
\newblock {\em Ann. Pure Appl. Logic}, 166(9):934--952, 2015.

\bibitem{MR2768691}
James Cummings.
\newblock Iterated forcing and elementary embeddings.
\newblock In {\em Handbook of set theory. {V}ols. 1, 2, 3}, pages 775--883.
  Springer, Dordrecht, 2010.

\bibitem{MR1077260}
Qi~Feng.
\newblock A hierarchy of {R}amsey cardinals.
\newblock {\em Ann. Pure Appl. Logic}, 49(3):257--277, 1990.

\bibitem{MR2830415}
Victoria Gitman.
\newblock Ramsey-like cardinals.
\newblock {\em J. Symbolic Logic}, 76(2):519--540, 2011.

\bibitem{MR2830435}
Victoria Gitman and Philip~D. Welch.
\newblock Ramsey-like cardinals {II}.
\newblock {\em J. Symbolic Logic}, 76(2):541--560, 2011.

\bibitem{MR3809587}
Gabriel Goldberg.
\newblock The linearity of the {M}itchell order.
\newblock {\em J. Math. Log.}, 18(1):1850005, 17, 2018.

\bibitem{HamkinsTalk}
Joel~David Hamkins.
\newblock {T}he weakly compact embedding property.
\newblock Talk at the \emph{Conference in honor of Arthur W. Apter and Moti
  Gitik} at Carnegie Mellon University, May 30-31, 2015. Slides available at
  \url{http://jdh.hamkins.org/wp-content/uploads/2015/05/Weakly-compact-embedding-property-CMU-2015.pdf}.

\bibitem{MR2063629}
Joel~David Hamkins.
\newblock Extensions with the approximation and cover properties have no new
  large cardinals.
\newblock {\em Fund. Math.}, 180(3):257--277, 2003.

\bibitem{MR1133077}
Kai Hauser.
\newblock Indescribable cardinals and elementary embeddings.
\newblock {\em J. Symbolic Logic}, 56(2):439--457, 1991.

\bibitem{MR2653962}
Alex Hellsten.
\newblock Saturation of the weakly compact ideal.
\newblock {\em Proc. Amer. Math. Soc.}, 138(9):3323--3334, 2010.

\bibitem{MR3800756}
Peter Holy and Philipp Schlicht.
\newblock A hierarchy of {R}amsey-like cardinals.
\newblock {\em Fund. Math.}, 242(1):49--74, 2018.

\bibitem{MR1940513}
Thomas Jech.
\newblock {\em Set theory}.
\newblock Springer Monographs in Mathematics. Springer-Verlag, Berlin, 2003.
\newblock The third millennium edition, revised and expanded.

\bibitem{jensennotes}
Ronald~B. Jensen and Kenneth Kunen.
\newblock Some combinatorial properties of {$L$} and {$V$}.
\newblock Handwritten notes, 1969.

\bibitem{MR853844}
C.~A. Johnson.
\newblock Distributive ideals and partition relations.
\newblock {\em J. Symbolic Logic}, 51(3):617--625, 1986.

\bibitem{MR918427}
C.~A. Johnson.
\newblock More on distributive ideals.
\newblock {\em Fund. Math.}, 128(2):113--130, 1987.

\bibitem{MR613283}
Yuzuru Kakuda.
\newblock On a condition for {C}ohen extensions which preserve precipitous
  ideals.
\newblock {\em J. Symbolic Logic}, 46(2):296--300, 1981.

\bibitem{MR1994835}
Akihiro Kanamori.
\newblock {\em The higher infinite}.
\newblock Springer Monographs in Mathematics. Springer-Verlag, Berlin, second
  edition, 2003.
\newblock Large cardinals in set theory from their beginnings.

\bibitem{MR513844}
Eugene~M. Kleinberg.
\newblock A combinatorial characterization of normal {$M$}-ultrafilters.
\newblock {\em Adv. in Math.}, 30(2):77--84, 1978.

\bibitem{MR2617841}
Kenneth Kunen.
\newblock {\em I{naccessibility} properties of cardinals}.
\newblock ProQuest LLC, Ann Arbor, MI, 1968.
\newblock Thesis (Ph.D.)--Stanford University.

\bibitem{MR0277381}
Kenneth Kunen and Jeffrey~B. Paris.
\newblock Boolean extensions and measurable cardinals.
\newblock {\em Ann. Math. Logic}, 2(4):359--377, 1970/1971.

\bibitem{MR534574}
William Mitchell.
\newblock Ramsey cardinals and constructibility.
\newblock {\em J. Symbolic Logic}, 44(2):260--266, 1979.

\bibitem{nielsen-welch}
Dan~Saattrup Nielsen and Philip~D. Welch.
\newblock Games and {R}amsey-like cardinals.
\newblock {\em J. Symbolic Logic}, 84(1):408–437, 2019.

\bibitem{ranosch}
Niels Ranosch.
\newblock Investigating {R}amsey-like and other large cardinals.
\newblock Master's thesis, {U}niversity of {B}onn, 2018.

\bibitem{MR2817562}
Ian Sharpe and Philip~D. Welch.
\newblock Greatly {E}rdos cardinals with some generalizations to the {C}hang
  and {R}amsey properties.
\newblock {\em Ann. Pure Appl. Logic}, 162(11):863--902, 2011.

\bibitem{MR1245524}
Wen~Zhi Sun.
\newblock Stationary cardinals.
\newblock {\em Arch. Math. Logic}, 32(6):429--442, 1993.

\end{thebibliography}

\end{document}